\definecolor{rltred}{rgb}{0.75,0,0}
\definecolor{rltgreen}{rgb}{0.75,0,0}
\definecolor{rltblue}{rgb}{0,0,0.75}
\definecolor{cColor}{rgb}{0.75,0,0.2}
\newcommand{\cxi}{\check{x}^{(i)}}
\newcommand{\eps}{\varepsilon}
\renewcommand{\P}{\mathbb{P}}
\newcommand{\Var}{\mathbb{V}\text{ar}}
\newcommand{\F}{\mathcal{F}}
\newcommand{\R}{\mathbb{R}}
\newcommand{\N}{\mathbb{N}}
\newcommand{\E}{\mathbb{E}}
\newcommand{\ind}[1]{\mathds{1}_{#1}}
\newcommand{\hX}{\widehat{X}}
\newcommand{\dt}{\Delta t}
\newcommand{\new}[1]{{#1}}
\newcommand{\tV}{\widetilde{V}}
\newcommand{\tx}{\widetilde{x}}
\newcommand{\tm}{\widetilde{m}}
\newcommand{\tn}{\widetilde{n}}
\renewcommand{\th}{\widetilde{h}}
\newcommand{\tX}{\widetilde{X}}
\newcommand{\ty}{\widetilde{y}}
\newcommand{\hV}{\widehat{V}}
\newcommand{\hx}{\widehat{x}}
\newcommand{\hm}{\widehat{m}}
\newcommand{\hn}{\widehat{n}}
\newcommand{\hh}{\widehat{h}}
\newcommand{\hy}{\widehat{y}}
\newcommand{\gNa}{g_\text{Na}}
\newcommand{\gL}{g_\text{L}}
\newcommand{\gK}{g_\text{K}}
\newcommand{\VNa}{V_\text{Na}}
\newcommand{\VL}{V_\text{L}}
\newcommand{\VK}{V_\text{K}}
\newcommand{\Vrev}{V_\text{rev}}
\newcommand{\Rmax}{R_\text{max}}
\newcommand{\JCh}{J_{\text{Ch}}}
\newcommand{\JE}{J_{\text{E}}}
\newcommand{\D}{\Delta}
\newcommand{\Vmax}{V^\text{max}}
\newcommand{\meanV}{\bar{V}^N}
\newcommand{\meanY}{\bar{y}^N}
\newcommand{\meanX}{\bar{x}^N}
\newcommand{\meanM}{\bar{m}^N}
\newcommand{\meanN}{\bar{n}^N}
\newcommand{\meanH}{\bar{h}^N}
\newcommand{\varV}{\bar{S}^V}
\newcommand{\varM}{\bar{S}^m}
\newcommand{\varN}{\bar{S}^n}
\newcommand{\varH}{\bar{S}^h}
\newcommand{\varX}{\bar{S}^x}
\newtheorem{theo}{Theorem}[section]
\newtheorem{hypothesis}[theo]{Hypothesis}
\newtheorem{lem}[theo]{Lemma}
\newtheorem{prop}[theo]{Proposition}
\newtheorem{corollary}[theo]{Corollary}
\newtheorem{rem}[theo]{Remark} 
\theoremstyle{definition}
\numberwithin{equation}{section}
\begin{document}
\pagestyle{plain}

\title{Synchronization of stochastic mean field networks of Hodgkin-Huxley neurons with noisy channels.   }

\author{Mireille Bossy\thanks{INRIA Sophia Antipolis M{\'e}diterran{\'e}e, France, 
Mireille.Bossy@inria.fr}, \;\;\; Joaqu\'in Fontbona\thanks{Center for Mathematical Modeling, University of  Chile. fontbona@dim.uchile.cl. Supported by CMM-Basal Conicyt  AFB 170001,  Nucleo Milenio NC 130062 and Fondecyt Grant 1150570} \; \; and \;\; H\'ector Olivero\thanks{ CIMFAV, Facultad de Ingenier\'ia, Universidad de Valpara\'iso, Chile. hector.olivero@uv.cl. Partially supported by Nucleo Milenio NC 130062,  Beca Chile Postdoctorado and Fondecyt Postdoctoral Grant 3180777.
 \\ \indent This research was  partially supported by the supercomputing infrastructure of NLHPC (ECM-02), Conicyt.}}%
\date{\today}

\maketitle

\abstract{In this work we are interested in a mathematical model \new{of  the collective behavior of a fully connected network of  finitely many neurons,  when their number  and when time go to infinity. } We assume that every neuron follows a stochastic version of the Hodgkin-Huxley model, and that pairs of neurons interact  through both electrical and chemical synapses, the global  connectivity being  of mean field type.  When the  leak conductance is strictly positive, we prove that if the  initial voltages are uniformly bounded and 
 the electrical interaction  between neurons is strong enough,  then, uniformly in the number of neurons,  the whole system synchronizes exponentially fast    as time goes to infinity,  up to some   error controlled by (and vanishing with) the channels noise level. Moreover,  we prove  that if the random initial condition is exchangeable,   on every bounded time interval  the propagation of chaos property for this system holds  (regardless of the interaction intensities). Combining these results, we deduce that the nonlinear  McKean-Vlasov equation describing an infinite network of such neurons  concentrates,  as \new{ time} goes to infinity, around the dynamics of a single Hodgkin-Huxley  neuron  with  \new{ chemical neurotransmitter channels}. Our results are illustrated and complemented with  numerical simulations. }

\medskip

\noindent
\textbf{Key words: }  \new {  Hodgkin-Huxley neurons, synchronization of neuron networks, mean-field limits, propagation of chaos, stochastic differential equations. }

\smallskip\noindent
\textbf{AMS  Subject classification: } 60H99, 60K35,  82C22, 82C32, 92B20, 92B25.


\section{Introduction}
The  dynamics of  a neuron's  voltage is the result of the passage of  ions  through its membrane. This  ion  flux takes place through specific proteins which act as gated channels.  According to the Hodgkin-Huxley model of a nerve neuron  \cite{Hodgkin1952},  the  coupled behavior of the voltage of the  neuron $V_t$, with the proportions  $m_t$, $h_t$ and $n_t$ of open channels of the different ions involved in this process (respectively activation Sodium channels, deactivation Sodium channels and activation Potassium channels),  can be described by the following system of ordinary differential equations: 
\begin{equation}\label{eq:DeterministicHHmodel}
\begin{aligned}
 V_t& =  V_0 + \int_{0}^{t}F(V_s,m_s,n_s,h_s)ds\\
 x_t &= x_0 + \int_{0}^{t}{\rho_x(V_s)(1-x_s)-\zeta_x(V_s)x_s \, ds} 
\end{aligned}
\end{equation}
where, here and in the sequel,  $x$ generically represents the $m,n,h$ components and $F:\R\times [0,1]^4\to \R$, defined by 
\begin{equation}\label{eq:HHOriginalDynamic}
F(V,m,n,h)=I - \gK n^4(V-\VK) - \gNa m^3h(V-\VNa) -\gL (V-\VL), 
\end{equation}
represents the effect on the voltage of  the ionic channels  and of an external current  $I$  (assumed constant  for simplicity). The rate functions $\rho_x$ and $\zeta_x$,  originally considered in     \cite{Hodgkin1952}, have some  generic form given in \eqref{def:rho_m,n} and \eqref{def:rho_h} below; see also Figure \ref{figure:RateFunctions} and  Tables  \ref{tab:ParametersVoltage}  and \ref{tab:RateFunctionsSingleNeuronModel} for their shape  and for biologically meaningful values of the parameters.  We refer the reader to  Ermentrout and Terman \cite{Ermentrout:2010aa} for a  concise discussion on  the Hodgkin-Huxley (HH in the sequel) model  and its deduction, as well as  for general  background on mathematical   models. 

From a mathematical point of view, system \eqref{eq:DeterministicHHmodel} defines a rich dynamical system, the properties  of which has been extensively studied. As an example of its various possible behaviors,   Figure \ref{figure:SingleNeuronDifferentImputCurrents}  below illustrates different  possible responses of system \eqref{eq:DeterministicHHmodel} to the value of the input current  $I$  \new{ (no oscillations, oscillations  of various types,  damping) }  all other parameters  of the model being fixed. See e.g. \cite{Ermentrout:2010aa} and Izhikevich \cite{Izhikevich:2007aa},   and references therein for detailed accounts on dynamical properties of \eqref{eq:DeterministicHHmodel} and related neuron models.  
Lower dimensional  dynamics have  also been  proposed as simpler alternatives to \eqref{eq:DeterministicHHmodel}, the most important  ones being the  FitzHugh-Nagumo model (FitzHugh \cite{FitzHugh:1961aa}, Nagumo et al. \cite{Nagumo:1962aa})  and the Morris-Lecar  model (Morris and Lecar \cite{Morris:1981aa}). 
These  are able to reproduce some of the dynamical  features of the HH system \eqref{eq:DeterministicHHmodel} and are easier to study from the mathematical point of view, but  they are less realistic regarding  some of its relevant  features. 

 A different approach  to model the electric activity of neurons are integrate-and-fire models,  introduced in Lapicque \cite{Lapicque:1907aa}. In  these models   the electric potential evolves according to some ordinary differential equation until it reaches a certain fixed threshold; the  neuron  then  emits a potential spike and  the voltage is reset to some   reference value,  from which its evolution restarts following the same dynamics. We refer the reader to  Burkitt \cite{Burkitt2006}, \cite{Burkitt:2006aa} for a review of this class of models. 

In the last decade, there has been  an increasing interest  of the  mathematical and computational neuroscience communities in understanding the role of stochasticity in neurons' dynamics, as well as in  mathematical models for it. We refer the reader to Goldwyn et al. \cite{goldwyn2011stochastic} and to Goldwyn and Shea-Brown \cite{goldwyn2011and}  for a  discussion on different ways in which  randomness might be introduced in the HH model,  their biological interpretation  and  their pertinence. See also  \cite[Chapter 10]{Ermentrout:2010aa} for general background on this issue. 
  Classically, random models arise in the form of  finite Markov chains describing a discrete number of   open  gates  which approximate the ion channel dynamics, or by  directly introducing   Gaussian  additive or multiplicative white noise   (that is, a Brownian motion or a stochastic integral with respect to it) in the voltage or ion channels dynamics in \eqref{eq:DeterministicHHmodel}.  More recently,   hybrid (also called piecewise deterministic) Markov processes have also been proposed as microscopic counterparts of the HH or other deterministic models. In this setting, the channel variables are replaced  by discrete continuous time processes whose    jump rates depend on the voltage,  while keeping a continuous description for the latter, see Austin  \cite{Austin:2008aa}, Pakdaman et al.  \cite{K.-Pakdaman:2010aa}  and references therein.   We also refer the reader to  Bossy et al. \cite{Bossy2015},   Dangerfield et al. \cite{PhysRevE.85.051907}, Wainrib \cite{Wainrib:2010aa} and  Sacerdote and Giraudo \cite{Sacerdote2013} for further discussion on stochastic models in this context, the latter one  in the case   of integrate-and-fire models.
  
In the present work we  consider  stochastic versions of the HH model  \eqref{eq:DeterministicHHmodel} which arise as diffusive scaling limits of hybrid models  of the type studied in  \cite{Austin:2008aa} and  \cite{K.-Pakdaman:2010aa}. More precisely, we are interested in networks of $N$  such neurons in mean field interaction, which can be described  by a system of stochastic differential equations of the form: 
\begin{equation*}
\begin{aligned}
& V_t^{(i)}=  V^{(i)}_0 + \int_{0}^{t}F(V_s^{(i)}, m_s^{(i)},n_s^{(i)},h_s^{(i)})  ds\\
& \qquad\qquad\qquad-  \int_{0}^{t} \frac{1}{N}\sum_{j=1}^{N}\JE (V_s^{(i)}-V_s^{(j)}) 
 -\frac{1}{N}\sum_{j=1}^{N}{\JCh y^{(j)}_s (V_s^{(i)}-\Vrev)} ds,\\
& x^{(i)}_t = x^{(i)}_0+\int_{0}^{t} b_x(V^{(i)}_s,x^{(i)}_s)  ds + \int_{0}^{t}{\sigma_x(V_s^{(i)},x_s^{(i)})dW_s^{x,i}}, ~~ x=m,n,h,y. 
\end{aligned}
\end{equation*}
Here, $(W^{x,i}\,:   i=1,\dots, N, x=m,n,h,y )$  are one dimensional Brownian motions  and the interaction between neurons account for the effect of  electrical and chemical synapses (the biological  interpretation of the interactions  terms  and in particular of  the variables $y^{(i)}$ is given in next section).  We refer to equation  \eqref{eq:HH-model}  below for the explicit  form of the system we will consider, and for  Hypothesis \ref{hyp:MainHypotheses}  for our assumptions on its coefficients.

The collective behavior of neurons, and the way it emerges  from their individual  features and synaptic activity, is indeed a central question in neuroscience.  In particular, considerable efforts have been devoted to understanding synchronization of neurons, an ubiquitous  phenomenon  seemingly related  to the generation of rhythms (such as the respiratory one or the heartbeat) but also to more complex neurologic functionalities. For instance, at the brain level,  synchronization has been connected to memory formation, see Axmacher et al. \cite{Axmacher2006170}, but also to  disorders such as epileptic seizures, see Jiruska et al. \cite{TJP:TJP5487}. Since the neuroscience  literature on this topic is  huge,  it is not our intention to   thoroughly comment on it here, and we refer the reader to  \cite[Chapters 8,9]{Ermentrout:2010aa}, \cite[Chapter 10]{Izhikevich:2007aa} for further discussion on biological roles of  neuron synchronization,  and mathematical approaches to it. \new{For a broad perspective on synchronization, we also refer the reader to Pikovsky et al. \cite{pikovsky2003synchronization}. } 

\new{
In this paper we are interested in synchronization due to a strong enough coupling between neurons.  This phenomenon differs from synchronization owed to common noise addressed e.g.\ in Marella and Ermentrout \cite{marella2008class} (where  uncoupled oscillators subject to a common noise are observed to get synchronized) or  in Pikovsky \cite{pikovskii1984synchronization} (where synchronization results from the action of a random forcing term). In our case,  noise is unshared  by the interacting neurons and does therefore  not contribute to their synchronization; indeed, it actually  prevents the perfect asymptotic synchronization of the network. (This phenomenon might be compared to noise-induced deviations from stable cycles in noisy oscillators, see  \cite{berglund2004noise,berglund2014noise} for a large deviations approach to that problem.) 
 Moreover, we will understand and quantify synchronization in terms of the empirical variance of the system of neurons and of its dissipation, an approach which does not rely on  the stability properties of individual neuron dynamics nor, in particular, on the existence of some oscillatory  limiting behavior.

In the case of interacting oscillators, a central mathematical  tool is  phase reduction. Introduced by Kuramoto \cite{Kuramoto:1984aa},  it is based on the  idea that stable periodic solutions of a nonlinear oscillator can be parametrized by its phase in the limit cycle.} Kuramoto's model  has proved  useful to  understand synchronization mechanisms  of simple coupled oscillators  (see \cite{Ermentrout:2010aa} and \cite{Izhikevich:2007aa} ), or for ensembles of population of neurons with intrinsic and extrinsic noise (see Bressloff and Ming Lai \cite{Bressloff2011} and the references therein), or even in the limiting case  of infinite oscillators  with noise and mean field interaction (see Bertini et al.  \cite{BertiniGiacominPakdaman} and the discussion in \cite[Chapter 4]{Wainrib:2010aa}). However, to our knowledge,  applications of these ideas to  networks of HH-type neurons have so far been restricted to small deterministic networks  and  ``weak coupling'' regimes (see  e.g. Hansel and Mato \cite{hansel1993patterns} and Hansel et al. \cite{hansel1993phase} ).  We refer the reader to Ostojic et al. \cite{Ostojic2008} for  synchronization results in the case of integrate-and-fire networks.

A related question is the  asymptotic behavior of networks  when the number of neurons tends to infinity.  In that sense, networks of $N$ neurons  in mean field interaction, in which every neuron experiences a pairwise interaction of strength-order $1/N$ with each  other, provide a mathematically  tractable (though not completely realistic) framework to address this question. 
Indeed, in a mean field network, the  evolutions of  finitely many  neurons are expected to become independent  as $N$  goes to infinity, a property known as propagation of chaos. In the case of exchangeable particles, this  is equivalent to the convergence of the dynamics of the empirical law of the system to some deterministic flow of probability laws, typically described by a nonlinear  McKean-Vlasov partial differential equation  (also termed ``mean field'' equation in this context); see   M\'el\'eard \cite{meleard1996asymptotic},  Sznitman \cite{sznitman1991topics} for background on propagation of chaos. We refer the reader to e.g. Faugeras et al. \cite{10.3389/neuro.10.001.2009} for formal derivations of mean field equations for multi type population networks of integrate-and-fire neurons, and respectively to Delarue et al.\cite{delarue2015global} and  Perthame and Salort \cite{Perthame2013841}  for probabilistic and PDE approaches to the global solvability of that equation (which can in principle have  explosive solutions) when the interaction is small. See also Fournier and L\"ocherbach \cite{FournierLocherbach} for further  recent results on propagation  of chaos for integrate and fire models.  The propagation of chaos  for mean field networks of neurons described by  stochastic differential equations, including  stochastic, multi type  HH and FitzHugh-Nagumo networks, has been addressed in Baladron et al. \cite{Baladron:2012aa},  and  then rigorously established  in  Bossy et al.   \cite{Bossy2015}.  We also refer the reader to  Mischler et al. \cite{Mischler2016}  for the mean field description of a  network of  FitzHugh-Nagumo neurons. 

\medskip 

In this present work, we establish that, under  strong enough electrical connectivity of the network (i.e. large enough $\JE$),  the $N$  neurons get synchronized,  up to an error proportional to the  channels' noise level $\sigma^2$,   at an exponential rate which is independent of $N$.   Moreover, we  exhibit a deterministic single-neuron dynamics which  is  ``mimicked'', as time goes to infinity,  by every  neuron of  the system \eqref{eq:HH-model}, over  short enough   moving time-windows, up to an error that vanishes with $\sigma^2$ and $N^{-1}$.  As far as we know, this is a first  mathematical result which establishes the synchronization of large networks of neurons.   We also establish the propagation of chaos for system  \eqref{eq:HH-model}, or its convergence  to  solutions to  a  McKean-Valsov equation,  for arbitrary parameters of the model (and for slightly more general coefficients than in  \cite{Bossy2015} in the single population case). This allows us to transfer our synchronization results to  the limiting PDE, which can be understood as the description of 
an  infinite network of neurons. 
Our theoretical results will be complemented with simulations, which in particular point out that synchronization phenomena might also hold for small electrical  connectivity and even for pure chemical connectivity ($\JE=0$).

The remainder of the paper is organized as follows. In Section \ref{sec:theModel}, we detail the model we consider and  state precisely our main results. Section \ref{sec:numerical-experiments} is devoted  to numerical experiments, both to illustrate  our theoretical statements and  to explore  related phenomena  in mean field settings not covered by them (like multi-type neuron populations or chemical-only synapses). In Section \ref{sec:ConcluOpen} we a discuss  possible improvements and extensions of our results, and some open questions. The mathematical proofs of our results are given in the Appendix sections.

\section{Model and main results }\label{sec:theModel}

We start by briefly recalling  how chemical and electrical synapses in networks of neurons  are modeled (we follow  \cite[Chapter 7]{Ermentrout:2010aa}  which we also refer to for further background on synaptic channels). 

 In chemical synapses, a neurotransmitter is released to the intercellular media (technically the synaptic cleft), from a pre-synaptic neuron to the post-synaptic one through  synaptic  channels, which are  voltage-gated just as ion channels are. With each pre-synaptic  neuron  we can thus associate a new variable $y$  in $[0,1]$  which represents its proportion of open synaptic channels at  each  time.  The dynamics of this  variable   can be modeled in a similar way as those of ion channels, that is,   in terms of  certain rate functions $ \rho_y$ and $\zeta_y$ depending on  the membrane potential  $V$ of that same neuron, and on some parameters (see \eqref{def:rho_y}).    The choice of these parameters determines the characteristic  (inhibitory or excitatory) of the chemical synapse. Hence, in a  fully connected network of $N$  similar neurons, chemical synapses coming from a pre-synaptic neuron $j$ should induce on the voltage  $V^{(i)}$ of the post-synaptic neuron $i$ an instantaneous variation at time $t$  of
 $$-\frac{\JCh}{N} y_t^{(j)}(V^{(i)}_t-\Vrev),$$
where  $y^{(j)}_t$ is the proportion of open synaptic channels of neuron $j$, $\JCh\geq 0$ is a constant  representing the chemical conductance of the network and $\Vrev$ is a reference potential. The factor $\frac{1}{N}$ is introduced in order that the contribution of each incoming synapse to the neuron $i$ has similar weight, which  corresponds  to a  global interaction of  mean field type. 

 On the other hand, the interior of one neuron can be directly connected with  another neuron's one through an intercellular channel called gap junction, which allows the constant flow of ions between them,   as a result of their possibly  different potentials. We thus  may assume that  pre-synaptic neuron $j$ 
contributes to the variation  of the voltage of post-synaptic neuron $i$  by the  amount 
$$-\frac{\JE}{N}(V^{(i)}_s-V^{(j)}_s),$$
where $\JE\geq 0$ is the electrical conductance (that can be thought of as a measure of the connectivity of the network) and the factor $\frac{1}{N}$ appears by similar reasons as before.   Connections of this type are termed electrical synapses and are less frequent than chemical ones; on the other hand,  they transmit information faster. (See also Hormuzdi et al. \cite{Hormuzdi2004113} for a deeper discussion on electrical synapses.)

\medskip 

In all the sequel, for each fixed $N$ we consider a stochastic process $X=(X^{(1)},\dots,X^{(N)})$  valued  in $(\R^5)^N$,  with coordinates  $X^{(i)}_t=(V^{(i)}_t,m^{(i)}_t,n^{(i)}_t,h^{(i)}_t,y^{(i)}_t)$   given  for $i=1,\dots, N$ and $t\geq 0$ by the solution 
 of the system of stochastic differential equations:
  \begin{equation}\label{eq:HH-model}
\begin{aligned}
& V_t^{(i)}=  V^{(i)}_0 + \int_{0}^{t}F(V_s^{(i)}, m_s^{(i)},n_s^{(i)},h_s^{(i)}) ds \\
& \qquad\qquad   - \int_0^t \frac{1}{N}\sum_{j=1}^{N}\JE (V_s^{(i)}-V_s^{(j)})  -\frac{1}{N}\sum_{j=1}^{N}{\JCh y^{(j)}_s (V_s^{(i)}-\Vrev)} ds,\\
& x^{(i)}_t= x^{(i)}_0+\int_{0}^{t}\rho_x(V^{(i)}_s)(1-x^{(i)}_s)  -\zeta_x(V^{(i)}_s)x^{(i)}_sds + \int_{0}^{t}{\sigma_x(V_s^{(i)},x_s^{(i)})dW_s^{x,i}}, 
\end{aligned}
\end{equation}
where $(W^{x,i}\,:   i\in \N, x=m,n,h,y )$  are independent one dimensional Brownian motions independent of $X_0$ and $F$ is defined in \eqref{eq:HHOriginalDynamic}.
Notice that,  for notational simplicity, the dependence of system  \eqref{eq:HH-model} on $N$ is omitted.  Throughout this work, we will additionally make the following   assumptions on  system \eqref{eq:HH-model}: 

\begin{hypothesis}\label{hyp:MainHypotheses} 

\begin{enumerate}
\item \label{hyp:properties-of-rho-zeta} For $x=m,n,h$ and $y$,  $\rho_x$ and $\zeta_x$   are  strictly positive, locally  Lipschitz continuous functions  defined on $\R$.
\item \label{hyp:definition-of-sigma} For $x=m,n,h$ and $y$,  functions $\sigma_x:\R^2\to \R$  are given by
\begin{equation}\label{eq:def-sigma_x}
\sigma_x(v,z) = \sigma\sqrt{|\rho_x(v)(1-z) + \zeta_x(v)z|}\chi(z),
\end{equation}
with $\chi: \R\to [0,1]$ a Lipschitz continuous function with support contained in $[0,1]$ and $\sigma\geq 0$. 
\item   One has  $( m_0^{(i)},n_0^{(i)},h_0^{(i)},y_0^{(i)})\in [0,1]^4$ a.s. 
\end{enumerate}
\end{hypothesis}

 These assumptions cover, for parameters $a_r^x, a_d^x> 0$,   functions  of the form
\begin{align}\label{def:rho_m,n}
\rho_x(V) = \frac{a_r^x (V-V_r^x)}{1-\exp\left(-\lambda_r^x(V-V_r^x)\right)},\quad \zeta_x(V)=a_d^x\exp\left(-\lambda_d^x(V-V_d^x)\right),
\end{align}
for $x=m,n$, and
\begin{align}\label{def:rho_h}
\rho_h(V) = a_r^h\exp\left(-\lambda_r^h(V-V_r^h)\right),\quad\zeta_h(V)=\frac{a_d^h}{1+\exp\left(-\lambda_d^h(V-V_d^h)\right)}, 
\end{align}
considered in the original HH model  \cite{Hodgkin1952}, as well as  functions 
\begin{align}\label{def:rho_y}
\rho_y(V) = \frac{a^y_r T_{\max}}{1+\exp\left(-\lambda(V-V_T)\right)},\;\;\;\zeta_y(V)=a^y_d 
\end{align}
associated with synaptic channels in   \cite[Chapter 7]{Ermentrout:2010aa}.  Diffusion coefficients  $\sigma_x$ defined in terms of the  functions  $\rho_x$ and $\zeta_x$ as in \eqref{eq:def-sigma_x}
 have been considered in 
\cite{Bossy2015}, and arise naturally in diffusive scaling limits of the hybrid models studied in \cite{K.-Pakdaman:2010aa}.

Observe that  for functions  $\rho_h$, $\zeta_m$ and $\zeta_n$ as above, the coefficients   of  system \eqref{eq:HH-model}   do not  satisfy classic   conditions for wellposedness. This well-posedness will be proved in Lemma \ref{lem:well-posedHH} below, relying on results  in \cite{Bossy2015} that ensure that under Hypothesis \ref{hyp:MainHypotheses}  the  processes $( m_t^{(i)},n_t^{(i)},h_t^{(i)},y_t^{(i)})$ remain in $[0,1]^4$. 
Notice that the absolute value in \eqref{eq:def-sigma_x} can then be removed. 
 
\subsubsection*{Synchronization}
\new{  By  synchronization we will understand the dissipation of the empirical variance of the network \eqref{eq:HH-model} as time goes by.  
In Figure \ref{figure:def-synchro} we show two extreme  situations in this regard. On the left, for a small interaction parameter $\JE$ and noise  $\sigma \neq 0$ we observe a chaotic behavior resulting in an empirical variance of constant order in time. On the right, for large $\JE$ and  $\sigma = 0$   we observe the fast  emergence of a coherent evolution implying the dissipation of the empirical variance. Our main result, Theorem \ref{theo:synchro-empiricalVar} will provide a quantitative picture  of this behavior with respect to  noise level $\sigma$ and the size of the network $N$,  for large enough  connectivity $\JE$.  Our results  require the following additional assumption:

\begin{figure}[ht!]
    \centering
    \begin{subfigure}[t]{0.4\textwidth}
        \centering
        \includegraphics[width=\textwidth]{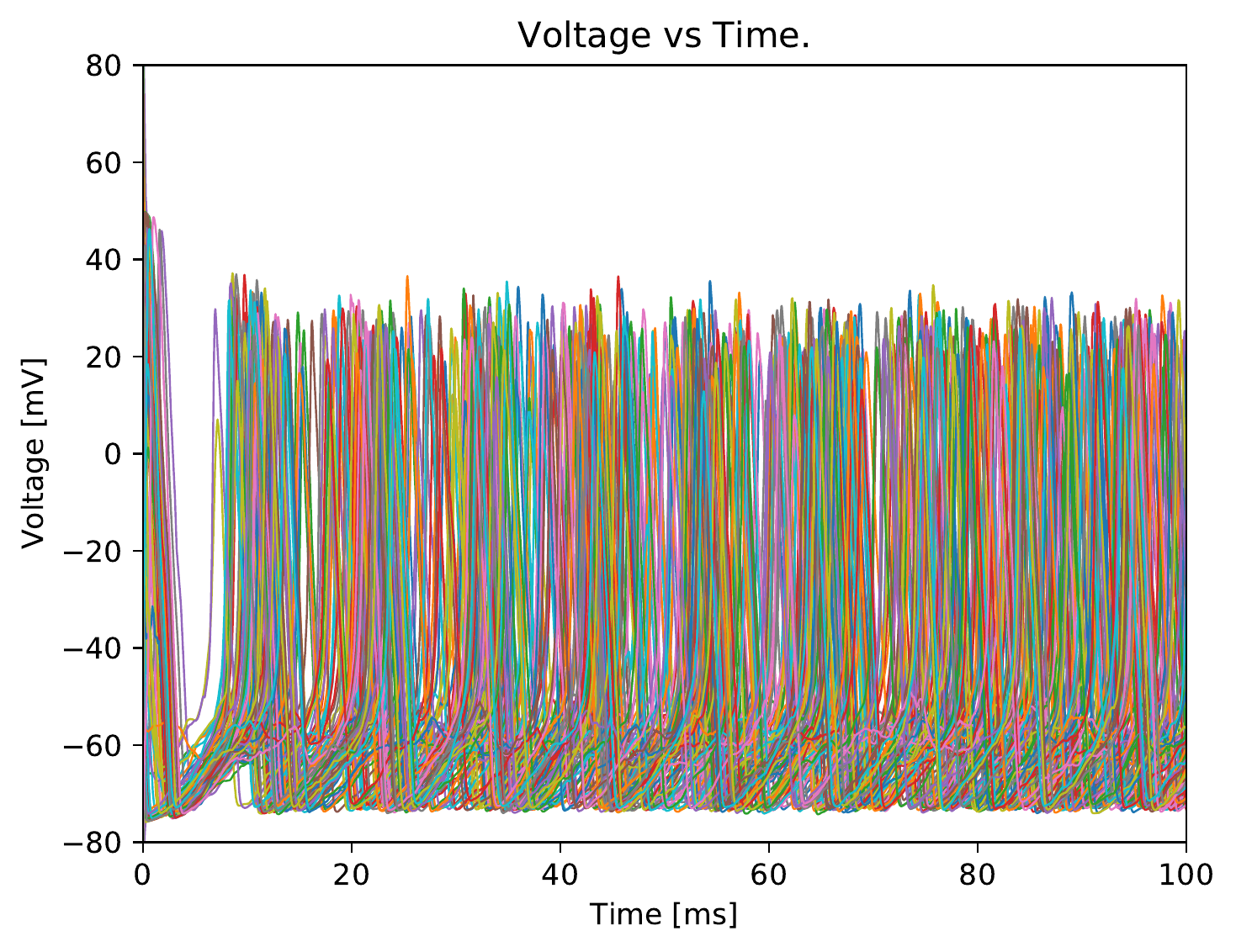}
        \caption{\fontsize{9}{11}\selectfont \new{  Small interaction parameter and noise}. \label{figure:def-synchro-a}}
    \end{subfigure}
    \quad
    \begin{subfigure}[t]{0.4\textwidth}
        \centering
        \includegraphics[width=\textwidth]{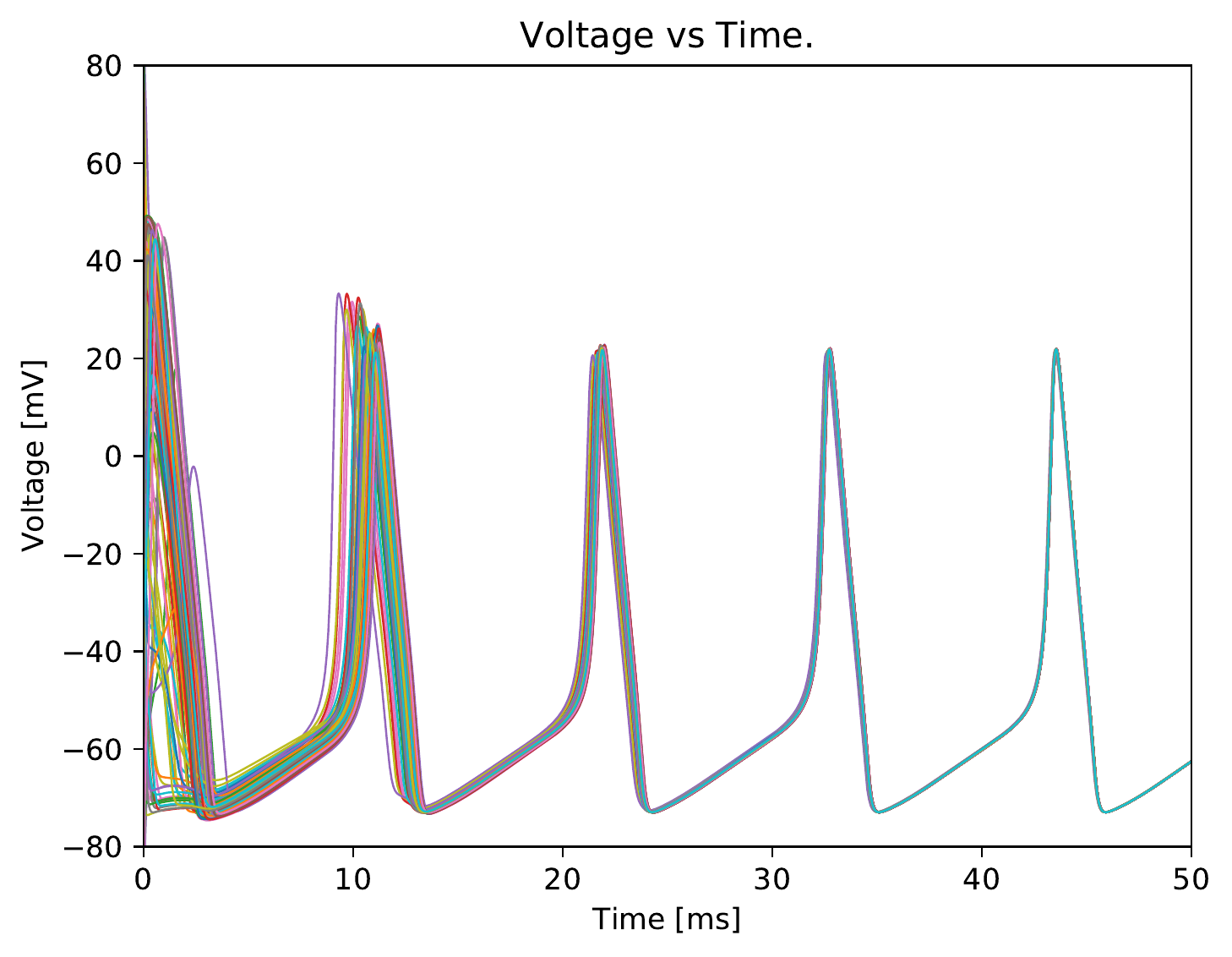}
        \caption{\fontsize{9}{11}\selectfont \new{ Large interaction parameter and no noise}.\label{figure:def-synchro-b}}
    \end{subfigure}
 
 \caption{\fontsize{9}{11}\selectfont Two typical situations of the evolution of the network  \eqref{eq:HH-model}.\label{figure:def-synchro}}
\end{figure}
}

\begin{hypothesis} \label{hyp:V0}
Hypothesis \ref{hyp:MainHypotheses} holds and moreover: 
\begin{enumerate}
\item[4.] The  parameter $g_L$ in \eqref{eq:HHOriginalDynamic} (termed  ``leak conductance'') is strictly positive.
\item[5.]  There exists  a constant $\Vmax_0>0$ not depending on $N$ such that 
$$\sup_{i=1,\ldots,N}|V^{(i)}_0|\leq \Vmax_0\;\;\; a.s.$$  
\end{enumerate}
\end{hypothesis}

We also need to introduce  notation for some  empirical means, namely 
\[\meanV_t = \frac{1}{N}\sum_{i=1}^N V _t^{(i)} \, , \quad  \bar{X}_t^N=  \frac{1}{N}\sum_{i=1}^N X _t^{(i)} \,\,\, \mbox{ and }  \, \,\, \meanX_t = \frac{1}{N}\sum_{i=1}^N  x_t^{(i)} \,  \text{ for $x=m,n,h,y$}.\]
Moreover, for  each  $N\geq 1$ and $t_1\geq 0$ we let   $(\hX^{N,t_1}_t:\,  t \geq t_1)= \left(  (\hV^N_t,\hm_t^N,\hn_t^N,\hh_t^N,\hy_t^N): \,  t\geq t_1\right) $ denote  the solution of the ordinary differential equation
\begin{equation}\label{eq:HatProcess} 
\begin{aligned}
\hV_t^N &= \hV_{t_1}^N+ \int_{t_1}^{t}{F(\hV^{N}_s,\hm_s^N,\hn_s^N,\hh_s^N)-\JCh \hy_s^N(\hV^{N}_s-\Vrev) ds},\\
 \hx^{N}_t& = \hx^{N}_{t_1}+\int_{t_1}^{t}\rho_x(\hV^{N}_s)(1-\hx^{N}_s)  -\zeta_x(\hV^{N}_s)\hx^{N}_sds ,\;\;x=m,n,h,y.
\end{aligned}
\end{equation}
with random initial condition  $$\hX_{t_1}^{N,t_1} = \bar{X}^N_{t_1}.$$

We are now is position to state our  main result about  synchronization of the  system \eqref{eq:HH-model}: 
\begin{theo}\label{theo:synchro-empiricalVar}
Assume Hypothesis \ref{hyp:V0} holds and that $(X_0^{(1)}, \dots, X_0^{(N)})$ is an exchangeable  random vector.  
\begin{itemize}
\item[a)]  {\bf Synchronization.}  There exist  constants $\JE^0>0$,  $C^0_{\zeta,\rho}>0$  and  $\lambda^0>0$ not depending on $N\geq 1$, $\sigma\geq 0$ or $X_0$, and there exists a time  $t_0\geq 0$ not depending on $N\geq 1$ or $\sigma\geq 0$, such that for each  $\JE>\JE^0$  the solution $X$ of \eqref{eq:HH-model}  satisfies, for every $t\geq t_0$ and each $i\in \{1,\dots,N\}$: 
\begin{equation}\label{eq:dissipation_Of_Empirical_Variance}
\E\left(   | X^{(i)}_t-\bar{X}_t^N|^2 \right)\leq\E\left(    | X^{(i)}_{t_0}-\bar{X}_{t_0}|^2 \right)e^{-\lambda^0( t-t_0)} + \sigma^2 \frac{C^0_{\zeta,\rho}}{\lambda^0}.
\end{equation}
In particular, 
$\limsup_{t\to\infty}\E\left(   | X^{(i)}_t-\bar{X}_t^N|^2 \right) \leq \sigma^2 \frac{C^0_{\zeta,\rho}}{\lambda^0}.$
\item[b)]  {\bf Synchronized dynamics. } Assume  $\JE>\JE^0$. Then, there are  constant  $K_0, K_0'>0$ depending only on the parameters of the voltage dynamics in \eqref{eq:HHOriginalDynamic}  and, for each $\delta\geq 0$, constants $K_{\delta} , K'_{\delta}>0$ depending on the coefficients  in \eqref{eq:HH-model}  and on $\delta$  (increasingly) but not on $N$,   such that for every $t_1\geq t_0$  and each $i\in \{1,\dots,N\}$:
\begin{align}\label{eq:synchonized_dynamics}
\begin{aligned}
&\sup_{t_1\leq t\leq t_1+\delta}     \E\left(   | X^{(i)}_t- \hX^{t_1,N}_t|^2\right) \\
& \qquad \leq K_0  \wedge  2 \bigg[     \left( K_0' e^{-\lambda_0(t_1-t_0)} + \sigma^2  \frac{ C^0_{\zeta,\rho}}{\lambda^0}\right) (1+ \delta K_{\delta})+   \delta  K'_{\delta}  \frac{\sigma^2  }{N} C^0_{\zeta,\rho}   \bigg].
\end{aligned}
\end{align} 
\end{itemize}
\end{theo}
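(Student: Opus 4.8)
My plan is to track the (expected) empirical variances of each coordinate and show they obey a coupled system of dissipative differential inequalities, the dissipation coming from the electrical coupling together with the leak and the gating drift, and the source from the channel noise. Since $(X_0^{(i)})$ is exchangeable, $\E(|X^{(i)}_t-\bar X^N_t|^2)$ is independent of $i$ and equals $\E\big(\tfrac1N\sum_i|X^{(i)}_t-\bar X^N_t|^2\big)$, so it suffices to bound the expected total empirical variance $s(t)=s_V(t)+\sum_{x=m,n,h,y}s_x(t)$, where $s_V(t)=\E\big(\tfrac1N\sum_i(V^{(i)}_t-\meanV_t)^2\big)$ and $s_x(t)=\E\big(\tfrac1N\sum_i(x^{(i)}_t-\meanX_t)^2\big)$. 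The first, and enabling, step is a uniform \emph{a priori} bound on the voltages that is eventually free of the initial data: there exist a universal $\Vmax_\ast$ (depending only on the parameters in \eqref{eq:HHOriginalDynamic}, not on $N,\sigma,X_0$) and a time $t_0$ (depending on $\Vmax_0$) with $\sup_i|V^{(i)}_t|\le\Vmax_\ast$ for $t\ge t_0$. This follows from a comparison argument on $\max_iV^{(i)}_t$: the gating variables stay in $[0,1]$ (Lemma~\ref{lem:well-posedHH} and the results of \cite{Bossy2015}), so all ionic conductances are nonnegative, the electrical term cancels against $\JE\meanV$ at the maximal coordinate, and $\gL>0$ yields a restoring drift of size $C-\gL\max_iV^{(i)}$; hence the maximum is driven into a fixed absorbing interval in time $t_0$. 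On the resulting compact set $\{|V|\le\Vmax_\ast\}\times[0,1]^4$, the coefficients $F,\rho_x,\zeta_x$ and their relevant combinations are Lipschitz and bounded with constants independent of $N,\sigma,X_0$, which is what makes all subsequent constants uniform.

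For part (a) I would differentiate the variances on this compact domain. For the voltage (an ODE), the mean-field electrical term contributes exactly $-2\JE\,s_V$ (no $N$-dependence), the chemical term $-2\JCh\meanY\,s_V\le0$, and the leak $-2\gL\,s_V$, while the potassium/sodium parts of $F$, after subtracting their value at the empirical mean and applying the Lipschitz bound, Cauchy--Schwarz and Young, contribute at most $\eta\,s_V+C(s_m+s_n+s_h)$. For each gate, It\^o's formula gives a self-dissipation $-2c_x s_x$ with $c_x=\min_{|V|\le\Vmax_\ast}(\rho_x(V)+\zeta_x(V))>0$ from the affine-in-$x$ drift, a cross term of order $s_V^{1/2}s_x^{1/2}$ from the $V$-dependence of $\rho_x,\zeta_x$, and a noise source bounded by $\sigma^2 C^0_{\zeta,\rho}$ from the quadratic variation, since $(\sigma_x(V^{(i)},x^{(i)}))^2\le\sigma^2\sup_{|V|\le\Vmax_\ast,\,x\in[0,1]}(\rho_x(V)+\zeta_x(V))$. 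I would then form the \emph{Lyapunov} functional $L(t)=s_V(t)+\sum_x\kappa_x s_x(t)$, first fixing the weights $\kappa_x$ large enough that each gate's self-dissipation absorbs its cross coupling after a Young split, and then choosing $\JE^0$ large enough that $-2\JE s_V$ dominates the accumulated $s_V$ cross terms. This yields $L'(t)\le-\lambda^0 L(t)+\sigma^2 C$, and Gr\"onwall gives \eqref{eq:dissipation_Of_Empirical_Variance} with uniform $\lambda^0,C^0_{\zeta,\rho}$, together with the $\limsup$ corollary.

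For part (b) I would compare with $\hX^{t_1,N}$, which starts at $\bar X^N_{t_1}$, via two estimates and a triangle inequality. First, the empirical mean $\bar X^N$ almost solves the closed ODE \eqref{eq:HatProcess}: the electrical coupling averages to zero, and the only discrepancies are the nonlinearity gaps $\tfrac1N\sum_iF(X^{(i)})-F(\bar X^N)$ (and likewise for each $b_x$), which by a Taylor expansion on the compact domain are of the order of the empirical variance, plus a martingale from averaging the $N$ independent channel noises, whose quadratic variation is $O(\sigma^2/N)$. A Gr\"onwall estimate on $[t_1,t_1+\delta]$, using the Lipschitz bound of the vector field on the compact domain and the zero initial gap at $t_1$, gives $\E|\bar X^N_t-\hX^{t_1,N}_t|^2$ of order $\delta\,e^{L\delta}\big(\sup_{[t_1,t_1+\delta]}s+\sigma^2/N\big)$. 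Second, by part (a) and $t\ge t_1\ge t_0$, both $\E|X^{(i)}_t-\bar X^N_t|^2$ and $\sup_{[t_1,t_1+\delta]}s$ are at most $K_0'e^{-\lambda^0(t_1-t_0)}+\sigma^2 C^0_{\zeta,\rho}/\lambda^0$. Combining $\E|X^{(i)}_t-\hX^{t_1,N}_t|^2\le 2\E|X^{(i)}_t-\bar X^N_t|^2+2\E|\bar X^N_t-\hX^{t_1,N}_t|^2$ produces precisely the $(1+\delta K_\delta)$-weighted variance term and the $\delta K'_\delta\,\sigma^2 C^0_{\zeta,\rho}/N$ noise term of \eqref{eq:synchonized_dynamics}, while the crude bound $K_0$ (all processes lie in a fixed compact set) furnishes the minimum.

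I expect the crux to be two intertwined points. The first is obtaining the initial-condition-free voltage bound with constants independent of $N$ and $\sigma$, so that $\JE^0,\lambda^0,C^0_{\zeta,\rho}$ are genuinely uniform; here the conservativity of the electrical coupling at the extremal coordinate and the strict positivity of $\gL$ are both essential. The second is tuning the weights $\kappa_x$ and the threshold $\JE^0$ so that the nonlinear cross couplings between $s_V$ and the gating variances are absorbed by the electrical/leak dissipation and the $\rho_x+\zeta_x$ dissipation. The mean-field $1/N$ scaling is what keeps the electrical dissipation exactly $-2\JE s_V$ and the noise source exactly of order $\sigma^2$, which is precisely what delivers uniformity in $N$.
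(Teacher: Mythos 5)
Your proposal is correct and takes essentially the same route as the paper: an eventually initial-data-free uniform voltage bound (the paper's Proposition \ref{prop:boundedness-of-V^i}, proved there by an $L^2$/Gronwall argument rather than your max-coordinate comparison, both of which work), then a dissipative differential inequality for the coupled variances in which Young splits let the electrical/leak dissipation and the $\rho_x+\zeta_x$ gating dissipation absorb the cross terms once $\JE$ is large, with the channel noise giving the $O(\sigma^2)$ source. The only cosmetic differences are that the paper works with pairwise differences $X^{(i)}-X^{(j)}$ plus an optimization over Young parameters instead of centered quantities with Lyapunov weights (equivalent via $\frac{1}{N^2}\sum_{i,j}(\alpha_i-\alpha_j)^2=\frac{2}{N}\sum_k(\alpha_k-\bar\alpha^N)^2$), and its part b) is exactly your decomposition: a Gronwall comparison of $\bar{X}^N$ with the ODE \eqref{eq:HatProcess} yielding the variance and $\sigma^2/N$ martingale sources, followed by the triangle inequality and the crude compact-support bound $K_0$.
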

  
Some remarks on this result are in order: 
\begin{rem}
~
\begin{itemize}
\item[(i)] The constants    $C^0_{\zeta,\rho}$  and $t_0$ depend explicitly on the coefficients of the system,  with the latter possibly depending also on $\Vmax_0$. On the other hand,  bounds  for  $\lambda_0$  and $\JE^0$ which do not depend on the initial data can also be obtained.  The remaining constants are explicit and do not depend on the initial condition. No constant is claimed to be optimal.
\item[(ii)] The bound $K_0$  in Theorem \ref{theo:synchro-empiricalVar} b)  is deduced from global  bounds  (which we establish) on the voltage processes and their average, and its role is  only  to prevent the r.h.s.\ from growing arbitrarily with $\delta$. The estimate becomes informative  as $t_1\to \infty$ for  small enough $\sigma^2>0$, $\delta>0$ and $N^{-1}$,  and  for any $\delta>0$ and $N$ if $\sigma^2=0$.
\item[(iii)]  Aside from the assumption that $g_L>0$, Theorem \ref{theo:synchro-empiricalVar}  holds regardless of the values  of the  parameters of  voltage dynamics  $F$ in \eqref{eq:HHOriginalDynamic}, and in particular if the input current $I$  is replaced by a uniformly bounded function  or  (suitably measurable) process.  
\item[(iv)]  The exchangeability assumption can be removed at the price of adding inside the expectations in \eqref{eq:dissipation_Of_Empirical_Variance} and \eqref{eq:synchonized_dynamics} the averages over $i$.  
\end{itemize}
\end{rem}

\subsubsection*{Mean field limit}

 We next address the question of  the behavior of  system   \eqref{eq:HH-model} as  $N\to \infty$.   We need to introduce additional notation: 
 
\begin{itemize}
\item  We denote by   ${\cal P}(\R\times [0,1]^4)$ the space of Borel probability measures on $\R\times [0,1]^4$  endowed with the weak topology, and by  ${\cal P}_2(\R\times [0,1]^4)$ its subspace of probability measures with finite second moment, endowed with the  Wasserstein distance  $\mathcal{W}_2$. 
That is, for all $\mu_1,\mu_2\in {\cal P}_2(\R\times [0,1]^4)$, 
$$\mathcal{W}^2(\mu_1,\mu_2) =\inf_{\mu\in\Pi(\mu_1,\mu_2)}\int_{\R^5}  |r_1 -r_2|^2 \mu(dr_1,dr_2)  ,$$
with $\Pi(\mu_1,\mu_2)$ the  set of probability measures on $(\R\times [0,1]^4)^2$  with  first and second  marginals equal to  $\mu_1$ and $\mu_2$ respectively. It is well known that  the  infimum is attained and that $\mathcal{W}_2$ defines a complete metric on ${\cal P}_2(\R\times [0,1]^4)$  inducing  the weak topology,  strengthened with the convergence of second moments (see Villani \cite{villani2009} for the  relevant properties of  Wasserstein metrics). 
\item Elements of  $\R\times [0,1]^4$  describing the state space of a single neuron's dynamics  will be written in the form $(v,u)  =(v,(u_m,u_n,u_h,u_y)) $, with $u=(u_m,u_n,u_h,u_y)\in [0,1]^4$.
\item  We introduce  the function  $\Phi: ( \R\times [0,1])\times (\R\times [0,1]^4)\to \R$ given  by
$$ \Phi(w,z, v, u) = F(v,u_m,u_n,u_h)-\JE(v-w)-\JCh z(v-\Vrev) $$
and, for each  channel  type  $x=m,n,h,y$,  we define  functions $b_x, a_x: \R\times [0,1]^4\to \R$    by 
\begin{equation*}
\begin{split}
b_x(v,u):  = &  \rho_x(v)(1-u_x)  -\zeta_x(v) u_x 
\quad  \mbox{ and } \\
a_x(v,u):= & ( \rho_x(v)(1-u_x) + \zeta_x(v) u_x) \chi(u_x)\, \\
\end{split}
\end{equation*}
(that is,  $ \sigma_x^2(v,u_x)=   \sigma^2a_x(v,u)$).
\item Given $\mu\in {\cal P}_2(\R\times [0,1]^4)$ we write
 \begin{align*}
\langle \mu^V\rangle &=\int_{\R^5}{v\mu(dv,d u_m,du_n,du_h,du_y)}\in \R , \\
\langle \mu^x\rangle  &=\int_{\R^5}{u_x\mu(dv,,d u_m,du_n,du_h,du_y)}\in [0,1] \mbox{ for } x=m,n,h,y  \mbox{ and }\\
\langle \mu\rangle  &= (\langle \mu^V\rangle, (\langle \mu^x\rangle )_{x=m,n,h,y }) \in \R\times[0,1]^4.
\end{align*}
\item Finally, with $\delta_x$  denoting the Dirac mass at $x\in \R\times [0,1]^4$, we write   
\begin{equation}\label{eq:empirical_measure} 
\mu_t^N:=\frac{1}{N}\sum_{i=1}^N \delta_{X_t^{(i)}}\in  {\cal P}_2(\R\times [0,1]^4)
\end{equation}
for the empirical measure of  system \eqref{eq:HH-model}   at time $t\geq 0$. 
\end{itemize}

\begin{theo}\label{theo:propchaos_McKeanVlasov_eq}
Assume Hypothesis \ref{hyp:V0}  and  moreover that  for all $N\geq 1$, $(X_0^{(1)},\dots, X_0^{(N)})$ are i.i.d. random vectors with  (compactly supported) common law $\mu_0\in  {\cal P}(\R\times [0,1]^4)$ not depending on $N$. 
\begin{itemize}
\item[a)]   For each $T>0$, the process $(\mu_t^N:t\in [0,T])$ converges in law on $C([0,T]; {\cal P}_2(\R\times [0,1]^4)) $,  when $N$ tends to $\infty$, to a deterministic and uniquely determined flow of probability measures  $(\mu_t:t\in [0,T])$ having uniformly bounded compact support. Moreover $(\mu_t: \, t \geq 0)$ in  $C(\mathbb{R}^+;  {\cal P}_2(\R\times [0,1]^4))$  is a   global  solution (in the \new{ sense} of distribution)  of the non linear McKean-Vlasov Fokker Planck equation
\begin{equation}\label{eq:Non-Linear-PDE}
\partial_t\mu_t  = \partial_v\left(\Phi (\langle \mu_t^V\rangle ,\langle \mu_t^y\rangle ,  \cdot,\cdot)\mu_t \right)+ \sum_{x=m,n,h,y}\frac{1}{2}\sigma^2 \, \partial_{u_x u_x}^2 \left(a_x\mu_t\right)-\partial_{u_x}\left( b_x\mu_t\right)
\end{equation}
with initial condition $\mu_0$. 
\item[b)] There is a  constant $C(T)>0$ depending on $\Vmax_0$ , $T>0$ and on the coefficients of system \eqref{eq:HH-model}, but not on $N$,  such that
\begin{equation}\label{estimatepropchaos}
\sup_{t\in [0,T]}  \E\left(\mathcal{W}_2^2(\mu^N_t,\mu_t )  \right) \leq C(T) N^{-2/5}.
\end{equation}
\item[c)] If additionally functions $\rho_x$ and $\zeta_x$ are of class $C^2(\R)$, (or  of class $C^1(\R)$ when $\sigma =0$),    $(\mu_t: \, t \geq 0) $ given in part a) is the unique weak solution of \eqref{eq:Non-Linear-PDE}   with  initial condition $\mu_0$ which has   supports  bounded uniformly in time. 

\end{itemize}
\end{theo}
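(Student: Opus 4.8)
The plan is to use Sznitman's coupling method. The first step is to construct and establish well-posedness of the nonlinear McKean--Vlasov process $\bar X^{(i)}=(\bar V^{(i)},\dots)$ solving the SDE obtained from \eqref{eq:HH-model} by replacing the empirical means $\meanV_s$ and $\meanY_s$ in the interaction with their deterministic counterparts $\langle\mu_s^V\rangle=\E[\bar V^{(i)}_s]$ and $\langle\mu_s^y\rangle=\E[\bar y^{(i)}_s]$, where $\mu_s=\law(\bar X^{(i)}_s)$, driven by the same Brownian motions $W^{x,i}$ and started from $X_0^{(i)}$. The crucial simplification is that the coupling enters only through the two scalar moments $\langle\mu^V\rangle$ and $\langle\mu^y\rangle$; I would therefore set up a Picard fixed point on the pair of continuous functions $t\mapsto(\langle\mu_t^V\rangle,\langle\mu_t^y\rangle)$ in $C([0,T];\R^2)$, for which the well-posedness of the ``frozen'' single-neuron SDE (with channel variables confined to $[0,1]^4$) provided by Lemma \ref{lem:well-posedHH} and the results of \cite{Bossy2015} applies. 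Throughout I would use the uniform-in-$N$, uniform-in-time voltage bound $\sup_i\sup_t|V^{(i)}_t|\le \Vmax$, available since $\gL>0$ and $V_0$ is bounded, which confines the whole dynamics to a fixed compact set on which $F$, $b_x$ and the rate functions $\rho_x,\zeta_x$ are Lipschitz; this bound also yields the claimed uniformly bounded compact support of $\mu_t$.

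\textbf{Parts a) and b).} With the coupling in place I would estimate $u(t):=\sup_i\E|X_t^{(i)}-\bar X_t^{(i)}|^2$ by It\^o's formula. The voltage difference carries no stochastic integral: using the Lipschitz property of $F$ on the compact set, the decomposition $\meanV_s-\langle\mu_s^V\rangle=\frac1N\sum_j(V_s^{(j)}-\bar V_s^{(j)})+\big(\frac1N\sum_j\bar V_s^{(j)}-\langle\mu_s^V\rangle\big)$ (and its analogue for $y$), exchangeability, and the i.i.d.\ fluctuation bound $\E|\frac1N\sum_j\bar V_s^{(j)}-\langle\mu_s^V\rangle|^2=O(1/N)$, one obtains $\E|\Delta V^{(i)}_t|^2\le C\int_0^t u\,ds+C\,t/N$. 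Combined with the channel estimates (see below) this gives a Gronwall inequality. Part a) then follows from $\W_2^2(\mu_t^N,\mu_t)\le 2\,\frac1N\sum_i|X_t^{(i)}-\bar X_t^{(i)}|^2+2\,\W_2^2(\bar\mu_t^N,\mu_t)$, with $\bar\mu_t^N=\frac1N\sum_i\delta_{\bar X_t^{(i)}}$: the first term vanishes by the coupling and the second by the law of large numbers for empirical measures (a uniform-in-$t$ version, with BDG and Doob for the supremum over $t$, upgrading this to convergence in $C([0,T];\pp_2)$ to the deterministic flow $\mu$); that $\mu$ solves \eqref{eq:Non-Linear-PDE} distributionally follows by applying It\^o to test functions of $\bar X^{(i)}$ and taking expectations. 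For part b), the rate is governed by $\E\,\W_2^2(\bar\mu_t^N,\mu_t)$, the speed of convergence of the empirical measure of $N$ i.i.d.\ samples of the compactly supported law $\mu_t$ on $\R\times[0,1]^4$: since the state space has dimension $d=5>2\cdot 2$, the sharp Wasserstein rates of Fournier and Guillin give $\E\,\W_2^2(\bar\mu_t^N,\mu_t)\le C\,N^{-2/5}$, and a quantitative version of the coupling estimate below (with its regularization parameter tuned so that the coupling error is no larger than this term) yields \eqref{estimatepropchaos}.

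\textbf{The main obstacle: the square-root diffusion.} The delicate point is the channel estimate, because $\sigma_x(v,z)=\sigma\sqrt{g_x(v,z)}\,\chi(z)$, with $g_x(v,z)=\rho_x(v)(1-z)+\zeta_x(v)z$, is only $\tfrac12$-H\"older in $z$. Splitting $\sigma_x(V^{(i)},x^{(i)})-\sigma_x(\bar V^{(i)},\bar x^{(i)})$ into a voltage increment (at fixed channel value) and a channel increment (at fixed voltage), the first is harmless: since $\rho_x,\zeta_x>0$, $g_x$ is bounded below on the compact set, so $\sqrt{g_x}$ is Lipschitz in $v$ and this part is $O(|\Delta V^{(i)}|)$. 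The second part is the genuine difficulty: using $(\sqrt a-\sqrt b)^2\le|a-b|$ with $|g_x(\bar V^{(i)},x^{(i)})-g_x(\bar V^{(i)},\bar x^{(i)})|\le C|\Delta x^{(i)}|$ only bounds its contribution to $\E|\Delta x^{(i)}_t|^2$ by the $L^1$ quantity $C\int_0^t\E|\Delta x^{(i)}|\,ds$, which is of the wrong order and obstructs a direct $L^2$ Gronwall argument (na\"ively bounding $\E|\Delta x^{(i)}|\le\sqrt{u}$ leaves a non-vanishing residual). I would resolve this as in \cite{Bossy2015}, through a Yamada--Watanabe-type argument: applying It\^o to a smooth concave approximation $\phi_\eps$ of $|\cdot|$ whose second derivative concentrates near the origin absorbs the singular channel increment, while the regular voltage increment (controlled by $|\Delta V^{(i)}|$) and the drift terms are treated classically, closing the estimate.

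\textbf{Part c).} For uniqueness of weak solutions with uniformly bounded support under the added $C^2$ (resp.\ $C^1$ when $\sigma=0$) regularity, I would invoke the superposition principle (Figalli--Trevisan): any such distributional solution of \eqref{eq:Non-Linear-PDE} is the flow of time-marginals of a solution to the associated nonlinear martingale problem. On the uniform compact support the upgraded regularity renders the coefficients $\Phi$, $b_x$ and $a_x$ smooth and bounded while preserving the Yamada--Watanabe structure of the channel diffusions, so the nonlinear SDE is well-posed; uniqueness of its one-dimensional marginals then forces the two weak solutions to coincide. I expect the square-root diffusion of the third paragraph to remain the principal technical hurdle throughout.
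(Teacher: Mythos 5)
Your overall skeleton (Sznitman coupling against i.i.d.\ copies of the nonlinear process, the uniform-in-$N$ voltage bound from $\gL>0$ giving compact supports, and Fournier--Guillin in dimension $5$ for the $N^{-2/5}$ rate) coincides with the paper's. However, what you single out as ``the main obstacle'' is a misdiagnosis, and the machinery you build to overcome it is where the proposal genuinely breaks. Under Hypothesis \ref{hyp:MainHypotheses}-1 the rates $\rho_x,\zeta_x$ are \emph{strictly positive} and continuous, so for $z\in[0,1]$ the quantity $\rho_x(v)(1-z)+\zeta_x(v)z$ is a convex combination of $\rho_x(v)$ and $\zeta_x(v)$ and is therefore bounded below by $\delta_M:=\min_{|v|\le M}\min\{\rho_x(v),\zeta_x(v)\}>0$ on any set $[-M,M]\times[0,1]$. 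Since both the particle system and the nonlinear process are confined to such a set (channels in $[0,1]$ by Lemma \ref{lem:well-posedHH} and Proposition 3.3 of \cite{Bossy2015}, voltages bounded by Proposition \ref{prop:boundedness-of-V^i}), the square root in \eqref{eq:def-sigma_x} is only ever evaluated on $[\delta_M,S_M]$, where it is Lipschitz: $|\sqrt a-\sqrt b|\le |a-b|/(2\sqrt{\delta_M})$. Hence $\sigma_x$ is Lipschitz in \emph{both} variables on the region where the dynamics live --- you invoke exactly this lower bound to handle the $v$-increment, and then inexplicably abandon it for the $z$-increment. No Yamada--Watanabe regularization is needed. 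This is precisely the observation in the proof of Lemma \ref{lem:well-posedHH} (replace $\sqrt{\cdot}$ by a globally Lipschitz $g_x$ agreeing with it on $(\delta_M/2,2S_M)$), and the paper exploits it by introducing the truncated systems \eqref{eq:truncated-HH-model} and \eqref{eq:NonLinearProcess_M}, whose coefficients are \emph{globally} Lipschitz, quoting Theorems 2.2 and 2.3 of \cite{meleard1996asymptotic} for well-posedness of the nonlinear equation and the $C(T)/N$ coupling estimate, and then identifying the truncated with the untruncated dynamics via the confinement bounds (Theorem \ref{theo:prop_chais_pathwise}).

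This matters quantitatively, not just aesthetically. A Yamada--Watanabe argument with a concave $\phi_\eps$ yields $L^1$-type stability of the channel differences, whereas part b) requires the coupling error in the form $\E\,|X^{(i)}_t-\bar{X}^{(i)}_t|^2=O(1/N)$, so that it is dominated by the Fournier--Guillin term $N^{-2/5}$ from \cite{fournier-guillin2013}; your remark that the ``regularization parameter can be tuned'' so the coupling error matches the i.i.d.\ fluctuation term is not substantiated, and it is exactly the step that would fail to close if the coefficient really were only $1/2$-H\"older. Once the Lipschitz observation is made, your Gronwall loop in parts a) and b) closes exactly as written and agrees with the paper's argument. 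Finally, for part c) your route is genuinely different from the paper's: the paper freezes the two moments, proves uniqueness of the resulting \emph{linear} Fokker--Planck equation by constructing a classical solution of the backward Cauchy problem via Feynman--Kac and Friedman's parabolic regularity (this is where the $C^2$, resp.\ $C^1$, hypothesis enters), and tests the weak formulation against it; your superposition-principle (Figalli--Trevisan) argument, reducing uniqueness of measure solutions to uniqueness in law of the nonlinear SDE, is a plausible alternative, but you should verify its integrability hypotheses and be aware that the regularity assumption in the statement reflects the paper's method rather than anything your route would use.
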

 
\begin{rem}
~
\begin{itemize} 
\item[i)]   We have not  been able to prove  uniqueness   of weak  (measure) solutions to  \eqref{eq:Non-Linear-PDE} in full generality.  However, the global weak solution of \eqref{eq:Non-Linear-PDE} given by Theorem \ref{theo:propchaos_McKeanVlasov_eq}  a) is uniquely determined. 

\item[ii)] Classically (see \cite{meleard1996asymptotic}, \cite{sznitman1991topics}),  convergence  in law of $\mu_t^N$ to $\mu_t$ for fixed $t\geq 0$ implies the asymptotic independence as $N\to \infty$  of any  subfamily  $(X_t^{(1)},\dots, X_t^{(k)})$ of fixed size $k\leq N$ of system  \eqref{eq:HH-model}  (the propagation of chaos property). Somewhat counterintuitively, this is not  incompatible with part a) of  Theorem \ref{theo:synchro-empiricalVar}, even when $\sigma^2=0$.

\item[iii)]  Parts  a) and b) of   Theorem \ref{theo:propchaos_McKeanVlasov_eq}  also hold for general exchangeable $\mu_0$-chaotic initial conditions  $(X_0^{(1)},\dots, X_0^{(N)})$ (that is, such that  $\mu_0^N$ converges in law to $\mu_0$ on $  {\cal P}_2(\R\times [0,1]^4)$),  in which case one must add a term of the form $ C \E\left(\mathcal{W}_2^2(\mu^N_0,\mu_0 )  \right) $ on the right hand side of  \eqref{estimatepropchaos}. 
\item[iv)]  The first assertion in Theorem \ref{theo:propchaos_McKeanVlasov_eq}  would be standard   if the coefficients  in \eqref{eq:HH-model}  were   globally Lipschitz.  Under the key Hypothesis \ref{hyp:V0} we will be able to reduce the proof to the Lipschitz case. Moreover, this assumption  will allow us to  take full advantage of the estimates for empirical measures of i.i.d.\ samples proved in Fournier and Guillin \cite{fournier-guillin2013},  from where the convergence rate  \eqref{estimatepropchaos}  will stem.
\end{itemize}
\end{rem}

\medskip
Equation \eqref{eq:Non-Linear-PDE} can be interpreted as the dynamical description  of  a system of  infinitely many HH neurons  in mean field interaction.  Thanks to Theorem \ref{theo:propchaos_McKeanVlasov_eq}  and  to the  uniformity  in $N$  of the results in Theorem \ref{theo:synchro-empiricalVar}, we can  now  finally  transfer our synchronization results to this infinite dimensional setting. For each $t_1\geq 0$, define    $(\hX^{t_1,\infty}_t:\,  t \geq t_1)= \left(  (\hV^{\infty}_t,\hm_t^{\infty},\hn_t^{\infty},\hh_t^{\infty},\hy_t^{\infty}): \,  t\geq t_1\right) $ as  the solution of the ordinary differential equation
\begin{equation}\label{eq:HatProcess_infty} 
\begin{aligned}
\hV_t^{\infty} &= \hV_{t_1}^{\infty}+ \int_{t_1}^{t}{F(\hV^{{\infty}}_s,\hm_s^{\infty},\hn_s^{\infty},\hh_s^{\infty})-\JCh \hy_s^{\infty}(\hV^{{\infty}}_s-\Vrev) ds},\\
 \hx^{{\infty}}_t& = \hx^{{\infty}}_{t_1}+\int_{t_1}^{t}\rho_x(\hV^{{\infty}}_s)(1-\hx^{{\infty}}_s)  -\zeta_x(\hV^{{\infty}}_s)\hx^{{\infty}}_sds ,\;\;x=m,n,h,y \, ,
\end{aligned}
\end{equation}
with deterministic initial condition  $$\hX_{t_1}^{t_1, \infty} = \langle \mu_{t_1}\rangle,  $$ 
 where $(\mu_t: t\geq 0)\in C([0,\infty), {\cal P}_2(\R\times [0,1]^4)) $ is the global weak solution of  \eqref{eq:Non-Linear-PDE} with initial condition $\mu_0$ given by Theorem \ref{theo:propchaos_McKeanVlasov_eq} a).  We have:

\begin{corollary}\label{coro:synchro-McKeanVlasov}
Under the assumptions of Theorem \ref{theo:propchaos_McKeanVlasov_eq} and for the same  constants  as in Theorem \ref{theo:synchro-empiricalVar}, whenever   $\JE>\JE^0$ we have:
\begin{itemize}
\item[a)] For every $t\geq t_0$,
\begin{equation}\label{eq:dissipation_Variance_McKeanVlasov}
\mathcal{W}_2^2(\mu_t,\delta_{\langle \mu_t \rangle }) \leq \mathcal{W}_2^2(\mu_{t_0},\delta_{\langle \mu_{t_0} \rangle })  e^{-\lambda^0( t-t_0)} + \sigma^2 \frac{C^0_{\zeta,\rho}}{\lambda^0}.
\end{equation}
In particular, 
$\limsup_{t\to\infty}\mathcal{W}_2^2(\mu_t,\delta_{\langle \mu_t \rangle })   \leq \sigma^2 \frac{C^0_{\zeta,\rho}}{\lambda^0}.$
\item[b)] For every $t_1\geq t_0$  and $\delta \geq 0$ we have:
\begin{equation*}
\sup_{t_1\leq t\leq t_1+\delta}   \mathcal{W}_2^2(\mu_t,\delta_{ \hX^{t_1, \infty}_t  })   \leq   K_0  \wedge  2 \bigg[     \big( K_0' e^{-\lambda_0(t_1-t_0)} + \sigma^2  \frac{ C^0_{\zeta,\rho}}{\lambda^0}\big) (1+ \delta K_{\delta}) \bigg]  .
 \end{equation*} 
\end{itemize}
  \end{corollary}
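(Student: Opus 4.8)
The plan is to derive Corollary \ref{coro:synchro-McKeanVlasov} from Theorem \ref{theo:synchro-empiricalVar} by letting $N\to\infty$ and invoking the propagation of chaos of Theorem \ref{theo:propchaos_McKeanVlasov_eq}. The starting point is the elementary identity $\mathcal{W}_2^2(\mu,\delta_a)=\int_{\R^5}|r-a|^2\,\mu(dr)$, valid for any $\mu\in\mathcal{P}_2(\R\times[0,1]^4)$ and $a\in\R\times[0,1]^4$, whose minimizer over $a$ is the barycenter $\langle\mu\rangle$, so that $\mathcal{W}_2^2(\mu,\delta_{\langle\mu\rangle})$ is exactly the variance of $\mu$. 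Applied to the empirical measure $\mu^N_t$, for which $\langle\mu^N_t\rangle=\bar X^N_t$, this gives $\mathcal{W}_2^2(\mu^N_t,\delta_{\bar X^N_t})=\frac1N\sum_{j=1}^N|X^{(j)}_t-\bar X^N_t|^2$; since the solution of \eqref{eq:HH-model} started from an exchangeable initial condition stays exchangeable and both $\bar X^N_{t_1}$ and $\hX^{t_1,N}_t$ are symmetric functions of the particles, taking expectations yields, for every fixed $i$,
\[
\E\left(\mathcal{W}_2^2(\mu^N_t,\delta_{\bar X^N_t})\right)=\E\left(|X^{(i)}_t-\bar X^N_t|^2\right)
\quad\text{and}\quad
\E\left(\mathcal{W}_2^2(\mu^N_t,\delta_{\hX^{t_1,N}_t})\right)=\E\left(|X^{(i)}_t-\hX^{t_1,N}_t|^2\right).
\]
The right-hand sides are precisely the quantities controlled in Theorem \ref{theo:synchro-empiricalVar}.

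For part a), I would fix $t\geq t_0$ and pass to the limit in \eqref{eq:dissipation_Of_Empirical_Variance}. Theorem \ref{theo:propchaos_McKeanVlasov_eq} b) gives $\E(\mathcal{W}_2^2(\mu^N_t,\mu_t))\to 0$, hence $\E(\mathcal{W}_2(\mu^N_t,\mu_t))\to 0$ by Jensen. Because the supports of all $\mu^N_t$ and of $\mu_t$ lie in one fixed compact set of $\R\times[0,1]^4$ — from the global a priori bounds on the voltages established for Theorem \ref{theo:synchro-empiricalVar} together with the confinement of the channel variables to $[0,1]^4$ — the variance functional $\mu\mapsto\mathcal{W}_2^2(\mu,\delta_{\langle\mu\rangle})=\int|r|^2\,d\mu-|\langle\mu\rangle|^2$ is Lipschitz with respect to $\mathcal{W}_2$ along these measures, so $\E(\mathcal{W}_2^2(\mu^N_t,\delta_{\bar X^N_t}))\to\mathcal{W}_2^2(\mu_t,\delta_{\langle\mu_t\rangle})$ for every fixed $t$, in particular at $t=t_0$. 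Since the term $\sigma^2 C^0_{\zeta,\rho}/\lambda^0$ does not depend on $N$, passing to the limit in \eqref{eq:dissipation_Of_Empirical_Variance} yields \eqref{eq:dissipation_Variance_McKeanVlasov}, and the $\limsup$ statement follows.

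For part b), the only extra ingredient is the convergence of the synchronized ODE. As $\bar X^N_{t_1}=\langle\mu^N_{t_1}\rangle$ and the barycenter map is $\mathcal{W}_2$-continuous, $\bar X^N_{t_1}\to\langle\mu_{t_1}\rangle=\hX^{t_1,\infty}_{t_1}$ in probability; since \eqref{eq:HatProcess} and \eqref{eq:HatProcess_infty} share the same locally Lipschitz vector field with solutions confined to a fixed bounded set, a Gronwall (continuous-dependence-on-the-initial-datum) estimate gives $\sup_{t_1\leq t\leq t_1+\delta}|\hX^{t_1,N}_t-\hX^{t_1,\infty}_t|\to 0$ in probability, hence in $L^1$ by boundedness. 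Using the triangle inequality for $\mathcal{W}_2$ to bound $|\mathcal{W}_2^2(\mu^N_t,\delta_{\hX^{t_1,N}_t})-\mathcal{W}_2^2(\mu_t,\delta_{\hX^{t_1,\infty}_t})|$ by $C\big(\mathcal{W}_2(\mu^N_t,\mu_t)+|\hX^{t_1,N}_t-\hX^{t_1,\infty}_t|\big)$ on the common bounded support, I get $\E(\mathcal{W}_2^2(\mu^N_t,\delta_{\hX^{t_1,N}_t}))\to\mathcal{W}_2^2(\mu_t,\delta_{\hX^{t_1,\infty}_t})$ for each $t\in[t_1,t_1+\delta]$. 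Passing to the limit in \eqref{eq:synchonized_dynamics}, the term $\delta K'_\delta\sigma^2 C^0_{\zeta,\rho}/N$ vanishes, leaving the $t$-independent bound $K_0\wedge 2\big[(K_0'e^{-\lambda_0(t_1-t_0)}+\sigma^2 C^0_{\zeta,\rho}/\lambda^0)(1+\delta K_\delta)\big]$ for $\mathcal{W}_2^2(\mu_t,\delta_{\hX^{t_1,\infty}_t})$ at each such $t$; taking the supremum over the window then gives the claim.

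The main obstacle is upgrading the in-probability (or $L^2$) convergences furnished by propagation of chaos to genuine convergence of the expectations appearing in the estimates. This is exactly where the uniform-in-$(N,t)$ compactness of the supports is crucial: it renders the functionals $\mu\mapsto\mathcal{W}_2^2(\mu,\delta_{\langle\mu\rangle})$ and $(\mu,a)\mapsto\mathcal{W}_2^2(\mu,\delta_a)$ bounded and Lipschitz in the relevant variables, so the limits of expectations are legitimate; without it the second moments could fail to pass to the limit. A secondary but genuine point is that the bounds inherited from Theorem \ref{theo:synchro-empiricalVar} b) are uniform in $t$ over $[t_1,t_1+\delta]$, which allows the supremum in part b) to be taken after the pointwise limit.
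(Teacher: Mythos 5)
Your proof is correct and follows essentially the same route as the paper's: both pass to the limit $N\to\infty$ in Theorem \ref{theo:synchro-empiricalVar} using the identity $\mathcal{W}_2^2(\nu,\delta_w)=\int|z-w|^2\,\nu(dz)$, exchangeability, the propagation-of-chaos estimate of Theorem \ref{theo:propchaos_McKeanVlasov_eq}~b), the uniform compactness of the supports (which makes the relevant functionals Lipschitz with respect to $\mathcal{W}_2$), and a Gronwall stability estimate comparing \eqref{eq:HatProcess} with \eqref{eq:HatProcess_infty}. The differences are only organizational (the paper bounds the discrepancy of the two Wasserstein quantities by a single coupling estimate, uniform over the time window, rather than upgrading convergence in probability to $L^1$ pointwise in $t$), not substantive.
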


\medskip 

We next present some numerical simulations which illustrate   the validity of our theoretical results (at least from a qualitative point of view)  and moreover we  explore the behavior of system \eqref{eq:HH-model} when  several neurons subpopulations are considered and when only chemical interaction is present. Furthermore, in view of the numerical experiments, we discuss some of the  limitations and possible extensions of our theoretical results.

\section{Numerical Experiments}\label{sec:numerical-experiments}
\new{
Inspired in Bossy et al.  \cite{Bossy:2016aa}, we have implemented numerical simulations of system  \eqref{eq:HH-model}  by means of  an Exponential Projective  Euler Scheme (EPES) which we next describe.
}

For a given time horizon $T>0$ and a natural number $M$, we consider the time grid $\{t_0=0,t_1=T/M,t_2 =2T/M,\ldots,t_k = kT/M,\ldots,t_M= T\}$. As initial condition for each neuron in the system we  consider independent random variables, uniformly distributed on $[-100,100]\times[0,1]^4$. Given the value of the system at $t_k$, the value for $\widehat{V}_{t_{k+1}}^{(i)}$ is computed as the exact solution to the ODE
\begin{equation*}
\begin{aligned}
 \widehat{V}_t^{(i)}=   \widehat{V}^{(i)}_{t_k} & + \int_{t_k}^{t}F( \widehat{V}_s^{(i)},  \widehat{m}_{t_k}^{(i)}, \widehat{n}_{t_k}^{(i)}, \widehat{h}_{t_k}^{(i)}) ds \\
 & - \int_0^t \frac{1}{N}\sum_{j=1}^{N}\JE ( \widehat{V}_s^{(i)}- \widehat{V}_{t_k}^{(j)}) -\frac{1}{N}\sum_{j=1}^{N}{\JCh  \widehat{y}^{(j)}_{t_k} ( \widehat{V}_s^{(i)}-\Vrev)} ds. 
\end{aligned}
\end{equation*}
which is indeed  a linear ODE since $F$ is linear in $V$. To compute  $\widehat{x}^{(i)}_{t_{k+1}}$ we first solve the SDE
\begin{equation*}
\begin{aligned}
  \cxi_t  =  \widehat{x}^{(i)}_{t_k} & +\int_{{t_k}}^{t}\rho_x( \widehat{V}^{(i)}_{t_k})(1- \cxi_s)  -\zeta_x( \widehat{V}^{(i)}_{t_k}) \cxi_sds\\
&+ \int_{t_k}^{t}{\sigma_x( \widehat{V}_{t_k}^{(i)}, \widehat{x}_{t_k}^{(i)})dW_s^{x,i}},\;\;x=m,n,h,y,
\end{aligned}
\end{equation*}
which corresponds to an Ornstein-Uhlenbeck process, so that $ \cxi_{t_{k+1}}$ can be exactly simulated. However, since  conditionally on $ \widehat{x}_{t_j}^{(i)}, j\leq k $ the law of $ \cxi_{t_{k+1}}$ is Gaussian,  $\{ \cxi_{t_{k+1}} \notin [0,1]\}$ happens with positive probability, so we are led to define $ \widehat{x}^{(i)}_{t_{k+1}}$ by projecting $\cxi_{t_{k+1}}$ onto $[0,1]$, that is:
\begin{equation*}
\widehat{x}^{(i)}_{t_{k+1}} =\left\{
\begin{array}{cc}
0, &  \cxi_{t_{k+1}} \in(-\infty,0)\\
\cxi_{t_{k+1}}, &  \cxi_{t_{k+1}}\in[0,1]\\
 1, & \cxi_{t_{k+1}} \in(1,+\infty).
\end{array}
\right.
\end{equation*}

\new{
In Appendix \ref{sec:proof-numerical-scheme} we prove  the convergence in $L^2$-norm of the EPES applied to \eqref{eq:HH-model}. We also provide the rate of convergence which is $1/2$ as for the classical Euler scheme.  }

In our simulations we have used as cut-off function (see Hypothesis \ref{hyp:MainHypotheses}-2)
$$
\chi(u) = \left\{ 
\begin{array}{cc}
0.1\exp\left(\frac{-0.5}{ 1-(2u-1)^2}\right )& u\in(0,1)\\
0 & u\notin (0,1),
\end{array}
\right.
$$
whereas, the specific values of the constants we have used are given in Table \ref{tab:ParametersVoltage}, and the   rate functions $\rho_x$ and $\zeta_x$ are given in Table \ref{tab:RateFunctionsSingleNeuronModel} and  shown in Figure \ref{figure:RateFunctions}. Although our results hold  irrespectively  of the  value of input current $I$, we have taken in all simulations  $I=25$, in which case a noiseless single neuron with the chosen parameters has  a limiting  regime of sustained oscillations, see Figure \ref{figure:SingleNeuronDifferentImputCurrents}. 

\begin{table}[ht!]
\fontsize{9}{11}\selectfont
\centering
\begin{tabular}{|c|c|c|c|c|c|}
\hline
$\gNa$&$120\; [\text{mS}/\text{cm}^3]$&$\gK$&$36\; [\text{mS}/\text{cm}^3]$&$\gL$&$0.3\; [\text{mS}/\text{cm}^3]$\\ \hline
$\VNa$&$50\;[\text{mV}]$&$\VK$&$-77\;[\text{mV}]$&$\VL$&$-54.4\;[\text{mV}]$\\ \hline
\end{tabular}
\caption{\fontsize{9}{11}\selectfont Values for the constants in the function for $F$. Taken from \cite[p.23]{Ermentrout:2010aa} \label{tab:ParametersVoltage} }
\end{table}%

\begin{table}[ht!]
\fontsize{9}{11}\selectfont
\begin{center}
\begin{tabular}{|c|c|c|}	\hline
Channel type&$\rho_x(V)$&$\zeta_x(V)$\\ \hline
Sodium (Na) Activation Channels $m$&$\dfrac{0.1(V+40)}{1 - \exp\left( -\dfrac{V+40}{10}\right)} $& $4\exp\left( -\dfrac{V+65}{18}\right) $ \\ \hline 
Sodium (Na) Deactivation Channels $h$&$0.07\exp\left( -\dfrac{V+65}{20}\right) $& $ \dfrac{1}{1+\exp\left( -\dfrac{V+35}{10}\right)}$ \\ \hline 
Potassium (K) Activation Channels $n$ &$\dfrac{0.01(V+55)}{1-\exp\left( -\dfrac{V+55}{10}\right)} $& $0.125\exp\left( -\dfrac{V+65}{80}\right) $ \\ \hline 
\new{ Neurotransmitter Channels $y$}  & \new{ $ \dfrac{5}{1+\exp\left(-0.2(V-2.0)\right)} $}&\new{  $0.18 $}\\ \hline 
\end{tabular}
\end{center}
\caption{\fontsize{9}{11}\selectfont Rate functions for the dynamics of the channels. Taken from \cite[p.23]{Ermentrout:2010aa} for the Sodium and Potassium channels, and from page  \cite[pp.160,163]{Ermentrout:2010aa} for the neurotransmitter channel.
\label{tab:RateFunctionsSingleNeuronModel}}
\end{table}%

\begin{figure}[ht!]
    \centering
    \begin{subfigure}[t]{0.4\textwidth}
        \centering
        \includegraphics[width=\textwidth]{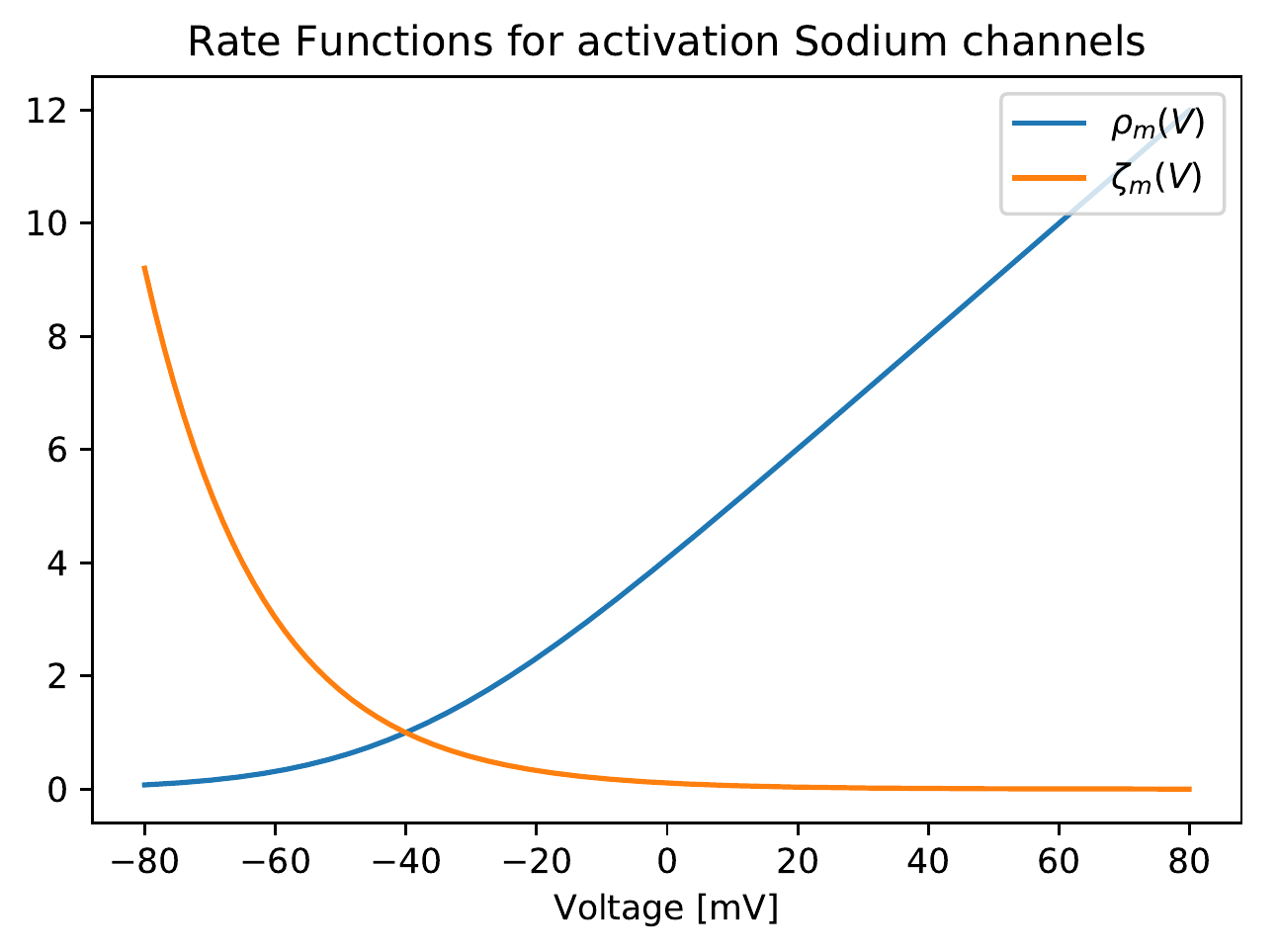}
        \caption{\fontsize{9}{11}\selectfont Activation Sodium Channels. \label{figure:RateFunctions-a}}
    \end{subfigure}
    \quad
    \begin{subfigure}[t]{0.4\textwidth}
        \centering
        \includegraphics[width=\textwidth]{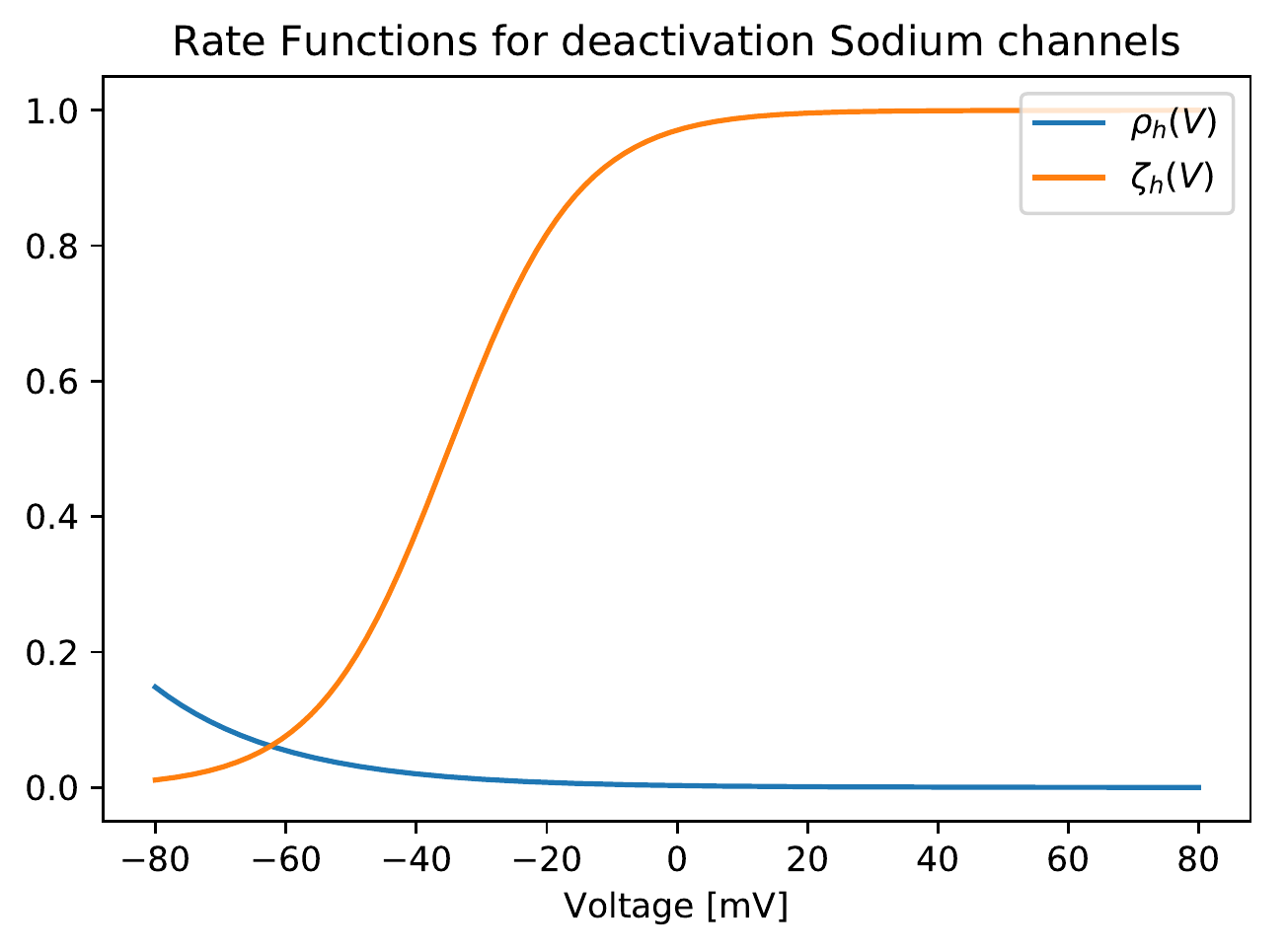}
        \caption{\fontsize{9}{11}\selectfont Deactivation Sodium Channels.\label{figure:RateFunctions-b}}
    \end{subfigure}
    \\
    \begin{subfigure}[t]{0.4\textwidth}
        \centering
        \includegraphics[width=\textwidth]{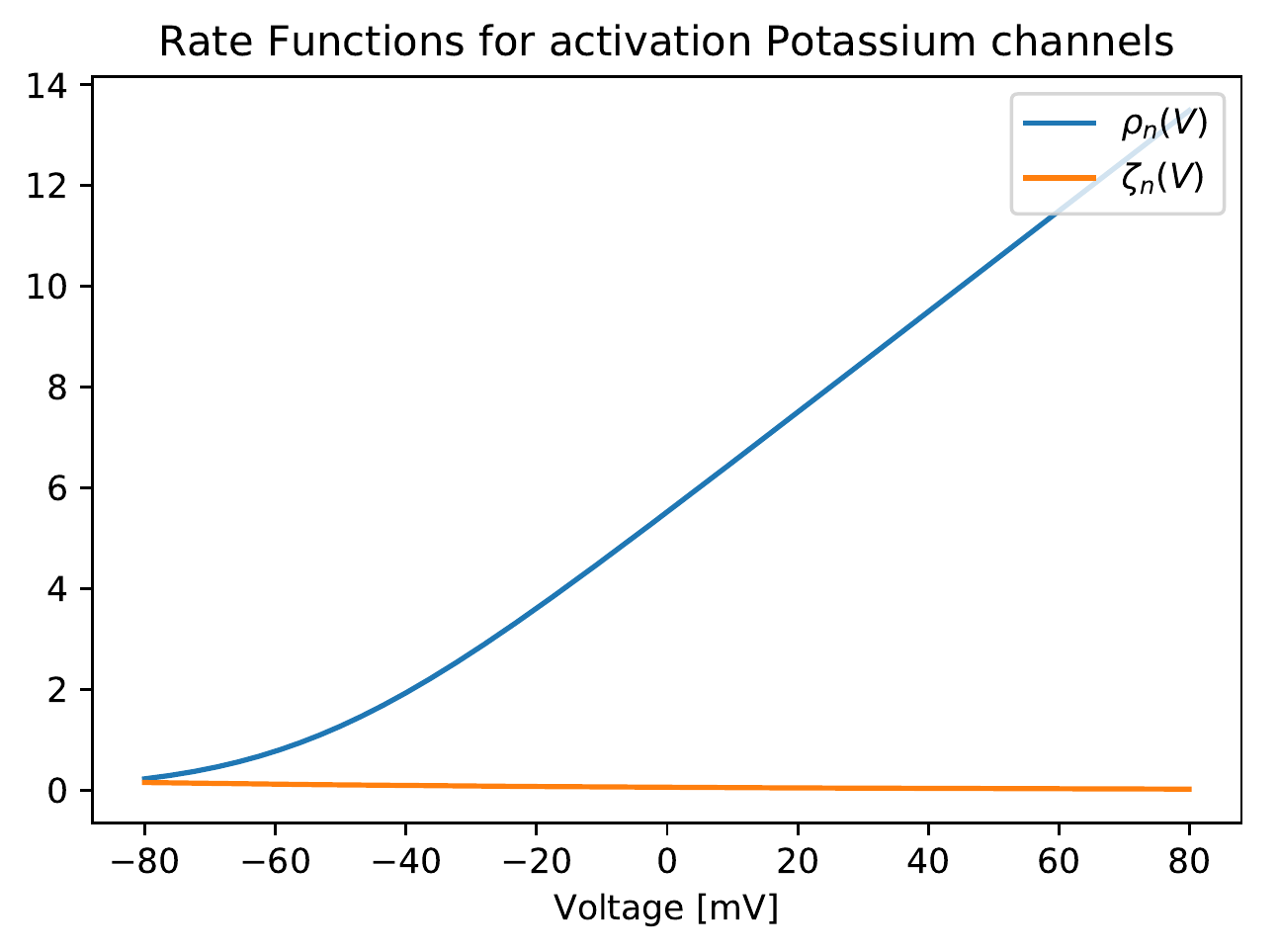}
        \caption{\fontsize{9}{11}\selectfont Activation Potassium Channels.\label{figure:RateFunctions-c}}
    \end{subfigure}
    \quad
    \begin{subfigure}[t]{0.4\textwidth}
        \centering
        \includegraphics[width=\textwidth]{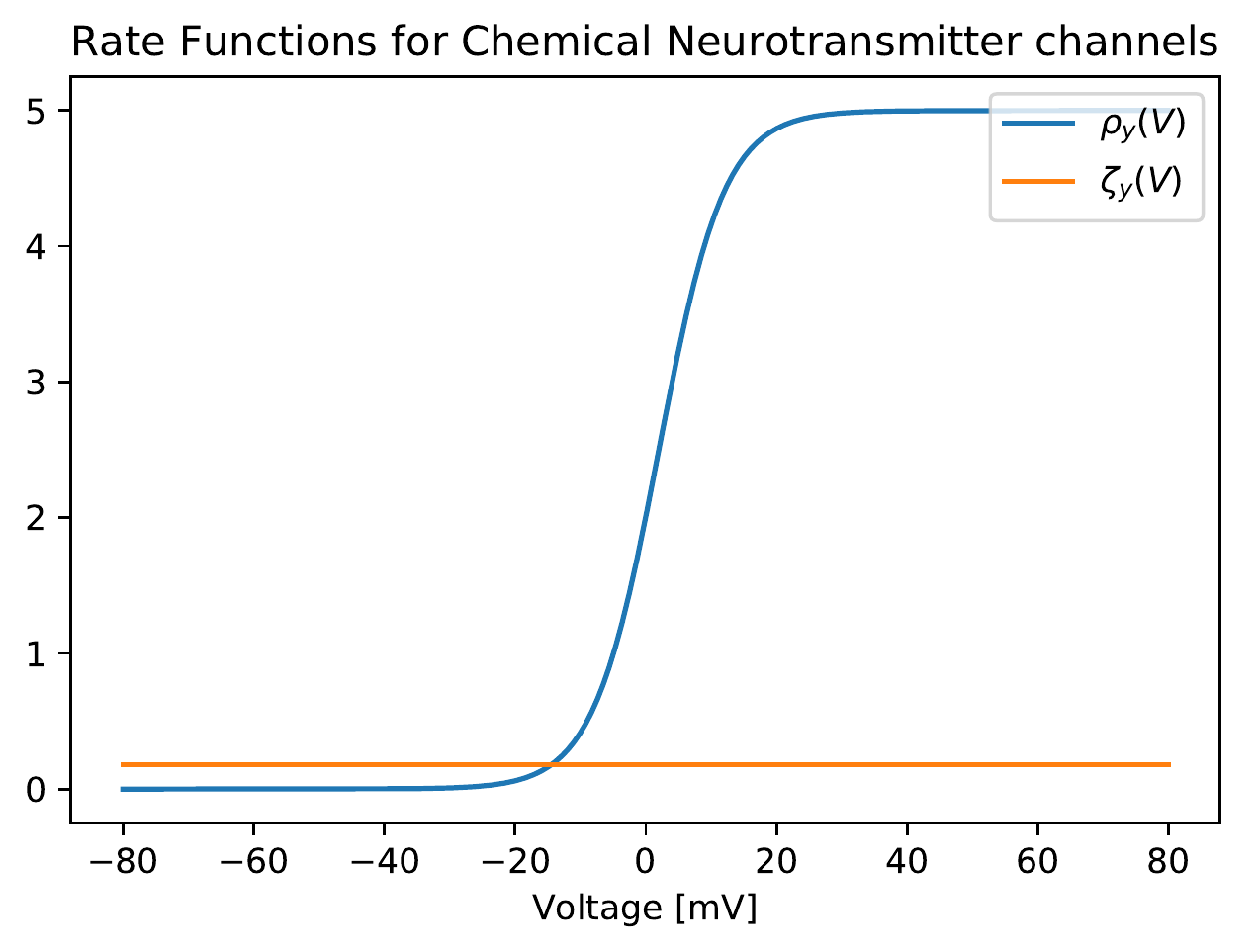}
        \caption{\fontsize{9}{11}\selectfont Synaptic Channels.\label{figure:RateFunctions-d}}
    \end{subfigure}
    \caption{\fontsize{9}{11}\selectfont Charateristic plot of rate functions $\rho_x$ and $\zeta_x$.   \label{figure:RateFunctions}}
\end{figure}

\begin{figure}[ht!]
    \centering
    \begin{subfigure}[t]{0.4\textwidth}
        \centering
        \includegraphics[width=\textwidth]{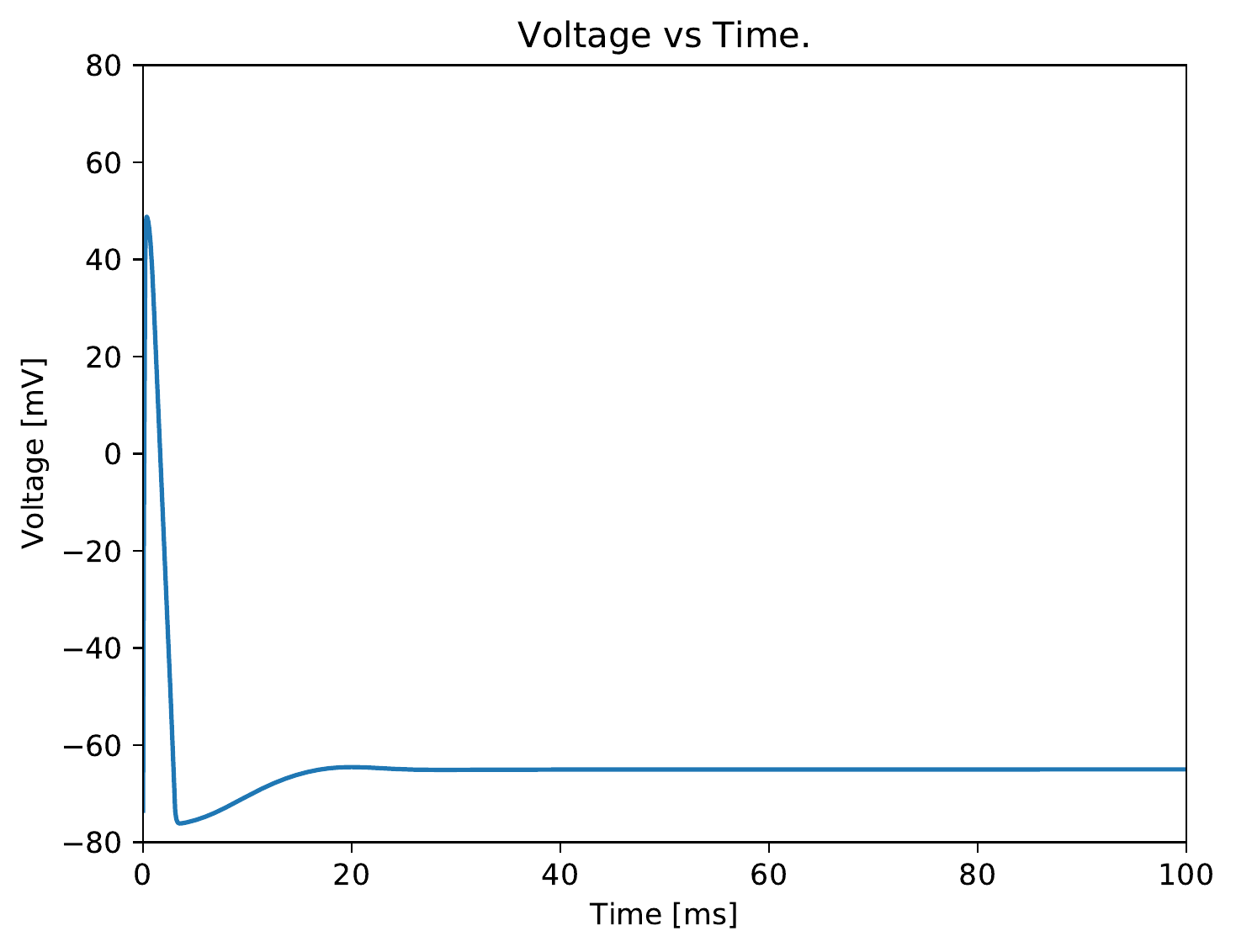}
        \caption{\fontsize{9}{11}\selectfont $I=0$. \label{figure:SingleNeuronDifferentImputCurrents-a}}
    \end{subfigure}
    \quad
    \begin{subfigure}[t]{0.4\textwidth}
        \centering
        \includegraphics[width=\textwidth]{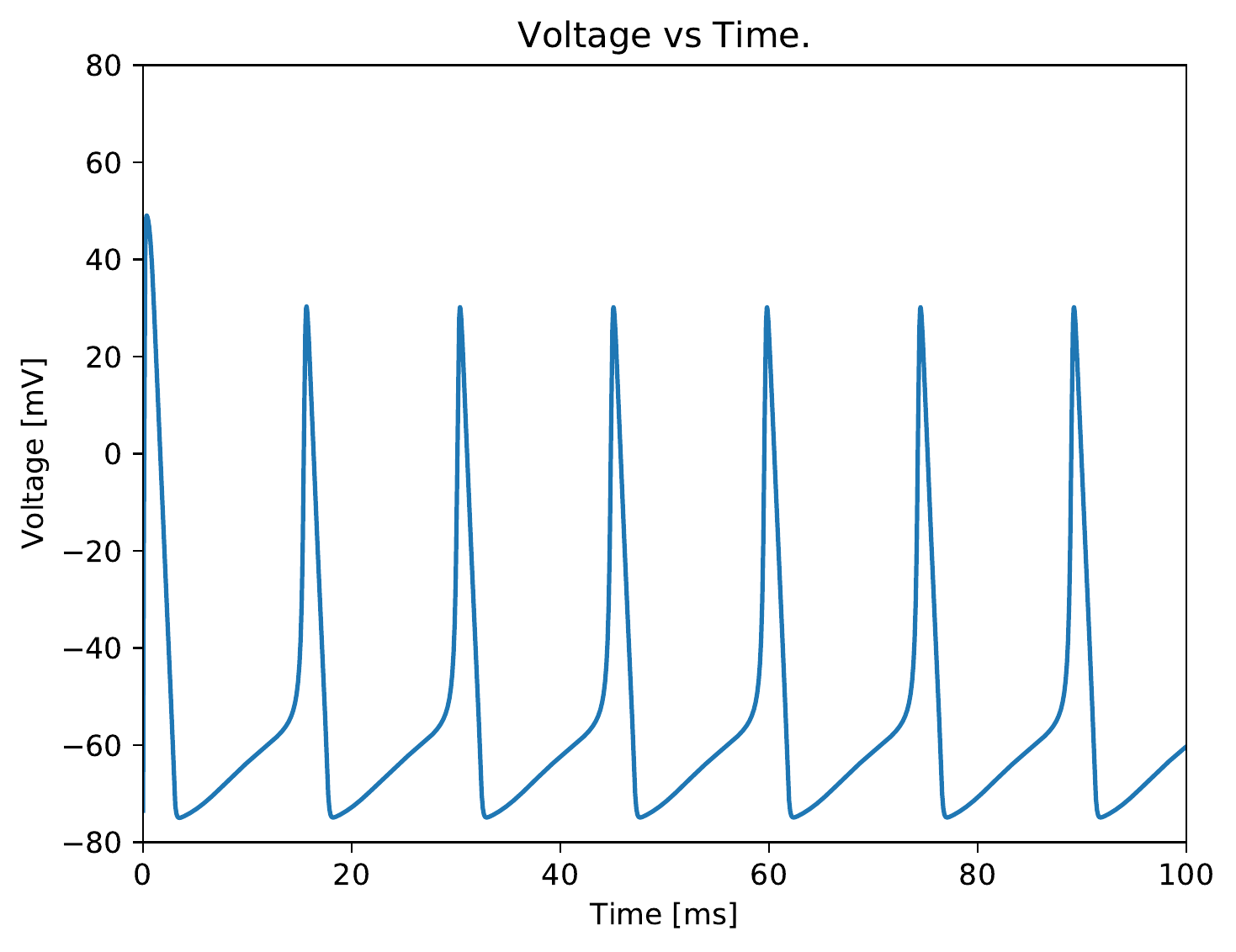}
        \caption{\fontsize{9}{11}\selectfont $I=10$.\label{figure:SingleNeuronDifferentImputCurrents-b}}
    \end{subfigure}
    \\
    \begin{subfigure}[t]{0.4\textwidth}
        \centering
        \includegraphics[width=\textwidth]{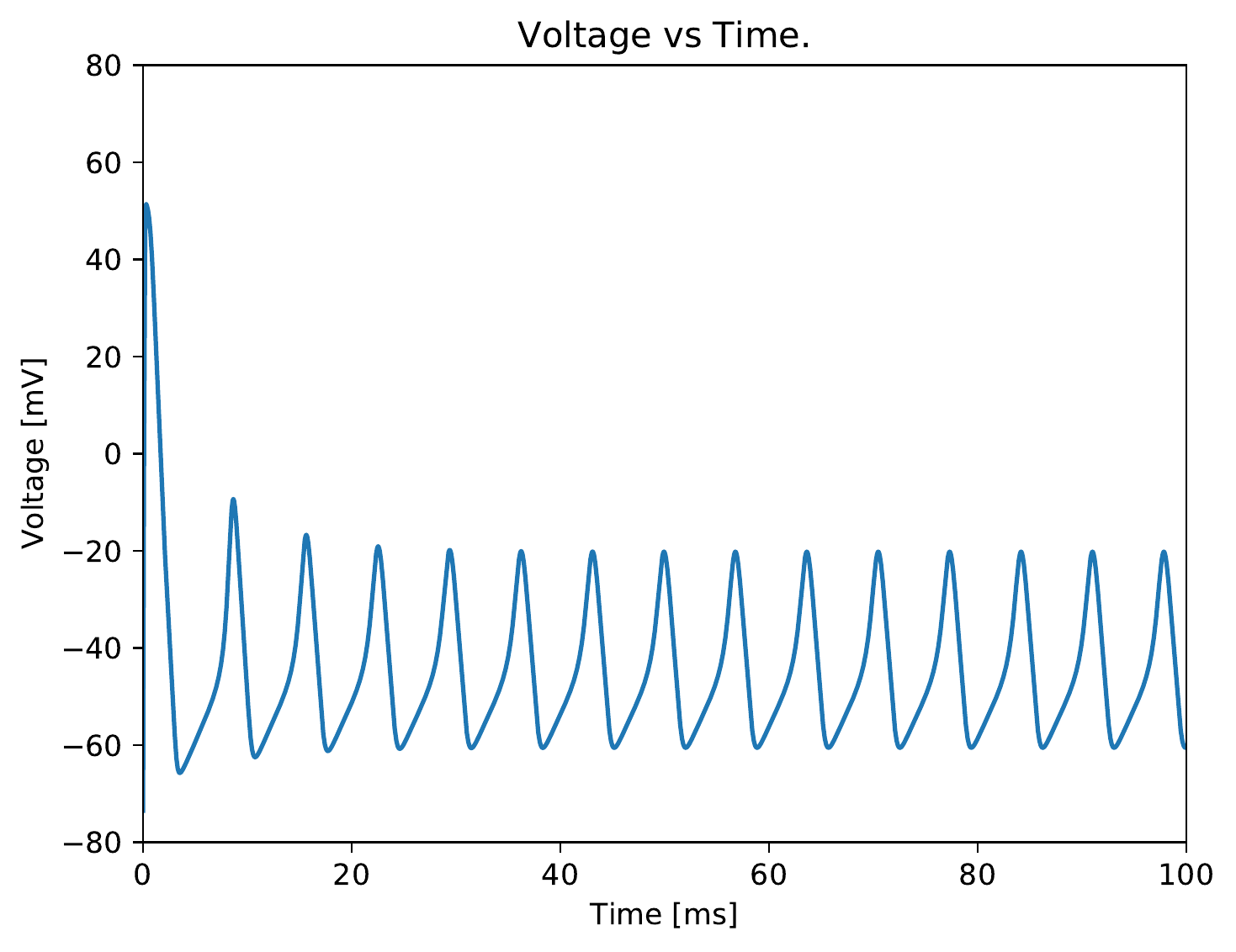}
        \caption{\fontsize{9}{11}\selectfont $I=100$.\label{figure:SingleNeuronDifferentImputCurrents-c}}
    \end{subfigure}
    \quad
    \begin{subfigure}[t]{0.4\textwidth}
        \centering
        \includegraphics[width=\textwidth]{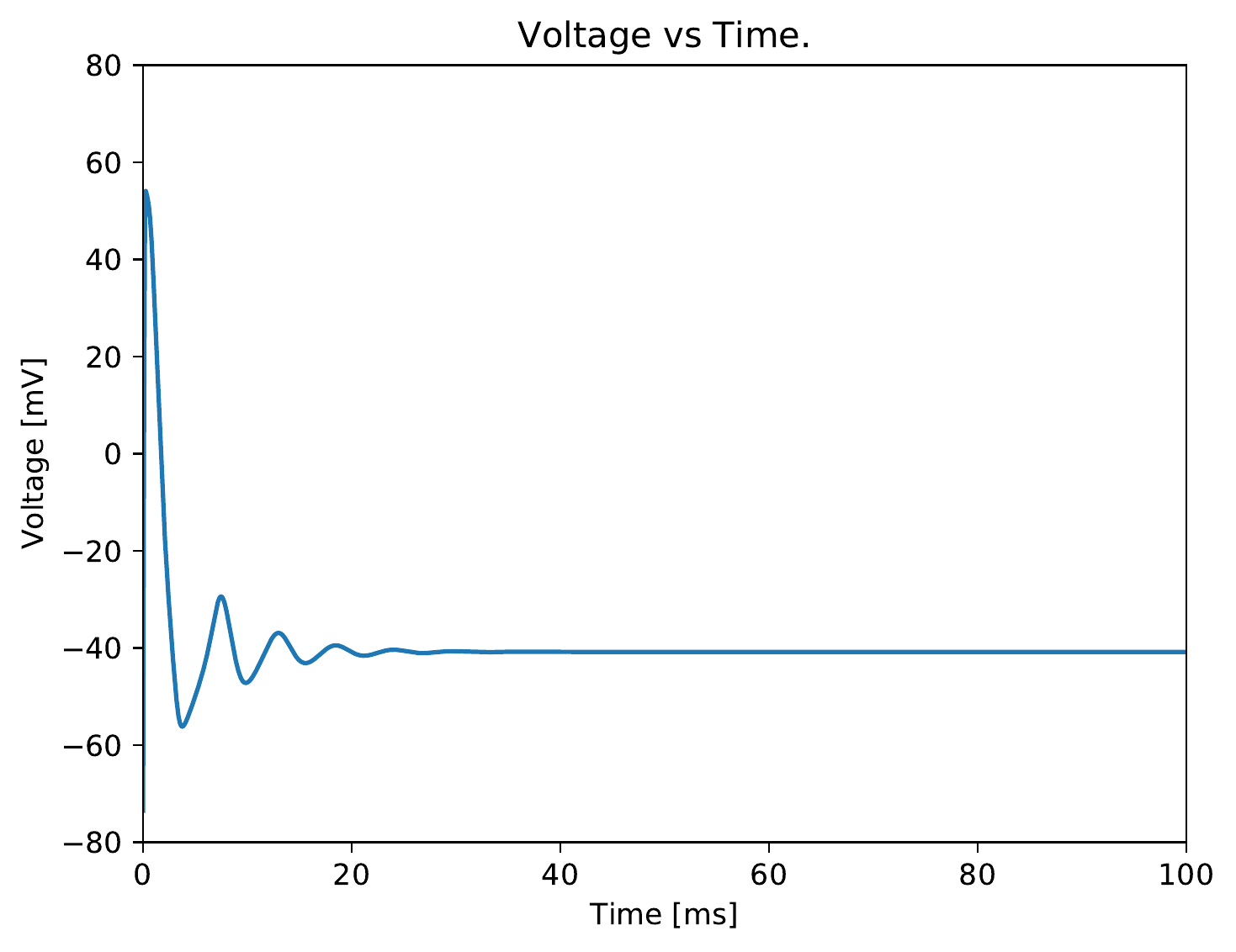}
        \caption{\fontsize{9}{11}\selectfont $I=200$.\label{figure:SingleNeuronDifferentImputCurrents-d}}
    \end{subfigure}
\caption{\fontsize{9}{11}\selectfont  Reponses of the model \eqref{eq:DeterministicHHmodel} depending on the input current $I$:  no oscillations if $I=0$ (a);  large amplitude and low frequency oscillations if $I=10$  (b); small \new{ amplitude} and  high frequency oscillations if $I=100$  (c);   damped oscillations if $I=200$  (d). \label{figure:SingleNeuronDifferentImputCurrents}}
\end{figure}

\subsection{Numerical experiments illustrating our theoretical  results }\label{numexpillustheo}
Our first numerical experiments illustrate the results of part a) in Theorem \ref{theo:synchro-empiricalVar}. In Figure \ref{figure:Samples-IE} we show one trajectory of the system \eqref{eq:HH-model} under purely electrical interaction, for different sizes of the network and levels of noise. The first row shows the trajectories of a network of $10$ neurons for $\sigma=0$, $\sigma=0.5$ and $\sigma=1$.  From the second to the fourth row, the trajectories of networks of $100$, $1000$ and $10000$ neurons, respectively, are shown. The scale of all plots is the same. 
We observe that the qualitative behavior in terms of $\sigma$ is the same for all rows: as expected, the noiseless network ultimately  reaches  perfect synchronization, whereas for $\sigma>0$ the trajectories of the neurons lie  in a band whose width increases with $\sigma$, as  predicted by Theorem   \ref{theo:synchro-empiricalVar}, a).   Moreover,  the speed at which synchronizations takes place does not depend on the size of the network nor on the level of noise. 

In our second experiment, we estimate the expected value of the empirical variance of a network of various sizes  and for different levels of noise. More precisely we estimate the mean  of
$$\varV_t 
= \frac{1}{N}\sum_{i=1}^N{\left( V_t^{(i)}-\meanV_t\right)^2},\;\;\;\varX_t = \frac{1}{N}\sum_{i=1}^N{\left( x_t^{(i)}-\meanX_t\right)^2}, \;\;x=m,n,h.$$
over   $50000$  Monte Carlo replica  for each value of  $\sigma\in\{0.1, 0.5, 1\}$ and $N\in\{10,100,1000,10000\}$ (we now use $\sigma=0.1$ instead of $\sigma=0$ since in the latter case  the obtained plot quickly becomes flat).  The computation of the empirical variance for each time step and  replica was  done using the corrected two-phase algorithm to avoid {\it catastrophic cancellations} (see \cite{Chan:1983aa}). The results of this experiment appear in Figure \ref{figure:DisipationEmpiricalVariance-IE}, where a different variable is presented in each row, from top to bottom: voltage ($V$), Sodium activation channels ($m$), Potassium channels ($n$) and Sodium deactivation channels ($h$). Each column corresponds to a different level of noise, increasing from $\sigma=0.1$ on the left,  to $\sigma=0.5$ in the middle and  to $\sigma=1$ on the right. In each subfigure we show the dissipation of the expected value of the empirical variance for networks of $10,\,100,\,1000$ and $10000$ neurons.  Just as for one trajectory of the system, we observe again a quick synchronization,  now  measured in terms of the average dispersion over many trajectories, at speed  which  does not depend on the noise or the size of the network, with the heights of  the peaks increasing with $\sigma$. Notice that double peaks are expected from  Figure \ref{figure:DisipationEmpiricalVariance-IE} already: even a small dispersion of the phase among different neurons can induce a high dispersion of their voltages and channels right before and after a potential spike is emitted. This  dispersion increases with $N$, but tends to stabilize as $N$ becomes large (notice that the red and green lines in Figure \ref{figure:DisipationEmpiricalVariance-IE} are indistinguishable),  consistently with Theorem \ref{theo:propchaos_McKeanVlasov_eq}. 

\smallskip 
From this last observation,  it is also interesting to point out  that the maximum variance over  time-windows of fixed length $\delta>0$  which drift to infinity cannot decrease for every possible  value of $\delta$, unless $\sigma^2=0$. Indeed, in the noisy case the voltage of significantly many neurons can in principle differ from the voltage of the underlying one-neuron dynamics, over time-windows  larger than its period, by as much as the whole asymptotic  range of the   voltages  dynamics. Thus,  albeit  not sharp, the estimates in part b) of Theorem  \ref{theo:synchro-empiricalVar} and   Corollary \ref{coro:synchro-McKeanVlasov} are qualitatively correct.

\medskip

 \begin{figure}[ht!]
    \centering
    \begin{subfigure}[t]{0.31\textwidth}
        \centering
        \includegraphics[width=\textwidth,height=4.8cm]{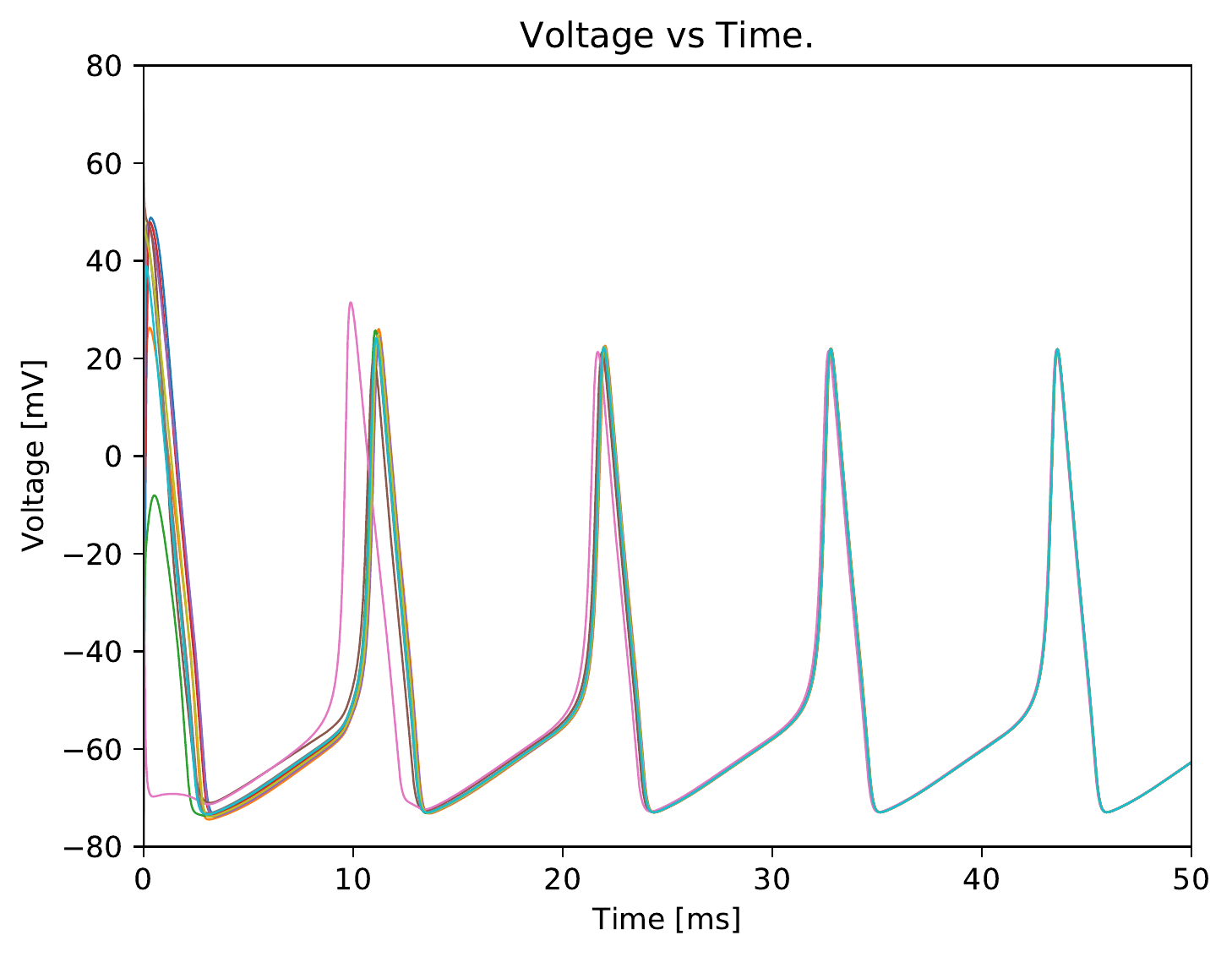}
        \caption{\fontsize{9}{11}\selectfont $N=10$, $\sigma=0$.}
    \end{subfigure}
    \quad
    \begin{subfigure}[t]{0.31\textwidth}
        \centering
        \includegraphics[width=\textwidth,height=4.8cm]{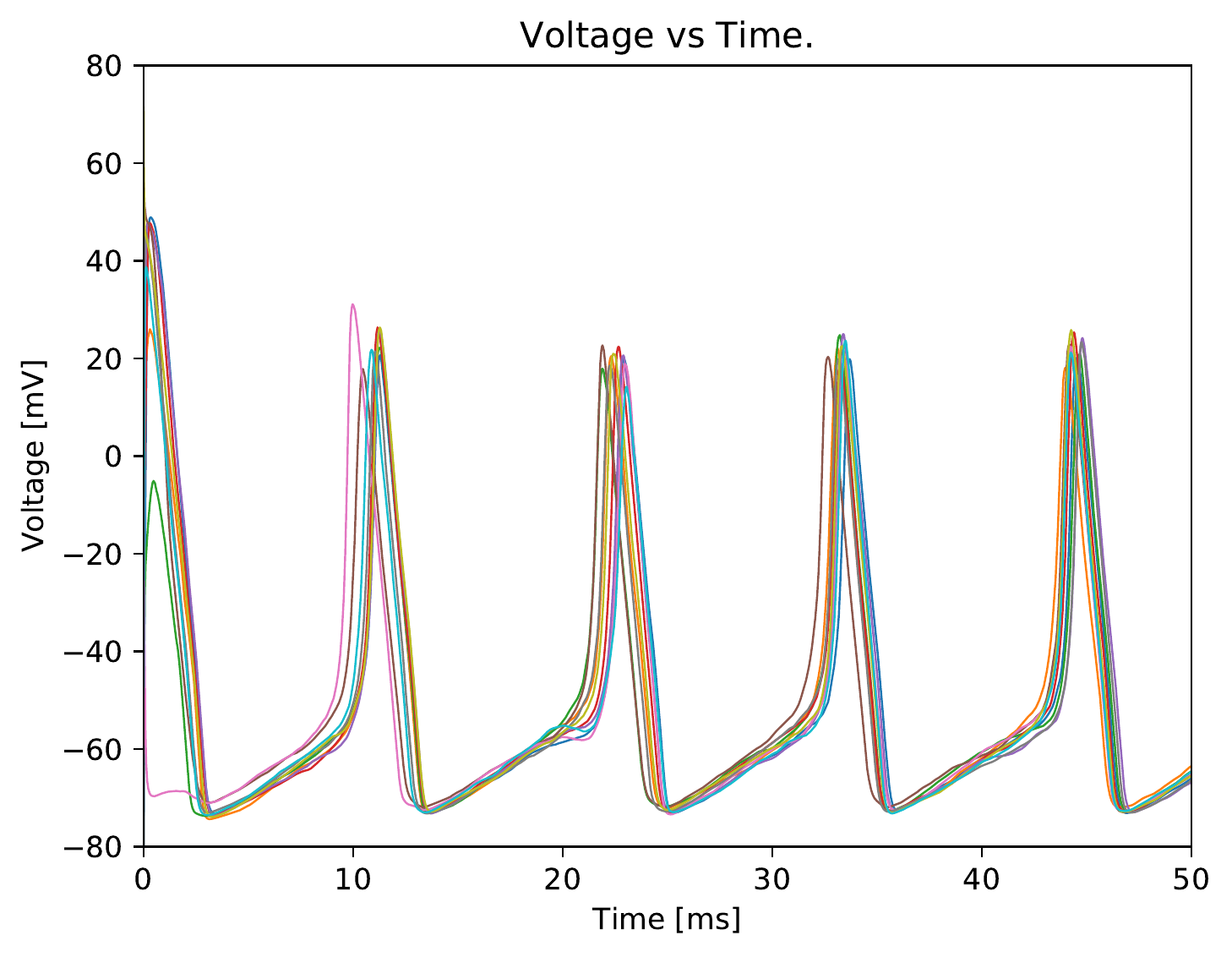}
        \caption{\fontsize{9}{11}\selectfont $N=10$, $\sigma=0.5$.}
    \end{subfigure}
    \quad
    \begin{subfigure}[t]{0.31\textwidth}
        \centering
        \includegraphics[width=\textwidth,height=4.8cm]{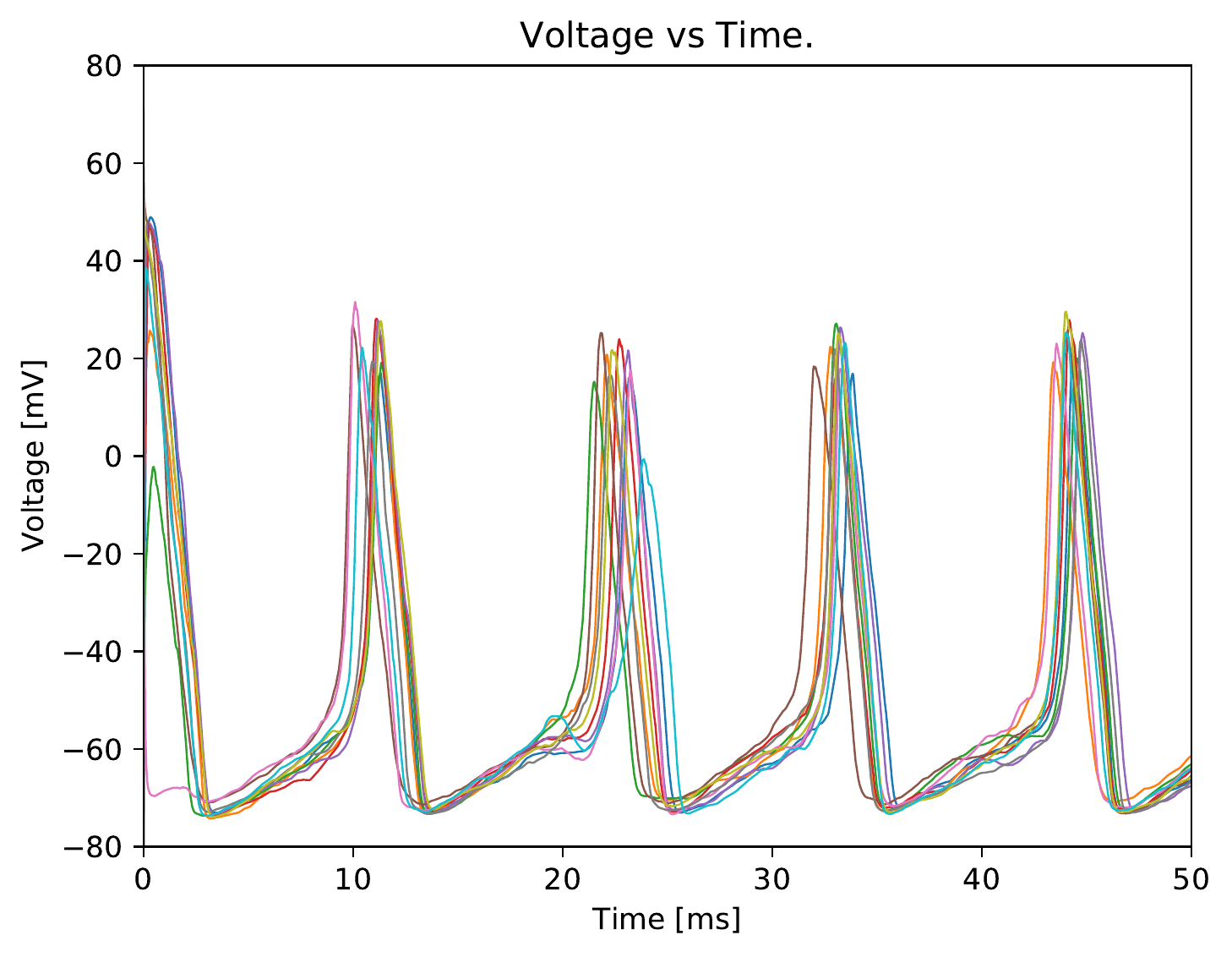}
       \caption{\fontsize{9}{11}\selectfont $N=10$, $\sigma=1$.}
    \end{subfigure}
    \\
       \begin{subfigure}[t]{0.31\textwidth}
        \centering
        \includegraphics[width=\textwidth,height=4.8cm]{figures/EI-100-sigma-0.pdf}
        \caption{\fontsize{9}{11}\selectfont $N=100$, $\sigma=0$.}
    \end{subfigure}
    \quad
    \begin{subfigure}[t]{0.31\textwidth}
        \centering
        \includegraphics[width=\textwidth,height=4.8cm]{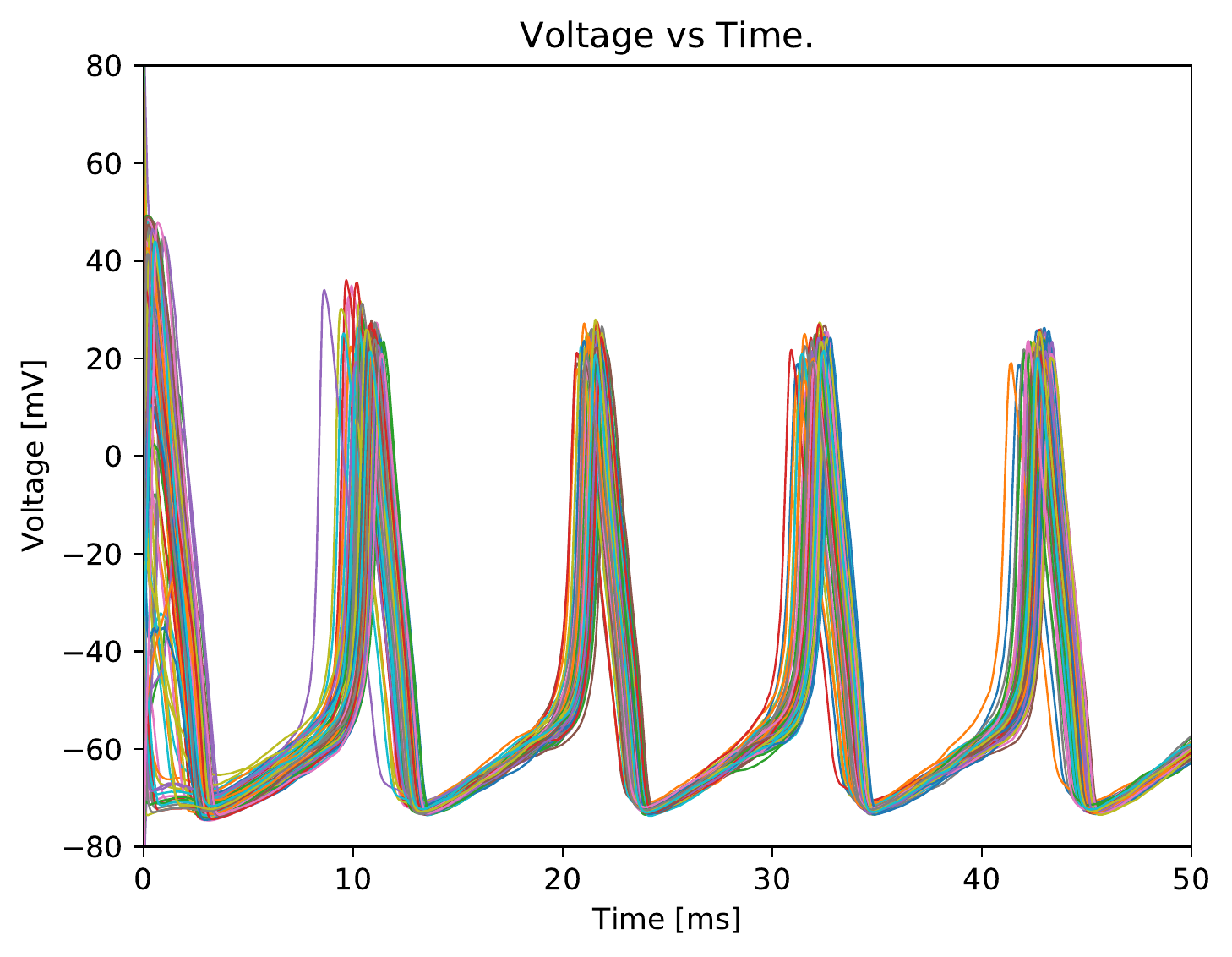}
        \caption{\fontsize{9}{11}\selectfont $N=100$, $\sigma=0.5$.}
    \end{subfigure}
    \quad
    \begin{subfigure}[t]{0.31\textwidth}
        \centering
        \includegraphics[width=\textwidth,height=4.8cm]{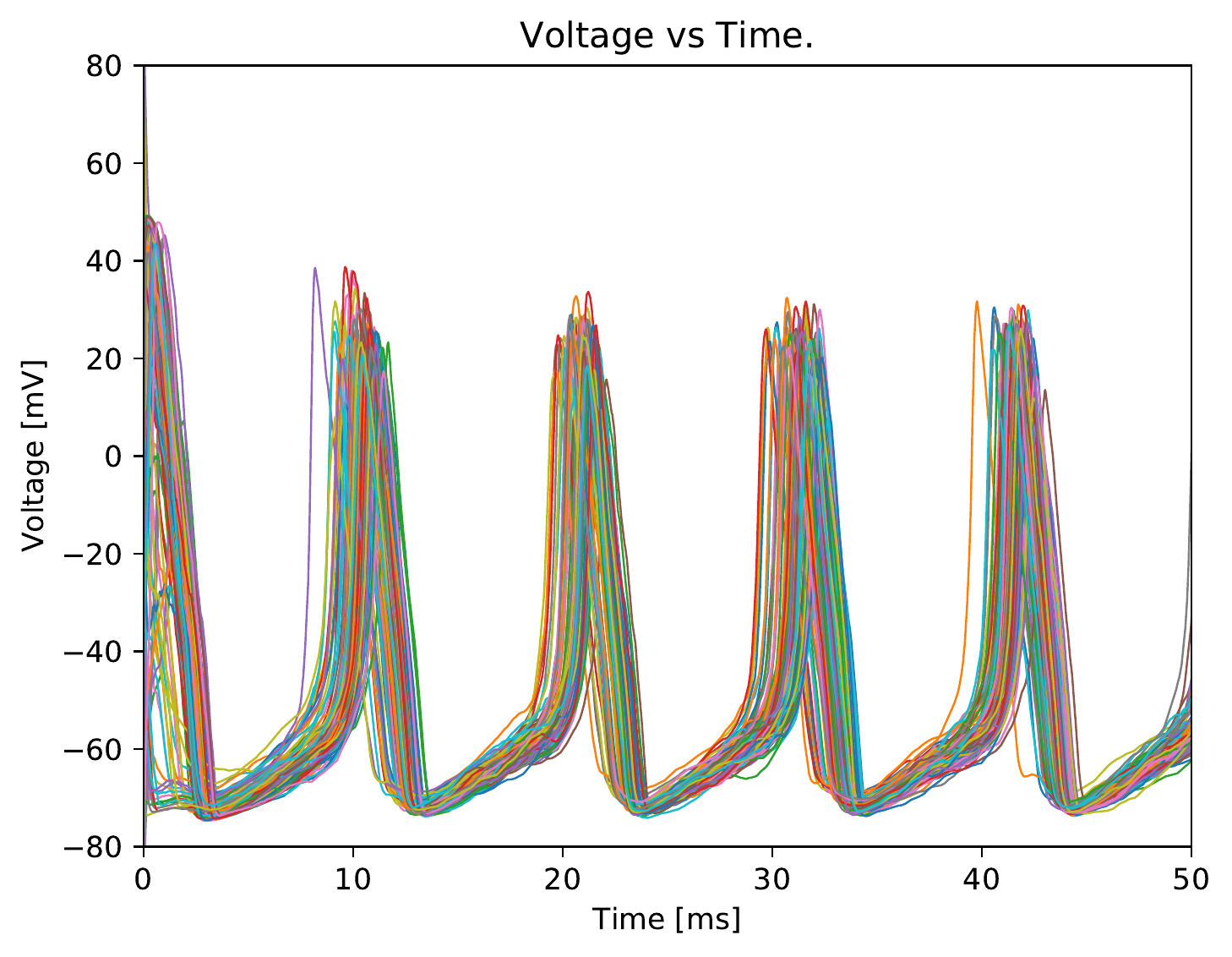}
        \caption{\fontsize{9}{11}\selectfont $N=100$, $\sigma=1$.}
    \end{subfigure}
    \\
       \begin{subfigure}[t]{0.31\textwidth}
        \centering
        \includegraphics[width=\textwidth,height=4.8cm]{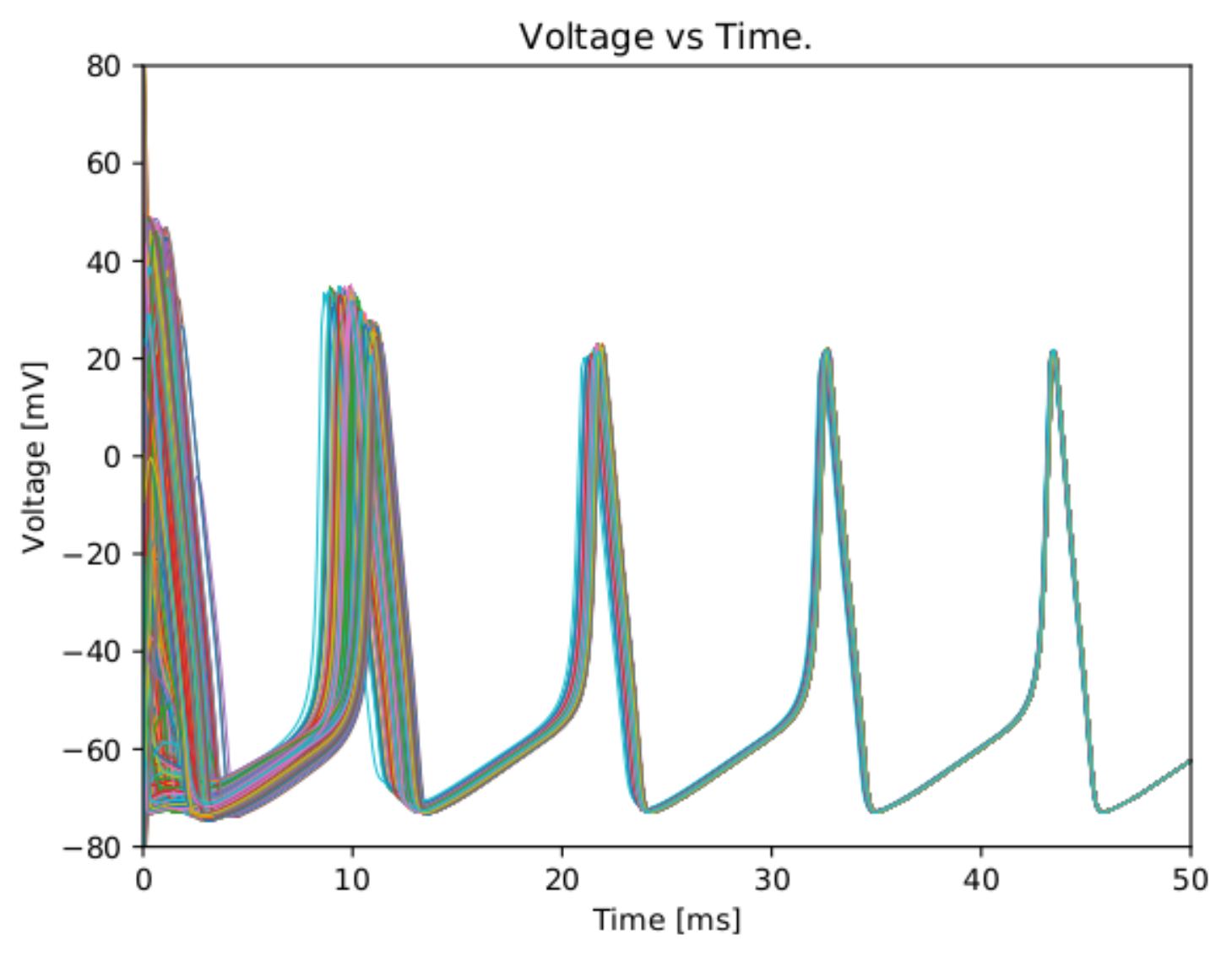}
        \caption{\fontsize{9}{11}\selectfont $N=1000$, $\sigma=0$.}
    \end{subfigure}
    \quad
    \begin{subfigure}[t]{0.31\textwidth}
        \centering
        \includegraphics[width=\textwidth,height=4.8cm]{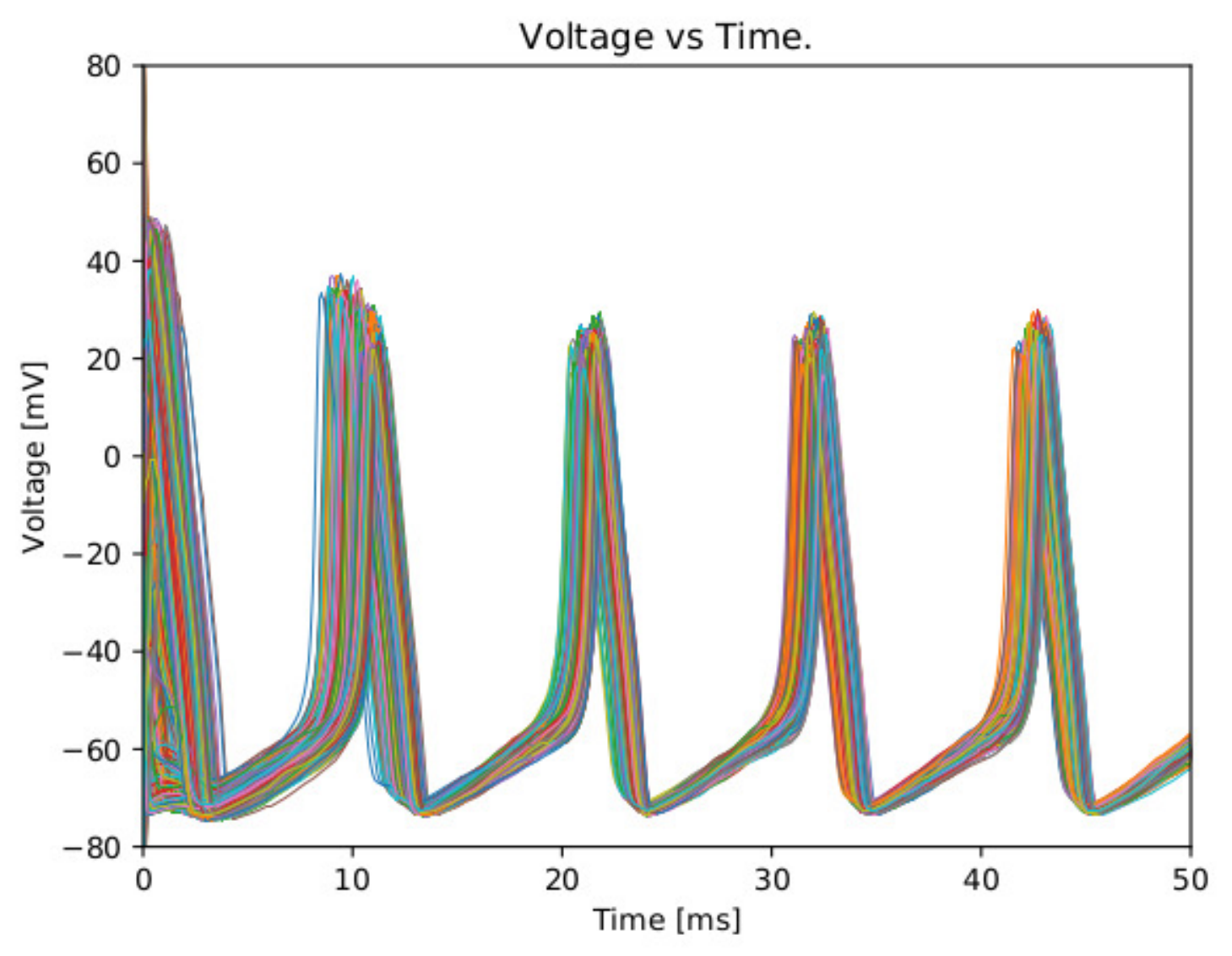}
        \caption{\fontsize{9}{11}\selectfont $N=1000$, $\sigma=0.5$.}
    \end{subfigure}
    \quad
    \begin{subfigure}[t]{0.31\textwidth}
        \centering
        \includegraphics[width=\textwidth,height=4.8cm]{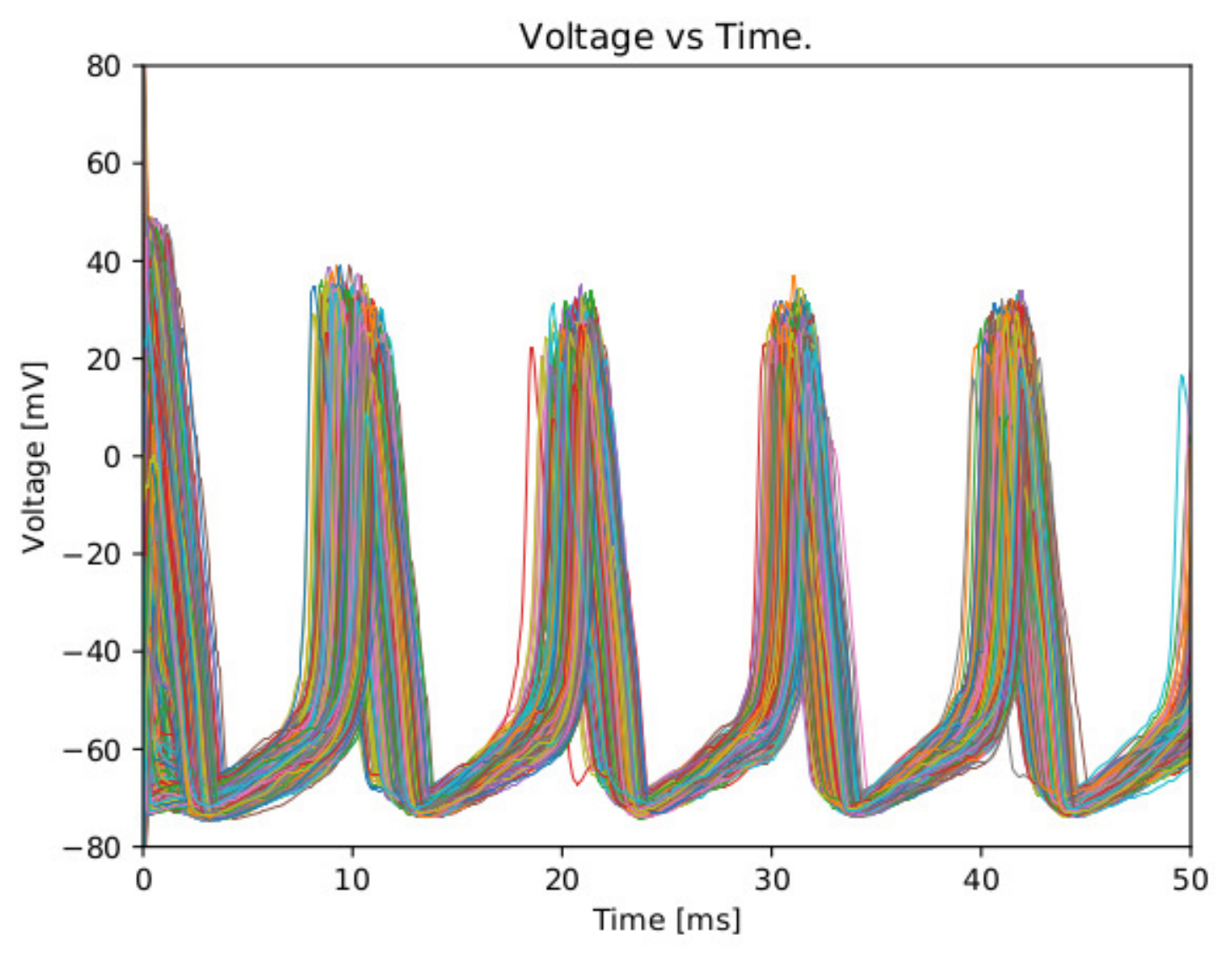}
        \caption{\fontsize{9}{11}\selectfont $N=1000$, $\sigma=1$.}
    \end{subfigure}
    \\
       \begin{subfigure}[t]{0.31\textwidth}
        \centering
        \includegraphics[width=\textwidth,height=4.8cm]{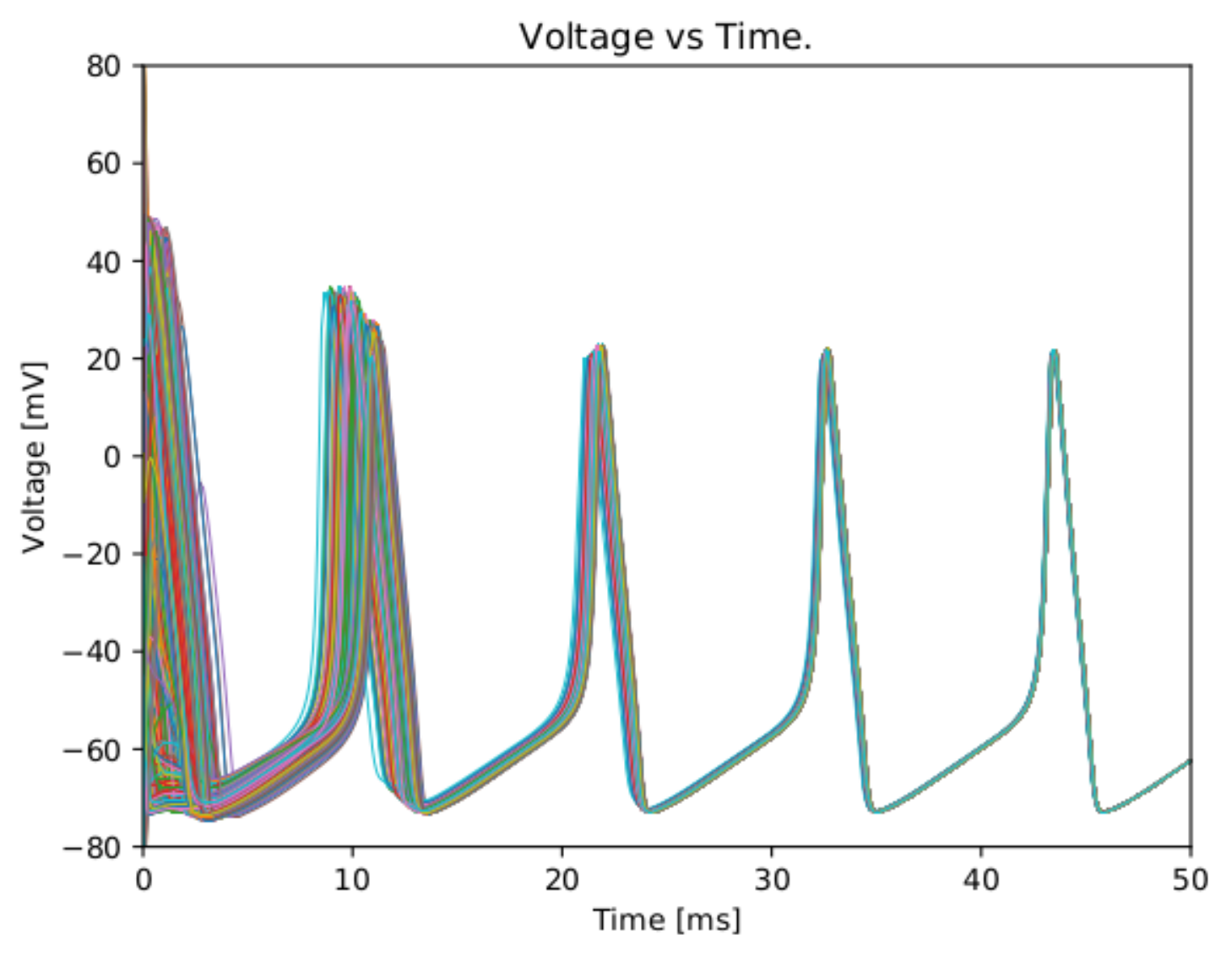}
       \caption{\fontsize{9}{11}\selectfont $N=10000$, $\sigma=0$.}
    \end{subfigure}
    \quad
    \begin{subfigure}[t]{0.31\textwidth}
        \centering
        \includegraphics[width=\textwidth,height=4.8cm]{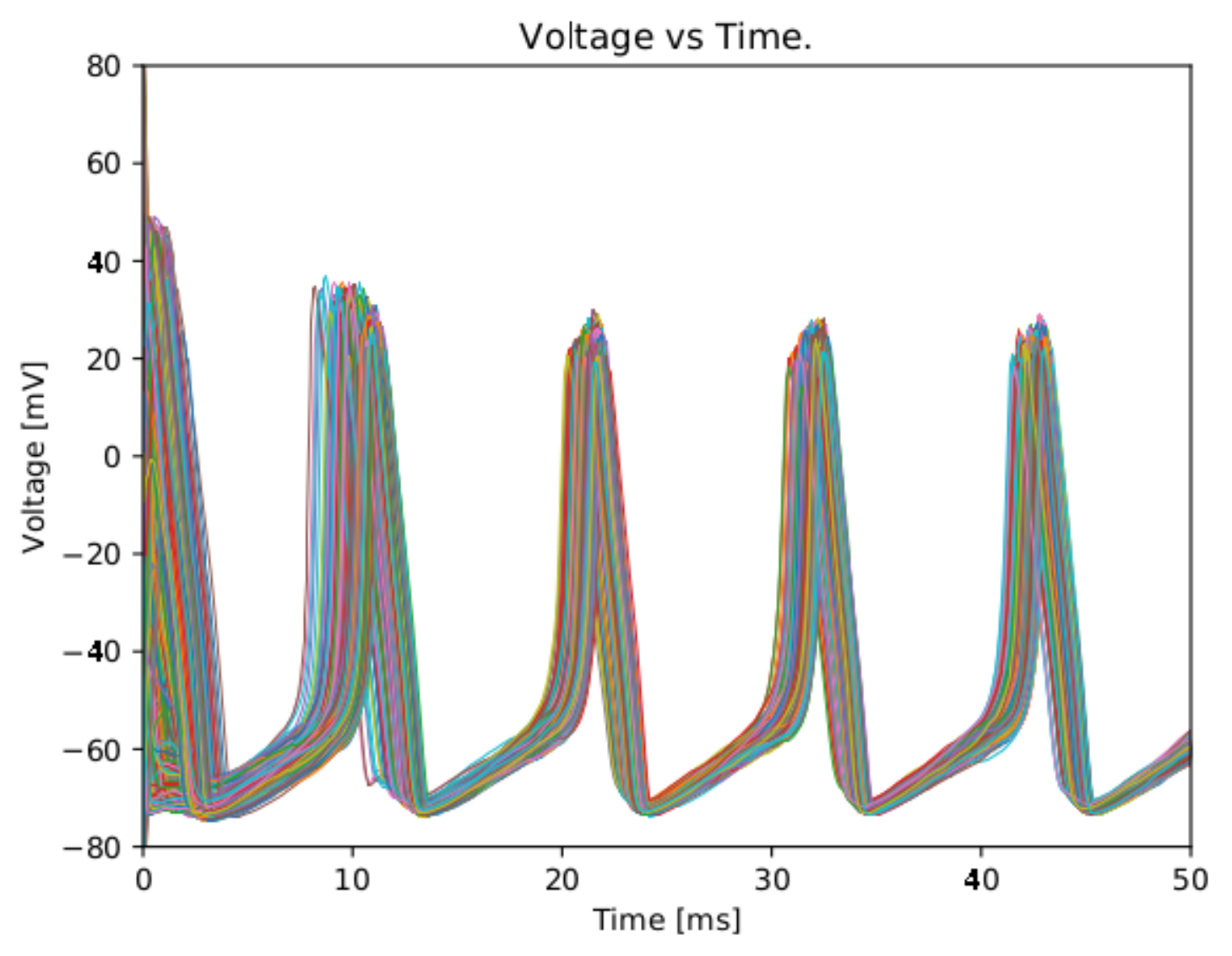}
        \caption{\fontsize{9}{11}\selectfont $N=10000$, $\sigma=0.5$.}
    \end{subfigure}
    \quad
    \begin{subfigure}[t]{0.31\textwidth}
        \centering
        \includegraphics[width=\textwidth,height=4.8cm]{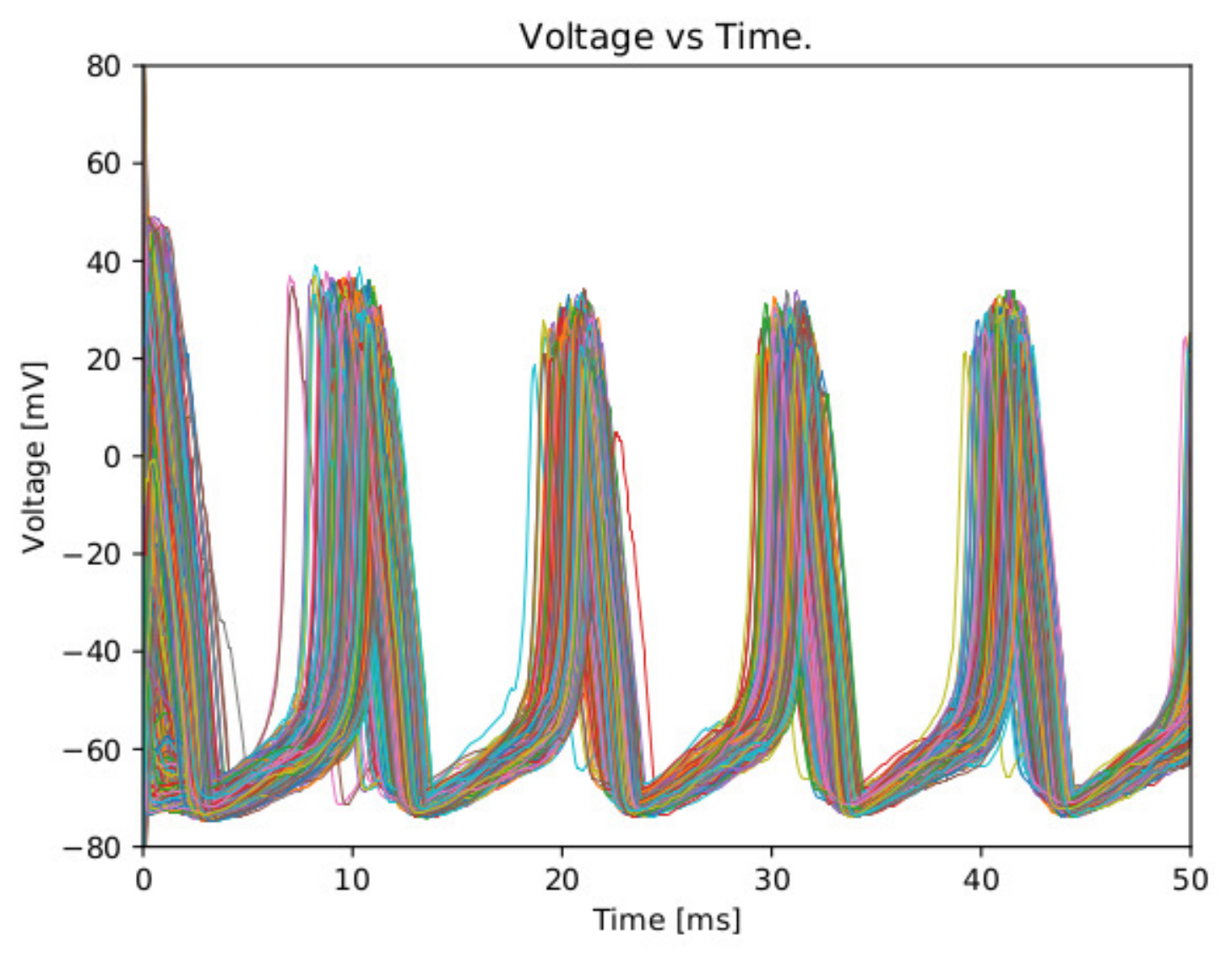}
       \caption{\fontsize{9}{11}\selectfont $N=10000$, $\sigma=1$.}
    \end{subfigure}
 \caption{\fontsize{9}{11}\selectfont  Synchronization of a network under pure electrical interaction for different network sizes  $N$ and noise levels $\sigma$. \label{figure:Samples-IE}}
\end{figure}

\begin{figure}[ht!]
    \centering
    \begin{subfigure}[t]{0.31\textwidth}
        \centering
        \includegraphics[width=\textwidth,height=4.8cm]{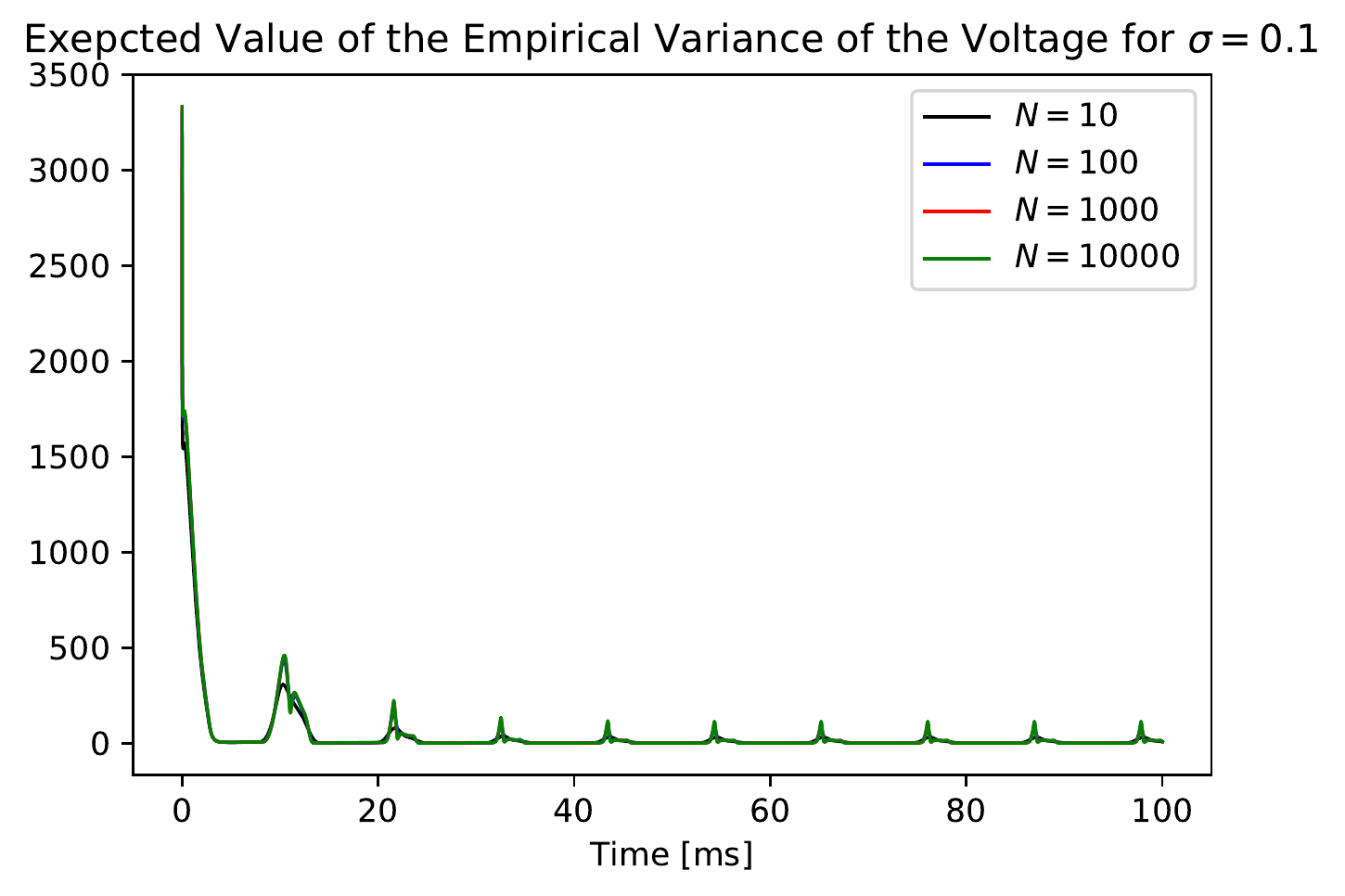}
        \caption{\fontsize{9}{11}\selectfont $V$, $\sigma=0.1$.}
    \end{subfigure}
    \quad
   \begin{subfigure}[t]{0.31\textwidth}
        \centering
        \includegraphics[width=\textwidth,height=4.8cm]{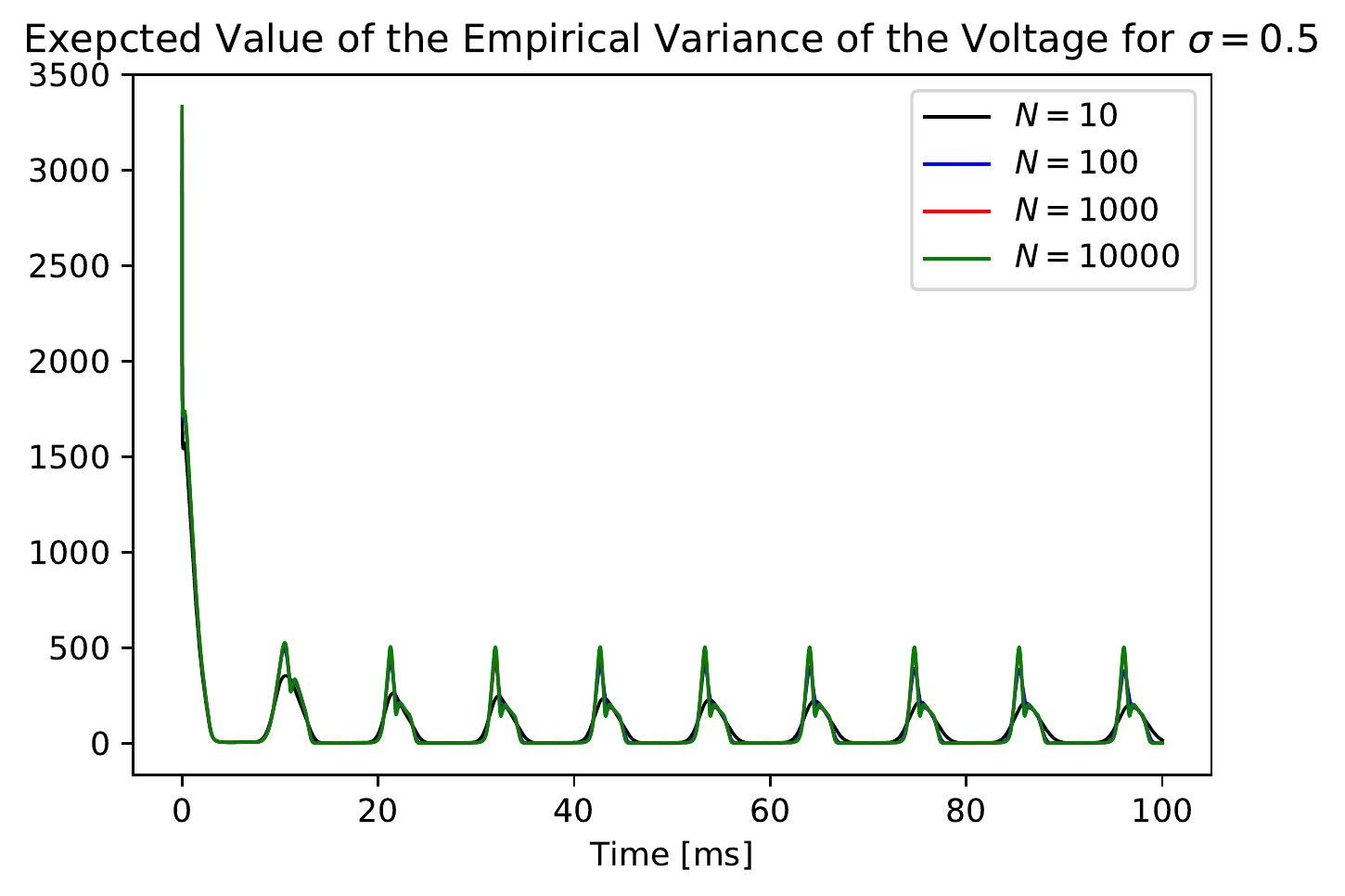}
        \caption{\fontsize{9}{11}\selectfont $V$, $\sigma=0.5$.}
    \end{subfigure}
    \quad
       \begin{subfigure}[t]{0.31\textwidth}
        \centering
        \includegraphics[width=\textwidth,height=4.8cm]{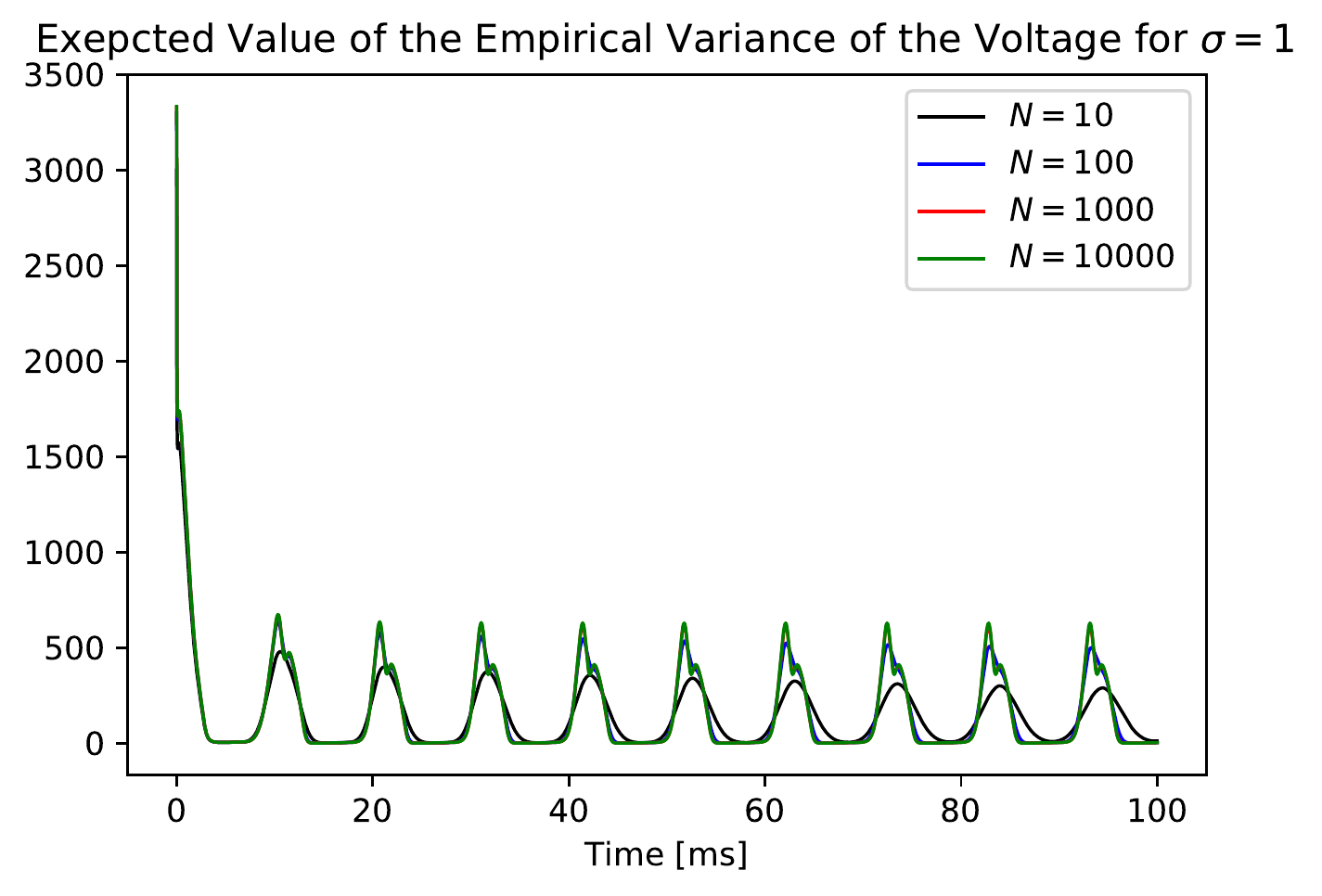}
       \caption{\fontsize{9}{11}\selectfont $V$, $\sigma=1$.}
    \end{subfigure}
    \\
       \begin{subfigure}[t]{0.31\textwidth}
        \centering
        \includegraphics[width=\textwidth,height=4.8cm]{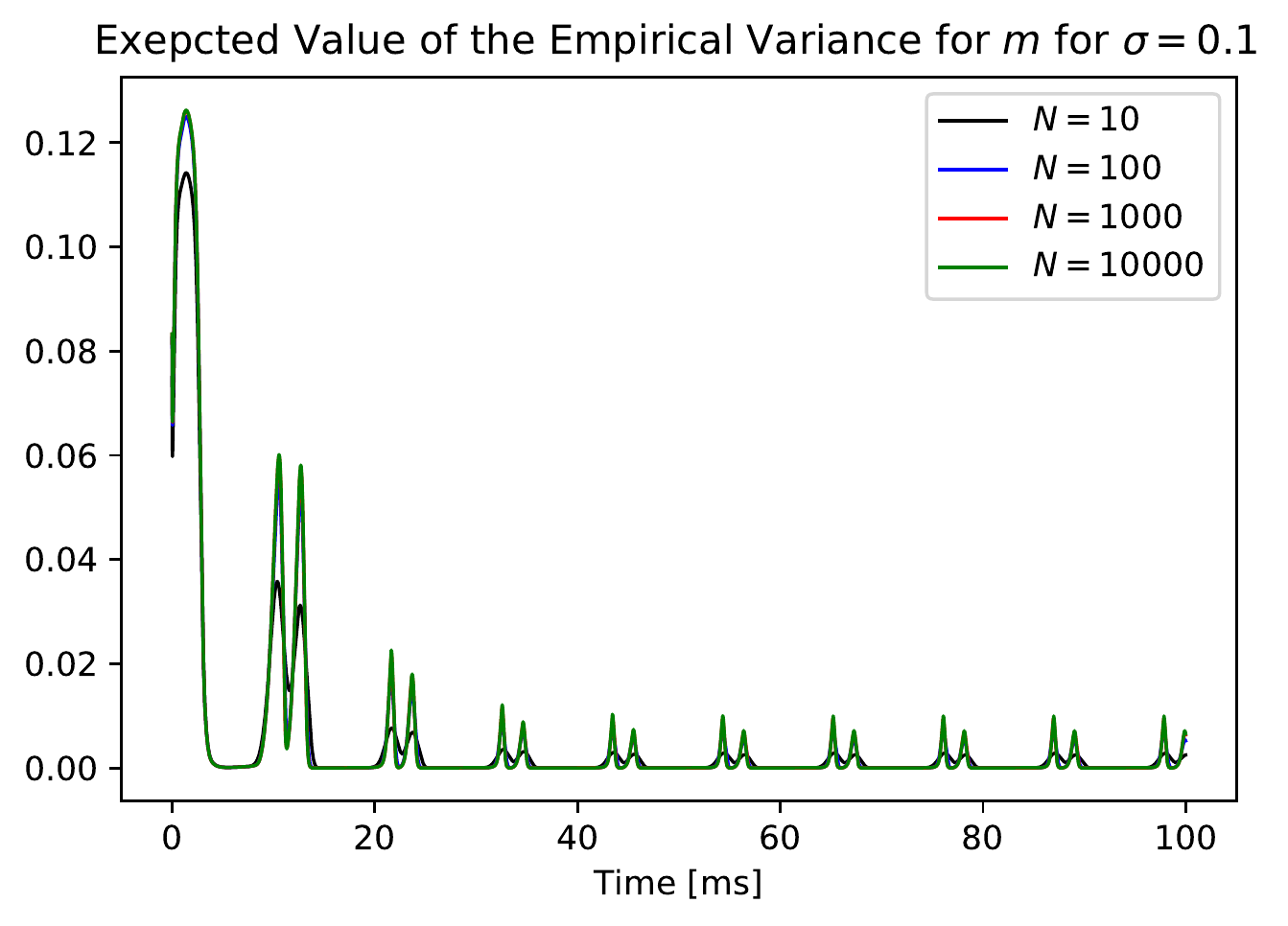}
        \caption{\fontsize{9}{11}\selectfont $m$, $\sigma=0.1$.}
    \end{subfigure}
    \quad
    \begin{subfigure}[t]{0.31\textwidth}
       \centering
        \includegraphics[width=\textwidth,height=4.8cm]{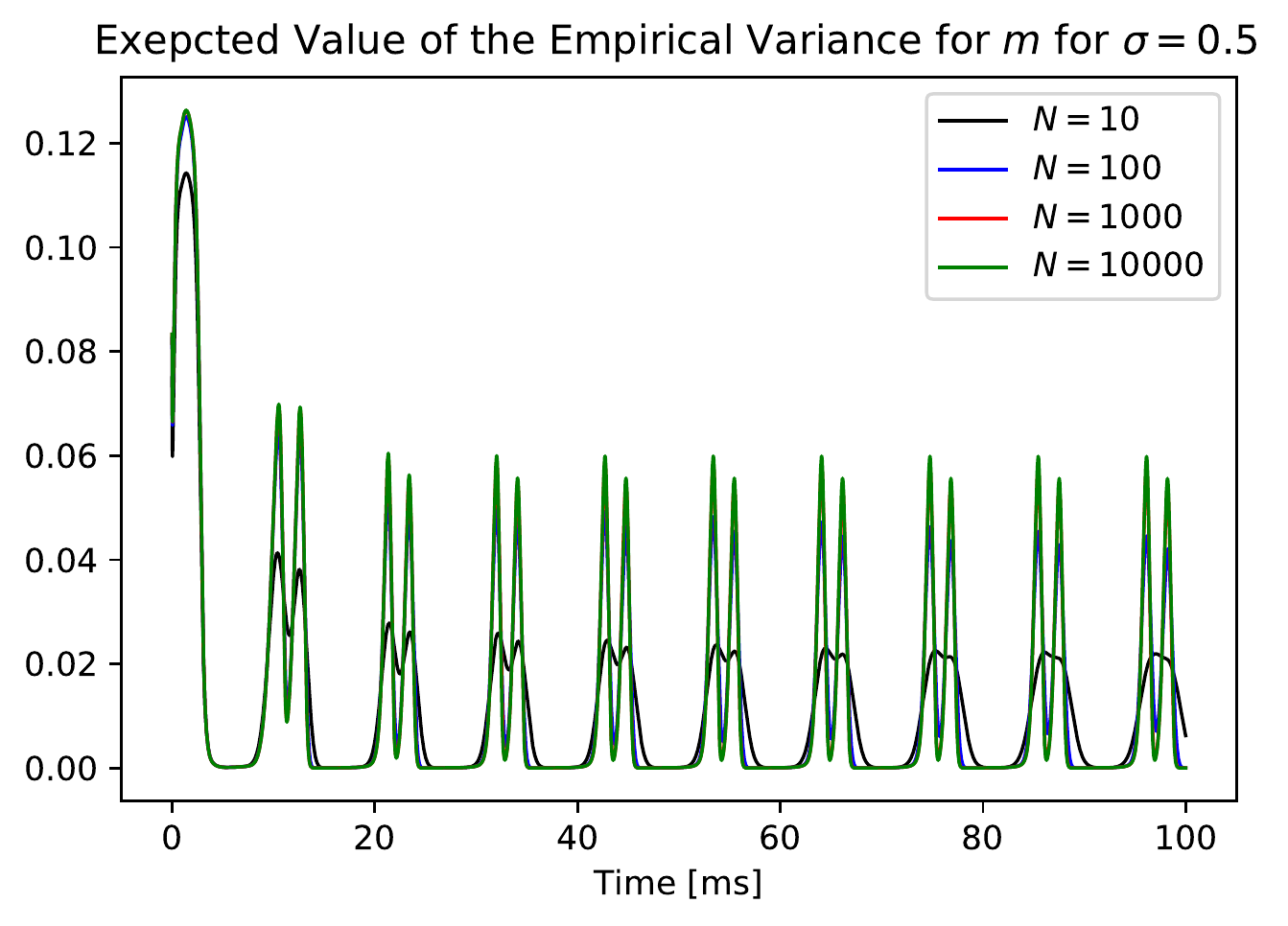}
         \caption{\fontsize{9}{11}\selectfont $m$, $\sigma=0.5$.}
    \end{subfigure}
    \quad
    \begin{subfigure}[t]{0.31\textwidth}
        \centering
        \includegraphics[width=\textwidth,height=4.8cm]{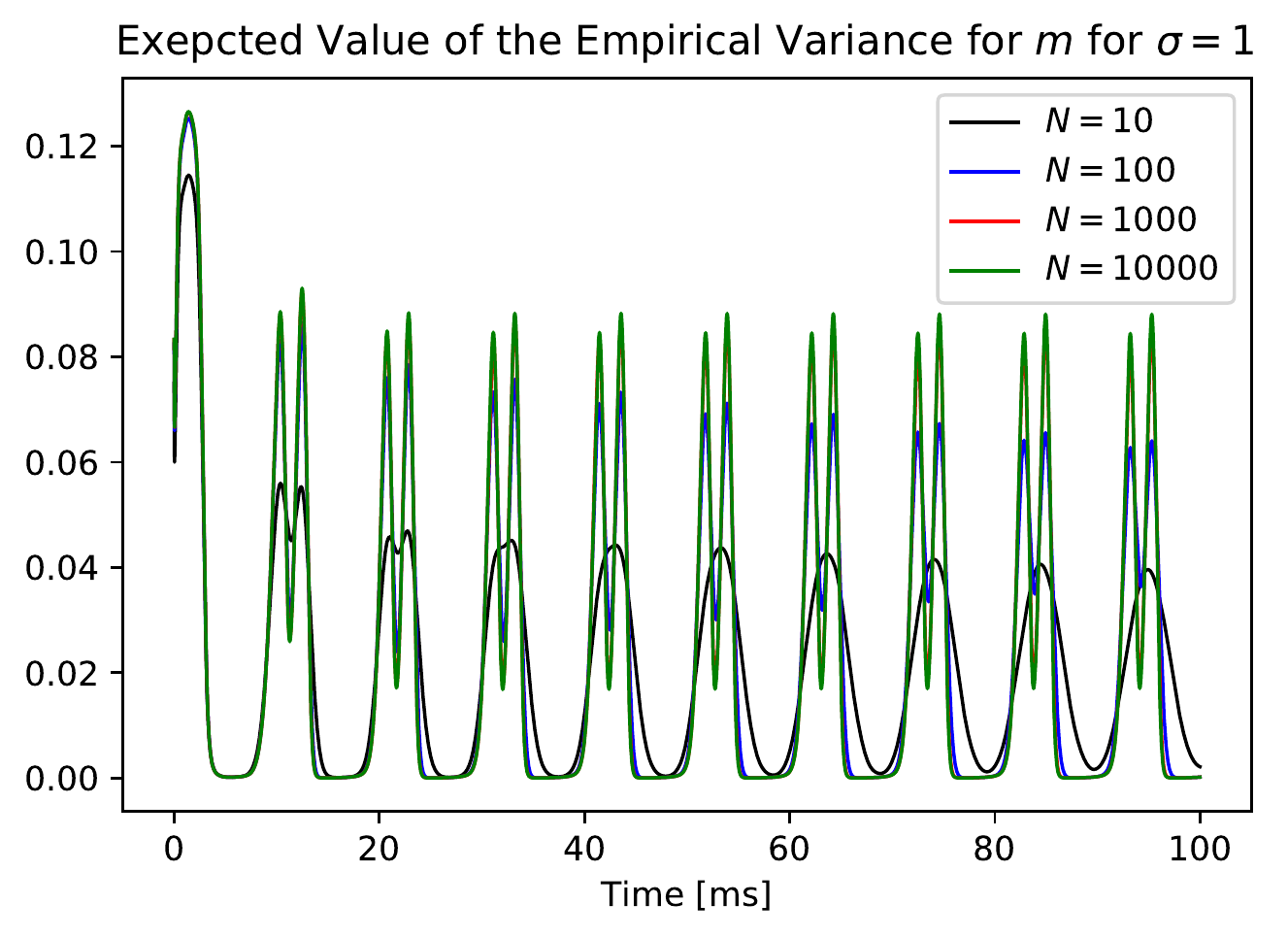}
        \caption{\fontsize{9}{11}\selectfont $m$, $\sigma=1$.}
    \end{subfigure}
    \\
       \begin{subfigure}[t]{0.31\textwidth}
        \centering
        \includegraphics[width=\textwidth,height=4.8cm]{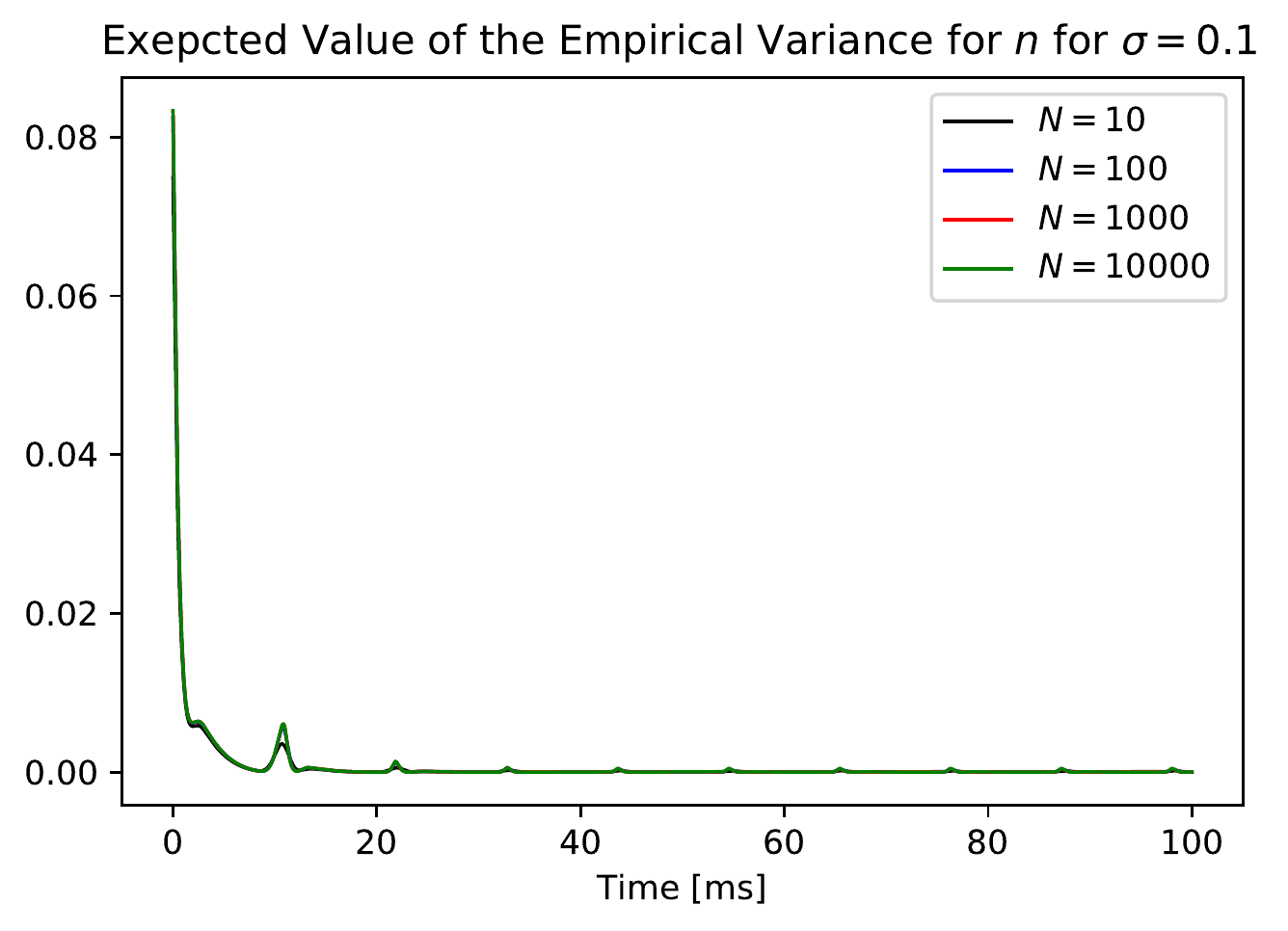}
         \caption{\fontsize{9}{11}\selectfont $n$, $\sigma=0.1$.}
    \end{subfigure}
    \quad
    \begin{subfigure}[t]{0.31\textwidth}
        \centering
        \includegraphics[width=\textwidth,height=4.8cm]{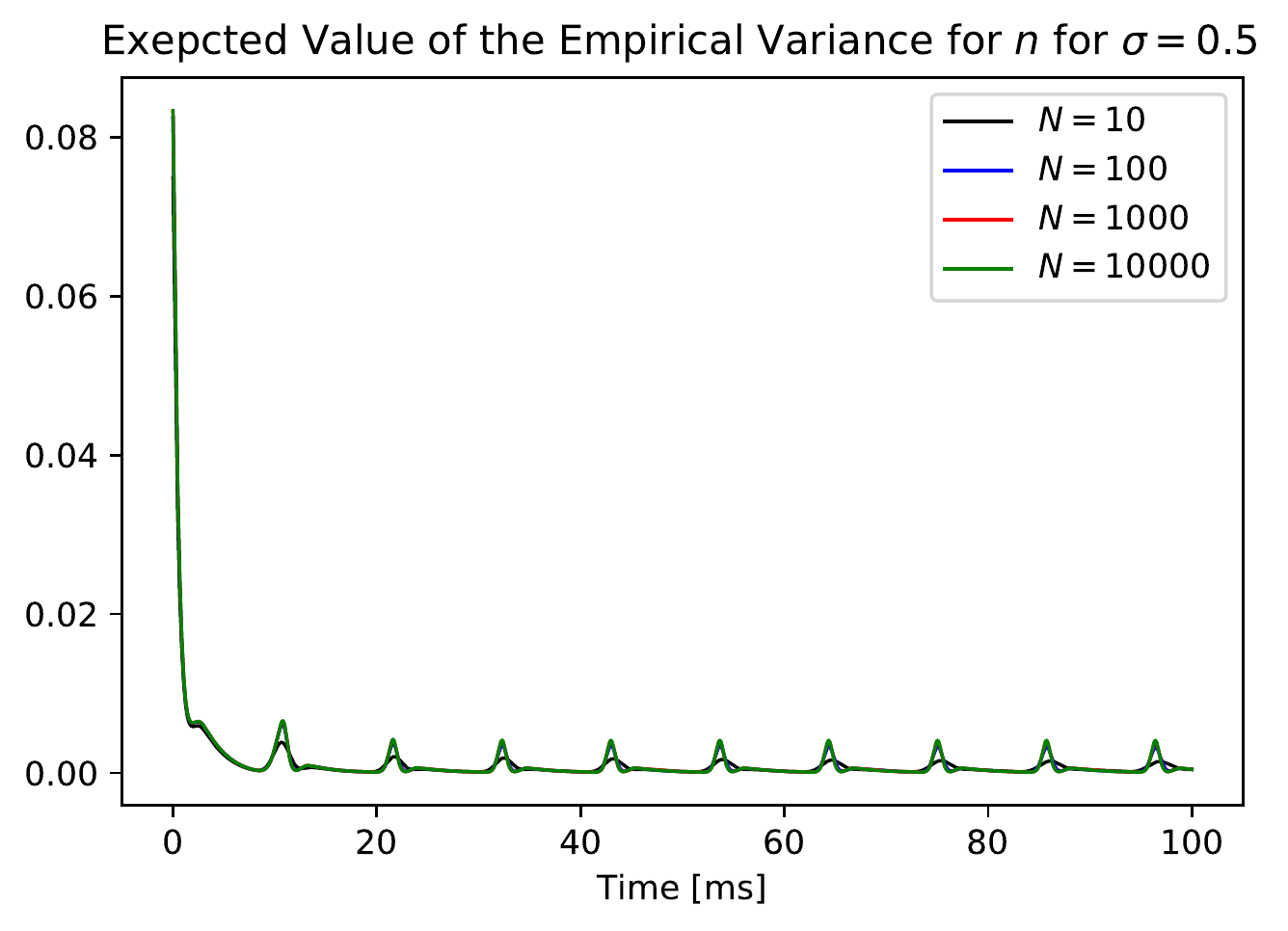}
        \caption{\fontsize{9}{11}\selectfont $n$, $\sigma=0.5$.}
    \end{subfigure}
    \quad
    \begin{subfigure}[t]{0.31\textwidth}
        \centering
        \includegraphics[width=\textwidth,height=4.8cm]{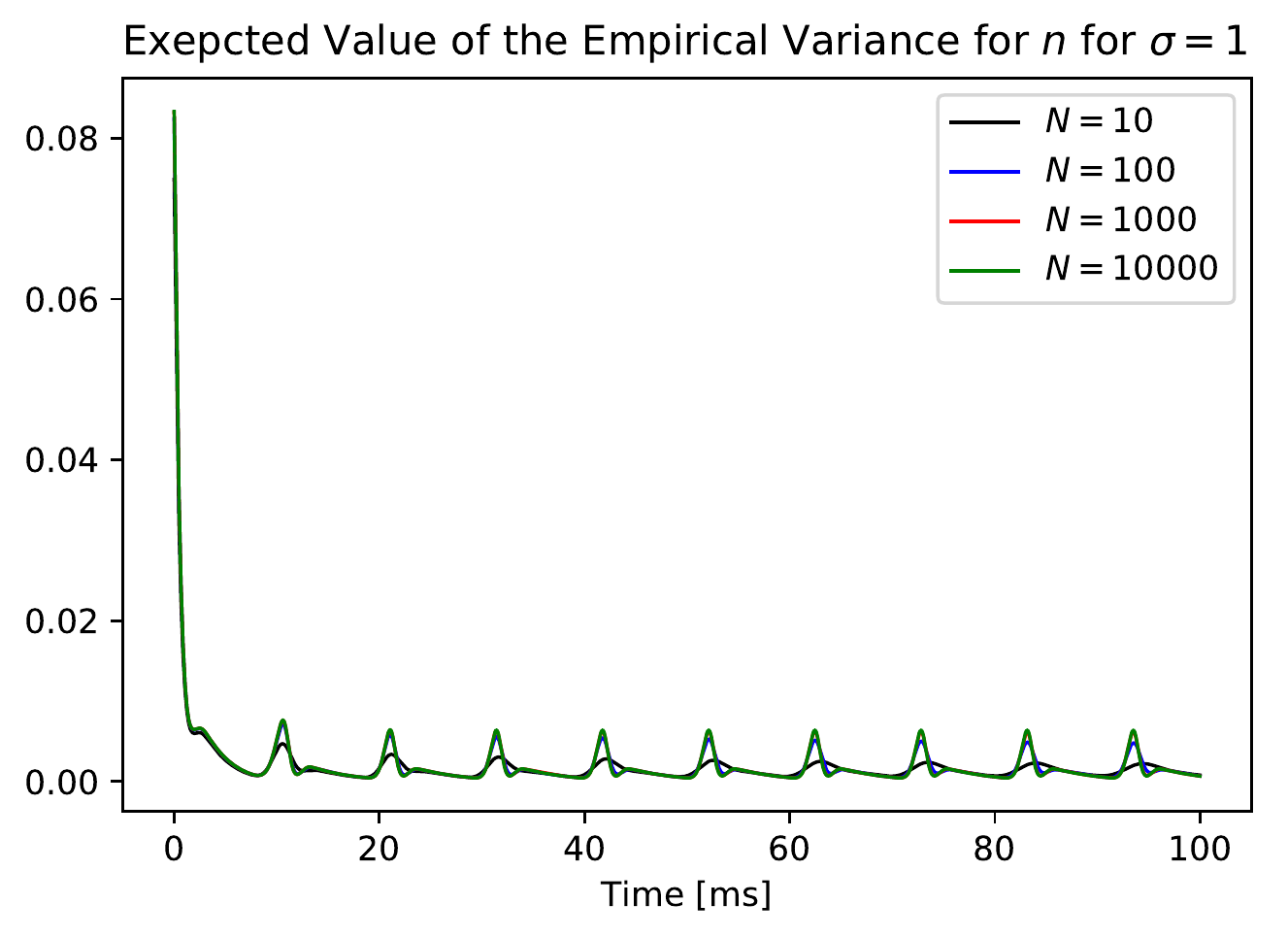}
        \caption{\fontsize{9}{11}\selectfont $n$, $\sigma=1$.}
   \end{subfigure}
     \\
       \begin{subfigure}[t]{0.31\textwidth}
        \centering
        \includegraphics[width=\textwidth,height=4.8cm]{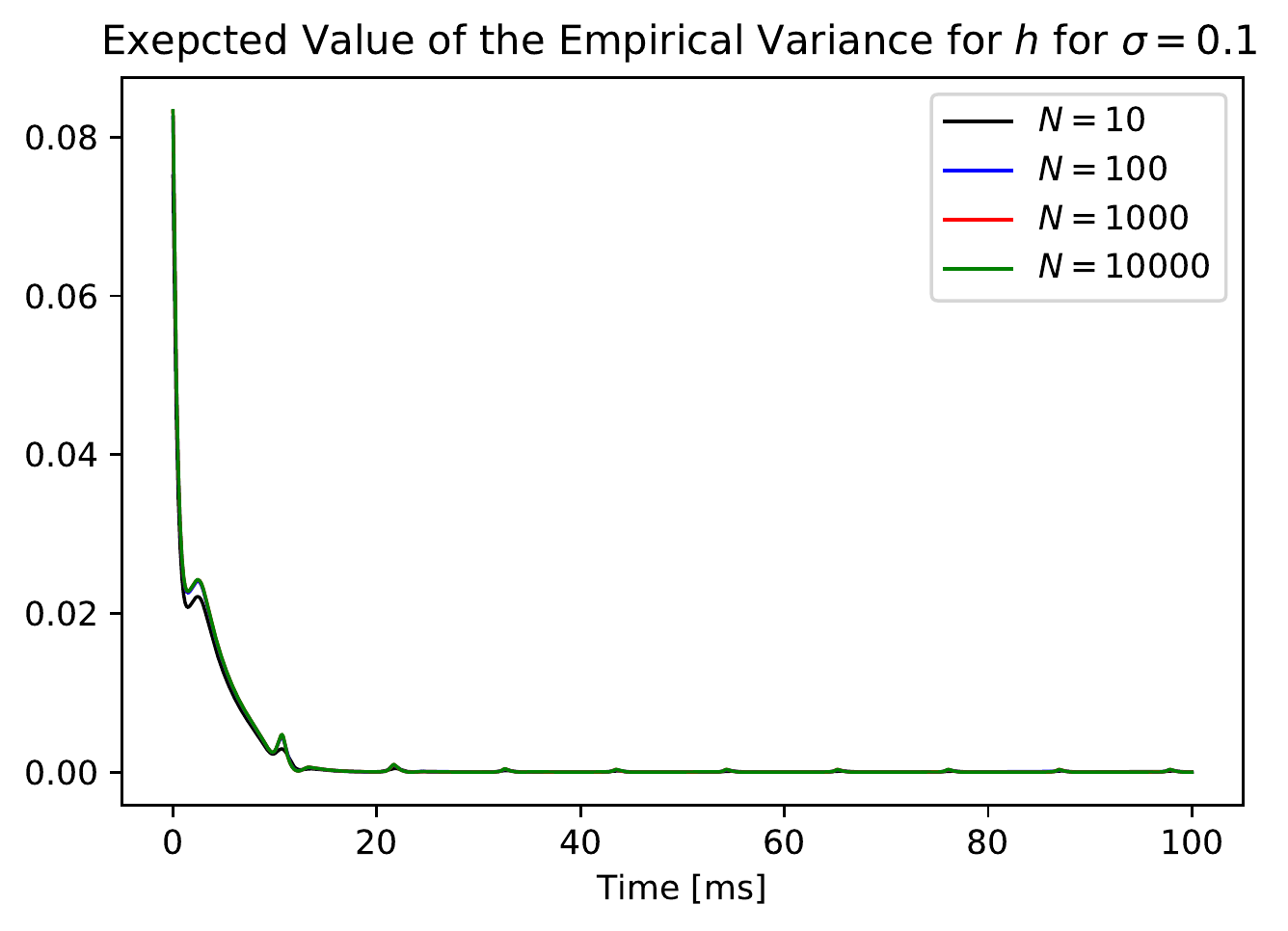}
         \caption{\fontsize{9}{11}\selectfont $h$, $\sigma=0.1$.}
    \end{subfigure}
    \quad
    \begin{subfigure}[t]{0.31\textwidth}
        \centering
        \includegraphics[width=\textwidth,height=4.8cm]{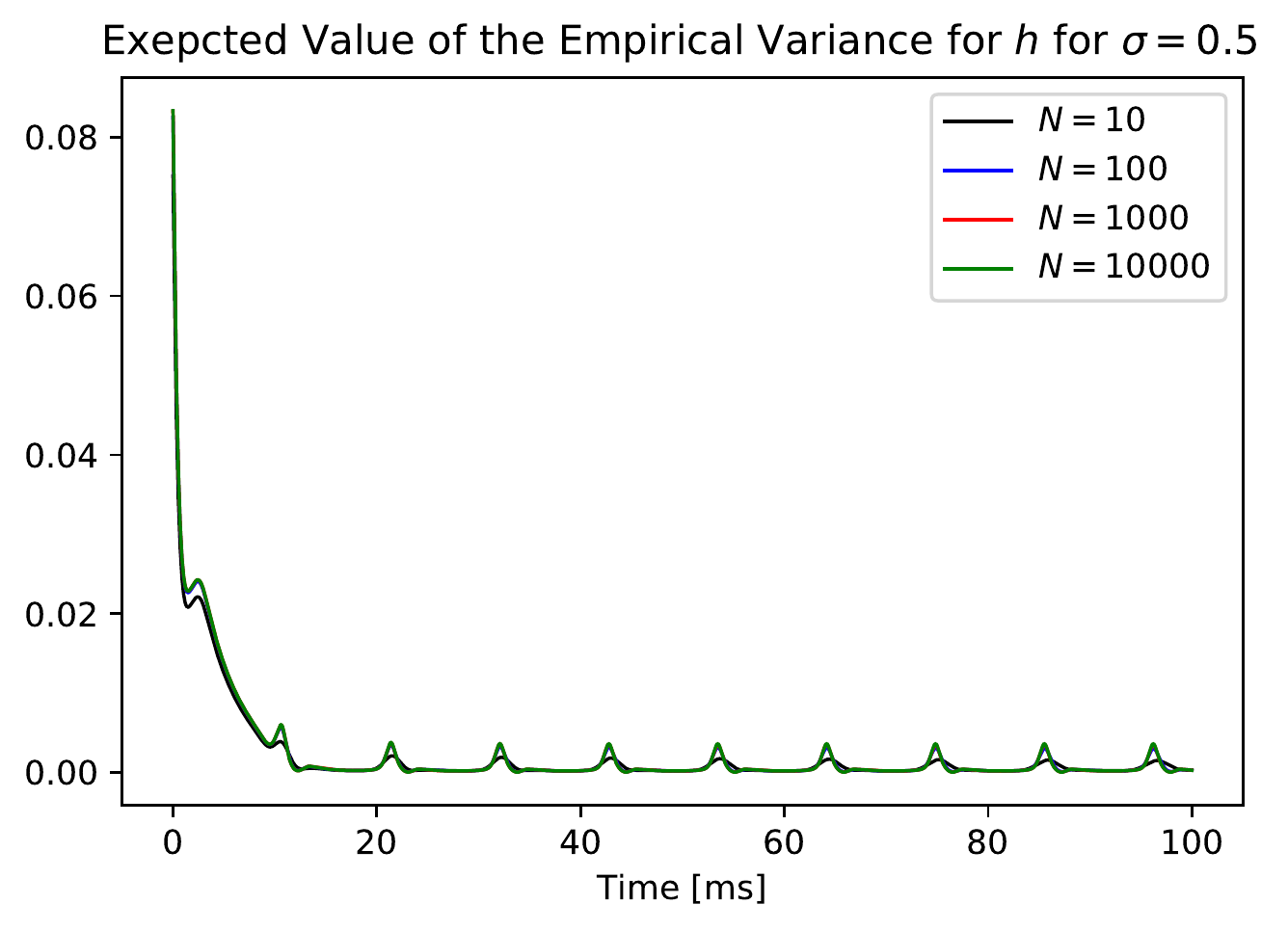}
  \caption{\fontsize{9}{11}\selectfont $h$, $\sigma=0.5$.}
      \end{subfigure}
    \quad
    \begin{subfigure}[t]{0.31\textwidth}
        \centering
        \includegraphics[width=\textwidth,height=4.8cm]{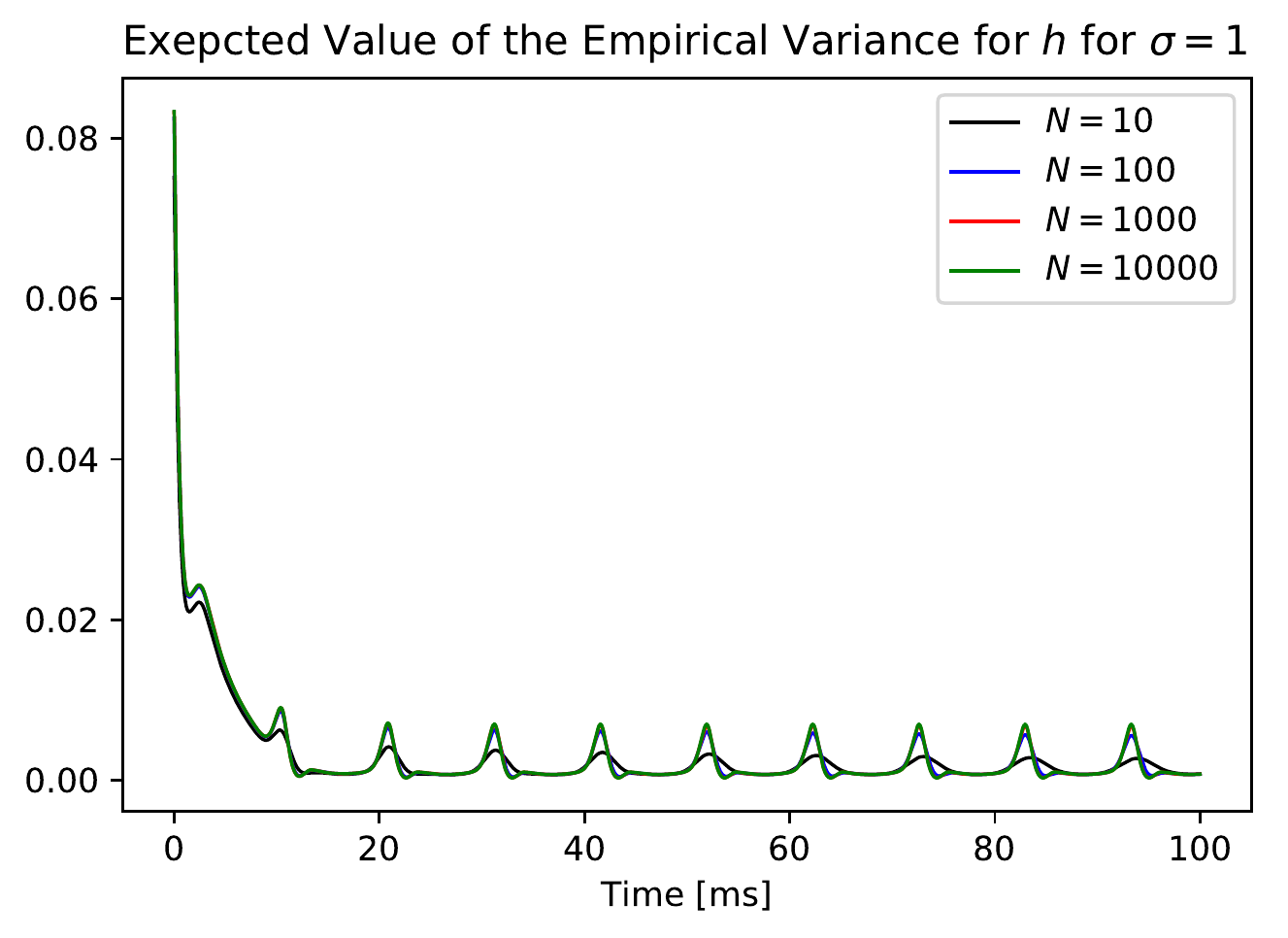}
         \caption{\fontsize{9}{11}\selectfont $h$, $\sigma=1$.}
   \end{subfigure}
   
    \caption{\fontsize{9}{11}\selectfont  Dissipation of the empirical variance for different level of noise. First row:  expected  empirical variance of the voltage; second to fourth rows: expected empirical variance of the Sodium activation channels ($m$), the Potassium channels ($n$) and the Sodium deactivation channels ($h$) respectively. $\JCh=0$ and $\JE=1$ in all simulations.   \label{figure:DisipationEmpiricalVariance-IE}}
\end{figure}

\clearpage

\subsection{Beyond Theorem  \ref{theo:synchro-empiricalVar}}\label{numexpbeyond}

We next carry out  two type of experiments in situations not covered by our theoretical  results on synchronization.  In the first one we study the behavior of a more realistic network with several subpopulations of neurons. The dynamics of the $i$-th neuron in the subpopulation of type $\alpha\in P$, with $P$  denoting the set of subpopulations,  is given by
\begin{equation*}
\begin{aligned}
 V_t^{(i)}& =  V^{(i)}_0 + \int_{0}^{t}F_{\alpha}(V_s^{(i)}, m_s^{(i)},n_s^{(i)},h_s^{(i)}) ds\\
 & \quad - \int_0^t \sum_{\gamma\in P}\frac{1}{N_\gamma}\sum_{j=1}^{N_\gamma}\JE^{\alpha\gamma} (V_s^{(i)}-V_s^{(j)})   -\sum_{\gamma\in P}\frac{1}{N_\gamma}\sum_{j=1}^{N_\gamma}{\JCh^{\alpha\gamma} y^{(j)}_s (V_s^{(i)}-\Vrev^{\alpha\gamma})} ds,\\
 x^{(i)}_t& = x^{(i)}_0+\int_{0}^{t}\rho_x^\alpha(V^{(i)}_s)(1-x^{(i)}_s)  -\zeta_x^\alpha(V^{(i)}_s)x^{(i)}_sds\\
&\quad\quad\quad\quad\quad + \int_{0}^{t}{\sigma_x^\alpha(V_s^{(i)},x_s^{(i)})dW_s^{x,i}},\;\;x=m,n,h,y \, , \qquad t\geq 0 \, ,
\end{aligned}
\end{equation*}
where $N_\gamma$ is the number of neurons in  subpopulation $\gamma$. We notice that in this case the electric and chemical conductivity parameters are  $|P|\times |P|$ matrices. Propagation of chaos for such systems as $N\to \infty$ was proved in  \cite{Bossy2015} (though under slightly more stringent assumptions on the coefficients). 
In Figure \ref{figure:TwoPopElecInt} we show one trajectory of a network of $100$ neurons with two subpopulations, each of them with $50$ neurons. On the left  (plot (a)), we consider the two subpopulations with different levels of noise and  different input current for each of them (different $F$), meanwhile  the electrical conductance matrix $\JE^{\alpha\gamma}$ is homogeneous,  with all the components equal to $1$. We observe that the whole network gets synchronized. We believe that  Theorem \ref{theo:synchro-empiricalVar}  can be easily extended to this case (or, more generally,  when $\inf_{\alpha,\gamma\in P}\JE^{\alpha\gamma}$ is big enough). 
 In the middle  (plot (b)), we observe  that, if in addition to considering different subpopulations,  the matrix $\JE$ is not homogeneous (taking in some entries strictly smaller values  than the largest  value $ 1$),   then synchronization can be observed in each subpopulation but not globally. More precisely, in these examples the two populations synchronize out-of-phase.  Finally, on the right (plot (c)),   we observe no evidence of synchronization when neurons in one population with same $F$ are  electrically connected with two small but different values for $\JE$. This is in line with our theoretical result that thresholds  the synchronization of the dynamics for a big enough $\inf_{\alpha,\gamma\in P}\JE^{\alpha\gamma}$, even if Theorem \ref{theo:synchro-empiricalVar}  gives only a sufficient condition on $\JE$. 
\smallskip

\new{ According to  Kopell and Ermentrout \cite{kopell2004chemical}, ``a small amount of electrical conductance can increase the degree of synchronization far more that a much larger increase in inhibitory conductance''. This is consistent with what we have observed in our numerical experiments. The effect of the electrical interaction is stronger and faster than the effect of the chemical interaction. Therefore to appreciate the effect of the chemical synapses, the second type of experiment we have performed concerns only  chemical  synapses, }that is, $\JE=0$.  In Figure \ref{figure:Ch Inhibitory} we show one trajectory of a network of $100$ neurons interacting through inhibitory chemical synapses (in this case,  we choose $\Vrev= -75 $  according to \cite[p.163]{Ermentrout:2010aa}).  
We observe an anti-phase synchronization that persists in time (see   in plot  (a)  the transition  to  the stationary regime in plot (b)), in which two clusters of simultaneously  firing neurons  emerge.  We note that the relative sizes of the two clusters is random and might change  from one simulation to another one. 

 On the other hand, Figure \ref{figure:Ch Excitatory} shows one trajectory  of a network of $100$ neurons interacting through excitatory chemical synapses (with $\Vrev=0$, see \cite[p.161]{Ermentrout:2010aa}). Some   kind of synchronization,  similar to the case of  purely  electrical synapses  (see e.g. Figure \ref{figure:TwoPopElecInt}(a)),  emerges also here,  although the shape of the oscillations is different and the frequency of the spikes is smaller.

\begin{figure}[ht!]
    \centering
    \begin{subfigure}[t]{0.31\textwidth}
        \centering
        \includegraphics[width=\textwidth,height=4.8cm]{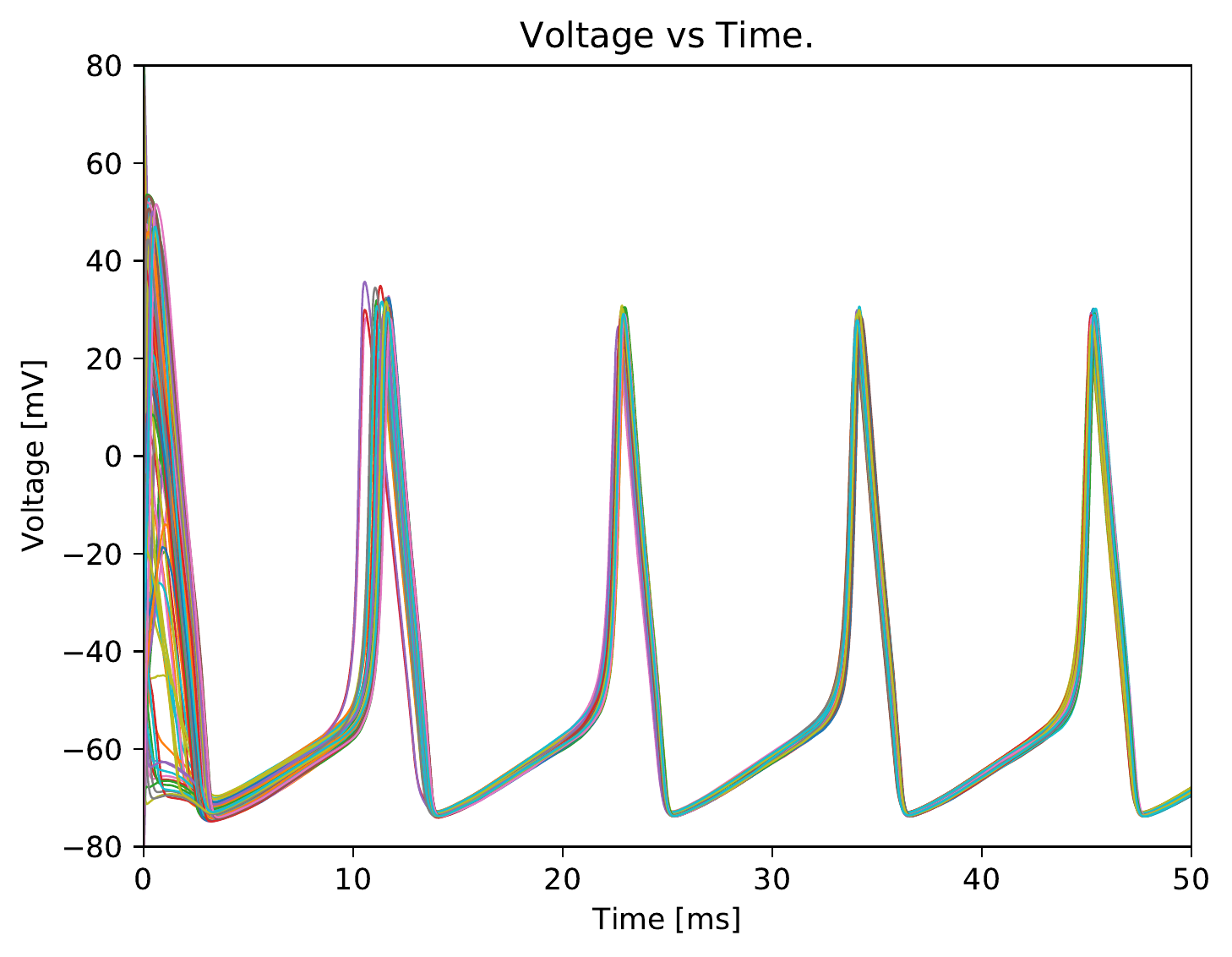}
        \caption{\fontsize{9}{11}\selectfont Populations are different, the interaction is homogeneous.}
    \end{subfigure}
    \quad
   \begin{subfigure}[t]{0.31\textwidth}
        \centering
        \includegraphics[width=\textwidth,height=4.8cm]{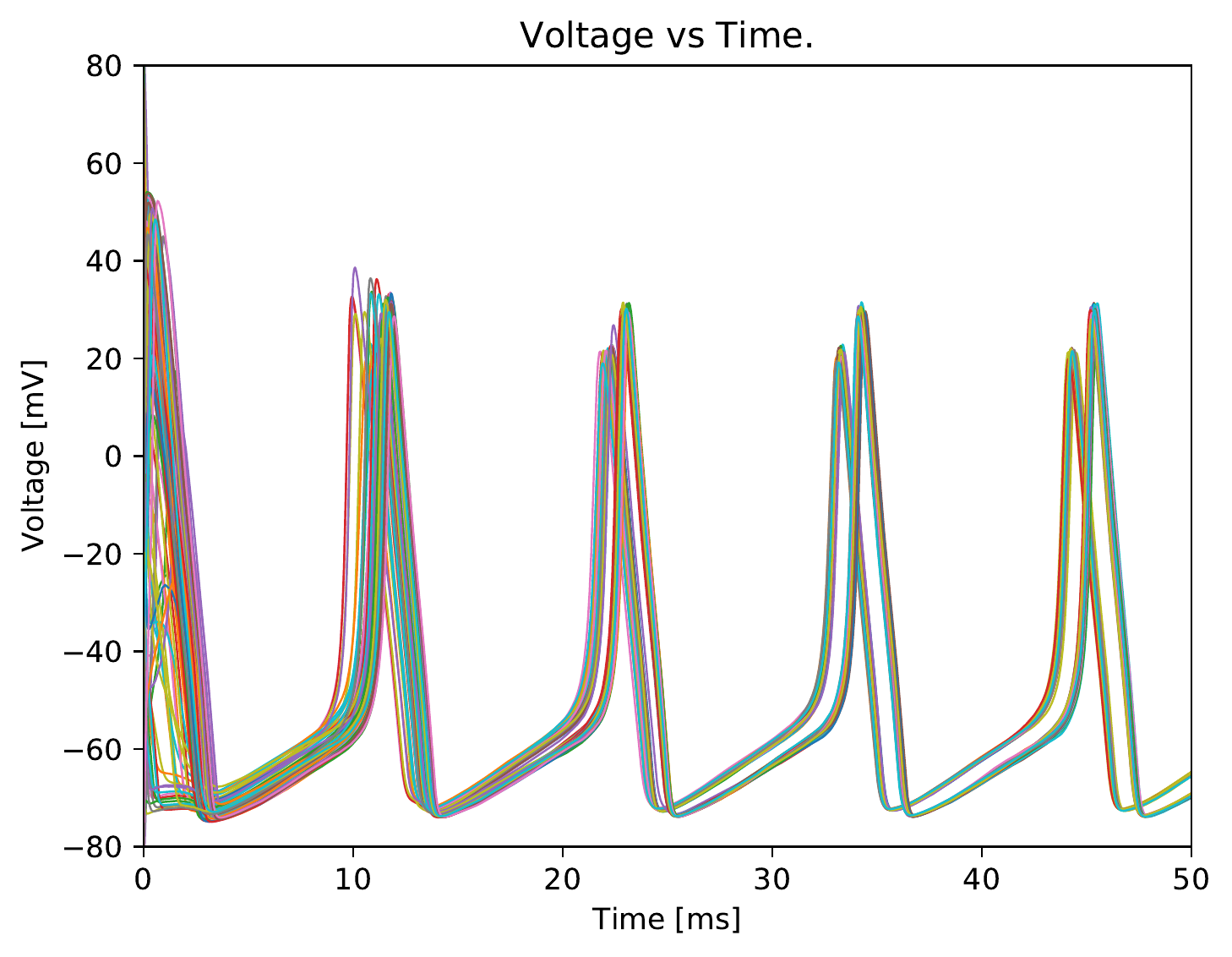}
        \caption{\fontsize{9}{11}\selectfont Populations are different, the interaction is heterogeneous.}
    \end{subfigure}
    \quad
       \begin{subfigure}[t]{0.31\textwidth}
        \centering
        \includegraphics[width=\textwidth,height=4.8cm]{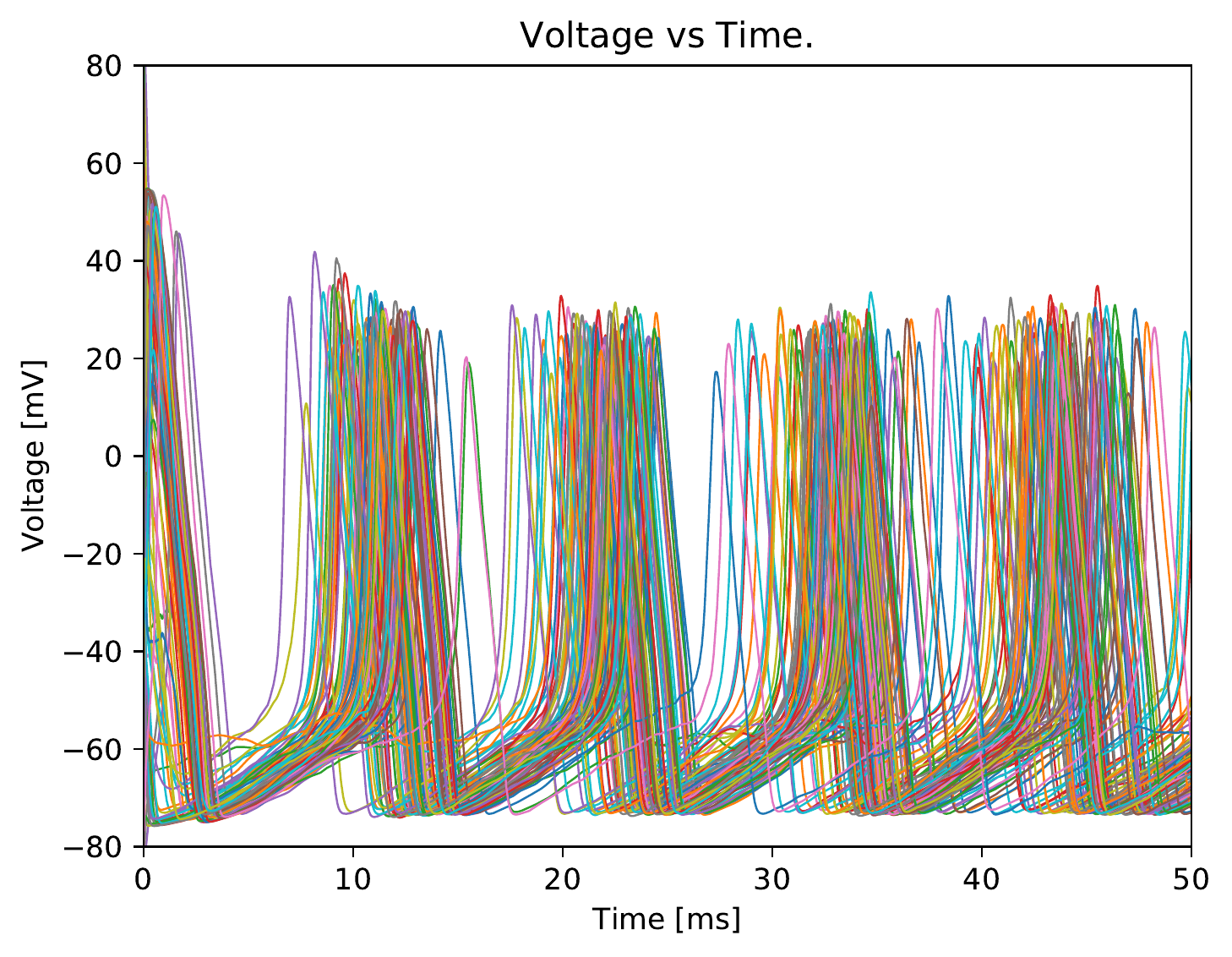}
       \caption{\fontsize{9}{11}\selectfont Populations are equal, the interaction is heterogeneous.}
    \end{subfigure}
   
    \caption{\fontsize{9}{11}\selectfont  Trajectories for network of $100$ neuron with two subpopulations. \label{figure:TwoPopElecInt}}
\end{figure}

\begin{figure}[ht!]
    \centering
    \begin{subfigure}[t]{0.48\textwidth}
        \centering
        \includegraphics[width=\textwidth,height=4.8cm]{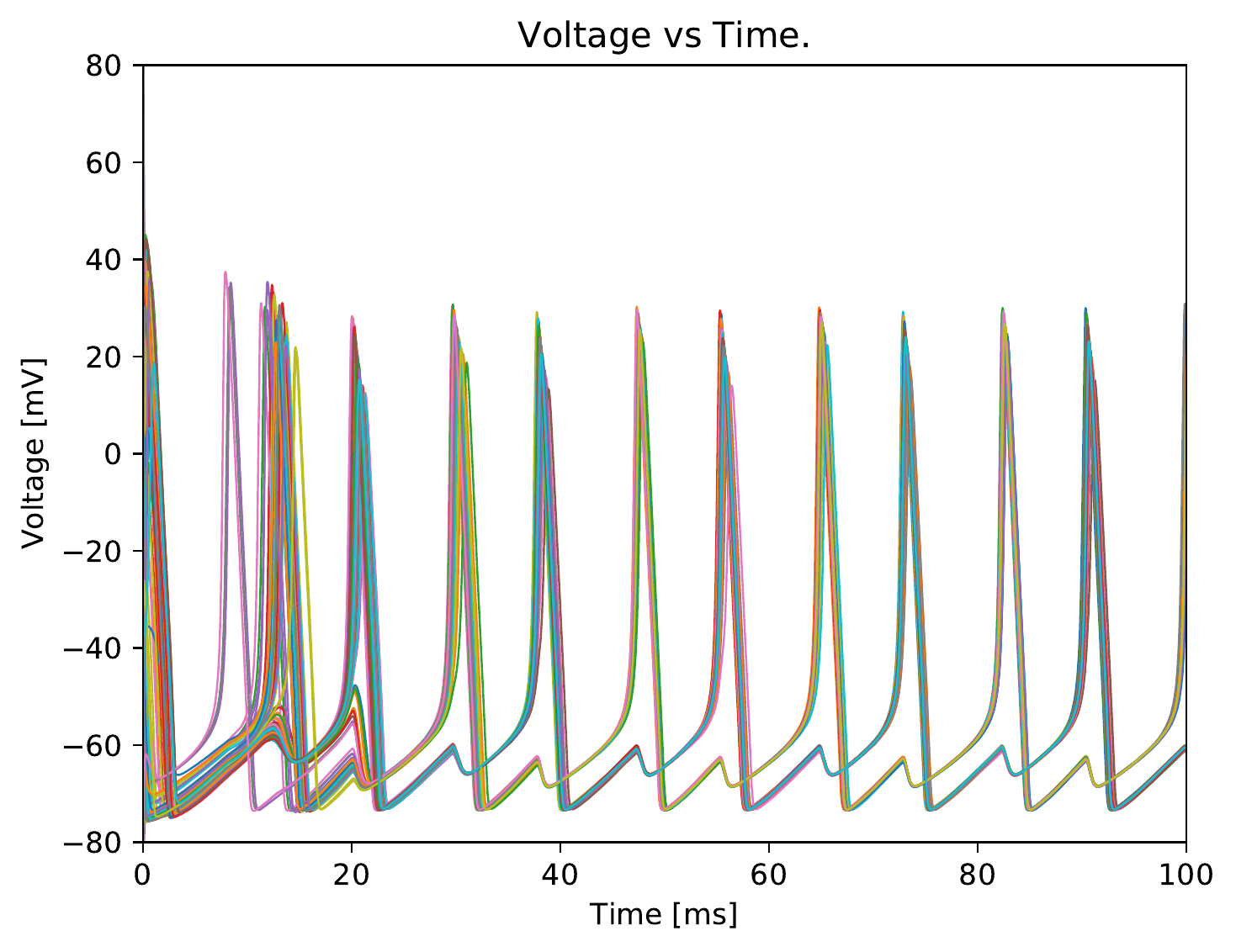}
        \caption{\fontsize{9}{11}\selectfont Voltage of $100$ neurons at the beginning of the simulation.}
    \end{subfigure}
    \quad
   \begin{subfigure}[t]{0.48\textwidth}
        \centering
        \includegraphics[width=\textwidth,height=4.8cm]{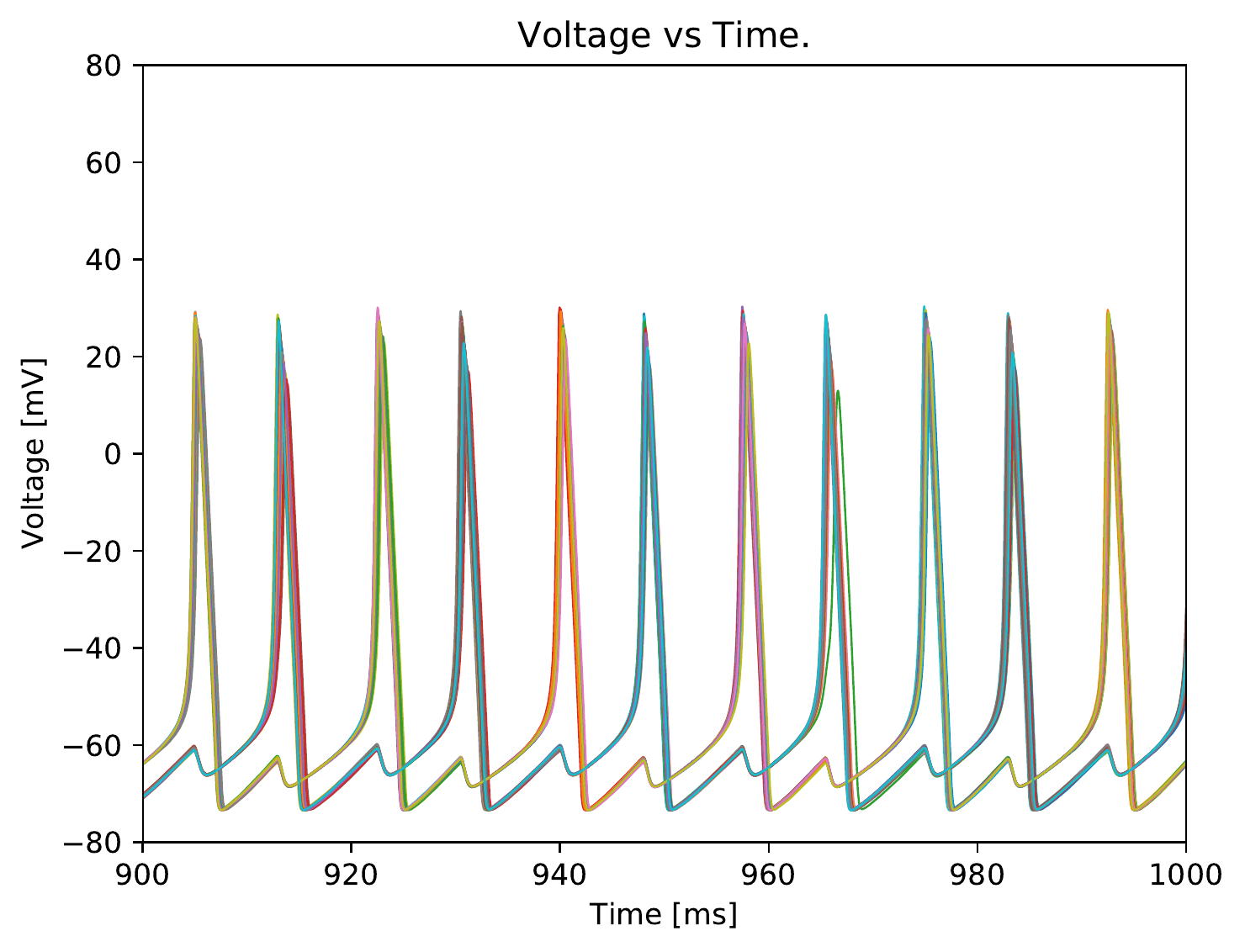}
        \caption{\fontsize{9}{11}\selectfont Voltage of $100$ neurons after $900$ [ms].}
    \end{subfigure}
   
    \caption{\fontsize{9}{11}\selectfont  Trajectories for network of $100$ neuron with inhibitory chemical synapses \label{figure:Ch Inhibitory}}
\end{figure}   

\begin{figure}[ht!]
    \centering
    \begin{subfigure}[t]{0.48\textwidth}
        \centering
        \includegraphics[width=\textwidth,height=4.8cm]{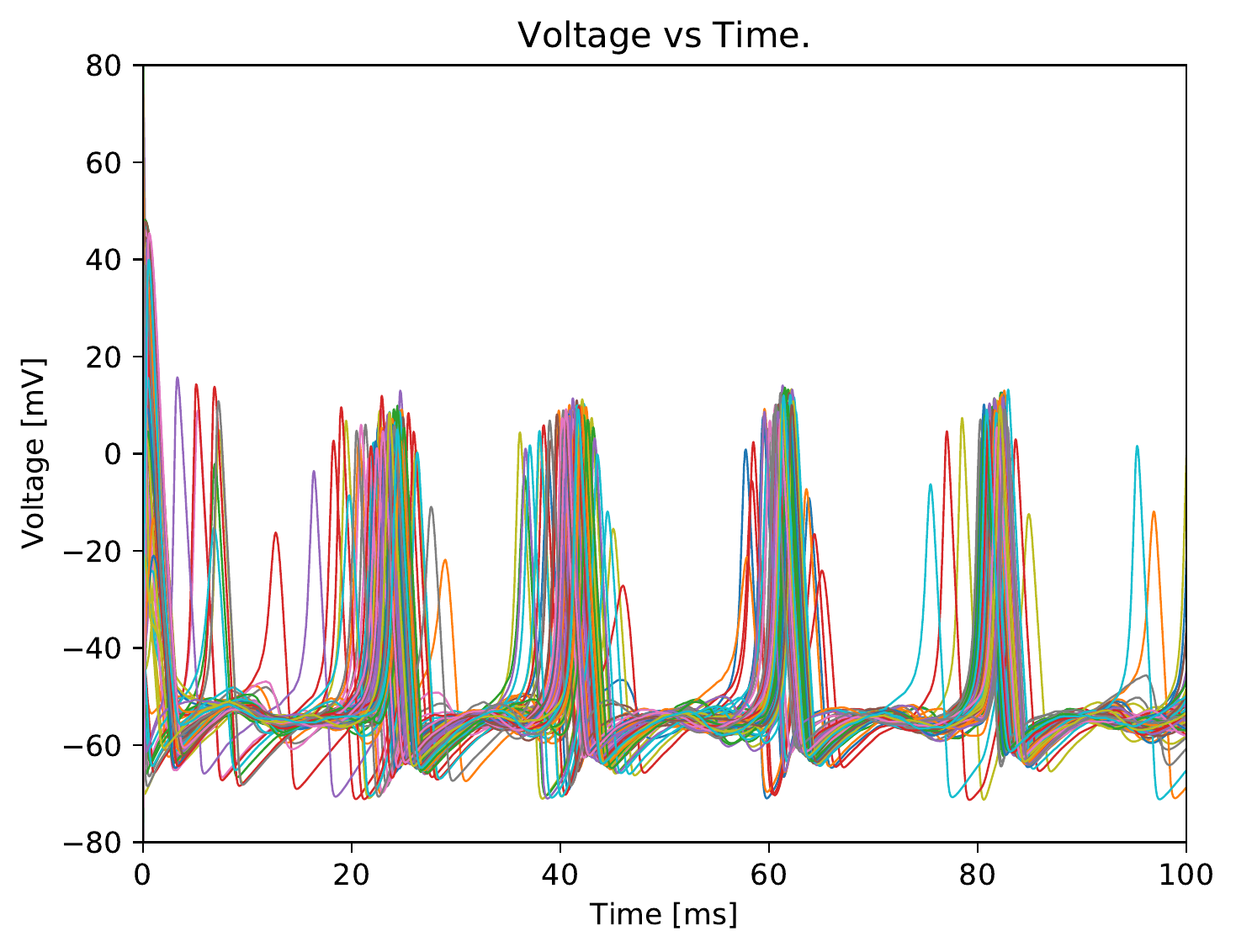}
        \caption{\fontsize{9}{11}\selectfont Voltage of $100$ neurons at the beginning of the simulation.}
    \end{subfigure}
    \quad
   \begin{subfigure}[t]{0.48\textwidth}
        \centering
        \includegraphics[width=\textwidth,height=4.8cm]{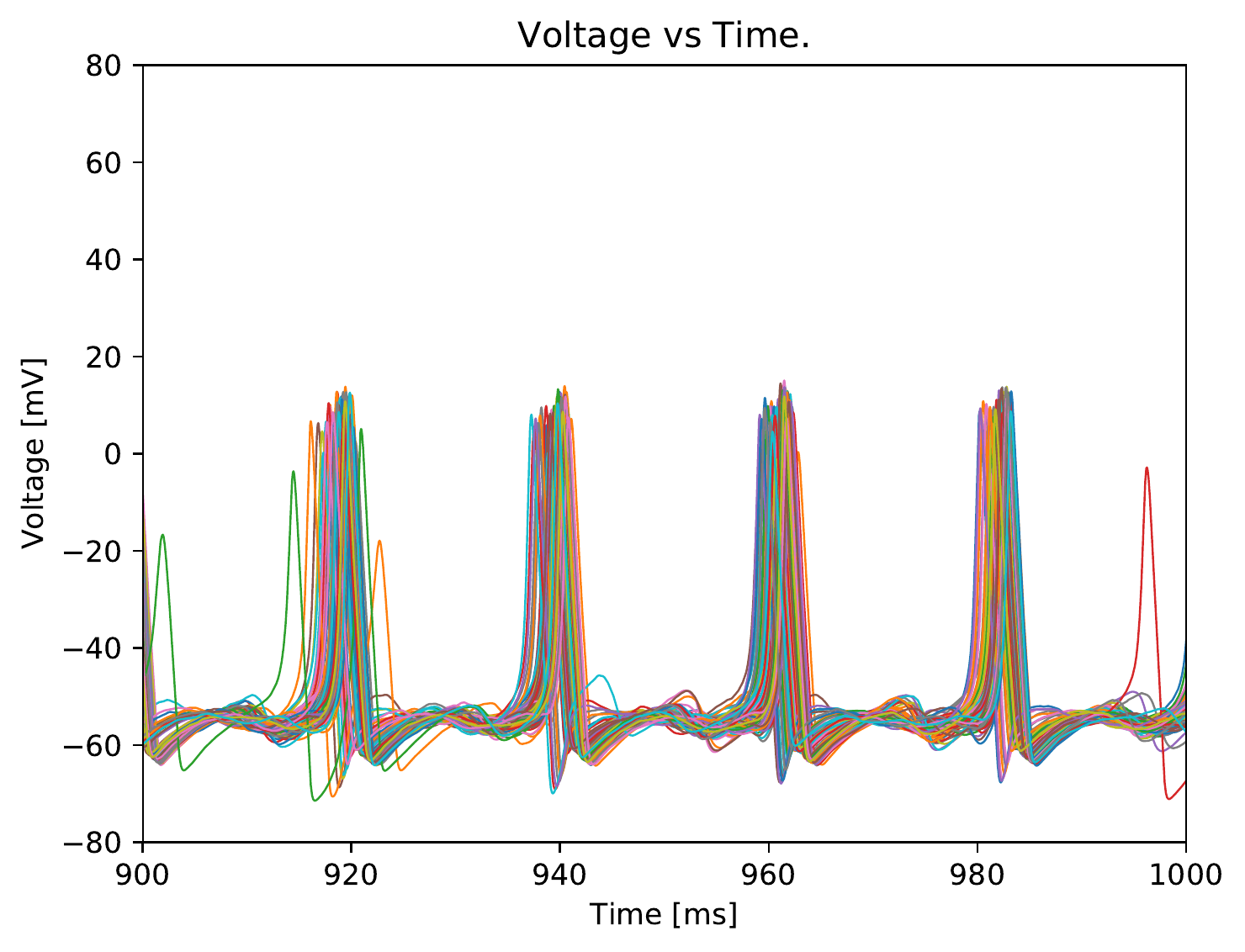}
        \caption{\fontsize{9}{11}\selectfont Voltage of $100$ neurons after $900$ [ms].}
    \end{subfigure}
   
    \caption{\fontsize{9}{11}\selectfont  Trajectory of  a network of $100$ neurons with excitatory chemical synapses \label{figure:Ch Excitatory}}
\end{figure}   

\clearpage
\section{Concluding remarks and open problems}\label{sec:ConcluOpen}

The numerical experiments  presented in Section \ref{numexpillustheo} show that,  from a quantitative point of view,  our theoretical results should still  allow for considerable improvements.  Indeed, our simulations indicate that the actual global bounds on the voltage,  the critical interaction strength above which synchronization  happens and the  asymptotic discrepancy from synchronization  are much smaller than suggested from rough estimates  of the bounds in our theoretical results and their proofs. In turn,  the  actual exponential synchronization rate  seems to be much higher.   Also, our theoretical results  treat  the values of the voltage and channels variables jointly, although they are of considerably different orders of magnitude (i.e. channel variables and variances are negligible compared to the voltages ones). The numerical experiments show in turn that the theoretically described  behavior happens at  each variable's scale.

We  must also emphasize that  the anti- or out-of-phase-  synchronization responses put in evidence in the numerical experiments  presented in Section \ref{numexpbeyond}   are not well captured by the empirical variance criteria proposed in our Theorem \ref{theo:synchro-empiricalVar}. 
In those cases,  a natural, though challenging strategy would be to extend the phase reduction approach and results developed  e.g. in  \cite{hansel1993phase} and \cite{hansel1993patterns}  for finite deterministic networks of HH neurons
 in order to obtain  rigorous synchronization results, regardless of the network size, and then in the mean field limit.  Another interesting but also challenging  question is  studying the existence of stationary measures for the McKean-Vlasov limit equation  in relation with  some characteristic  behavior  of the system and its possible synchronization regimes, in the vein of recent works of Bertini et al.  \cite{bertini-giacomin-poquet-2014},  Giacomin et al. \cite{giacomin-lucon-poquet-2014} and  Lu\c{c}on  and  Poquet \cite{lucon-poquet-2017}.  These questions are left for future works.

\appendix

\section{Basic properties of  the model \eqref{eq:HH-model}}\label{sec:theDiffusiveModel}
We start establishing three basic facts about the system of stochastic differential equations \eqref{eq:HH-model}: its (strong) global well-posedness, the fact that the open channels proportion processes stay (as required)  in $[0,1]$ and, finally, and explicit  global bound for the voltage processes in terms of a bound for the initial values.

 \begin{lem}\label{lem:well-posedHH}
 Assume Hypothesis \ref{hyp:MainHypotheses}.
Then, strong existence and pathwise uniqueness holds for system  \eqref{eq:HH-model}. Moreover,  a.s. for all $t\geq 0$ and every $i=1,\ldots,N$
 we have $( m_t^{(i)},n_t^{(i)},h_t^{(i)},y_t^{(i)})\in [0,1]^4$. In particular, the absolute value in  \eqref{eq:def-sigma_x}
can be removed.
  \end{lem}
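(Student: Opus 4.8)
The plan is to treat \eqref{eq:HH-model} as a locally Lipschitz system, confine the channel variables to $[0,1]^4$ using the degenerate structure of the noise at the boundary, and then rule out explosion of the voltages through an a priori linear bound. First I would check that all coefficients of \eqref{eq:HH-model} are locally Lipschitz on $\R^{5N}$, so that standard truncation yields a unique strong solution up to an explosion time $\tau_\infty=\lim_{R\to\infty}\tau_R$, where $\tau_R=\inf\{t\geq0:\max_i|V^{(i)}_t|\geq R\}$; this already secures pathwise uniqueness. The drift terms are plainly locally Lipschitz (polynomials in the voltages and channels composed with the locally Lipschitz $\rho_x,\zeta_x$), and the only delicate coefficient is $\sigma_x$. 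Here I would use that for $z\in[0,1]$ and $\rho_x,\zeta_x>0$ one has $\rho_x(v)(1-z)+\zeta_x(v)z\geq\min(\rho_x(v),\zeta_x(v))>0$, so on each set $\{|v|\leq R,\ z\in[0,1]\}$ the square root is smooth and bounded away from $0$, whereas for $z\notin[0,1]$ one has $\chi(z)=0$ and hence $\sigma_x=0$; combined with the Lipschitz continuity of $\chi$ and the fact that $\supp\chi\subseteq[0,1]$ forces $\chi(0)=\chi(1)=0$, this makes $\sigma_x$ locally Lipschitz.

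The crux is then the confinement of $(m^{(i)},n^{(i)},h^{(i)},y^{(i)})$ to $[0,1]^4$ on $[0,\tau_\infty)$, which I would establish following \cite{Bossy2015}. The two structural facts are that the diffusion vanishes off $(0,1)$ (since $\chi(0)=\chi(1)=0$) and that the drift $b_x(v,z)=\rho_x(v)(1-z)-\zeta_x(v)z$ points strictly inward at the boundary, $b_x(v,0)=\rho_x(v)>0$ and $b_x(v,1)=-\zeta_x(v)<0$ for every $v$. Concretely, applying Itô's formula to $(z^-)^2$ and to $((z-1)^+)^2$ along $x^{(i)}$, the stochastic integral and the second-order term vanish on $\{x^{(i)}<0\}$ and on $\{x^{(i)}>1\}$ respectively (there $\sigma_x=0$), while the surviving drift contribution carries the favourable sign; a localization at $\tau_R$ followed by Gronwall then forces $\E\big[((x^{(i)}_{t\wedge\tau_R})^-)^2\big]=\E\big[((x^{(i)}_{t\wedge\tau_R}-1)^+)^2\big]=0$, starting from $x^{(i)}_0\in[0,1]$ (Hypothesis \ref{hyp:MainHypotheses}). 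By continuity this gives $x^{(i)}_t\in[0,1]$ for all $t<\tau_\infty$ and each channel. Importantly, this step needs no control on $V$, since the inward drift relies only on $\rho_x,\zeta_x>0$, so there is no circularity with the voltage estimate.

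With the channels confined to $[0,1]^4$ I would rule out explosion. Writing the electrical term as $\JE(V^{(i)}_t-\frac1N\sum_j V^{(j)}_t)$ and using $\frac1N\sum_j y^{(j)}_t\in[0,1]$, the boundedness of $\gK n^4,\gNa m^3h,\gL$ on $[0,1]^4$ gives $\big|\frac{d}{dt}V^{(i)}_t\big|\leq C(1+|V^{(i)}_t|+\max_j|V^{(j)}_t|)$ with $C$ depending only on the parameters and on $N$. Applying this to $S_t=\sum_i(V^{(i)}_t)^2$ and using Cauchy--Schwarz yields $\frac{d}{dt}S_t\leq C'(1+S_t)$, so Gronwall keeps $S_t$ finite on every $[0,T]$; hence $\tau_R\uparrow\infty$, $\tau_\infty=\infty$, and the local solution is global. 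Finally, since $x^{(i)}_t\in[0,1]$ renders both $\rho_x(v)(1-z)$ and $\zeta_x(v)z$ nonnegative along the solution, the argument of the absolute value in \eqref{eq:def-sigma_x} is nonnegative and the absolute value can be dropped.

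I expect the confinement step to be the main obstacle: it is the only genuinely non-classical part, precisely because the diffusion degenerates at the very boundary where control is required, so one cannot invoke standard non-degenerate comparison or invariance criteria and must instead exploit the exact compatibility between the vanishing of $\chi$ at $\{0,1\}$ and the inward-pointing drift. The remaining ingredients (local Lipschitz solvability and the Gronwall non-explosion bound) are routine once that invariance is in hand.
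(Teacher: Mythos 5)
Your proposal is correct and its skeleton coincides with the paper's: (i) all coefficients of \eqref{eq:HH-model} are locally Lipschitz, the only delicate one being $\sigma_x$, which both you and the paper handle by the same two observations (on $\R\times[0,1]$ the argument of the square root in \eqref{eq:def-sigma_x} is pinched between $\min(\rho_x,\zeta_x)>0$ and $\rho_x+\zeta_x$, locally in $v$, while off $[0,1]$ the factor $\chi$ kills everything); (ii) the channel variables stay in $[0,1]^4$ because the noise degenerates where $\chi$ vanishes and the drift points inward; (iii) once the channels are confined, the voltage drift has linear growth, which forbids explosion. The differences are in execution and are worth recording. For the confinement step the paper does not prove the invariance itself: it verifies the sign condition $[\rho_x(v)(1-u)-\zeta_x(v)u][\mathbf{1}_{(-\infty,0]}(u)-\mathbf{1}_{[1,\infty)}(u)]\geq 0$ and the vanishing of $\sigma_x$ off $(0,1)$, and then invokes Proposition 3.3 of \cite{Bossy2015} (remarking that the proof there tolerates $\supp\chi\subseteq[0,1]$ in place of compact support in $(0,1)$); your It\^o argument on $(z^-)^2$ and $((z-1)^+)^2$ is a self-contained substitute, provided you phrase it via Meyer--It\^o/Tanaka or a smoothing of these $C^1$ functions, and in fact Gronwall is superfluous there: after localization the surviving drift term already has a sign, so the expectation is directly nonincreasing. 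For the global-in-time part, the paper makes the truncation explicit (globally Lipschitz $F_M$, $p^j_M$), patches the truncated solutions by stopping times, and excludes explosion through the moment bound $\E|X_{t\wedge\theta_M}|\leq C(t)$ combined with $M\,\P(\theta_M<t)\leq C(t)$; your pathwise Gronwall bound on $\sum_i(V^{(i)}_t)^2$ is equally valid and even simpler, since the voltage equations carry no noise, so the control is almost sure rather than in expectation. One point to tighten: the explosion time furnished by local Lipschitz theory is defined through the full state norm, not through $\max_i|V^{(i)}_t|$ alone as in your $\tau_R$; either truncate in all variables, or add the one-line remark that while the voltages stay below $R$ the channel coefficients have linear growth in $x$ and $\sigma_x$ is bounded, so no explosion can originate in the channel coordinates. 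With that adjustment your proof closes as written.
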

\begin{proof} It is enough to prove the result for deterministic initial data so we assume this is the case. Take $M>0$ fixed, and for $j=1,3, 4$ define  truncation functions $p^j_M$  on $\R$  by 
\begin{equation*}
p^j_M(x) = \left\{ \begin{array}{cl}
x^j&x\in[-M,M]\\
M^j&x\in(M,\infty)\\
(-M)^j&x\in(-\infty,-M).
\end{array}\right.
\end{equation*}
Let $X^{(M)}:=(X^{(1,M)},\dots, X^{(N,M)}) $ with $X_t^{(i,M)}=(V_t^{(i,M)}, m_t^{(i,M)},n_t^{(i,M)},h_t^{(i,M)},y_t^{(i,M)})$, $i=1,\dots,N$  be defined by
\begin{equation}\label{eq:truncated-HH-model}
\begin{aligned}
 V_t^{(i,M)}& =  V^{(i,M)}_0 + \int_{0}^{t}F_M(V_s^{(i,M)}, m_s^{(i,M)},n_s^{(i,M)},h_s^{(i,M)}) -\frac{1}{N}\sum_{j=1}^{N}\JE (V_s^{(i,M)}-V_s^{(j,M)})  \\
 &\quad\quad\quad\quad\quad -\frac{1}{N}\sum_{i=1}^{N}{\JCh p_M^1(y^{(j,M)}_s) (p_M^1(V_s^{(i,M)})-\Vrev)}ds,\\
 x^{(i,M)}_t& = x^{(i,M)}_0+\int_{0}^{t}\rho_x(p_M^1(V^{(i,M)}_s))(1-p_M^1(x^{(i,M)}_s))  -\zeta_x(p_M^1(V^{(i,M)}_s))p_M^1(x^{(i,M)}_s)ds\\
&\quad\quad\quad\quad\quad + \int_{0}^{t}{\sigma_x(p_M^1(V_s^{(i,M)}),x_s^{(i,M)})dW_s^{x,i}},\;\;x=m,n,h,y , \,
\end{aligned}
\end{equation}
where 
\begin{equation}\label{eq:truncated-HH-drift}
F_M(v,m,n,h)=I - \gK p_M^4(n)(p_M^1(v)-\VK) - \gNa p_M^3(m)p_M^1(h)(p_M^1(v)-\VNa) -\gL (v-\VL). 
\end{equation}
Is is immediate  that the drift coefficients in system \eqref{eq:truncated-HH-model} are Lipschitz continuous. This is less clear in the case of the diffusion coefficients, so we  check this point next.  Notice that 
$$ \max_{(v,u)\in \R \times [0,1]} \rho_x(p_M^1(v))(1-u) + \zeta_x(p_M^1(v))u\leq  S_M:= \max_{v\in [-M,M]}  \rho_x(v) + \zeta_x(v)<\infty$$
whereas,  thanks to point 2) in  Hypothesis  \ref{hyp:MainHypotheses}, 
$$ \min_{(v,u)\in \R \times [0,1]} \rho_x(p_M^1(v))(1-u) + \zeta_x(p_M^1(v))u\geq \delta_M:= \min_{v\in[-M,M]}\{\rho_x(v), \zeta_x(v)\}>0. $$
Therefore,  one can find a bounded Lipschitz continuous function $g_x: \R\to \R_+$ such that $g_x(s)=\sqrt{s}$ on $(\delta_M/2,2S_M)$  
and rewrite
 the diffusion coefficients in  \eqref{eq:truncated-HH-model} as
 $$\sigma_x(p^1_M(v),u)=  \sigma g_x( |\rho_x(p^1_M(v))(1-u) + \zeta_x(p^1_M(v))u|)\chi(u).$$
It is  then easily seen that $|\sigma_x(p^1_M(v),u)-\sigma_x(p^1_M(v'),u')|\leq C_M (|u-u'|+|v-v'|) $ for some $C_M>0$   in each of the three cases $(u,u')\in [0,1]^2$, $(u,u')\in( [0,1]^2)^c$ and $(u,u')\in [0,1]\times [0,1]^c$ for any $v,v'\in \R$. 
Thus,  global pathwise well-posedness for system  \eqref{eq:truncated-HH-model}  holds.  

Thanks to the  second assumption  in point 4) of Hypothesis \ref{hyp:MainHypotheses} and the fact that $\sigma_x(v,u)=0$ for $(v,u)\in \R\times (0,1)^c$ and  $[\rho_x(v)(1-u)- \zeta_x(v)u][\mathbf{1}_{(-\infty,0]}(u)- \mathbf{1}_{[1,-\infty)}(u)]\geq 0$ for $(v,u)\in \R^2$,  we  can more  apply  Proposition 3.3 in \cite{Bossy2015} to get that   $x^{(1,M)},\dots, x^{(N,M)}$   are confined in $[0,1]$ for all time  (notice that the  proof of that result  still works if  Hypothesis 2.1 i)  therein that  $\chi$  be  compactly supported in $(0,1)$ is replaced by $\chi$ being supported in  $[0,1]$). 

 We can now use standard arguments  to deduce global existence and pathwise uniqueness of a solution to  system  \eqref{eq:HH-model}. Indeed,  setting   $\theta_M = \inf\{t\geq0: |X_t^{(M)}|\geq M\}$, using  the global Lipschitz character of its coefficients together with It\^o calculus  and Gronwall's lemma we get for every $M'>M$ that    a.s. for all  $t\geq 0$,
$X^{(M)}_{t\wedge \theta_M} = X_{t\wedge \theta_M}^{(M')}$.  This implies that $\theta_{M'}>\theta_M$ a.s.\ and allows us to unambiguously define a process $X$ solving \eqref{eq:HH-model}
on  the random interval $[0,\theta)$, with $\theta:= \sup_{M>0}\theta_M$, by   $X_t= X^{(M)}_t$ for all $t\in [0,\theta_M]$. 
 On the other hand,  since  $|p_M^1(z)|\leq |z|$ for all $z\in \R$,  for two constants $C_1,C_2>0$ not depending on $M>0$ we have $|F_M(v,m,n,h)|\leq C_1+ C_2 |v|$ for every $(v,m,n,h)\in \R \times [0,1]^3$.  Using this control on the right hand side of the  equations for $V^{(1,M)},\dots, V^{(N,M)}$ in \eqref{eq:truncated-HH-model}
and Gronwall's lemma we get $$\E\left[ |X_{t\wedge \theta_M}| \right] \leq C(t),$$
for some constant $C(t)>0$ not depending on $M$. This yields $M\P\left[ \theta_M<t  \right]\leq C(t) $, 
whence  $\P\left[ \theta<\infty \right] =0$  letting $M$ and then $t \nearrow \infty$ . The statement follows. 
\end{proof}

\begin{rem}

 \begin{itemize}
\item[i)] The arguments given in the previous proof also show that each of the functions $\sigma_x$ is locally Lipschitz on $\R\times [0,1]$. 
\item[ii)] The same proof also works for some extensions of our model. For instance, if independent Brownian motions are added to each of the voltage processes. 
\end{itemize}
\end{rem}

\medskip 

 We next show that under the additional Hypothesis \ref{hyp:V0}, each of the voltage processes is bounded uniformly in time and in $N$. Below and in all the sequel we denote
$$ \Vmax_{t,\infty}:= \max_{i=1,\dots,N} \sup_{s\in [t,\infty)} | V_s^{(i)}|.$$
We also set
$$\Rmax:= \max_{r,s,u \in[0,1]} |I+\gNa\VNa r +\gK \VK s+ \gL \VL + \JCh\Vrev u |. $$

\begin{prop}\label{prop:boundedness-of-V^i}  Under Hypothesis \ref{hyp:V0},   for every $N\geq 1$ and $t\geq 0$   we have a.s.  
$$ 
\left|\meanV_t \right|  \leq \Vmax_0  e^{-\gL t} +\frac{ 2 \Rmax}{\gL}(1-e^{-\gL t})
$$ 
and 
\begin{equation}\label{eq:uniform-bound-for-voltages}
\Vmax_{t,\infty}\leq V^*_t:= \frac{4\Rmax}{\gL}+2 \Vmax_0 e^{-\gL t}.
\end{equation}
 As a consequence  for every $N\geq 1$, there exists at least one invariant law $\mu^N_{\infty}$ for the solution to \eqref{eq:HH-model}, namely there exists a solution $(X_t, t\geq 0 )$  to \eqref{eq:HH-model} such that $X_t$  has law $\mu^N_{\infty}$ for all $t\geq 0$ as soon as $X_0$ has law $\mu^N_{\infty}$. Moreover, this invariant measure is exchangeable.
\end{prop}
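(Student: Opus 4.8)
The plan is to prove the two a priori estimates first and then obtain the invariant law as a consequence, via Krylov--Bogoliubov, using that these estimates confine the dynamics to a compact set. Observe first that along almost every path each $t\mapsto V^{(i)}_t$ is $C^1$, since the voltage equations in \eqref{eq:HH-model} contain no stochastic integral and the channel processes are continuous. Collecting the terms proportional to $V^{(i)}$, I would write
\[ \tfrac{d}{dt}V^{(i)}_t=-\big(\tilde a^{(i)}_t+\JE\big)V^{(i)}_t+\beta^{(i)}_t+\JE\,\meanV_t, \]
where $\tilde a^{(i)}_t=\gK (n^{(i)}_t)^4+\gNa (m^{(i)}_t)^3 h^{(i)}_t+\gL+\JCh\meanY_t\ge \gL$ gathers the dissipative coefficients and $\beta^{(i)}_t=I+\gK (n^{(i)}_t)^4\VK+\gNa (m^{(i)}_t)^3 h^{(i)}_t\VNa+\gL\VL+\JCh\meanY_t\Vrev$ is the channel forcing. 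By Lemma \ref{lem:well-posedHH} the channels lie in $[0,1]$, so $\meanY_t\in[0,1]$ and, by the very definition of $\Rmax$, $|\beta^{(i)}_t|\le \Rmax$.

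For the mean I would introduce the empirical energy $E_t=\frac1N\sum_{i}(V^{(i)}_t)^2$. Summing $V^{(i)}_t\,\tfrac{d}{dt}V^{(i)}_t$ over $i$, the electrical coupling contributes exactly $-\JE\big(E_t-(\meanV_t)^2\big)\le 0$ (a nonnegative empirical variance), which is the crucial dissipativity of the mean-field interaction; using $\tilde a^{(i)}_t\ge\gL$, $|\beta^{(i)}_t|\le\Rmax$ and Cauchy--Schwarz gives $\tfrac12\dot E_t\le -\gL E_t+\Rmax\sqrt{E_t}$, whence $\sqrt{E_t}\le \sqrt{E_0}\,e^{-\gL t}+\tfrac{\Rmax}{\gL}(1-e^{-\gL t})$. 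Since $|\meanV_t|\le\sqrt{E_t}$ and $\sqrt{E_0}\le\Vmax_0$, the first (even stronger than) asserted bound follows. Feeding this control on $\meanV$ into the scalar differential inequality $\tfrac{d}{dt}|V^{(i)}_t|\le -(\gL+\JE)|V^{(i)}_t|+\Rmax+\JE|\meanV_t|$ and integrating (Gronwall/Duhamel) yields \eqref{eq:uniform-bound-for-voltages}; the constants $2,4$ are deliberately non-sharp.

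For the invariant law, the two estimates together with Lemma \ref{lem:well-posedHH} show that, whenever $\sup_i|V^{(i)}_0|\le\Vmax_0$ a.s., the solution remains for all $t\ge 0$ in the compact set $K=\big([-V^{**},V^{**}]\times[0,1]^4\big)^N$ with $V^{**}=\tfrac{4\Rmax}{\gL}+2\Vmax_0$ (because $V^*_t\le V^*_0$). The coefficients of \eqref{eq:HH-model} are autonomous and Lipschitz on $K$ --- in particular $\sigma_x$ is Lipschitz there, as $\rho_x,\zeta_x$ are bounded below away from $0$ on compacts (cf.\ the proof of Lemma \ref{lem:well-posedHH}) --- so solutions depend continuously in $L^2$ on their initial data and the Markov semigroup is Feller. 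I would then start the system from an exchangeable law $\mathcal{L}_0$ supported in $K$ (say i.i.d.\ coordinates with $|V^{(i)}_0|\le\Vmax_0$) and consider the occupation measures $\nu_T=\frac1T\int_0^T\law(X_t)\,dt$. These are all supported in the compact $K$, hence tight, and by the Feller property any weak subsequential limit $\mu^N_\infty$ is invariant --- which is exactly the claimed stationary solution. Finally, the coefficients being symmetric under permutations of the neuron indices, pathwise uniqueness propagates the exchangeability of $\mathcal{L}_0$ to $\law(X_t)$ for every $t$, hence to each $\nu_T$; since exchangeability passes to weak limits, $\mu^N_\infty$ is exchangeable.

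I expect the main obstacle to be the uniform-in-time voltage bound, since it is what makes the effective state space compact and renders the Krylov--Bogoliubov step routine. Its proof must combine two structural facts: the strict positivity $\gL>0$ (Hypothesis \ref{hyp:V0}), which supplies a dissipation rate uniform in $N$ and $\sigma$, and the dissipativity of the electrical coupling (the nonnegative variance term above), which keeps the interaction from pumping energy into the mean. The potentially delicate channel contributions $\gK n^4 V$ and $\gNa m^3 h V$ are tamed not by bounding them as forcing but by absorbing them into $\tilde a^{(i)}_t\ge\gL$, where their nonnegativity only helps dissipation.
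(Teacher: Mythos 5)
Your proposal is correct and follows essentially the same route as the paper's proof: the same decomposition of the drift into a dissipative coefficient bounded below by $\gL$ plus a forcing term bounded by $\Rmax$, the same two-step scheme (first controlling the empirical mean by exploiting the dissipativity of the electrical coupling, i.e.\ $(\meanV_t)^2$ bounded by the normalized empirical energy, then feeding this into each individual voltage equation), and the same Krylov--Bogoliubov occupation-measure argument with exchangeability passing to weak limits. The only differences are cosmetic: you use a direct ODE comparison for $\sqrt{E_t}$ and $|V^{(i)}_t|$ where the paper squares and invokes Lemma \ref{lem:gronwallAmbrisio} (your constants are in fact slightly sharper), and you spell out the Feller property that the paper leaves implicit.
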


\begin{rem}\label{rem:after-prop-boundedness-of-V^i}
~
\begin{itemize}
\item[i)]  The bound $V_t^* $  on $ \Vmax_{t,\infty}$  is in general not optimal. For instance, if $\Vmax_0<  \frac{ 2 \Rmax}{g_L}$, one can choose $\Vmax_0>  \frac{ 2 \Rmax}{g_L + \JE}$ and get  from the last identity in \eqref{eq:boundsV} that   $ \Vmax_{0,\infty}\leq \frac{4 \Rmax}{g_L}<V_0^*$. However, in order to state a synchronization result that holds  for a general class of initial conditions  $V_0$, the   fact that the bound  $V^*_t$ does not depend on the electrical connectivity  $\JE$ and that  $V^*_{\infty}:= \lim_{t \to \infty} V^*_t= \frac{4 \Rmax}{g_L}$ does not depend on the initial condition will be crucial. See point i) in Remark \ref{comments_post_synchro} for a related discussion. 

\item[ii)]   If  point 2) of Hypothesis \ref{hyp:V0}   does not hold, by  slightly modifying the arguments of Lemma \ref{prop:boundedness-of-V^i} we still   can get the a.s.\ bound 
$$ \left|V_t^{(i)}\right|\leq \frac{4\Rmax}{\gL}+2  \frac{  | V_0|}{\sqrt{N}}  e^{-\gL t}  \, $$
 implying a uniform in $N$ bound  for $\E( \Vmax_{t,\infty})$ if for instance all the random variables $V_0^{i,N}$, $i=1,\dots,N$, $N\geq 1$   are equal in law and have finite second moment. However, we have not been able to  fully extend our results to such a framework. 
 \item[iii)]\new{ The same arguments also show that a bound like \eqref{eq:uniform-bound-for-voltages}  holds with $\Vmax_{t,\infty}$ replaced by
 $$\widehat{\Vmax}_{t,\infty}:= \max_{i=1,\dots,N} \sup_{s\in [t,\infty)} |\widehat{ V}_s^{(i)}|.$$
 That is, the voltages obtained with the EPE scheme are also uniformly bounded. 
    }
\end{itemize}
\end{rem}

\new{ In the proof of Proposition \ref{prop:boundedness-of-V^i} and later,} we will make use of the 
 the following version of Gronwall's lemma (see  e.g. Ambrosio et al. \cite[p. 88]{Ambrosio:2008aa}).
\begin{lem}\label{lem:gronwallAmbrisio}
Let $\theta:[0,+\infty)\to\R$ be a locally absolutely continuous function and $a, b \in L_{\text{loc}}^1([0,+\infty))$ be given functions satisfying, for $\lambda\in\R$,
$$\frac{d}{dt}\theta^2(t)+2\lambda \theta^2(t)\leq a(t) + 2b(t)\theta(t)\;\text{for }\mathcal{L^1}-a.e.\;t>0. $$
Then for every $T>0$ we have
$$e^{\lambda T}|\theta(T)|\leq \left( \theta^2(0)+ \sup_{t\in[0,T]}\int_{0}^{t}{e^{2\lambda s}a(s)ds}\right)^{1/2}+2\int_{0}^{T}{e^{\lambda t}|b(t)|dt}.$$
\end{lem}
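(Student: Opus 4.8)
The plan is to reduce the differential inequality to an integral (Bihari-type) inequality via the integrating factor $e^{2\lambda t}$, and then to close it through a quadratic estimate on the supremum of $e^{\lambda t}|\theta(t)|$. First I would set $y(t):=e^{2\lambda t}\theta^2(t)$, which is locally absolutely continuous because $\theta$ is, and observe that $\sqrt{y(t)}=e^{\lambda t}|\theta(t)|$. Differentiating and invoking the hypothesis gives, for $\mathcal{L}^1$-a.e.\ $t>0$,
\[ y'(t)=e^{2\lambda t}\Big(\tfrac{d}{dt}\theta^2(t)+2\lambda\theta^2(t)\Big)\le e^{2\lambda t}a(t)+2e^{2\lambda t}b(t)\theta(t). \]
Since $y$ is locally absolutely continuous and (using that $\theta$ is continuous, hence bounded on compacts, together with $a,b\in L^1_{\mathrm{loc}}$) the right-hand side is locally integrable, the fundamental theorem of calculus yields, for every $t\in[0,T]$,
\[ y(t)\le \theta^2(0)+\int_0^t e^{2\lambda s}a(s)\,ds+2\int_0^t e^{2\lambda s}b(s)\theta(s)\,ds. \]

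Next I would bound the two integral terms. Using $e^{2\lambda s}b(s)\theta(s)\le e^{\lambda s}|b(s)|\,\sqrt{y(s)}$ and $\int_0^t e^{2\lambda s}a(s)\,ds\le S:=\sup_{r\in[0,T]}\int_0^r e^{2\lambda s}a(s)\,ds$, and writing $G:=\theta^2(0)+S$, this produces the closed inequality
\[ y(t)\le G+2\int_0^t e^{\lambda s}|b(s)|\,\sqrt{y(s)}\,ds\qquad\text{for all }t\in[0,T]. \]

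To close it I would use a supremum argument rather than a differential comparison, as this matches the stated constant. Let $m:=\sup_{t\in[0,T]}\sqrt{y(t)}$, which is finite because $y$ is continuous on the compact interval $[0,T]$, and set $B:=\int_0^T e^{\lambda s}|b(s)|\,ds$. Bounding $\sqrt{y(s)}\le m$ inside the integral gives $y(t)\le G+2mB$ for every $t\in[0,T]$, and taking the supremum over $t$ yields $m^2\le G+2mB$. Solving the quadratic inequality $m^2-2Bm-G\le0$ gives $m\le B+\sqrt{B^2+G}\le 2B+\sqrt{G}$, where the last step uses the subadditivity $\sqrt{B^2+G}\le B+\sqrt{G}$. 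Since $e^{\lambda T}|\theta(T)|=\sqrt{y(T)}\le m$ and $\sqrt G=\big(\theta^2(0)+\sup_{t\in[0,T]}\int_0^t e^{2\lambda s}a(s)\,ds\big)^{1/2}$, the claimed bound follows.

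The steps requiring care, rather than genuine obstacles, are measure-theoretic: verifying that $y=e^{2\lambda t}\theta^2$ is locally absolutely continuous (so that the fundamental theorem of calculus legitimately passes from the a.e.\ differential inequality to the integral one) and that its a.e.\ derivative is dominated by an $L^1_{\mathrm{loc}}$ function, which follows from $a,b\in L^1_{\mathrm{loc}}$ and the continuity of $\theta$. I note that a sharper constant $1$ in front of $B$ is in fact available via the differential comparison $\frac{d}{dt}\sqrt{Z(t)}\le e^{\lambda t}|b(t)|$ applied to $Z(t):=G+\varepsilon+2\int_0^t e^{\lambda s}|b(s)|\sqrt{y(s)}\,ds$ followed by $\varepsilon\downarrow0$, but the supremum argument above already suffices for the stated inequality and avoids the regularization.
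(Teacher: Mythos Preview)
Your proof is correct. The paper does not actually supply a proof of this lemma: it simply cites it as a known result from Ambrosio, Gigli and Savar\'e (\emph{Gradient Flows in Metric Spaces and in the Space of Probability Measures}, p.~88, essentially Lemma~4.1.8 there), so there is no ``paper's own proof'' to compare against.

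For what it is worth, your argument follows the standard route one finds in that reference: pass to $y(t)=e^{2\lambda t}\theta^2(t)$ via the integrating factor, integrate the resulting differential inequality, bound the $a$-contribution by its running supremum and the $b$-contribution by $e^{\lambda s}|b(s)|\sqrt{y(s)}$, and then close the inequality. The only stylistic difference is that Ambrosio et al.\ close it through the differential comparison you mention at the end (differentiating $\sqrt{Z(t)}$ and letting the regularization parameter vanish), which indeed yields the same constant $2$ in front of the $b$-integral; your supremum--quadratic argument is a perfectly valid alternative and arguably cleaner. Your remarks about the measure-theoretic points (local absolute continuity of $y$, local integrability of the right-hand side so the fundamental theorem of calculus applies) are the right ones to flag, and your observation that $S\ge 0$ because the supremum over $t\in[0,T]$ includes $t=0$ ensures $G\ge 0$ so that $\sqrt{G}$ is well defined.
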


\begin{proof}[Proof of Proposition \ref{prop:boundedness-of-V^i}]
Setting 
$$R^{(i)}_s:=  I+\gNa\VNa\left[ m^{(i)}_s\right]^3h^{(i)}_s+\gK \VK \left[ n^{(i)}_s\right]^4 + \gL \VL + \JCh\Vrev\meanY_s,   \mbox{ and }$$
$$A^{(i)}_s: = \gNa \left[ m^{(i)}_s\right]^3h^{(i)}_s+\gK  \left[ n^{(i)}_s\right]^4 + \gL  + \JCh \bar{y}_s^N ,$$
the dynamics of the potential can be written as
\begin{equation*}
V_t^{(i)} = V_0^{(i)} + \int_{0}^{t}{R^{(i)}_s - A^{(i)}_sV_s^{(i)} -  \JE V^{(i)}_s+ \JE\meanV_sds}.
\end{equation*}
Therefore, we get
\begin{equation}\label{eq:dinamic-for-V-boudedness}
\left( V_t^{(i)}\right)^2 = \left( V_0^{(i)}\right)^2 + 2\int_{0}^{t}{R^{(i)}_s  V^{(i)}_s - A^{(i)}_s\left( V_s^{(i)}\right)^2 -  \JE\left(  V^{(i)}_s\right)^2+ \JE\meanV_s V^{(i)}_sds}.
\end{equation}
and 
$$| V_t|^2 = |V_0|^2 + 2\int_{0}^{t}{\sum_{i=1}^N\left[ R^{(i)}_s  V^{(i)}_s - A^{(i)}_s\left( V_s^{(i)}\right)^2\right] -  \JE|V_s|^2+ N\JE(\meanV_s)^2ds}.$$
Notice that 
\begin{align*}
(\meanV_s)^2&=\frac{1}{N^2}\sum_{i,j=1}^{N}V^{(i)}_sV^{(j)}_s \leq \frac{1}{2N^2}\sum_{i,j=1}^{N}(V^{(i)}_s)^2 + (V^{(j)}_s)^2 = \frac{1}{N}|V_s|^2, 
\end{align*}
which yields 
$$\frac{d}{dt}| V_t|^2  + 2\gL | V_t|^2 \leq  2|R_t||  V_t|.$$
By Lemma \ref{lem:gronwallAmbrisio} we  deduce that
$$| V_t| \leq | V_0| e^{-\gL t} + 2 e^{-\gL t}\int_{0}^{t}{e^{\gL s}|R_s|ds}.$$
Since   $|R^{(i)}_s|\leq \Rmax, $
 we then get
$$ \left|\meanV_t \right|  \leq  \frac{  | V_t|}{\sqrt{N}}  \leq\frac{  | V_0|}{\sqrt{N}}   e^{-\gL t} +\frac{ 2 \Rmax}{\gL}(1-e^{-\gL t})\leq  \Vmax_0  e^{-\gL t} +\frac{ 2 \Rmax}{\gL}(1-e^{-\gL t}).
$$
which is the first desired inequality. Using this in \eqref{eq:dinamic-for-V-boudedness} yields
\begin{align*}
\frac{d}{dt}(V_t^{(i)})^2 + 2(\gL+\JE)(V_t^{(i)})^2&\leq2|R^{(i)}_t + \JE\meanV_t||V_t^{(i)}|\\
& \leq 2 \left( \Rmax + \JE \left( \Vmax_0 -\frac{ 2 \Rmax}{\gL}\right)e^{-\gL t} +\frac{ 2 \JE\Rmax}{\gL}\right)|V_t^{(i)}|.
\end{align*}
Applying once again Lemma \ref{lem:gronwallAmbrisio}, we obtain
\begin{align}\label{eq:boundsV}
\left|V_t^{(i)}\right|&\leq \Vmax_0 e^{-(\gL+\JE)t}\nonumber\\
&\quad+2e^{-(\gL+\JE)t}\int_{0}^{t}{e^{(\gL+\JE)s} \left( \Rmax\left(\frac{\gL + 2 \JE}{\gL}\right) + \JE \left( \Vmax_0 -\frac{ 2 \Rmax}{\gL}\right)e^{-\gL s}\right) ds}\nonumber\\
&\quad=\Vmax_0 e^{-(\gL+\JE)t}+2\Rmax\left(\frac{\gL + 2 \JE}{\gL(\gL+\JE)}\right)(1- e^{-(\gL+\JE)t})\nonumber \\
&\quad \quad   +2 \left( \Vmax_0 -\frac{ 2 \Rmax}{\gL}\right)\left( e^{-\gL t} -e^{-(\gL+\JE)t}\right)\nonumber\\
&\quad=\frac{2\Rmax}{\gL}\left(\frac{\gL + 2 \JE}{\gL+\JE}\right)+2 \left( \Vmax_0 -\frac{ 2 \Rmax}{\gL}\right)e^{-\gL t}\nonumber \\
&\quad \quad+ \left(\frac{ 2 \Rmax}{\gL+\JE} - \Vmax_0  \right) e^{-(\gL+\JE)t}\nonumber\\
&\leq \frac{4\Rmax}{\gL}+2 \Vmax_0 e^{-\gL t}=V_t^* 
\end{align} 
which implies the asserted bounds on $ \Vmax_{t,\infty}$. 
\smallskip

Let  us now deduce the existence of an invariant distribution which is exchangeable. Let $P_t^N$ denote the semigroup associated to the solution of  \eqref{eq:HH-model}, that is for  each $\mathcal{X}\in(\R\times[0,1]^4)^N$  and  $B$ Borel set of $(\R\times[0,1]^4)^N$, 
$$P_t^N(\mathcal{X},B)=\P\left(X_t\in B \vert X_0 = \mathcal{X}\right).$$
Consider also the probability measure $R_T^N(\lambda)$ on $(\R\times[0,1]^4)^N$, defined for any  law $\lambda$  as
$$R_T^N(\lambda)(B)=\int_{ (\R\times[0,1]^4)^N} \left(\frac{1}{T}\int_{0}^{T} P_t^N(\mathcal{X},B) dt \right) \lambda(d\mathcal{X}).$$
Since the voltage component  is uniformly bounded in time, by \eqref{eq:boundsV}, the solution to \eqref{eq:HH-model} lies in the compact set $([-4\tfrac{\Rmax}{\gL}-2\Vmax_0, 4\tfrac{\Rmax}{\gL}+2\Vmax_0]\times[0,1]^4)^N$, and then for any $(T_M)\nearrow\infty$, and any $\lambda$ with compact support, the sequence $(R_{T_M}^N(\lambda),M\geq 0)$ is tight and has a subsequence weakly converging to some probability measure $\mu_{\infty}^N$. According to Krylov-Bogoliubov Theorem,  $\mu_{\infty}^N$ is invariant for $P_t^N$.

Let us now  choose  and exchangeable initial  law $\lambda$. For  any measurable and bounded function $\psi$, the identity 
$$  \int_{(\R\times[0,1]^4)^N}P_t^N(\mathcal{X},dy) \psi(y) \lambda(d\mathcal{X}) =   \int_{(\R\times[0,1]^4)^N}P_t^N(\mathcal{X},dy) (\psi\circ \pi)(y) \lambda(d\mathcal{X}).$$
for any $N$-permutation $\pi$ of the coordinates follows directly from the exchangeable structure of the system of equations  \eqref{eq:HH-model}. 
Therefore, $R_{T_M}^N(\lambda)$ is  exchangeable for any $T_M$, and the corresponding $\mu_{\infty}^N$ is  exchangeable  as  the weak limit of exchangeable measures.

\end{proof}

\section{Synchronization: proof of Theorem  \ref{theo:synchro-empiricalVar} \textit{a)}}\label{sec:synch}

In the sequel,  for any locally bounded real function $f$ on $\R$ and  each $R>0$ we will write
$$ \|f \|_{\infty,R}:= \sup_{v\in[-R,R]}|f (v)|.$$

 We will repeatedly use a  simple  control of the increments of  the function $F$, stated in  next  lemma for  convenience:
 
 \begin{lem} We have 
\begin{equation}\label{eq:boundForTheIncrementOfTheDriftF}
\begin{aligned}
\left( F(V_1,m_1,n_1,h_1)-F(V_2,m_2,n_2,h_2)\right)(V_1-V_2) &\leq - \gL(V_1-V_2)^2\\
&\quad  + 4\gK|V_2-\VK| | n_1 - n_2||V_1-V_2| \\
&\quad + 3\gNa|V_2-\VNa||m_1-m_2||V_1-V_2|\\
&\quad + \gNa|V_2-\VNa| |h_1-h_2||V_1-V_2|.
\end{aligned}
\end{equation}
for every  $m_i,n_i,h_i\in[0,1]$ and $V_i\in \R$,   $i=1,2$.
 \end{lem}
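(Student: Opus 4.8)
The plan is to decompose the difference $F(V_1,m_1,n_1,h_1)-F(V_2,m_2,n_2,h_2)$ into its four physical contributions (external current, leak, potassium, sodium) and treat each separately. The external current $I$ is constant and cancels. For the leak term, since $\VL$ is constant one has $-\gL(V_1-\VL)+\gL(V_2-\VL)=-\gL(V_1-V_2)$, so after multiplying by $(V_1-V_2)$ this contributes exactly the term $-\gL(V_1-V_2)^2$ on the right-hand side. It remains to bound the potassium and sodium contributions by the three remaining terms.

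For the potassium term I would telescope by adding and subtracting $n_1^4(V_2-\VK)$, writing
\[
n_1^4(V_1-\VK)-n_2^4(V_2-\VK)=n_1^4(V_1-V_2)+(n_1^4-n_2^4)(V_2-\VK).
\]
Multiplying by $-\gK$ and by $(V_1-V_2)$, the first piece yields $-\gK n_1^4(V_1-V_2)^2\leq 0$, which is discarded (enlarging the bound). For the second piece I would use the elementary factorization $n_1^4-n_2^4=(n_1-n_2)(n_1^3+n_1^2n_2+n_1n_2^2+n_2^3)$ together with $n_1,n_2\in[0,1]$ to get $|n_1^4-n_2^4|\leq 4|n_1-n_2|$, giving the bound $4\gK|V_2-\VK|\,|n_1-n_2|\,|V_1-V_2|$.

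For the sodium term the same idea is applied with a double telescoping. First I would write $m_1^3h_1(V_1-\VNa)-m_2^3h_2(V_2-\VNa)=m_1^3h_1(V_1-V_2)+(m_1^3h_1-m_2^3h_2)(V_2-\VNa)$, and then split the coefficient as $m_1^3h_1-m_2^3h_2=(m_1^3-m_2^3)h_1+m_2^3(h_1-h_2)$. After multiplying by $-\gNa(V_1-V_2)$, the diagonal term $-\gNa m_1^3h_1(V_1-V_2)^2\leq 0$ is again dropped, and the two remaining cross terms are bounded using $|m_1^3-m_2^3|\leq 3|m_1-m_2|$ (from the analogous cubic factorization) together with $h_1\leq 1$ and $m_2^3\leq 1$, yielding $3\gNa|V_2-\VNa|\,|m_1-m_2|\,|V_1-V_2|$ and $\gNa|V_2-\VNa|\,|h_1-h_2|\,|V_1-V_2|$ respectively. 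Summing the four contributions gives the claimed inequality.

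There is no genuine obstacle here; the argument is routine bookkeeping. The only points requiring care are organizing the telescoping so that each ``diagonal'' term (the one proportional to $(V_1-V_2)^2$) carries the correct nonpositive sign and can be thrown away, and checking that every factor left multiplying a voltage difference lies in $[0,1]$ (which holds by Lemma \ref{lem:well-posedHH}), so that the mean-value-type estimates $|a^k-b^k|\leq k|a-b|$ for $a,b\in[0,1]$ apply with the constants $4$ and $3$ that appear in the statement.
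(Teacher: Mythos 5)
Your proof is correct and takes essentially the same route as the paper's own argument: the identical telescoping decomposition (pivoting the channel factors at $V_2$ and the voltage difference at $n_1^4$, resp.\ $m_1^3h_1$), discarding the nonpositive diagonal terms, and the same elementary bounds $|n_1^4-n_2^4|\leq 4|n_1-n_2|$ and $|m_1^3-m_2^3|\leq 3|m_1-m_2|$ for arguments in $[0,1]$. The only cosmetic differences are which factorization of $a^4-b^4$ you invoke (the paper writes $(a^2+b^2)(a+b)(a-b)$) and that the containment $m_i,n_i,h_i\in[0,1]$ is a hypothesis of the lemma itself, so no appeal to Lemma \ref{lem:well-posedHH} is needed.
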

 \begin{proof}
Since
$$x^4-y^4 = (x^2+y^2)(x+y)(x-y),$$
and
$$x^3u-y^3v = u(x^2+xy+y^2)(x-y)+y^3(u-v),$$
we get
\begin{equation*}
\begin{aligned}
F(V_1,m_1,n_1,h_1)-F(V_2,m_2,n_2,h_2) &= - (\gK n^4_1+\gNa m^3_1h_1+\gL)(V_1-V_2)\\
&\quad  - \gK(V_2-\VK)( n^2_1 + n^2_2)( n_1 + n_2) ( n_1 - n_2) \\
&\quad - \gNa(V_2-\VNa)h_1 (m^2_1+m_1m_2+m_2^2)(m_1-m_2)\\
&\quad - \gNa(V_2-\VNa) m^3_2(h_1-h_2)
\end{aligned}
\end{equation*}
and the asserted bound follows.  
\end{proof}
 
The following result is the core of the proof of Theorem  \ref{theo:synchro-empiricalVar}:

\begin{prop}\label{prop:ElectricalSynchronization}
 For each  $V^*>0 $, there are  constants $\JE^*>0$  and $\lambda^*>0$ not depending on $N$ nor on $\sigma$ such that for each  $\JE>\JE^*$ and any solution $X$  of \eqref{eq:HH-model}  satisfying   $\Vmax_{0,\infty}\leq V^*$,  one has
\begin{equation*}
\E\left( |X_t^{(i)} -X_t^{(j)} |^2\right)\leq\E\left(  |X_0^{(i)} -X_0^{(j)} |^2 \right) e^{-\lambda^* t} + \sigma^2 \frac{2 C^*_{\zeta,\rho}}{\lambda^*} \, \quad \forall \,t\geq 0, 
\end{equation*}
for all $ i,j\in\{1,\ldots,N\}$,  where $$C^*_{\zeta,\rho}=\sum_{x=m,n,h,y}  \|\rho_x \vee \zeta_x \|_{\infty,V^*} <\infty . $$
 \end{prop}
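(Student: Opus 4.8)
The plan is to track the evolution of the squared distance $|X^{(i)}_t - X^{(j)}_t|^2$ between two neurons by applying It\^o's formula and then to use a Gronwall-type differential inequality to obtain the exponential dissipation up to the noise-controlled floor. I would begin by writing down the SDE satisfied by the difference $X^{(i)}_t - X^{(j)}_t$. For the voltage component, the key structural feature is that the electrical interaction term contributes $-\JE(V^{(i)}_s - V^{(j)}_s)$ to the difference (the average $\meanV_s$ cancels), so we gain a strongly dissipative term $-\JE(V^{(i)} - V^{(j)})^2$ with a coefficient we can make as large as we wish. The chemical interaction term, since both neurons see the same empirical average $\frac1N\sum_j \JCh y^{(j)}_s \Vrev$ and the same population term, should reduce to a contribution controlled by the voltage difference times a bounded factor (using that $|V^{(i)}_s|, |V^{(j)}_s| \leq V^*$ and $y \in [0,1]$).

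Next I would apply It\^o's formula to $|X^{(i)}_t - X^{(j)}_t|^2 = (V^{(i)}_t - V^{(j)}_t)^2 + \sum_{x} (x^{(i)}_t - x^{(j)}_t)^2$ and take expectations. The drift of the voltage-difference square yields, via the lemma \eqref{eq:boundForTheIncrementOfTheDriftF}, a term $-\gL (V^{(i)} - V^{(j)})^2$ plus cross terms of the form $C |V^{(i)} - V^{(j)}| \cdot |x^{(i)} - x^{(j)}|$ (where the constants $C$ depend on $\|\rho_x \vee \zeta_x\|$ and on $V^*$ through $|V_2 - \VK|$ etc.), together with the strongly dissipative $-2\JE (V^{(i)} - V^{(j)})^2$ coming from the electrical synapse. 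For each channel component $x$, the drift of $(x^{(i)} - x^{(j)})^2$ produces a dissipative-or-bounded term from the difference of $\rho_x(V^{(i)})(1-x^{(i)}) - \zeta_x(V^{(i)})x^{(i)}$ minus its counterpart; splitting this into a part Lipschitz in $x$ (bounded by $\|\rho_x \vee \zeta_x\|_{\infty,V^*}(x^{(i)}-x^{(j)})^2$ up to constants) and a part Lipschitz in $V$ (a cross term controlled by $|V^{(i)} - V^{(j)}| \cdot |x^{(i)} - x^{(j)}|$), I would collect all contributions. Crucially, the diffusion part of the It\^o formula contributes the It\^o correction $\sum_x \E(\sigma_x(V^{(i)},x^{(i)}) - \sigma_x(V^{(j)},x^{(j)}))^2$; since $\sigma_x^2 = \sigma^2 a_x$ with $a_x$ bounded on $\R \times [0,1]$ by $\|\rho_x \vee \zeta_x\|_{\infty,V^*}$ (and $\chi$ bounded), this term is bounded by $2\sigma^2 C^*_{\zeta,\rho}$, which is precisely the source of the additive noise floor.

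The heart of the argument is then an absorption step: all the cross terms $C|V^{(i)} - V^{(j)}||x^{(i)} - x^{(j)}|$ are handled by Young's inequality $C|a||b| \leq \frac{C^2}{2\eps}a^2 + \frac{\eps}{2}b^2$. By choosing $\JE$ large enough (this is where $\JE^* > 0$ enters), the coefficient $-2\JE$ on $(V^{(i)} - V^{(j)})^2$ dominates all the $a^2$-contributions $\frac{C^2}{2\eps}$ coming from the cross terms, leaving a strictly negative effective coefficient $-\lambda^*$ on the full quantity $\E|X^{(i)}_t - X^{(j)}_t|^2$, while the $\frac{\eps}{2}b^2$ terms are absorbed against the dissipation (or mild growth) in the channel components. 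This yields a differential inequality of the form $\frac{d}{dt}\E|X^{(i)}_t - X^{(j)}_t|^2 \leq -\lambda^* \E|X^{(i)}_t - X^{(j)}_t|^2 + 2\sigma^2 C^*_{\zeta,\rho}$, and integrating (or applying Gronwall directly) gives the stated estimate with floor $\sigma^2 \frac{2 C^*_{\zeta,\rho}}{\lambda^*}$.

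The main obstacle I anticipate is the bookkeeping of the channel-component cross terms and verifying that $\JE^*$ and $\lambda^*$ can be chosen uniformly in $N$ and $\sigma$. The delicate point is that the channel dynamics themselves need not be dissipative in $x$ — the coefficient of $(x^{(i)} - x^{(j)})^2$ arising from the Lipschitz-in-$x$ part of $\rho_x, \zeta_x$ could have either sign — so one cannot rely on channel dissipation to absorb the $\frac{\eps}{2}(x^{(i)}-x^{(j)})^2$ terms for free. Instead the whole negative budget must come from the single voltage dissipation $-2\JE(V^{(i)}-V^{(j)})^2$, which is why taking $\JE$ large is essential and why one must carefully balance the Young-inequality parameters $\eps$ so that the induced voltage penalty stays below the available $\JE$-dissipation while simultaneously the channel penalties stay below $\lambda^*$. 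Ensuring all constants depend only on $V^*$ and on $\|\rho_x \vee \zeta_x\|_{\infty,V^*}$ (and not on $N$ or $\sigma$) is the key consistency check.
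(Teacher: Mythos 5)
Your overall skeleton (It\^o's formula applied to $|X^{(i)}_t-X^{(j)}_t|^2$, the increment bound \eqref{eq:boundForTheIncrementOfTheDriftF} for $F$, Young's inequality on the cross terms, and a Gronwall argument) is exactly the paper's route, but your final paragraph contains a genuine conceptual error that would make the absorption step fail if carried out as you describe. You claim that the channel dynamics ``need not be dissipative in $x$'' and that ``the whole negative budget must come from the single voltage dissipation $-2\JE(V^{(i)}-V^{(j)})^2$''. The first claim is false and the second is impossible: the term $-2\JE(\Delta V_s)^2$ acts only on the voltage coordinate, so no choice of $\JE$, however large, can produce a negative coefficient in front of $\E\left[(\Delta x_s)^2\right]$. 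If Young's inequality leaves a positive coefficient $\eps_x$ on $(\Delta x_s)^2$ with nothing to absorb it, Gronwall gives at best slow growth of the channel discrepancies, never decay --- indeed, with no $x$-dissipation, two channels driven by \emph{independent} noises drift apart even when the voltages agree, so the stated estimate could not hold. The missing ingredient, used crucially in the paper, is that the channel drift $b_x(v,u)=\rho_x(v)(1-u)-\zeta_x(v)u$ is affine in $u$ with slope $-(\rho_x(v)+\zeta_x(v))$, which is \emph{strictly negative} by point 1 of Hypothesis \ref{hyp:MainHypotheses} and bounded away from zero on the compact set $[-V^*,V^*]$. Writing the drift difference as $(1-x^{(i)}_s)\bigl(\rho_x(V^{(i)}_s)-\rho_x(V^{(j)}_s)\bigr)-x^{(i)}_s\bigl(\zeta_x(V^{(i)}_s)-\zeta_x(V^{(j)}_s)\bigr)-\bigl(\rho_x(V^{(j)}_s)+\zeta_x(V^{(j)}_s)\bigr)\Delta x_s$, one gains the intrinsic dissipation $-2\eta^*_x(\Delta x_s)^2$, with $\eta^*_x:=\inf_{v\in[-V^*,V^*]}\{\rho_x(v)+\zeta_x(v)\}>0$. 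It is this channel dissipation that absorbs the Young penalties $\eps_x(\Delta x_s)^2$ (one needs $\eps_x<\eta^*_x$), while the large-$\JE$ voltage dissipation absorbs the $(\Delta V_s)^2$ penalties of size $\propto 1/\eps_x$; the paper's $\lambda^*$ is then the optimized minimum of all these coefficients, and $\JE^*$ is the interaction level at which the voltage coefficient turns negative.

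A secondary slip: since $W^{x,i}$ and $W^{x,j}$ are independent, the It\^o correction for $(\Delta x_t)^2$ is the sum $\sigma_x^2(V^{(i)}_s,x^{(i)}_s)+\sigma_x^2(V^{(j)}_s,x^{(j)}_s)$, not the squared difference $\bigl(\sigma_x(V^{(i)}_s,x^{(i)}_s)-\sigma_x(V^{(j)}_s,x^{(j)}_s)\bigr)^2$ that you wrote; the latter would be the correction under common noise and would wrongly suggest that the noise contribution vanishes at synchrony. Your numerical bound $2\sigma^2 C^*_{\zeta,\rho}$ is the correct one for the correct (sum) term, so the noise floor in your conclusion is right, but the formula should be corrected.
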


\begin{proof} 
Let us write $\D V_t = V_t^{(i)}-V_t^{(j)}$ and $\D x_t =  x_t^{(i)}-x_t^{(j)} $. Thanks to the bound \eqref{eq:boundForTheIncrementOfTheDriftF}, we have
\begin{align*}
(\D V_t)^2&= (\D V_0)^2+ 2 \int_{0}^{t}{[F(V^{(i)}_s,m_s^{(i)},n_s^{(i)},h_s^{(i)})-F(V^{(j)}_s,m_s^{(j)},n_s^{(j)},h_s^{(j)})]\D V_sds}\\
&\quad-\int_{0}^{t}{(2\JE +2\JCh\meanY_s ) (\D V_s )^2ds}\\
&\leq (\D V_0)^2+  \int_{0}^{t}{  8\gK|V^{(j)}_s-\VK| | \D n_s||\D V_s| +  6\gNa|V^{(j)}_s-\VNa|| \D m_s||\D V_s|ds}\\
&\quad + \int_{0}^{t}{   2\gNa|V^{(j)}_s-\VNa| | \D h_s||\D V_s|ds}-\int_{0}^{t}{(2\gL+ 2\JE +2\JCh\meanY_s ) (\D V_s )^2ds}\\
&\leq  (\D V_0)^2 +\int_{0}^{t}{\eps_m\left( \D m_s\right)^2+\eps_n \left( \D n_s\right)^2+\eps_h \left( \D h_s\right)^2 ds}  \\
&\quad-\int_{0}^{t}{\left( 2\gL + 2\JE +2\JCh\meanY_s  - \frac{9 M_\text{Na}^2}{\eps_m}-\frac{16M_\text{K}^2}{\eps_n} -\frac{M_\text{Na}^2}{\eps_h}  \right)\left( \D V_s\right)^2ds},
\end{align*}
where we have used Young's inequality: $ab\leq   \eps_x a^2 +   \frac{b^2}{4 \eps_x }$  for $x=m,n,h,y$,  with $\eps_x >0$ to be chosen later, and where  we have set
$$M_\text{Na} = \gNa\max_{v\in[-V^*,V^*]}|v-\VNa|,\;\;M_\text{K} =\gK\max_{v\in[-V^*,V^*]}|v-\VK|.$$

On the other hand, for the channel types $x=m,n,h,y$, we have
\begin{align*}
\E\left[ ( \D x_t)^2\right]&=\E\left[ (\D x_0)^2\right]+2\int_{0}^{t}{\E\left[  (1-x^{(i)}_t)(\rho_x(V_t^{(i)})-\rho_x(V_t^{(j)}))\D x_s\right]ds}\\
&\quad -2\int_{0}^{t}{\E\left[ x^{(i)}_t(\zeta_x(V_t^{(i)})-\zeta_x(V_t^{(j)}))\D x_s\right]ds}\\
&\quad-2\int_{0}^{t}{\E\left[ \left( \rho_x(V_s^{(j)}) + \zeta_x(V_s^{(j)}) \right)(\D x_s)^2\right]ds}\\
&\quad+\int_{0}^{t}{\E\left[ \sigma_x^2( V_s^{(j)}, x^{(j)}_s) + \sigma_x^2(V_s^{(i)},x^{(i)}_s)\right]ds}.
\end{align*}
By our assumptions,  for all $t\geq 0$ we have fror $k=i,j$, 
$$\sigma_x^2(V^{(k)}_t,x_t)\leq \sigma^2{\left( (1-x^{(k)}_t)\rho_x(V^{(k)}_t)+x^{(k)}_t\zeta_x(V^{(k)}_t)\right)}\leq \sigma^2 \|\rho_x \vee \zeta_x \|_{\infty,V^*} .$$
Using Young's inequality in the same way as before yields 
\begin{align*}
\E\left[ ( \D x_t)^2\right]&\leq \E\left[ (\D x_0)^2\right]+\int_{0}^{t}{\E\left[  \frac{(L^*_{\rho_x}+L^*_{\zeta_x})^2}{\eps_x}(\D V_s)^2\right]ds}\\
&\quad-  \left(2\eta_x-\eps_x\right) \int_{0}^{t}{\E\left[ (\D x_s)^2\right]ds}+ 2  t\, \sigma^2 \|\rho_x \vee \zeta_x \|_{\infty,V^*},
\end{align*}
where $L_f^*$ denotes the Lipschitz constant  on $[-V^*,V^*]$ of a locally Lipschitz function  $f$, and where
$$\eta^*_x:=    \inf_{v\in[-V^*,V^*]}\left\{\rho_x(v)+\zeta_x(v)\right\}>0 .$$
Adding up, we get 
\begin{align*}
\E\left( |X_t^{(i)} -X_t^{(j)} |^2\right)&\leq \E\left[ |X_0^{(i)} -X_0^{(j)} |^2\right]\\
&\quad -\int_{0}^{t}\E\Big[ \Big(2\gL + 2\JE - \frac{9 M_\text{Na}^2+ (L^*_{\rho_m}+L^*_{\zeta_m})^2}{\eps_m} -  \frac{16 M_\text{K}^2 +(L^*_{\rho_n}+L^*_{\zeta_n})^2}{\eps_n} \\
&\quad\quad\qquad-  \frac{M_\text{Na}^2 +(L^*_{\rho_h}+ L^*_{\zeta_h})^2}{\eps_h}-  \frac{(L^*_{\rho_y}+L^*_{\zeta_y})^2}{\eps_y} \Big)(\D V_s)^2\Big]ds\\
&\quad-   \left(2\eta_m -2\eps_m\right) \int_{0}^{t}{\E\left[(\D m_s)^2\right]ds}-  \left(2\eta_n -2\eps_n\right) \int_{0}^{t}{\E\left[(\D n_s)^2\right]ds}\\
&\quad-\left(2\eta_h -2\eps_h\right) \int_{0}^{t}{\E\left[ (\D h_s)^2\right]ds}- \left(2\eta_y -\eps_y\right) \int_{0}^{t}{\E\left[ (\D y_s)^2\right]ds}\\
&\quad+ 2 t\, \sigma^2 C^*_{\zeta,\rho}.
\end{align*}
Define now $\lambda^*$ as the optimal value of the problem
$$ \max_{J,\eps_m,\eps_n,\eps_h,\eps_y  >0} \Psi(J,\eps_m,\eps_n,\eps_h,\eps_y ) \, ,$$
where
\begin{multline*}
 \Psi(J,\eps_m,\eps_n,\eps_h,\eps_y ):= \\
 \min\Bigg\{2\gL + 2J - \frac{12M_\text{Na}^2+ (L^*_{\rho_h}+L^*_{\zeta_h})^2}{\eps_m} -  \frac{16M_\text{K}^2 +(L^*_{\rho_n}+L^*_{\zeta_n})^2}{\eps_n} -  \frac{M_\text{Na}^2 +(L^*_{\rho_h}+L^*_{\zeta_h})^2}{\eps_h}-  \frac{(L^*_{\rho_y}+L^*_{\zeta_y})^2}{\eps_y}, \\
\quad\quad\quad\quad2\eta^*_m -2\eps_m,\,2\eta^*_n -2\eps_n, 2\eta^*_h -2\eps_h, 2\eta^*_y -\eps_y\Bigg \}. 
\end{multline*}
Notice that $\lambda^*$ is strictly positive since $\Psi(J,\eps_m,\eps_n,\eps_h,\eps_y )$  can be made so by  taking small enough $\eps_x>0$ for $x=m,n,h,y$ and then large enough $J>0$.  Calling $\JE^*$ the  smallest $J>0$ such that $(J,\eps_m,\eps_n,\eps_h,\eps_y )\in \arg\max \Psi$,  it follows that for every $\JE>\JE^*$, 
$$\E\left( |X_t^{(i)} -X_t^{(j)} |^2\right)\leq \E\left[ |X_0^{(i)} -X_0^{(j)} |^2\right]-\lambda^* \int_{0}^{t}{\E\left( |X_s^{(i)} -X_s^{(j)} |^2\right)}+ 2 t\, \sigma^2C_{\zeta,\rho}.$$
Applying Lemma \ref{lem:gronwallAmbrisio}, we obtain
$$\sqrt{\E\left( |X_t^{(i)} -X_t^{(j)} |^2\right)} \leq e^{-\frac{\lambda^* t}{2}}\left( \E\left[ |X_0^{(i)} -X_0^{(j)} |^2\right] + \frac{(e^{\lambda^* t}-1)}{\lambda^*}2 \sigma^2C^*_{\zeta,\rho} \right)^{1/2},$$
and the desired result.
\end{proof}

The next result removes the dependance of the previous one on the bound $V^*$,   at the price of ensuring   exponentially  fast synchronization  only from some time instant $t_0\geq  0$ on. It will then be  easy to deduce  part a)  of  Theorem  \ref{theo:synchro-empiricalVar}.
\begin{theo}\label{theo:ElectricalSynchronization}
There are  constants $\JE^0>0$ and  $\lambda^0>0$ not depending on $N\geq 1$, on $\sigma\geq 0$ nor on the initial data, and  $t_0\geq 0$ not depending on $N\geq 1$ nor on $\sigma\geq 0$, such that for each  $\JE>\JE^0$  the solution $X$ of \eqref{eq:HH-model}  satisfies, for every $t\geq t_0$, 
\begin{equation*}
\E\left( |X_t^{(i)} -X_t^{(j)} |^2\right)\leq\E\left(  |X_{t_0}^{(i)} -X_{t_0}^{(j)} |^2 \right) e^{-\lambda^0( t-t_0)} + \sigma^2 \frac{2 C^0_{\zeta,\rho}}{\lambda^0} \, , \quad \forall \, i,j\in\{1,\ldots,N\}, 
\end{equation*}
 where
$$C^0_{\zeta,\rho}:=\sum_{x=m,n,h,y}  \|\rho_x \vee \zeta_x \|_{\infty,\frac{ 5\Rmax}{g_L}} 
<\infty  \, .$$
\end{theo}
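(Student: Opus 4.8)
The plan is to reduce the statement to Proposition \ref{prop:ElectricalSynchronization} by waiting long enough that the a priori voltage bound from Proposition \ref{prop:boundedness-of-V^i} has relaxed to a level free of the initial data. Proposition \ref{prop:ElectricalSynchronization} already delivers the desired exponential dissipation, but with constants $\JE^*,\lambda^*$ and a noise coefficient $C^*_{\zeta,\rho}$ that all depend on an a priori uniform voltage bound $V^*$; and the only such bound available from time $0$, namely $V_0^*=\tfrac{4\Rmax}{\gL}+2\Vmax_0$, carries an unwanted dependence on the initial condition through $\Vmax_0$. The key observation is that $V_t^*=\tfrac{4\Rmax}{\gL}+2\Vmax_0\,e^{-\gL t}\downarrow \tfrac{4\Rmax}{\gL}$ as $t\to\infty$, and the limiting value is independent of the initial data.

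Concretely, first I would fix $t_0\geq 0$, depending only on $\gL$, $\Rmax$ and $\Vmax_0$ (hence on neither $N$ nor $\sigma$), large enough that
\begin{equation*}
V^*_{t_0}=\frac{4\Rmax}{\gL}+2\Vmax_0\,e^{-\gL t_0}\leq \frac{5\Rmax}{\gL}=:V^*;
\end{equation*}
for instance any $t_0\geq \tfrac{1}{\gL}\max\{0,\log(2\gL\Vmax_0/\Rmax)\}$ works. By Proposition \ref{prop:boundedness-of-V^i} this choice guarantees $\Vmax_{t_0,\infty}\leq V^*$ almost surely, the supremum being taken over $[t_0,\infty)$.

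Next I would exploit that \eqref{eq:HH-model} is an autonomous system: the time-shifted process $\tilde X_s:=X_{t_0+s}$, driven by the shifted Brownian motions $\tilde W^{x,i}_s:=W^{x,i}_{t_0+s}-W^{x,i}_{t_0}$, is again a solution of \eqref{eq:HH-model}, started from the (random, but bounded and hence square-integrable) initial datum $X_{t_0}$. Its own forward voltage supremum equals $\Vmax_{t_0,\infty}\leq V^*$ a.s., so $\tilde X$ satisfies the hypothesis of Proposition \ref{prop:ElectricalSynchronization} for the fixed threshold $V^*=\tfrac{5\Rmax}{\gL}$. I would then set $\JE^0:=\JE^*$ and $\lambda^0:=\lambda^*$ to be the constants produced by that proposition for this $V^*$; since $V^*$ is a universal constant, $\JE^0$ and $\lambda^0$ depend on none of $N$, $\sigma$ or the initial data. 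Applying the proposition to $\tilde X$ and substituting $t=t_0+s$ yields exactly the claimed estimate, the noise coefficient being $C^*_{\zeta,\rho}=\sum_{x=m,n,h,y}\|\rho_x\vee\zeta_x\|_{\infty,5\Rmax/\gL}=C^0_{\zeta,\rho}$.

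The only genuinely delicate point is this restart step: one must verify that the shifted process is a bona fide solution of the same equation and that its forward voltage bound is inherited from $\Vmax_{t_0,\infty}\leq V^*_{t_0}$ rather than from the worse bound $V_0^*$. This is precisely where it is essential that the limiting voltage bound $\tfrac{4\Rmax}{\gL}$ in Proposition \ref{prop:boundedness-of-V^i} be independent of both $\JE$ and the initial data (cf.\ Remark \ref{rem:after-prop-boundedness-of-V^i} i)); that independence is exactly what allows $\JE^0$, $\lambda^0$ and $C^0_{\zeta,\rho}$ to be chosen uniformly in $N$, $\sigma$ and $X_0$. Beyond this, the argument is a routine time translation combined with the a priori bounds already established.
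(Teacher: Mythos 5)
Your proposal is correct and follows essentially the same route as the paper: choose $t_0$ so that the a priori bound $V^*_{t_0}=\tfrac{4\Rmax}{\gL}+2\Vmax_0 e^{-\gL t_0}$ drops below the universal level $\tfrac{5\Rmax}{\gL}$, then apply Proposition \ref{prop:ElectricalSynchronization} with $V^*=\tfrac{5\Rmax}{\gL}$ to the time-shifted process. The only cosmetic difference is that the paper invokes the proposition conditionally on the sigma-field generated by $(X_s: s\leq t_0)$ and then takes expectations, whereas you apply it directly to the shifted solution with random (bounded) initial datum $X_{t_0}$; both are legitimate since the proposition's hypothesis is the almost sure bound $\Vmax_{0,\infty}\leq V^*$ and its constants do not depend on the initial data.
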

\begin{proof}
Fix  $\epsilon_0 \in (0,1)$, take $t_0\geq 0$ such that $2 \Vmax_0 e^{-\gL t_0}\leq   \epsilon_0 \frac{\Rmax}{g_L}  $ and, conditionally on the sigma-field generated by $(X_s:s \leq t_0)$, apply Proposition \ref{prop:ElectricalSynchronization}  to the shifted process $X':=(X_{t+t_0}:t\geq 0)$  with $V^*=V^*_{t_0}\leq (4+\epsilon_0)\frac{ \Rmax}{g_L}\leq 5\frac{ \Rmax}{g_L} $. The proof is  then achieved  taking expectation in the obtained inequality.  
\end{proof}

We can now finish the proof of  Theorem  \ref{theo:synchro-empiricalVar}.\ a). Here and in the sequel 
we denote by   $\varV_t$ and $\varX_t$ the empirical variance of  voltages  and  $x$ type channels at time $t$,  respectively: 
$$\varV_t = \frac{1}{N}\sum_{i=1}^N \left( V_t^{(i)}-\meanV_t\right)^2   \;\; \mbox{ and }\;  \varX_t = \frac{1}{N}\sum_{i=1}^N\left( x_t^{(i)}-\meanX_t\right)^2.$$
\begin{proof}[Proof of  Theorem  \ref{theo:synchro-empiricalVar}.\ a)]   Applying in the conclusion of  Theorem \ref{theo:ElectricalSynchronization} the elementary identity  
$$\frac{1}{N^2}\sum_{i,j=1}^N{(\alpha_i-\alpha_j)^2} = \frac{2}{N}\sum_{k=1}^N{(\alpha_k-\bar{\alpha}^N)^2} \quad   \mbox{ for every  }\alpha_1,\ldots,\alpha_N\in \R,  $$
with  $\bar{\alpha}^N=\frac{1}{N} \sum_{i=1}^N \alpha_i$
we get 
\begin{equation*}
\E\left(  \varV_t + \sum_{x=mn,n,h,y} \varX_t   \right)\leq\E\left(  \varV_{t_0} +\sum_{x=mn,n,h,y} \varX_{t_0} \right)e^{-\lambda^0( t-t_0)} + \sigma^2 \frac{C^0_{\zeta,\rho}}{\lambda^0}
\end{equation*}
in the general case. If,  additionally, exchangeability of the initial condition is assumed, the path law of system \eqref{eq:HH-model} is exchangeable,  by pathwise uniqueness. The asserted inequality follows. 

\end{proof}
\begin{rem}\label{comments_post_synchro}
$  $
 \begin{itemize}
\item[i)] Theorems  \ref{theo:synchro-empiricalVar} 
and \ref{theo:ElectricalSynchronization} show that, for large enough $\JE$,  synchronization of the network \eqref{eq:HH-model} always  occurs, as long as the  initial voltage $V_0$ is bounded, but regardless of its actual values. 
 More precisely, the time $t_0>0$  which
 depends on $\Vmax_0$, on $\frac{\Rmax}{g_L}$  and on some arbitrary choice of the parameter $\epsilon_0>0$, but not on $\JE$, is one possible  time after which we can grant  that  the  voltage trajectories  stay in  some fixed  interval  not depending on $V_0$.  Then, after $t_0$ and  if  $\JE$  was chosen large enough, synchronization occurs at least at the exponential rate $\lambda^0$ which depends on coefficients of the system \eqref{eq:HH-model} but no longer on the initial data.  In turn,  for large enough $\JE$, Proposition \ref{prop:ElectricalSynchronization} ensures synchronization from $t_0=0$ on   but only if    $\Vmax_0$ is small enough. 

\item[ii)]
Notice that  the function $\Psi$ in the proof of Proposition \ref{prop:ElectricalSynchronization} (and hence the constant $\lambda^*$ therein)  increases when its parameter  $V^*$ decreases, whereas $C^*_{\zeta,\rho}$ decreases when $V^*$ does. Therefore, letting $\epsilon_0\to 0$ (or $t_0\to \infty$) yields the best (by this approach) bounds  for the $\limsup$ in Theorem  \ref{theo:synchro-empiricalVar}. Moreover, the largest possible exponential rate $\lambda^0>0$  and the smallest possible interaction strength $\JE^0\geq 0 $  that can be  obtained   (but not necessarily attained) in Theorems  \ref{theo:synchro-empiricalVar}  and  \ref{theo:ElectricalSynchronization}  by our approach  are $\lambda^*$ and $\JE^*$ corresponding to $V^*=\frac{4 \Rmax}{g_L}$. These choices are  certainly  not optimal in general.
\end{itemize}
 \end{rem}

\section{Synchronized dynamics: proof of Theorem  \ref{theo:synchro-empiricalVar} \textit{b)}}

Our  next goal is to prove part b)  of  Theorem  \ref{theo:synchro-empiricalVar}. 
\begin{rem}\label{boundVt_1*}
Proceeding in a similar way as in the proof of Proposition \ref{prop:boundedness-of-V^i}  one checks  that  the process  \eqref{eq:HatProcess} satisfies
$\frac{d}{dt}| \widehat{V}_t|^2_2  + 2\gL | \widehat{V}_t |^2_2 \leq  2\Rmax |  \widehat{V}_t|, $
which now yields, for any $t\geq t_1$, 
\begin{equation*}| \widehat{V}_t  |\leq | \widehat{V}_{t_1}|  e^{-\gL( t-t_1) } +\frac{ 2 \Rmax}{\gL}(1-e^{-\gL ( t-t_1)}). 
\end{equation*}
Applying on $\bar{V}^N_{t_1}= \widehat{V}_{t_1}$      the  first bound  in Lemma \ref{prop:boundedness-of-V^i}  we get that $| \widehat{V}_t  |\leq  \Vmax_0 e^{-\gL  t } +\frac{ 2 \Rmax}{\gL}$ for every $t\geq t_1$.  Thus, if 
 $t_0\geq 0$ is  chosen as  in Theorem \ref{theo:ElectricalSynchronization}, we deduce that
\begin{equation}\label{boundhatV}
\max\left\{ \sup_{s\in [t_1,\infty)} | \bar{V}_s |, \sup_{s\in [t_1,\infty)} | \widehat{V}_s |\right\} \leq   \frac{V^*_{t_0}}{2} \leq\frac{ 5\Rmax}{2 g_L} \, .
\end{equation} 
\end{rem}

We first prove
\begin{prop}\label{prop:HatProcessIsCloseToMeanProcess} Let   $t_0$  be as in Theorem \ref{theo:ElectricalSynchronization} and $\delta>0$. There are constants $K_{1,\delta}, K_{2,\delta}>0$ increasingly depending  on $\delta>0$, but not depending on $N$ nor on the initial condition,  such that for each $t_1\geq t_0$,  
\begin{equation}\label{eq:barXhatX}
\begin{aligned}
\E\left( \sup_{t_1\leq t \leq t_1+\delta}|\bar{X}^{N,t_1}_t-\hX^N_t|^2\right) &\leq  \left(\left[ (V^*_{t_0} )^2 + 4\right] e^{-\lambda^0(t_1-t_0)}  +  \frac{\sigma^2 C^0_{\zeta,\rho}}{\lambda^0 }\right) \, \delta K_{1,\delta}  + \delta  K_{2,\delta}  \frac{\sigma^2  }{N} C^0_{\zeta,\rho} . 
  \end{aligned}
\end{equation}
\end{prop}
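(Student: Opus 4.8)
The plan is to analyze the difference process $D_t := \bar X^{N,t_1}_t - \hX^N_t$, which vanishes at $t=t_1$ since $\hX^{N,t_1}_{t_1}=\bar X^N_{t_1}$. First I would write down the dynamics of the empirical mean. Averaging the voltage equation of \eqref{eq:HH-model} over $i$, the electrical interaction cancels identically (because $\frac1N\sum_i\frac1N\sum_j\JE(V^{(i)}_s-V^{(j)}_s)=0$), while the chemical term factorizes exactly into $\JCh\,\meanY_s(\meanV_s-\Vrev)$. Thus $\meanV$ obeys an integral equation with drift $\frac1N\sum_iF(V^{(i)}_s,m^{(i)}_s,n^{(i)}_s,h^{(i)}_s)-\JCh\,\meanY_s(\meanV_s-\Vrev)$, whereas each channel mean $\meanX$ solves an equation with drift $\frac1N\sum_i b_x(V^{(i)}_s,x^{(i)}_s)$ plus a martingale $M^x_t:=\int_{t_1}^t\frac1N\sum_i\sigma_x(V^{(i)}_s,x^{(i)}_s)\,dW^{x,i}_s$. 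Comparing term by term with \eqref{eq:HatProcess}, each coordinate of $D_t$ becomes a time integral of a drift difference, plus $M^x$ on the channel coordinates.

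By Remark \ref{boundVt_1*} (inequality \eqref{boundhatV}) together with Lemma \ref{lem:well-posedHH} and Proposition \ref{prop:boundedness-of-V^i}, for $t_1\ge t_0$ all of $V^{(i)}_s,\meanV_s,\hV_s$ and the channel variables stay, for $s\ge t_1$, in a fixed compact set independent of $N$ and of the initial data; there $F$, the chemical nonlinearity $(w,v)\mapsto\JCh w(v-\Vrev)$, each $b_x$ and each $\sigma_x^2$ are bounded and Lipschitz with $N$-independent constants. The key step is to split every drift difference into a fluctuation part and a Lipschitz-in-$D$ part, e.g.\ $\frac1N\sum_iF(X^{(i)}_s)-F(\hX^N_s)=\big[\frac1N\sum_iF(X^{(i)}_s)-F(\bar X^N_s)\big]+\big[F(\bar X^N_s)-F(\hX^N_s)\big]$, and likewise for $b_x$ and the chemical term. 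The second bracket is bounded by $L|D_s|$, while the first, by the Lipschitz property and Cauchy--Schwarz, is bounded by $L\big(\varV_s+\sum_{x}\varX_s\big)^{1/2}$, that is by the square root of the empirical variance.

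I would then bound $\phi(t):=\E\big(\sup_{t_1\le r\le t}|D_r|^2\big)$. Using $\big(\int_{t_1}^r g\,ds\big)^2\le\delta\int_{t_1}^{t}g^2\,ds$ for $r\le t\le t_1+\delta$, the crucial point is that \emph{squaring} the fluctuation bound turns $\big(\varV_s+\sum_x\varX_s\big)^{1/2}$ into $\varV_s+\sum_x\varX_s$, so the empirical variance enters \emph{linearly}. For the martingale coordinates, Doob's inequality and the independence of the $W^{x,i}$ across $i$ give $\E\big(\sup_{r\le t}|M^x_r|^2\big)\le 4\,\E\langle M^x\rangle_t\le\tfrac{4\delta\sigma^2}{N}\|\rho_x\vee\zeta_x\|_{\infty,\,5\Rmax/g_L}$, which produces the $\delta\,\tfrac{\sigma^2}{N}C^0_{\zeta,\rho}$ contribution. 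Invoking part a) of Theorem \ref{theo:synchro-empiricalVar} (whose proof bounds $\E(\varV_s+\sum_x\varX_s)$ by $\mathcal V_{t_1}:=[(V^*_{t_0})^2+4]\,e^{-\lambda^0(t_1-t_0)}+\sigma^2C^0_{\zeta,\rho}/\lambda^0$ for all $s\ge t_1$, the variance at $t_0$ being controlled via $|V^{(i)}_{t_0}|\le V^*_{t_0}$ and $x^{(i)}_{t_0}\in[0,1]$) and collecting terms yields $\phi(t)\le C\delta^2\mathcal V_{t_1}+C'\delta\tfrac{\sigma^2}{N}C^0_{\zeta,\rho}+C\delta\int_{t_1}^t\phi(s)\,ds$. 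A final application of Gronwall's lemma over the window of length $\delta$ closes the feedback loop and gives the asserted estimate, with constants $K_{1,\delta},K_{2,\delta}$ of order $e^{C\delta^2}$, increasing in $\delta$ and independent of $N$ and of the initial condition.

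The main obstacle is less any single estimate than the bookkeeping needed to keep every constant free of $N$ and of the initial data: this rests entirely on confining the dynamics to a fixed compact set via \eqref{boundhatV}, and on the fact (already used in Theorem \ref{theo:ElectricalSynchronization}) that the limiting voltage range $5\Rmax/g_L$ depends neither on $\JE$ nor on $V_0$. The one genuinely delicate conceptual point is the $\sqrt{\text{variance}}\to\text{variance}$ mechanism: it is precisely what lets the merely Lipschitz (rather than $C^2$) regularity of $\rho_x,\zeta_x$ still yield a bound in which the empirical variance, and hence the synchronization rate $\lambda^0$, appears to the first power.
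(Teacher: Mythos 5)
Your proposal is correct and follows essentially the same route as the paper's proof: the same fluctuation-plus-Lipschitz splitting of each drift difference (average of $F$ minus $F$ of average, then $F(\bar X^N)-F(\hX^N)$), the same confinement to a compact set via \eqref{boundhatV} to obtain $N$-independent Lipschitz constants, Doob's inequality with the $1/N^2$ quadratic-variation factor for the martingale term, Theorem \ref{theo:synchro-empiricalVar} a) (with $\varV_{t_0}\leq (V^*_{t_0})^2$ and the channel variances bounded by $4$) to control the integrated empirical variance, and a final Gronwall step over the window of length $\delta$. The only differences are cosmetic bookkeeping choices (Cauchy--Schwarz on the time integrals throughout, versus the paper's mix of Young's inequality for the voltage and the $3\delta$-splitting for the channels), which lead to the same structure of constants $K_{1,\delta},K_{2,\delta}$.
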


\begin{proof} For notational simplicity we write  in the proof $\hX_{t}^{N}: = \hX_{t}^{N,t_1} $. Notice that the average process satisfies the dynamics
\begin{align*}
\meanV_t&=  \meanV_{t_1} + \int_{t_1}^{t} \frac{1}{N}\sum_{i=1}^N{F(V_s^{(i)}, m_s^{(i)},n_s^{(i)},h_s^{(i)})} - \JCh \meanY_s (\meanV_s-\Vrev)ds \\
\meanX_t &= \meanX_{t_1}+\frac{1}{N}\sum_{j=1}^N\int_{t_1}^{t}\rho_x(V^{(j)}_s)(1-x^{(j)}_s)  -\zeta_x(V^{(j)}_s)x^{(j)}_s	+\frac{1}{N}\sum_{i=1}^N\int_{t_1}^{t}{\sigma_x(V_s^{(j)},x_s^{(j)})dW_s^{x,j}}.
\end{align*}
Therefore, after some manipulations, we get that
\begin{align*}
\left( \meanV_t - \hV_t\right)^2&=\int_{t_1}^{t}  \left[ \frac{1}{N}\sum_{i=1}^N{F(V_s^{(i)}, m_s^{(i)},n_s^{(i)},h_s^{(i)})-F(\meanV_s, \meanM_s,\meanN_s,\meanH_s)}  \right]^2 ds\\
&\quad+2\int_{t_1}^{t}  \left[F(\meanV_s, \meanM_s,\meanN_s,\meanH_s) - F(\hV^{N}_s,\hm_s^N,\hn_s^N,\hh_s^N) \right](\meanV_s-\hV^{N}_s) ds\\
&\quad + \int_{t_1}^{t}{(1-2\JCh \meanY_s )(\meanV_s-\hV^{N}_s)^2 +2\JCh (\hV^{N}_s-\Vrev)(\meanY_s- \hy_s^N)(\meanV_s-\hV^{N}_s) ds}\\
&= I_1+I_2+I_3.
\end{align*}
By Jensen's inequality and the bound \eqref{boundhatV} we have
\begin{align*}
I_1&\leq \int_{t_1}^{t}  \frac{1}{N}\sum_{i=1}^N{\left[ F(V_s^{(i)}, m_s^{(i)},n_s^{(i)},h_s^{(i)})-F(\meanV_s, \meanM_s,\meanN_s,\meanH_s)\right]^2}  ds\\
&\leq \int_{t_1}^{t}  \frac{1}{N}\sum_{i=1}^N{4\left[\left( \gK (n^{(i)}_s)^4+\gNa (m^{(i)}_s)^3 h^{(i)}_s+\gL\right)(V_s^{(i)}-\meanV_s)  \right]^2}  ds\\
&\quad+\int_{t_1}^{t}  \frac{1}{N}\sum_{i=1}^N{4\left[\gNa(\meanV_s-\VNa)h^{(i)}_s \left( (m^{(i)}_s)^2 +m^{(i)}_s\meanM_s+(\meanM_s)^2 \right)(m_s^{(i)}-\meanM_s)  \right]^2}  ds\\
&\quad+\int_{t_1}^{t}  \frac{1}{N}\sum_{i=1}^N{4\left[\gNa(\meanV_s-\VK)\left( (n^{(i)}_s)^2 +(\meanN_s)^2\right)\left( n^{(i)}_s +\meanN_s\right)(n_s^{(i)}-\meanN_s)  \right]^2}  ds\\
&\quad+ \int_{t_1}^{t}  \frac{1}{N}\sum_{i=1}^N{4\left[\gNa(\meanV_s-\VNa)(\meanM_s)^3 (h_s^{(i)}-\meanH_s)  \right]^2}  ds\\
&\leq K_V^1 \int_{t_1}^{t}{\varV_s+\varM_s+\varN_s+\varH_sds},
\end{align*}
with $K_V^1$ explicitly depending on 
$ \sup_{v\in[-\frac{ 5\Rmax}{2 g_L} ,\frac{ 5\Rmax}{2 g_L} ]} \max\{  |v-\VNa|,  |v-\VK| \}$,  $g_{\text{K}}$ and $g_{\text{Na}}$.
 Meanwhile, using \eqref{eq:boundForTheIncrementOfTheDriftF} we get
\begin{align*}
I_2&\leq \int_{t_1}^{t}  
-2\gL(\meanV_s-\hV^{N}_s)^2+4\gK|\hV^{N}_s-\VK|\left((\meanV_s-\hV^{N}_s)^2+(\meanN_s-\hn^{N}_s)^2 \right)\\
&\quad\quad +3\gNa|\hV^{N}_s-\VNa|\left((\meanV_s-\hV^{N}_s)^2+(\meanM_s-\hm^{N}_s)^2 \right)\\
&\quad\quad+\gNa|\hV^{N}_s-\VNa|\left((\meanV_s-\hV^{N}_s)^2+(\meanH_s-\hh^{N}_s)^2 \right) ds \\
&\leq K_V^2\int_{t_1}^{t}  
(\meanV_s-\hV^{N}_s)^2+(\meanN_s-\hn^{N}_s)^2+(\meanM_s-\hm^{N}_s)^2 +(\meanH_s-\hh^{N}_s)^2  ds,
\end{align*}
with $K_V^2$ also depending on  those quantities  and on $g_L$. 
By similar arguments, we get
$$I_3 \leq K_V^3\int_{t_1}^{t}  
(\meanV_s-\hV^{N}_s)^2+(\meanY_s-\hy^{N}_s)^2 ds$$
for some $K_V^3$ depending on $\JCh $ and on $\sup_{v\in[-\frac{ 5\Rmax}{2 g_L} ,\frac{ 5\Rmax}{2 g_L} ]}|v-V_\text{rev} |$. We thus get:
$$\left( \meanV_t - \hV_t\right)^2 \leq  K_V^1 \int_{t_1}^{t}{\left[ \varV_s+\varM_s+\varN_s+\varH_s\right]ds} + \tilde{K}_V\int_{t_1}^{t}{|\bar{X}^N_s-\hX^N_s|^2ds} $$
for some explicit $\tilde{K}_V$ a.s., from where
\begin{equation}\label{eq:Bound-meanV-hatV}
\E\left[ \sup_{t_1\leq s\leq t}\left( \meanV_s - \hV_s\right)^2\right] \leq  K_V^1 \int_{t_1}^{t}{\E\left[ \varV_s+\varM_s+\varN_s+\varH_s\right]ds} + \tilde{K}_V\int_{t_1}^{t}{\E\left[ \sup_{t_1\leq u\leq s}|\bar{X}^N_u-\hX^N_u|^2\right]ds}.
\end{equation}
On the other hand, for $x$ type channels we get
\begin{align*}
\meanX_t -\hx^{N}_t&=\frac{1}{N}\sum_{j=1}^N\int_{t_1}^{t}{ \rho_x(V^{(j)}_s)(1-x^{(j)}_s)  -\zeta_x(V^{(j)}_s)x^{(j)}_s -\rho_x(\meanV_s)(1-\meanX_s)  +\zeta_x(\meanV_s)\meanX_s}ds\\
&\quad +\int_{t_1}^{t}\rho_x(\meanV_s)(1-\meanX_s)  -\zeta_x(\meanV_s)\meanX_s- \rho_x(\hV^{N}_s)(1-\hx^{N}_s)  +\zeta_x(\hV^{N}_s)\hx^{N}_sds\\
&\quad+\frac{1}{N}\sum_{j=1}^N\int_{t_1}^{t} \sigma_x(V_s^{(j)},x_s^{(j)}) dW_s^{x,j}.
\end{align*}
For $t\in (t_1,t_1+\delta)$ we  deduce:
\begin{align*}
(\meanX_t -\hx^{N}_t)^2&\leq 3\delta  \int_{t_1}^{t} \frac{1}{N}\sum_{j=1}^N\left( \rho_x(V^{(j)}_s)(1-x^{(j)}_s)  -\zeta_x(V^{(j)}_s)x^{(j)}_s-\rho_x(\meanV_s)(1-\meanX_s)  +\zeta_x(\meanV_s)\meanX_s\right)^2 \, ds \\
&\quad + 3\delta \int_{0}^{t} \left(\rho_x(\meanV_s)(1-\meanX_s)  -\zeta_x(\meanV_s)\meanX_s- \rho_x(\hV^{N}_s)(1-\hx^{N}_s)  +\zeta_x(\hV^{N}_s)\hx^{N}_s\right)^2  \, ds\\
&\quad+    3\left( \frac{1}{N}\sum_{i=1}^N\int_{t_1}^{t}{\sigma_x(V_s^{(j)},x_s^{(j)})dW_s^{x,j}}\right)^2. \\
\end{align*}
The previous   yields, 
\begin{align*}
\E\bigg[ \sup_{t_1\leq s\leq t} & (\meanX_s -\hx^{N}_s)^2\bigg]\\
&\leq 3\delta \int_{t_1}^{t}{ \E\left[ \frac{1}{N}\sum_{j=1}^N\left( \rho_x(V^{(j)}_s)(1-x^{(j)}_s)  -\zeta_x(V^{(j)}_s)x^{(j)}_s-\rho_x(\meanV_s)(1-\meanX_s)  +\zeta_x(\meanV_s)\meanX_s\right)^2\right]ds}\\
&\quad +3\delta \int_{0}^{t}\E\left[ \left(\rho_x(\meanV_s)(1-\meanX_s)  -\zeta_x(\meanV_s)\meanX_s- \rho_x(\hV^{N}_s)(1-\hx^{N}_s)  +\zeta_x(\hV^{N}_s)\hx^{N}_s\right)^2\right]ds\\
&\quad+3\E\left[\sup_{t_1\leq s\leq t} \left( \frac{1}{N}\sum_{i=1}^N\int_{t_1}^{s}{\sigma_x(V_u^{(j)},x_u^{(j)}) dW_u^{x,j}}\right)^2\right]\\
&= I_1+I_2+I_3.
\end{align*}
Denoting by  $L_{f,R}$   a Lipschitz constant of a function  $f$ on $[- R,R]$ and using 
standard arguments, we get that
$$  I_1 \leq K_x\delta (L_{\rho_x,  \frac{ 5\Rmax}{2 g_L} }^2+L_{\rho_x+\zeta_x, \frac{ 5\Rmax}{2 g_L} }^2 )\int_{t_1}^{t}{ \E\left[\varV_s + \varX_s   \right]ds} $$
and that
$$ I_2\leq K_x	\delta (L_{\rho_x, \frac{ 5\Rmax}{2 g_L}  }^2+L_{\rho_x+\zeta_x, \frac{ 5\Rmax}{2 g_L} }^2 )\int_{t_1}^{t}{ \E\left[ (\meanV_s-\hV^{N}_s)^2+(\meanX_s-\hx^{N}_s)^2   \right]ds}
$$
for all  $t\in (t_1,t_1+\delta)$. By Doob's inequality, we  moreover obtain
\begin{equation*}
I_3 \leq 3 \cdot 4 \E\left[ \frac{1}{N^2}\sum_{i=1}^N\int_{t_1}^{t}{ \sigma^2 _x(V_s^{(j)},x_s^{(j)}) ds}\right] \leq \frac{12  \sigma^2 \delta}{N}   \|\rho_x \vee \zeta_x \|_{\infty,  \frac{ 5\Rmax}{2 g_L}} . \end{equation*}
 Summarizing, for the $x$-type channel we have shown that for all   $t\in (t_1,t_1+\delta)$,
\begin{equation}\label{eq:bound-for-x-mean-hat}\E\left[ \sup_{t_1\leq s\leq t}(\meanX_s -\hx^{N}_s)^2\right]  \leq \delta K_x\int_{t_1}^{t}{\E\left[ \varV_s+\varX_s+\sup_{t_1\leq u\leq s}|\bar{X}^N_u-\hX^N_u|^2 \right]  ds} +  \frac{ 12\sigma^2 \delta}{N} \|\rho_x \vee \zeta_x \|_{\infty, \frac{ 5\Rmax}{2 g_L}}
\end{equation}
for some constants $K_x>0$. Putting together \eqref{eq:Bound-meanV-hatV} and \eqref{eq:bound-for-x-mean-hat} we get for  all   $t\in (t_1,t_1+\delta)$ and some constants $K_1,K_2>0$, 
\begin{multline*}
\E\left[ \sup_{t_1\leq s\leq t}|\bar{X}^N_s-\hX^N_s|^2\right]   \leq (1+\delta) K_1\int_{t_1}^{t_1+\delta}{\E\left(  \varV_s+ \sum_{x=m,n,h,y}  \varX _s\right)  ds}   + \frac{12  \sigma^2 \delta}{N} C^0_{\zeta,\rho}  \\ + 
(1+\delta) K_2\int_{t_1}^{t}{\E\left[\sup_{t_1\leq u\leq s}|\bar{X}^N_u-\hX^N_u|^2\right]  ds},  \quad  \quad  \quad   \quad  \quad 
\end{multline*}
from where,  using Gronwall's inequality, we deduce:  
\begin{equation*}
\E\left( \sup_{t_1\leq t \leq t_1+\delta}|\bar{X}^N_t-\hX^N_t|^2\right) \leq e^{K_2(1+\delta)} \left( K_1(1+\delta) 
   \int_{t_1}^{t_1+\delta} \E\left( \varV_s + \sum_{x=m,n,h,y} \varX_s \right)ds +  \frac{12  \sigma^2\delta }{N} C^0_{\zeta,\rho}  \right). 
\end{equation*}
We can now use  Theorem  \ref{theo:synchro-empiricalVar}  to bound the integral  on the r.h.s.  With $K_{1,\delta} =e^{K_2(1+\delta)}K_1(1+\delta) $ and  $K_{1,\delta} =12 e^{K_2(1+\delta)}$ we get, 
 for all $t_1\geq t_0$, that
\begin{equation*}
\begin{aligned}
\E\left( \sup_{t_1\leq t \leq t_1+\delta}|\bar{X}^N_t-\hX^N_t|^2\right) &\leq \E\left( \varV_{t_0}+ \sum_{x=m,n,h,y}  \varX_{t_0}  \right)\frac{1}{\lambda^0 }(1-e^{-\lambda^0 \delta}) e^{-\lambda^0( t_1-t_0) }K_{1,\delta}\\
&\quad+ \frac{\sigma^2 C^0_{\zeta,\rho}}{\lambda^0 } \delta K_{1,\delta} + \delta  K_{2,\delta}  \frac{\sigma^2  }{N} C^0_{\zeta,\rho}  \\
&\leq \left(  \left[(V^*_{t_0} )^2 + 4\right] e^{-\lambda^0( t_1-t_0) } + \frac{\sigma^2 C^0_{\zeta,\rho}}{\lambda^0 }\right) \delta K_{1,\delta} + \delta  K_{2,\delta}  \frac{\sigma^2  }{N} C^0_{\zeta,\rho}  \\
\end{aligned}
\end{equation*}
since $ \varV_{t_0}\leq (V^*_{t_0} )^2$. 
 \end{proof}

\begin{proof}[Proof of  Theorem  \ref{theo:synchro-empiricalVar}.\ b)]  
Notice  on hand that,  for each $t\geq t_1$, we always have the bounds $$ | X^{(i)}_t- \hX^{t_1,N}_t|^2\leq 2 \varV_{t}+ 2 |\widehat{V}_t- \bar{V}_t^N|^2+ 4\leq 4(V^*_{t_0})^2 + 4 \leq K_0:= 4\left(  \frac{ 5\Rmax}{g_L} \right)^2+4, $$
 thanks to  \eqref{boundhatV} and that $V^*_{t_0} \leq  \frac{ 5\Rmax}{g_L} $.  On the other hand, combining Proposition \ref{prop:HatProcessIsCloseToMeanProcess}  with Theorem  \ref{theo:synchro-empiricalVar}.\ a) we get for every $t\in [t_1,t_1+\delta]$ that
$$  \E\left(   | X^{(i)}_t- \hX^{t_1,N}_t|^2\right)\leq   2 \bigg[     \left( K_0' e^{-\lambda_0(t-t_0)} + \sigma^2 \frac{C^0_{\zeta,\rho}}{\lambda^0}\right) (1+ \delta K_{1,\delta})+  \delta  K_{2,\delta}  \frac{\sigma^2  }{N} C^0_{\zeta,\rho} \bigg] $$
with $K_0'=\left(  \frac{ 5\Rmax}{g_L} \right)^2+4$. The statement follows.
\end{proof}

\section{Propagation of Chaos and synchronization for the McKean-Vlasov limit: proofs of Theorem \ref{theo:propchaos_McKeanVlasov_eq} and   Corollary  \ref{coro:synchro-McKeanVlasov}}\label{sec:chaos}
We first address the asymptotic behavior of the flow of empirical measures \eqref{eq:empirical_measure} when $N\to \infty$ and the proof of Theorem \ref{theo:propchaos_McKeanVlasov_eq}. In particular, we will prove the  propagation of chaos property for  system \eqref{eq:HH-model}. Following the classic pathwise approach developed 
in  \cite{sznitman1991topics} and \cite{meleard1996asymptotic},  we first establish: 

\begin{theo}\label{theo:prop_chais_pathwise} 
Under the assumptions of Theorem \ref{theo:propchaos_McKeanVlasov_eq}, we have:
\begin{itemize}
\item[a)]  Let $W^{x}, x=m,n,h,y$ be independent  standard Brownian motions and $(V_0,m_0,n_0,h_0,y_0)$  an independent random vector with law $\mu_0$. 
There is existence and uniqueness, pathwise and in law, of a solution $\tX= (\tV_t,\tm_t,\tn_t,\th_t,\ty_t, t\geq 0 )$ to the nonlinear stochastic differential  equation  (in the sense of McKean)  with values in $\R\times [0,1]^4$:
\begin{equation}\label{eq:NonLinearProcess} 
\begin{aligned}
\tV_t &= V_0+ \int_{0}^{t}{F(\tV^{}_s,\tm_s,\tn_s,\th_s)ds}-\int_{0}^{t}{ \JE (\tV^{}_s-\E[\tV_s])ds}- \int_{0}^{t}{\JCh \E[\ty_s](\tV^{}_s-\Vrev) ds},\\
 \tx^{}_t& = x^{}_0+\int_{0}^{t}\rho_x(\tV^{}_s)(1-\tx^{}_s)  -\zeta_x(\tV^{}_s)\tx^{}_sds + \int_{0}^{t}{\sigma_x(\tV_s^{},\tx_s^{})dW_s^{x}},\;\;x=m,n,h,y \,
 \end{aligned}
\end{equation}
such that for all $t\geq0$, $|\tV_t|\leq{4\Rmax}/{\gL} + 2\Vmax_0 e^{-\gL t}$ almost surely. 
\item[b)]  $(\mu_t:=\mbox{law}(\tilde{X}_t): t\geq 0)$ is a weak solution globally defined  in
 $C((0,+\infty];   {\cal P}_2(\R\times [0,1]^4)) $    of the McKean-Vlasov equation \eqref{eq:Non-Linear-PDE}. 
 
 \item[c)]  For each $T>0$, let  $\tX^{(i)}= \left((\tV^{(i)}_t,\tm^{(i)}_t,\tn^{(i)}_t,\th^{(i)}_t,\ty^{(i)}_t):t\in [0,T]\right)$,  $i=1,\ldots,N$ be independent copies of the nonlinear process \eqref{eq:NonLinearProcess} each of them driven by the same Brownian motions $(W^{x,i},\ x=m,n,h,y)$  and with same initial conditions $X^{(i)}_0=\tX^{(i)}_0 $ as the $N$-particle system \eqref{eq:HH-model}.  Then, there
is a constant $C(T)>0$ such that for every $N\geq 1$ and $i\in \{1,\dots, N\}$, 
$$\E\left[ \sup_{0\leq t\leq T}| X^{(i)}_t-\tX^{(i)}_t |^2 \right]\leq \frac{C(T)}{N}.$$
\end{itemize}
\end{theo}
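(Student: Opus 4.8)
The plan is to leverage the a priori bounds already in hand---the voltage confinement of Proposition \ref{prop:boundedness-of-V^i} and the channel confinement in $[0,1]^4$ of Lemma \ref{lem:well-posedHH}---in order to reduce this problem, whose coefficients are only locally Lipschitz, to the classical globally Lipschitz framework of Sznitman \cite{sznitman1991topics} and M\'el\'eard \cite{meleard1996asymptotic}. The key point is that, under Hypothesis \ref{hyp:V0}, every process involved lives inside the fixed compact set $[-R,R]\times[0,1]^4$ with $R:=4\Rmax/\gL+2\Vmax_0+1$, on which all coefficients are Lipschitz.

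For part a), I would first introduce truncated coefficients $F_M,\rho_{x,M},\zeta_{x,M},\sigma_{x,M}$ that coincide with the originals on $[-R,R]\times[0,1]^4$ but are globally Lipschitz and bounded, exactly as in the proof of Lemma \ref{lem:well-posedHH}. For the resulting truncated nonlinear equation, whose McKean nonlinearity enters only through the scalar means $\E[\tV_s]$ and $\E[\ty_s]$, existence and pathwise/law uniqueness on each $[0,T]$ follow from a standard Picard--Banach fixed point argument in $C([0,T];L^2(\Omega))$ (equivalently, on flows of laws under $\mathcal{W}_2$), the global Lipschitz bounds yielding the contraction after finitely many iterations. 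I would then check that the truncated solution satisfies the original bounds: channel confinement to $[0,1]^4$ follows as in Lemma \ref{lem:well-posedHH} (via Proposition 3.3 of \cite{Bossy2015}), while the voltage bound $|\tV_t|\leq 4\Rmax/\gL+2\Vmax_0 e^{-\gL t}$ follows by repeating verbatim the Gronwall computation of Proposition \ref{prop:boundedness-of-V^i}, now with the deterministic mean $\E[\tV_s]$ playing the role of $\meanV_s$. Since on this region the truncated and original coefficients agree, the solution solves \eqref{eq:NonLinearProcess}; and uniqueness transfers, because any solution obeying these bounds also solves the truncated problem. Part b) is then routine: applying It\^o's formula to $\phi(\tX_t)$ for $\phi\in C^2_b(\R\times[0,1]^4)$, taking expectations and identifying the transport term through $\E[\tV_s],\E[\ty_s]$, one recognizes the weak formulation of \eqref{eq:Non-Linear-PDE}; the uniform bounds guarantee finiteness of all integrals and that $(\mu_t)$ has uniformly bounded compact support and lies in $C([0,\infty);{\cal P}_2(\R\times[0,1]^4))$.

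Part c) is the core coupling estimate. With the $\tX^{(i)}$ driven by the same Brownian motions $W^{x,i}$ and sharing the initial data $X^{(i)}_0$, I would estimate $\E\big[\sup_{s\leq t}|X^{(i)}_s-\tX^{(i)}_s|^2\big]$. The voltage and channel differences split into (i) drift increments, which are Lipschitz since both processes stay in the fixed compact region; (ii) the mean-field discrepancies $\meanV_s-\E[\tV_s]$ and $\meanY_s-\E[\ty_s]$; and (iii), for the channels, the stochastic integrals against the \emph{common} noise, whose integrands differ in a Lipschitz way and which are handled by Doob's and Burkholder--Davis--Gundy inequalities. For the mean-field terms I would use the decomposition
$$ \meanV_s-\E[\tV_s] = \frac{1}{N}\sum_{j=1}^N \left(V^{(j)}_s-\tV^{(j)}_s\right) + \left(\frac{1}{N}\sum_{j=1}^N \tV^{(j)}_s-\E[\tV_s]\right), $$
bounding the first sum by $\frac1N\sum_j|X^{(j)}_s-\tX^{(j)}_s|$ and the expected square of the second, centered term by $C/N$ thanks to the independence and uniform boundedness of the $\tV^{(j)}_s$; the same treatment applies to $\ty$. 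Collecting all contributions and applying Gronwall's lemma on $[0,T]$ closes the estimate with a constant $C(T)$ depending on $T$, $\Vmax_0$ and the coefficients but not on $N$.

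The only genuine obstacle is the failure of global Lipschitz continuity of the coefficients; once the a priori bounds confine the dynamics to a compact set, this difficulty evaporates and the rest is the classical computation. The one point demanding care is that the clean $1/N$ rate (rather than the weaker $N^{-2/5}$ appearing in Theorem \ref{theo:propchaos_McKeanVlasov_eq}) is available precisely because the interaction here is through the \emph{scalar} means $\E[\tV_s],\E[\ty_s]$: the fluctuation term above is the empirical mean of i.i.d.\ bounded variables, whose variance is of order $1/N$, whereas the Wasserstein rate in Theorem \ref{theo:propchaos_McKeanVlasov_eq} b) stems from approximating the full empirical measure and will require the estimates of Fournier and Guillin \cite{fournier-guillin2013}.
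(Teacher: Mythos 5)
Your proposal is correct, and it rests on the same pillar as the paper's proof: under Hypothesis \ref{hyp:V0} every process involved is confined to a fixed compact set, so truncation reduces everything to the globally Lipschitz McKean--Vlasov framework of \cite{sznitman1991topics,meleard1996asymptotic}. The execution, however, is organized differently, and the comparison is instructive. You establish the a priori bounds for the truncated nonlinear process \emph{directly}: channel confinement via Proposition 3.3 of \cite{Bossy2015}, and the voltage bound by redoing the Gronwall argument of Proposition \ref{prop:boundedness-of-V^i} with $\E[\tV_s]$ in place of $\meanV_s$. (One caution: this is not quite ``verbatim'' --- you must first control the mean, e.g.\ through $\E[\tV_t^2]$, using that the $\JE$-term contributes $-2\JE\left(\E[\tV_t^2]-(\E[\tV_t])^2\right)\le 0$, and only then run the pathwise estimate; this is exactly the computation the paper carries out, but in its \emph{uniqueness} step.) The paper instead obtains the confinement of the truncated nonlinear process by a transfer argument: it first identifies the original particle system \eqref{eq:HH-model} with the truncated one for $M$ large (via Proposition \ref{prop:boundedness-of-V^i}), quotes the classical coupling estimate $\E[\sup_{t\le T}|X^{(i,M)}_t-\tX^{(i,M)}_t|^2]\le C_M(T)/N$ for the truncated pair, and then lets $N\to\infty$ in bounds of the form $\P\left(\sup_{t\le T}\tx^{(i,M)}_t\ge 1+\eps\right)\le \P\left(\sup_{t\le T}x^{(i,M)}_t\ge 1\right)+2C_M(T)/(N\eps^2)$, so that the propagation-of-chaos estimate itself delivers the bounds on the limit object; this also makes part c) immediate, whereas you re-derive Sznitman's coupling estimate by hand (correctly: the splitting of $\meanV_s-\E[\tV_s]$ into a coupling term plus a centered i.i.d.\ fluctuation of variance $O(1/N)$, together with Doob/BDG for the common-noise stochastic integrals, is the standard argument, and your explanation of why the rate is $1/N$ here versus $N^{-2/5}$ in Theorem \ref{theo:propchaos_McKeanVlasov_eq} b) is exactly the right one). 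The one place where your proof delivers less than the paper's is uniqueness: since you only show that solutions \emph{already obeying the bounds} solve the truncated equation, your uniqueness holds within that class, which matches the literal wording of part a) but is weaker than what the paper proves --- there, an arbitrary solution of \eqref{eq:NonLinearProcess}, a priori allowed to explode, is localized by stopping times $\tau_M$, shown non-explosive through a uniform-in-$M$ second-moment bound, and only then shown to satisfy the confinement bounds and hence to coincide with the constructed solution. If uniqueness among all $\R\times[0,1]^4$-valued solutions is intended, you would need to add that localization step; everything else in your argument stands.
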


\begin{proof} 
The statements a), b) and c) would be   standard if the coefficients 
in each of the $N$ components of  \eqref{eq:HH-model}  were replaced by  globally Lipschitz  functions  of $X^{(i)}_s$ and $X^{(j)}_s$, see Theorems 2.2 and 2.3 in  \cite{meleard1996asymptotic}.
In particular, with functions $p^j_M$ and $F_M$ defined for fixed $M>0$ as in Lemma \ref{lem:well-posedHH}, for any $T>0$ there is existence and uniqueness, pathwise and in law, of a solution to the nonlinear  stochastic differential equation  on $[0,T]$:
\begin{equation}\label{eq:NonLinearProcess_M} 
\begin{aligned}
\tV_t^M &= V_0+ \int_{0}^{t}{F_M(\tV^M_s,\tm_s^M,\tn_s^M,\th_s^M)ds}-\int_{0}^{t}{ \JE (\tV^M_s-\E[\tV^M_s])ds}\\
 & \quad - \int_{0}^{t}{\JCh \E[p_M^1(\ty_s)](p_M^1(\tV^M_s)-\Vrev) ds},\\
 \tx^M_t & = x^{}_0+\int_{0}^{t}\rho_x(p^1_M(\tV^M_s)(1-p_M^1(\tx^M_s))  -\zeta_x(p_M^2(\tV^M_s))p_M^1(\tx^M_s) ds\\
 &\quad  
  + \int_{0}^{t}{\sigma_x(p_M^1(\tV_s^M),\tx_s^M)dW_s^{x}},\;\;x=m,n,h,y.  \,
 \end{aligned}
\end{equation}
Moreover, letting 
 $\tX^{(i,M)}= \left((\tV^{(i,M)}_t,\tm^{(i,M)}_t,\tn^{(i,M)}_t,\th^{(i,M)}_t,\ty^{(i,M)}_t):t\in [0,T]\right)$,  $i=1,\ldots,N$ be independent copies of the nonlinear process \eqref{eq:NonLinearProcess_M} driven by the same Brownian motions $(W^{x,i},\ x=m,n,h,y)$  and with same initial conditions $X^{(i)}_0=\tX^{(i)}_0 $ as the  system  $(X^{(1,M)},\dots, X^{(N,M)}) $ defined in \eqref{eq:truncated-HH-model}, we obtain that 
 $$\E\left[ \sup_{0\leq t\leq T}| X^{(i,M)}_t-\tX^{(i,M)}_t |^2 \right]\leq \frac{C_M(T)}{N}$$
 for every $N\geq 1$ and $i\in \{1,\dots, N\}$, and some constant $C_M(T)>0$.

 We notice now that, by  Proposition \ref{prop:boundedness-of-V^i}, for $M>0$ large enough the system $(X^{(1)},\dots, X^{(N)}) $  is also a solution to the system of equations \eqref{eq:truncated-HH-model}.  Pathwise uniqueness of the latter yields for all such $M>0$ that  $(X^{(1)},\dots, X^{(N)}) =(X^{(1,M)},\dots, X^{(N,M)}) $  on $[0,T]$, from where
 \begin{equation}\label{estim_prop_chaos_M}
 \E\left[ \sup_{0\leq t\leq T}| X^{(i)}_t-\tX^{(i,M)}_t |^2 \right]\leq \frac{C_M(T)}{N}
 \end{equation}
 for every $N\geq 1$ and $i\in \{1,\dots, N\}$. Furthermore, for any $M'>0$ 
 $$\P\left(  \sup_{0\leq t\leq T}  \tx^{(i,M)}_t  \geq M'+\varepsilon \right)\leq \P\left(  \sup_{0\leq t\leq T}  x^{(i,M)}_t  \geq M' \right) + \frac{2 C_M(T)}{N \varepsilon^2}$$
 Taking $M'=1$, letting $N\to \infty$ and then $\varepsilon\to 0$ we deduce that $ \tx^{(i,M)}_t \leq 1$ a.s. for every $t\in [0.T]$ and $i\in \N $. In a similar way,   $ \tx^{(i,M)}_t \geq  0$ and $|\tilde{V}^{(i,M)}_t | \leq   \Vmax_{t,\infty}$ hold a.s. for every $t\in [0, T]$ and $i\in \N $. This implies that for $M>0$ large enough but fixed,  a solution to \eqref{eq:NonLinearProcess_M}  also solves \eqref{eq:NonLinearProcess}, and proves the existence part in a).
 
 \smallskip 

We show now that any solution have uniform in time bounded compact support, from which uniqueness in part a) will immediately  follow.  We shall first consider a solution $(U_t,q^m_t,q^n_t,q^h_t,q^y_t)$ of \eqref{eq:NonLinearProcess}  with explosion time $\xi$, and we will show that it  coincides with  $(\tV^{M}_t,\tm^{M}_t,\tn^{M}_t,\th^{M}_t,\ty^{M}_t, t\geq 0)$ for a $M$ big enough.  For $M>1$, we define  $\tau_M = \inf\{t\geq0: \max\{|U_t|,|q^m_t|,|q^n_t|,|q^h_t|,|q^y_t|\}\geq M\}$.
Then  we observe that  the coefficients of \eqref{eq:NonLinearProcess} applied to $(U_t,q^m_t,q^n_t,q^h_t,q^y_t,  0 \leq t \leq  \tau_M)$ coincide with the truncated coefficients of \eqref{eq:NonLinearProcess_M} 
and thanks to the uniqueness property for \eqref{eq:NonLinearProcess_M}  we  conclude that almost surely
$$(U,q^m,q^n,q^h,q^y)_{t\wedge\tau_M} = (\tV^{M},\tm^{M},\tn^{M},\th^{M},\ty^{M})_{t\wedge\tau_M}.$$
In particular, we observe that  $q^x_{t \wedge \tau_M} \in [0,1]$ for $x=m,n,h,y$,  and  that  $\tau_M = \inf\{t\geq0:|U_t|\geq M\}$ for $M > 1$.  Moreover the second order moment $ \E[U_{t\wedge\tau_M}^2]$ is uniformly bounded in $M$, since 
 \begin{align*}
U_{t\wedge\tau_M}^2 &= V_0^2+ 2\int_{0}^{t\wedge \tau_M}{U_sF(U_s,q^m_s,q^n_s,q^h_s)ds}-2\int_{0}^{t\wedge \tau_M}{ \JE U_s(U_s-\E[U_s])ds}\\
&\quad- 2\int_{0}^{t\wedge \tau_M}{\JCh \E[q^y_s]U_s(U_s-\Vrev) ds},
\end{align*}
 from where, it  is easy to show that 
 $$\E(U_{t\wedge\tau_M}^2) \leq C_1 + C_2\int_{0}^{t}{\E(U_{s\wedge\tau_M}^2)},$$
 and therefore, thanks to Gronwall's inequality
  $$\E(U_{t\wedge\tau_M}^2) \leq C_1e^{C_2 t}.$$
  On the other hand $\E(U_{t\wedge\tau_M}^2) = \E(U_{t}^2\ind{\tau_M>t})+ M^2\P(\tau_M \leq t)$ and then we can conclude for all $t\geq0$ and all $M\geq 1$
  $$\P(\tau_M \leq t) \leq \frac{C_1e^{C_2t}}{M^2}.$$
Since $\tau_M\nearrow \xi$, we conclude that for all $t$ $\P(\xi \leq t)=0$, from where it follows that $\xi$ is almost surely infinite. 
 
Now, since $(U_t,q^m_t,q^n_t,q^h_t,q^y_t)$ has no explosion, we   apply  Proposition 3.3 in \cite{Bossy2015} to get that  almost surely $q^x_{t} \in [0,1]$ for any $t>0$. Using this, we derive a more precise bound for the second order moment: 
\begin{align*}
\E(U_{t}^2) &\leq \E(V_0^2)+ 2\int_{0}^{t}{\sqrt{\E(R_s^2)}\sqrt{\E(U_s^2)}-\gL\E(U_s^2) ds},
\end{align*}
where as in the proof of Proposition \ref{prop:boundedness-of-V^i}, 
\begin{align*}
 R_s
  & \leq   \Rmax:= \max_{a,b,c  \in [0,1]}{| I + \gL \VL+\gK\VK a+\gNa\VNa b+\JCh\Vrev c}|. 
\end{align*}
 Applying one more time Lemma \ref{lem:gronwallAmbrisio} we conclude
$$\sqrt{ \E(U_{t}^2)} \leq \sqrt{\E(V_0^2)} e^{-\gL t} + \frac{2\Rmax}{\gL}(1-e^{-\gL t}).$$
Thus, the second moment of any solution of \eqref{eq:NonLinearProcess} is uniformly bounded in time.  Moreover, since the initial condition $V_0$ is bounded, proceeding exactly as in the proof of Proposition \ref{prop:boundedness-of-V^i} we obtain  that
$$|U_t| \leq \frac{4\Rmax}{\gL} + 2\Vmax_0 e^{-\gL t},$$
with the same bound  $\Vmax_0$ for $V_0$. In conclusion, solutions of \eqref{eq:NonLinearProcess} are non explosive, even more  they are uniformly bounded in time. Choosing   $M>4\Rmax/\gL + 2\Vmax_0$, we get $\tau_M=\infty$ almost surely,  and 
for any $t\geq0$,
$$(U,q^m,q^n,q^h,q^y)_{t} = (\tV^{M},\tm^{M},\tn^{M},\th^{M},\ty^{M})_{t}.$$
Hence equation \eqref{eq:NonLinearProcess} has a unique solution.

Part  b) derives from a direct and easy application of the Ito's formula to compute 
$$\E[\psi(\tX_t)] = \int_{\R\times [0,1]^4} \psi(x) \mu_t(dx) $$ 
for a $C^\infty_c$ test function $\psi$,  thanks to the fact that the Lebesgue integrals on the  right hand side of the Itô formula will be all bounded, since the supports of the laws $(\mu_t: t\geq 0)$ are contained in some compact set, and by  continuity of coefficients. 

Part c) is immediate taking large enough $M$ in  \eqref{estim_prop_chaos_M}. 

 \end{proof}

We are now in position to prove 
\begin{proof}[Proof of  of Theorem  \ref{theo:propchaos_McKeanVlasov_eq}]
a) We write $\mathcal{C}_T:=C([0,T], \R\times [0,1]^4)$. 
Part c) of Theorem \ref{theo:prop_chais_pathwise}  implies that for each  $T>0$ and $k\geq 1$ the convergence $\mbox{Law}(X^{(1)},\dots X^{(k)})\to\mu^{\otimes k}$  with  $\mu=\mbox{Law}(\tilde{X}^{(1)})$  holds on the space  $\mathcal{C}_T^k$ as $N\to \infty$. By  Proposition 2.2.\ in \cite{sznitman1991topics} or Proposition 4.2.\ in \cite{meleard1996asymptotic}, this implies that the empirical measure 
\begin{equation*} 
\mu^N:=\frac{1}{N}\sum_{i=1}^N \delta_{X_{\cdot}^{(i)}}\in  {\cal P}(\mathcal{C}_T), 
\end{equation*}
with ${\cal P}(\mathcal{C}_T)$ denoting the space of probability measures on $\mathcal{C}_T$ endowed with the weak topology, converges in law to the (deterministic) probability measure $\mu$.  The  first assertion of the theorem follows then from the fact that the  mapping  associating with $\nu \in   {\cal P}(\mathcal{C}_T)$ its flow $(\nu_t:t\in [0,T])\in C([0,T];{\cal P}( \R\times [0,1]^4))$ of one-dimensional time-marginals laws is continuous,  together with  part b) of Theorem \ref{theo:prop_chais_pathwise} (notice  that $C([0,T];{\cal P}( \R\times [0,1]^4))$ can be replaced by $C([0,T]; {\cal P}_2( \R\times [0,1]^4))$ since all the random measures involved have a common compact support).

\medskip 

b) We observe first that for each $t\geq 0$ one has
$$      \E\left(\mathcal{W}_2^2(\mu^N_t,\mu_t )  \right) \leq 2    \E\left(\mathcal{W}_2^2(\mu^N_t,\tilde{\mu}^N_t )  \right)+  2  \E\left(\mathcal{W}_2^2(\tilde{\mu}^N_t,\mu_t ) \right),$$
where $\tilde{\mu}^N_t$ is the empirical measure  of any  random i.i.d. sample  of the law $\mu_t$ constructed in the same probability as $\mu^N_t$. Taking 
$\tilde{\mu}^N_t:=\frac{1}{N}\sum_{i=1}^N \delta_{\tilde{X}_t^{(i)}}$
with $\tX^{(i)}_t$,  $i=1,\ldots,N$ the processes defined in part c) of Theorem \ref{theo:prop_chais_pathwise}  we get for every $t\in [0,T]$  that
$$      \E\left(\mathcal{W}_2^2(\mu^N_t,\mu_t )  \right) \leq 2  \frac{C(T)}{N} +  2  \E\left(\mathcal{W}_2^2(\tilde{\mu}^N_t,\mu_t )\right) .$$
On the other hand, we have    $\sup_{t\in [0,T]}( \int |z|^q\mu_t(dz))^{1/q} <\infty$  for each $q\geq 1$, using for instance   the bound  obtained at the end of the proof of Theorem \ref{theo:prop_chais_pathwise}.  We can therefore apply Theorem 1 in \cite{fournier-guillin2013} with $p=2$, $d=5$ and a sufficiently large $q>2$,  to get that $ \E\left(\mathcal{W}_2^2(\tilde{\mu}^N_t,\mu_t )\right) \leq C N^{-2/5}$. The second assertion thus follows.

\medskip 
c)  In order to prove uniqueness  for the McKean-Vlasov equation \eqref{eq:Non-Linear-PDE}, we adapt to our setting a generic argument going back at least to G\"artner \cite{gartner}.   Assume for  a while that for each compactly supported  $\nu_0\in {\cal P}( \R\times [0,1]^4))$  and  $(\nu^*_t:t\in [0,T])\in C([0,T],{\cal P}_2( \R\times [0,1]^4))$ 
the linear Fokker-Planck equation 
\begin{equation}\label{eq:Linear-PDE}
\begin{aligned}
\partial_t\nu_t & = \partial_v\left(\Phi (\langle (\nu^*_t)^V\rangle ,\langle (\nu^*_t)^y\rangle ,  \cdot,\cdot)\nu_t \right)+ \sum_{x=m,n,h,y} \frac{1}{2}\sigma^2 \, \partial_{u_x u_x}^2 \left(a_x\nu_t\right)-\partial_{u_x}\left( b_x\nu_t\right)
\end{aligned}
\end{equation}
has at most one weak solution with supports  bounded uniformly in $t\in [0,T]$. By similar arguments as in Lemma \ref{lem:well-posedHH}, strong well-posedness holds for the stochastic differential equation: 
\begin{equation}\label{eq:AuxiliarProcess} 
\begin{aligned}
V_t^* &= V^*_0+ \int_{0}^{t}{F(V^*_s,m_s^*,n_s^*,h_s^*)ds}-\int_{0}^{t}{ \JE (V^*_s-\langle (\nu^*_s)^V \rangle)ds}- \int_{0}^{t}{\JCh \langle (\nu^*_s)^y \rangle (V^*_s-\Vrev) ds},\\
 x^*_t& = x^*_0+\int_{0}^{t}\rho_x(V^*_s)(1-x^*_s)  -\zeta_x(V^*_s)x^*_sds + \int_{0}^{t}{\sigma_x(V_s^*,x_s^*)dW_s^{x}},\;\;x=m,n,h,y,
\end{aligned}
\end{equation}
with $(V_0^* ,m_0^* ,n_0^* ,h_0^* ,y_0^* )$ independent of the Brownian motions $W^x$ and with law  $\nu_ 0$. Moreover,   one can check that  $x^*_t\in [0,1]$  a.s. for all $t\in [0,T]$ and   that the process $(V_t^*:t\in [0,T])$ is bounded.  It follows using It\^o's formula that a unique weak solution to equation \eqref{eq:Linear-PDE}  with uniformly  bounded supports   does exist, and  is given by $\nu_ t=\mbox{ law} (V_t^* ,m_t^* ,n_t^* ,h_t^* ,y_t^* )$ for all  $t\in [0,T]$. Now, any solution $(\mu_t:t\in [0,T])$   in $C([0,T],{\cal P}( \R\times [0,1]^4))$ of \eqref{eq:Non-Linear-PDE} with uniformly bounded supports also solves the linear equation \eqref{eq:Linear-PDE} with  $(\nu^*_t:t\in [0,T])=(\mu_t:t\in [0,T])$. This yields,    for all  $t\in [0,T]$,  that $\mu_t=\mbox{ law} (V_t^* ,m_t^* ,n_t^* ,h_t^* ,y_t^* )$, for the process  defined as in \eqref{eq:AuxiliarProcess},  with $\nu^*_s=\mu_s$ for all $ s\in [0,T]$. In other words,   this process solves the nonlinear stochastic differential equation \eqref{eq:NonLinearProcess}. From Theorem \ref{theo:prop_chais_pathwise}  we conclude that 
$(\mu_t:t\in [0,T])=(\mbox{law}(\tilde{X}_t):t\in [0,T])$, that is,  there is uniqueness of solutions in $C([0,T],{\cal P}( \R\times [0,1]^4))$ of  \eqref{eq:Non-Linear-PDE} having uniformly bounded support.

Hence, in order to conclude the proof of Theorem  \ref{theo:propchaos_McKeanVlasov_eq} it is enough to show that, given  functions $\alpha,\beta\in C([0,T],\R)$ and $\nu_0\in {\cal P}_2( \R\times [0,1]^4)$ there is at most one solution   $(\nu_t:t\in [0,T])\in C([0,T],{\cal P}( \R\times [0,1]^4))$ with support bounded uniformly in $[0,T]$,  to the distribution formulation of equation \eqref{eq:Linear-PDE}
 \begin{equation}\label{eq:weakeq}
\begin{split}
\int \psi(t,v,u) \nu_t(dv,du) & = \int \psi(0,v,u) \nu_0(dv,du) - \int_0^t  \int \bigg[ \Phi (\alpha_s ,\beta_s , v, u ) \partial_v \psi(s,v,u)   \\
&  \qquad   +  \big( \partial_s + \sum_{x=m,n,h,y}  \frac{1}{2} \sigma^2  \, a_x  \partial_{u_x u_x}^2  + b_x \partial_{u_x} \big) \psi(s,v,u)  \bigg] \nu_s (dv,du)  \, ds
\end{split}
\end{equation}
for all $t\in [0,T]$ and for an extended class of test function $\psi\in C^{1,1,2}_b([0,T]\times \R\times [0,1]^4)$. Let $\rho_x'$ and $\zeta_x'$ denote compactly supported functions coinciding with   $\rho_x$ and $\zeta_x$ on some compact set  ${\cal K}\subset \R$ containing the supports of the measures $\nu^V_t$ for $ t\in [0,T]$, and define $\sigma_x'$, $a'_x$ and $b_x'$ in terms of them in a similar way as $\sigma_x$, $a_x$ and $b_x$ were defined in terms of   $\rho_x$ and $\zeta_x$. 
For a given $t>0$, consider the following Cauchy problem in $\R^5$ : for all $(s,v,u)\in [0,t)\times \R\times \R^4$, 
\begin{equation}\label{cauchy}
\begin{aligned}
  \big( \partial_s-  \Phi (\alpha_s ,\beta_s , v, u ) \partial_v +  \sum_{x=m,n,h,y} \frac{1}{2} \sigma^2  \, a'_x  \partial_{u_x u_x}^2  + b'_x \partial_{u_x} \big) f_t(s,v,u)=&0,\\
f_t(t,v,u) = & \, \psi(v,u).
\end{aligned}
\end{equation}
By the Feynman-Kac formula  (see e.g. Karatzas and Shreve \cite{Karatzas:1991uq}),  if a solution $f_t\in C_b([0,t] \times \R^5 )\cap C_b^{1,1,2}([0,t)\times \R\times \R^4)$  exists, then  it is given  by 
\begin{equation}\label{eq:FKformula}
f_t(s,v,u):=\E(\psi(X^{s,v,u}_t)) \, 
\end{equation}
 where $(X_r^{s,v,u}:=(V_r,m_r,n_r,h_r,y_r)\,: r\in [s,t])$ is the unique (pathwise and in law) solution in $[s,t]$ of the stochastic differential equation: 
\begin{equation*} 
\begin{aligned}
V_r &= v+ \int_{s}^{r}{F(V_{\theta},m_{\theta},n_{\theta},h_{\theta})d\theta}-\int_{s}^{r}{ \JE (V_{\theta}-\alpha_{\theta}) ds}- \int_{s}^{t}{\JCh\beta_{\theta} (V_{\theta}-\Vrev) d\theta},\\
 x_r& = u_x+\int_{s}^{r}\rho'_x(V_{\theta})(1-x_{\theta})  -\zeta'_x(V_{\theta})x_{\theta}d\theta + \int_{s}^{r}{\sigma'_x(V_{\theta},x_{\theta})dW_{\theta}^{x}},\;\;x=m,n,h,y.
 \end{aligned}
\end{equation*}
Moreover, for $v$ chosen in some fixed compact set, this solution  is bounded independently of $s\in [0,t]$, and one has $x_r\in [0,1]$ for all $r\in [s,t]$.   Hence, under the assumption that $\sigma>0$, $\rho_x$ and $\zeta_x$ are of class $C^2(\R)$,  one can moreover prove, following the lines of  Friedman \cite[p.124]{Friedman-06}, that the function $f_t$ defined by  \eqref{eq:FKformula} actually is of class $C_b^{1,1,2}([0,t)\times \R\times\R^4)$ and solves the Cauchy problem \eqref{cauchy}. Putting $\psi=f_t$ in \eqref{eq:weakeq} yields
\begin{equation*}
\int \psi(v,u) \nu_t(dv,du)  = \int \E(\psi(X^{s,v,u}_t))  \nu_0(dv,du) 
\end{equation*}
for all $\psi\in C_0^2(\R^5 )$, which uniquely determines $\nu_t.$ Notice that when $\sigma=0$, the required  
regularity for $\phi$ and for $f$ turns from $C^{1,1,2}$  to  $C^{1,1,1}$ and the  Feymann Kac formula   in the argument can be replaced by the characteristics formula.  The proof of part c)  is complete. 

d) This is immediate from parts b) and d) of Theorem \ref{theo:prop_chais_pathwise} . 
  \end{proof}

\begin{proof}[Proof of  Corollary  \ref{coro:synchro-McKeanVlasov}]

Recall  first that, for any $\nu\in {\cal P}_2(\R\times [0,1]^4)$ and $w\in \R\times [0,1]^4)$, one has 
$$ \mathcal{W}_2^2(\nu,\delta_w) = \int | z-w|^2 \nu(dz).$$
Moreover, for every $t\geq t_1$ and $N\geq 1$  it holds by exchangeability that:  
$$  \E\left(   | X^{(i)}_t- \hX^{t_1,N}_t|^2\right)=  \E\left(  \mathcal{W}_2^2(\mu^N_t,\delta_{ \hX^{ t_1,N}_t  }) \right)  . $$
 Therefore, it is enough to prove that,  for any $t_1\geq 0$,
\begin{equation*}
\sup_{t_1\leq t\leq t_1+\delta}  \E\left|  \mathcal{W}_2^2(\mu_t,\delta_{ \hX^{ t_1,\infty}_t  })   -   \mathcal{W}_2^2(\mu^N_t,\delta_{ \hX^{ t_1,N}_t  })    \right|  \to 0
\end{equation*}
as $N\to \infty$.  Given  $t\geq t_1$ and $N\geq 1$, let $\pi_t^N(dz,dz')$ be  a coupling between $\mu_t$ and $\mu^N_t$.
Then,  for some constant $C>0$ not depending on $t\geq t_1$ nor on $N\geq  1$, we have
\begin{equation*}
\begin{split}
 \left| \mathcal{W}_2^2(\mu_t,\delta_{ \hX^{ t_1,\infty}_t  })   -   \mathcal{W}_2^2(\mu^N_t,\delta_{ \hX^{ t_1,N}_t  }) \right| =  & \left| \int \pi_t^N(dz,dz')  \left[ | z-   \hX^{ t_1,\infty}_t |^2 - | z'-\hX^{ t_1,N}_t  |^2\right]\right| \\ 
 \leq  &\,  C  \left[ \int   | z-  z'  |\pi_t^N(dz,dz') + |  \hX^{ t_1,\infty}_t-\hX^{ t_1,N}_t  |\right] \\ 
\end{split}
\end{equation*}
since the supports of $\mu_t$ and $\mu_t^N$ and the processes  $\hX^{ t_1,\infty}_t$  and  $\hX^{ t_1,N}_t$  are uniformly bounded in $t\geq t_1$ and $N$.  The latter property also allows us to write the dynamics in \eqref{eq:HatProcess}  and \eqref{eq:HatProcess_infty} using globally Lipschitz coefficients. Thanks  to Gronwall's lemma this yields the estimates 
$$  \sup_{t_1\leq t\leq t_1+\delta}  |  \hX^{ t_1,\infty}_t-\hX^{ t_1,N}_t  |\leq C_{\delta}  |  \hX^{ t_1,\infty}_{t_1}-\hX^{ t_1,N}_{t_1}  |=  C_{\delta}  | \langle \mu_{t_1}\rangle-  \langle \mu^N_{t_1} \rangle |\leq C_{\delta}  \int   | z-  z'  |\pi_{t_1}^N(dz,dz') $$
for some constant $C_{\delta}>0$ not depending on $N$.  Since  $ \int   | z-  z'  |\pi_{t}^N(dz,dz') \leq \left(  \int   | z-  z'  |^2\pi_{t}^N(dz,dz')\right)^{1/2}  $,  by taking the above couplings to be optimal  for $ \mathcal{W}_2$,   we get the estimate
\begin{equation*}
\sup_{t_1\leq t\leq t_1+\delta}  \E\left| \mathcal{W}_2^2(\mu_t,\delta_{ \hX^{ t_1,\infty}_t  })   -   \mathcal{W}_2^2(\mu^N_t,\delta_{ \hX^{ t_1,N}_t  })  \right| \leq  C' \sup_{t_1\leq t\leq t_1+\delta}   \E^{1/2}\left(  \mathcal{W}_2^2(\mu_t, \mu^N_t) \right) 
\end{equation*}
for some $C'>0$. We conclude thanks to Theorem  \ref{theo:propchaos_McKeanVlasov_eq}.
\end{proof}
\new{
\section{Strong Convergence Rate Result for the Exponential Projective Euler Scheme (EPES)}\label{sec:proof-numerical-scheme}

The main object of this section is to prove the convergence of the numerical scheme presented in Section \ref{sec:numerical-experiments} to the model \eqref{eq:HH-model} and establish the following rate of  convergence

\begin{prop}\label{theo:ConvNS} Asumme Hypothesis \ref{hyp:V0}, if $\chi(x)=O(x(1-x))$, then there exists a constant $C$ depending on the parameters of the system, but independent of $\dt$, such that for any $i=1,\ldots,N$:
\begin{align*}
 \E\left[\left(V_t^{(i)}- \widehat{V}_t^{(i)}\right)^2\right] +\sum_{x=m,n,h,y}\E\left[ |x_t^{(i)}-\hx_t^{(i)}|^2\right] & \leq C\dt.
\end{align*}
\end{prop}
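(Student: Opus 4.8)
The plan is to run a step-by-step strong-error analysis that, thanks to the a priori bounds already established, takes place entirely on a compact state space where all coefficients are globally Lipschitz and bounded. First I would record the reductions. By Lemma~\ref{lem:well-posedHH} the true channel proportions $x^{(i)}_t$ and, by construction (the projection), the scheme values $\widehat{x}^{(i)}_t$ lie in $[0,1]$; by Proposition~\ref{prop:boundedness-of-V^i} and point~iii) of Remark~\ref{rem:after-prop-boundedness-of-V^i}, both $V^{(i)}_t$ and $\widehat{V}^{(i)}_t$ are bounded by a deterministic constant $R>0$, uniformly in $N$, in $t$ and in the step. Hence $F$, $\rho_x$, $\zeta_x$ and, by the remark following Lemma~\ref{lem:well-posedHH}, $\sigma_x$ may be replaced by globally Lipschitz and bounded versions agreeing with them on $[-R,R]\times[0,1]^4$ without altering either process. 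I would then write $\eta(s):=t_k$ for $s\in[t_k,t_{k+1})$ and introduce the continuous-time interpolation of the EPES: the exact linear voltage flow, and between grid points the Ornstein--Uhlenbeck processes $\check{x}^{(i)}$ with coefficients frozen at $\eta(s)$, the projection being applied only at grid points. The key remark on the projection is that $\Pi_{[0,1]}$ is non-expansive and fixes $[0,1]$, so, since $x^{(i)}_{t_{k+1}}\in[0,1]$,
\[
\big|x^{(i)}_{t_{k+1}}-\widehat{x}^{(i)}_{t_{k+1}}\big|=\big|\Pi_{[0,1]}(x^{(i)}_{t_{k+1}})-\Pi_{[0,1]}(\check{x}^{(i)}_{t_{k+1}})\big|\le\big|x^{(i)}_{t_{k+1}}-\check{x}^{(i)}_{t_{k+1}}\big| ,
\]
so the projection can only decrease the error and may be discarded in the recursion.

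The core is a one-step estimate on $[t_k,t_{k+1}]$. Setting $e^{V,(i)}_s=V^{(i)}_s-\widehat V^{(i)}_s$ and $e^{x,(i)}_s=x^{(i)}_s-\check x^{(i)}_s$, I would apply It\^o's formula to $|X^{(i)}_s-\check X^{(i)}_s|^2$ and take expectations; there is a dissipative drift and a martingale in the channel equations, none in the exact voltage flow. Every coefficient increment is compared along the compact set: each term $\rho_x(V^{(i)}_s),\zeta_x(V^{(i)}_s),\sigma_x(V^{(i)}_s,x^{(i)}_s)$ evaluated on the true trajectory and the corresponding frozen value at $(\widehat V^{(i)}_{\eta(s)},\widehat x^{(i)}_{\eta(s)})$ both sit in $[-R,R]\times[0,1]$, so the Lipschitz bound yields a control by $|V^{(i)}_s-\widehat V^{(i)}_{\eta(s)}|+|x^{(i)}_s-\widehat x^{(i)}_{\eta(s)}|$. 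I would then split
\[
|V^{(i)}_s-\widehat V^{(i)}_{\eta(s)}|\le|e^{V,(i)}_s|+|\widehat V^{(i)}_s-\widehat V^{(i)}_{\eta(s)}|,\qquad |x^{(i)}_s-\widehat x^{(i)}_{\eta(s)}|\le|e^{x,(i)}_s|+|\check x^{(i)}_s-\widehat x^{(i)}_{\eta(s)}| ,
\]
the first summands feeding Gronwall and the second being \emph{consistency} terms. The scheme voltage has bounded time derivative, so $|\widehat V^{(i)}_s-\widehat V^{(i)}_{\eta(s)}|\le C\dt$, while the Ornstein--Uhlenbeck increment satisfies $\E|\check x^{(i)}_s-\widehat x^{(i)}_{\eta(s)}|^2\le C\dt$. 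The dissipative $-\gL$ term in the voltage drift is handled exactly as in \eqref{eq:boundForTheIncrementOfTheDriftF}. After Young's inequality the consistency terms produce a local inhomogeneity of order $\dt$ in $\tfrac{d}{ds}\E|X^{(i)}_s-\check X^{(i)}_s|^2$, i.e. an integrated local error $O(\dt^2)$; this is exactly the point where the assumption $\chi(x)=O(x(1-x))$ is used, to keep the consistency estimate for the degenerate diffusion $\sigma_x$ of order $\dt$ up to the boundary and to control the excursions of the unprojected step outside $[0,1]$.

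The only genuinely non-local feature is the mean-field coupling in the voltage. For fixed $i$ its contribution to $\tfrac{d}{ds}\E(e^{V,(i)}_s)^2$ is $2\,\E\!\big[e^{V,(i)}_s\big(-\JE e^{V,(i)}_s+\JE\overline{e^V}_s+O(\dt)\big)\big]$ with $\overline{e^V}_s=\tfrac1N\sum_j e^{V,(j)}_s$, plus an analogous chemical term. Summing over $i$ the cross terms cancel against the dissipation, since $\sum_i e^{V,(i)}_s\overline{e^V}_s=N(\overline{e^V}_s)^2\le\sum_i(e^{V,(i)}_s)^2$, so that $S_s:=\sum_i\E|X^{(i)}_s-\check X^{(i)}_s|^2$ obeys $\tfrac{d}{ds}S_s\le C S_s+CN\dt$. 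Combining with the projection step and Lemma~\ref{lem:gronwallAmbrisio} gives the discrete recursion $\tfrac1N\sum_i\epsilon^{(i)}_{k+1}\le(1+C\dt)\tfrac1N\sum_i\epsilon^{(i)}_k+C\dt^2$ with $\epsilon^{(i)}_k=\E|X^{(i)}_{t_k}-\widehat X^{(i)}_{t_k}|^2$ and $\epsilon^{(i)}_0=0$, whence $\tfrac1N\sum_i\epsilon^{(i)}_k\le C\dt$. Feeding this averaged bound back, the per-neuron coupling term is controlled by $\JE\E(e^{V,(i)}_s)^2+\JE\,S_s/N\le\JE\E(e^{V,(i)}_s)^2+C\dt$, so the single-index recursion also closes as $\epsilon^{(i)}_{k+1}\le(1+C\dt)\epsilon^{(i)}_k+C\dt^2$, yielding $\epsilon^{(i)}_k\le C\dt$ uniformly in $i$ and in $k\le M$, which is the claimed rate $1/2$ (the intermediate times $t\in(t_k,t_{k+1})$ following from the same per-step Gronwall bound).

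I expect the main obstacle to be the careful treatment of the channel diffusion near the boundary, where $\sigma_x$ degenerates: although $\sigma_x$ is Lipschitz on the compact set, obtaining the sharp $O(\dt)$ consistency for the frozen Ornstein--Uhlenbeck step and certifying that the non-expansive projection does not spoil the order requires the refined control afforded by $\chi(x)=O(x(1-x))$. The bookkeeping of the mean-field coupling is routine once the aggregate cancellation above is observed, and the exact linear treatment of the voltage is harmless since an exact flow is better than a first-order one and only the freezing of the channel and interaction coefficients generates error.
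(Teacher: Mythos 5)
Your proof is correct and shares the paper's overall architecture --- reduction to a compact state space on which all coefficients are globally Lipschitz and bounded, a frozen-coefficient one-step It\^o/Gronwall estimate (the paper's Lemma~\ref{lem:conv-num-step1}), local error bounds of order $\dt^2$ for the voltage and $\dt$ for the Ornstein--Uhlenbeck step (the paper's Lemma~\ref{prop:local-errors}), and the discrete recursion $\epsilon_{k+1}\le(1+C\dt)\epsilon_k+C\dt^2$ --- but it diverges from the paper at the one genuinely delicate point, the projection step, and there your route is different and in fact simpler. The paper controls the discrepancy between the unprojected value $\cxi_{t_{k+1}}$ and its projection $\hx^{(i)}_{t_{k+1}}$ through Lemma~\ref{prop:probability-of-projection}, a Gaussian concentration estimate giving $\P(\cxi_t\notin[0,1])\le \exp(-C/\dt)$; this is precisely where the hypothesis $\chi(x)=O(x(1-x))$ enters (it makes $x^2/\chi(x)^2$ bounded below near the boundary, so the conditional mean-to-variance ratio of the frozen step is of order $1/\dt$). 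You instead observe that $\Pi_{[0,1]}$ is non-expansive and fixes $[0,1]$, so that, since $x^{(i)}_{t_{k+1}}\in[0,1]$ a.s.\ by Lemma~\ref{lem:well-posedHH}, the projection can only decrease the error pathwise; this removes any need for an exit-probability estimate and closes the recursion exactly, with no exponential remainder and no Young-type loss. A consequence you did not draw: under your argument the hypothesis $\chi(x)=O(x(1-x))$ is never actually used --- your assertion that it is needed for the $O(\dt)$ consistency of the frozen step and to ``control excursions outside $[0,1]$'' is a misattribution, since $\E\left[|\cxi_s-\hx^{(i)}_{\eta(s)}|^2\right]\le C\dt$ is the standard estimate (bounded coefficients, starting point in $[0,1]$) and your projection argument makes excursions irrelevant; your proof therefore establishes the proposition under Hypothesis~\ref{hyp:V0} alone, which is slightly stronger than the paper's statement. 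Finally, your treatment of the mean-field coupling, summing over $i$ and using $\sum_i e^{V,(i)}_s\,\overline{e^V}_s=N\left(\overline{e^V}_s\right)^2\le\sum_i\left(e^{V,(i)}_s\right)^2$ before feeding the averaged bound back into the single-index recursion, is also sound and avoids the exchangeability of the particles that the paper's proof of Lemma~\ref{lem:conv-num-step1} invokes but that is not formally among the hypotheses of the proposition.
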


 We decompose the proof of this proposition in several preliminary results.

The next result follows from the uniform bound for $\widehat{V}_t^{(i)}$ (see iii) in Remark \ref{rem:after-prop-boundedness-of-V^i}) and some standard arguments on local approximation of SDEs, so we omit the proof.
\begin{lem}\label{prop:local-errors} Under Hypothesis \ref{hyp:V0}, there exists a constant $C$ depending on the parameters of the system, but independent of $\dt$ such that
$$\sup_{i=1,\ldots,N}\E\left[\left(\widehat{V}_t^{(i)}- \widehat{V}^{(i)}_{\eta(t)}\right)^2\right]\leq C\dt^2,\qquad\sup_{i=1,\ldots,N}\E\left[\left( \cxi_{t}-  \widehat{x}^{(i)}_{\eta(t)}\right)^2\right]\leq C\dt.$$
\end{lem}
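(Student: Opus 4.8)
The plan is to treat the two estimates separately, exploiting in both cases the uniform (in $t$ and in $N$) almost sure bound on the discrete voltages $\widehat V^{(i)}$ supplied by point iii) of Remark~\ref{rem:after-prop-boundedness-of-V^i}: there is a constant $V^*$, depending only on $\Rmax$, $\gL$ and $\Vmax_0$, with $\sup_{i}\sup_{t\ge 0}|\widehat V^{(i)}_t|\le V^*$ a.s. This guarantees that every coefficient appearing in the scheme is evaluated at arguments lying in the fixed compact set $[-V^*,V^*]\times[0,1]^4$, so that $\rho_x,\zeta_x$ and $a_x$ are uniformly bounded there. I would fix $i$ throughout and obtain constants that do not depend on $i$, $N$ or $\dt$, whence the suprema over $i$ follow at once.

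For the voltage, I would observe that on the interval $[\eta(t),t]$ the process $\widehat V^{(i)}$ solves a linear ODE whose right-hand side is the deterministic map
$$s\mapsto F(\widehat V^{(i)}_s,\widehat m^{(i)}_{\eta(t)},\widehat n^{(i)}_{\eta(t)},\widehat h^{(i)}_{\eta(t)})-\tfrac1N\sum_{j}\JE(\widehat V^{(i)}_s-\widehat V^{(j)}_{\eta(t)})-\tfrac1N\sum_{j}\JCh\,\widehat y^{(j)}_{\eta(t)}(\widehat V^{(i)}_s-\Vrev).$$
Since the frozen channel values lie in $[0,1]$ and all discrete voltages are bounded by $V^*$, this integrand is bounded a.s. by a constant $K$ depending only on the parameters of the system. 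Hence $|\widehat V^{(i)}_t-\widehat V^{(i)}_{\eta(t)}|\le K(t-\eta(t))\le K\dt$ almost surely, and squaring and taking expectation gives the first bound with $C=K^2$.

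For the channel increment I would use that on $[\eta(t),t]$ the process $\cxi$ is an Ornstein--Uhlenbeck process with coefficients frozen at $\eta(t)$. Since $\cxi_{\eta(t)}=\widehat x^{(i)}_{\eta(t)}$,
$$\cxi_t-\widehat x^{(i)}_{\eta(t)}=\int_{\eta(t)}^t\Big[\rho_x(\widehat V^{(i)}_{\eta(t)})(1-\cxi_s)-\zeta_x(\widehat V^{(i)}_{\eta(t)})\cxi_s\Big]\,ds+\int_{\eta(t)}^t\sigma_x(\widehat V^{(i)}_{\eta(t)},\widehat x^{(i)}_{\eta(t)})\,dW^{x,i}_s,$$
and I would bound the drift and martingale parts separately. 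The drift integrand is affine in $\cxi_s$ with coefficients bounded by $\|\rho_x\vee\zeta_x\|_{\infty,V^*}$; since $\cxi$ is a Gaussian process started from a value in $[0,1]$ with bounded coefficients, a routine Gr\"onwall estimate gives $\sup_{s\in[\eta(t),t]}\E[(\cxi_s)^2]\le C$, so by Jensen's inequality this contribution is at most $C\dt^2\le C\dt$. For the stochastic integral, It\^o's isometry yields $\E[(\int_{\eta(t)}^t\sigma_x\,dW^{x,i}_s)^2]=\int_{\eta(t)}^t\E[\sigma_x^2(\widehat V^{(i)}_{\eta(t)},\widehat x^{(i)}_{\eta(t)})]\,ds$, and because $\sigma_x^2=\sigma^2 a_x$ with $a_x$ bounded on $[-V^*,V^*]\times[0,1]$, this is at most $C\dt$. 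Adding the two yields the second bound.

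The only mildly technical point, which I would settle first, is the uniform second-moment control of the unprojected process $\cxi$, which can escape $[0,1]$ and is therefore not bounded pathwise; this is precisely where the boundedness of $\widehat V^{(i)}$ (and hence of $\rho_x,\zeta_x,\sigma_x$) is essential, and it reduces to the standard moment estimate for a linear SDE with bounded coefficients over an interval of length $\dt$.
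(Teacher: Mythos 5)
Your proof is correct and takes essentially the approach the paper intends: the paper in fact omits the proof of this lemma, saying only that it follows from the uniform bound on $\widehat V^{(i)}$ (point iii) of Remark \ref{rem:after-prop-boundedness-of-V^i}) together with standard local-approximation arguments for SDEs, and that is exactly what you carry out (a.s.\ bounded right-hand side of the frozen-coefficient linear ODE giving the $O(\dt^2)$ voltage estimate, and It\^o isometry plus a Gr\"onwall second-moment bound for the frozen-coefficient Ornstein--Uhlenbeck process giving the $O(\dt)$ channel estimate). Your explicit treatment of the unprojected process $\cxi$, which may leave $[0,1]$ and so must be controlled in second moment rather than pathwise, is precisely the detail needed to make the paper's ``standard arguments'' rigorous.
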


Next we establish a  the key step in the convergence of the scheme, namely  that, with extremely high probability, the processes $\widehat{x}^{(i)}$ and $\cxi$ coincide. 

\begin{lem}\label{prop:probability-of-projection}
Asumme Hypothesis \ref{hyp:V0}, if $\chi(x)=O(x(1-x))$, then there exists a constant $C$ depending on the parameters of the system, but independent of $\dt$, such that 
$$\sup_{i=1,\ldots,N}\sum_{x=m,n,h,y} \P\left(\cxi_t \notin[0,1]\right) \leq  \exp\left(-\frac{C}{\dt}\right).$$
\end{lem}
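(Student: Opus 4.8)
The plan is to exploit the fact that, conditionally on $\mathcal{F}_{\eta(t)}$, the process $(\cxi_s:\eta(t)\le s\le t)$ is an Ornstein--Uhlenbeck process with \emph{frozen} coefficients, hence conditionally Gaussian, and to show that the condition $\chi(x)=O(x(1-x))$ forces the diffusion coefficient to degenerate near the boundary of $[0,1]$ at exactly the rate needed to keep the Gaussian mass outside $[0,1]$ exponentially small. First I would fix $i$ and a channel type $x$, write $t_k=\eta(t)$, $s=t-t_k\in(0,\dt]$, and introduce the $\mathcal{F}_{t_k}$-measurable frozen coefficients $a=\rho_x(\widehat{V}^{(i)}_{t_k})$, $b=\zeta_x(\widehat{V}^{(i)}_{t_k})$ and $\sigma_0=\sigma_x(\widehat{V}^{(i)}_{t_k},\widehat{x}^{(i)}_{t_k})$, noting that $z_0:=\widehat{x}^{(i)}_{t_k}\in[0,1]$ by construction of the scheme. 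Over $[t_k,t]$ the equation reads $d\cxi_s=(a-(a+b)\cxi_s)\,ds+\sigma_0\,dW^{x,i}_s$, so conditionally on $\mathcal{F}_{t_k}$ one has $\cxi_t\sim\mathcal{N}(m,v)$ with
\[
m=z_0e^{-(a+b)s}+\tfrac{a}{a+b}\bigl(1-e^{-(a+b)s}\bigr),\qquad v=\tfrac{\sigma_0^2}{2(a+b)}\bigl(1-e^{-2(a+b)s}\bigr).
\]

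Next I would record the two elementary facts that drive the estimate. On the one hand, $m$ is a convex combination of $z_0\in[0,1]$ and $a/(a+b)\in[0,1]$ (using $a,b>0$ from Hypothesis \ref{hyp:MainHypotheses}), so $m\in[0,1]$, and moreover $m\ge z_0e^{-(a+b)s}$ and $1-m\ge(1-z_0)e^{-(a+b)s}$. On the other hand, from $1-e^{-u}\le u$ we get $v\le\sigma_0^2 s\le\sigma_0^2\dt$; and since $\widehat{V}^{(i)}_{t_k}$ is bounded uniformly in $i$, $N$ and $\dt$ (Remark \ref{rem:after-prop-boundedness-of-V^i} iii)), the values $\rho_x(\widehat{V}^{(i)}_{t_k}),\zeta_x(\widehat{V}^{(i)}_{t_k})$ are bounded by a fixed constant, while $\chi(x)=O(x(1-x))$ gives $\chi(z_0)\le C_\chi z_0(1-z_0)$. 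Substituting into the definition \eqref{eq:def-sigma_x} of $\sigma_x$ yields $\sigma_0\le A\,z_0(1-z_0)$ for a constant $A$ depending only on $\sigma$, $C_\chi$ and the uniform voltage bound, whence $v\le A^2 z_0^2(1-z_0)^2\dt$.

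Then I would conclude with the standard Gaussian tail bound $\P(\mathcal{N}(m,v)\notin[0,1]\mid\mathcal{F}_{t_k})\le e^{-m^2/2v}+e^{-(1-m)^2/2v}$, the degenerate case $v=0$ (i.e. $z_0\in\{0,1\}$, $\chi(z_0)=0$, or $\sigma=0$) being trivial since then $\cxi_t=m\in[0,1]$ deterministically. Inserting the lower bounds for $m$ and $1-m$ and the upper bound for $v$, and using that $a+b$ is bounded above by some $L$ so that $e^{-2(a+b)s}\ge e^{-2L\dt}\ge c_0>0$ for $\dt$ small, I obtain for the lower endpoint $\tfrac{m^2}{2v}\ge\tfrac{z_0^2e^{-2(a+b)s}}{2A^2z_0^2(1-z_0)^2\dt}\ge\tfrac{c_0}{2A^2\dt}$ (bounding $(1-z_0)^2\le1$), and symmetrically $\tfrac{(1-m)^2}{2v}\ge\tfrac{c_0}{2A^2\dt}$ (bounding $z_0^2\le1$), both uniformly in $z_0\in[0,1]$, in $i$, $N$, and in the coefficients. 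Taking expectations removes the conditioning, and summing over the four channel types $x=m,n,h,y$ (and absorbing the factor $4$ for $\dt$ small) gives the claimed $\exp(-C/\dt)$ bound.

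The main obstacle is precisely this boundary degeneracy: as $z_0\to0$ or $z_0\to1$, both the distance from $m$ to the nearest endpoint and the noise level $\sigma_0$ vanish, and one must check they vanish at matched rates. The squared distance to the nearer endpoint is comparable to $z_0^2$ or to $(1-z_0)^2$, while $v\lesssim z_0^2(1-z_0)^2\dt$; thus in each ratio one of the two vanishing factors cancels and the other is bounded by $1$, keeping the Gaussian exponent of order $1/\dt$. It is exactly the hypothesis $\chi(x)=O(x(1-x))$ that secures this matching; without the quadratic decay of $\chi$ at the endpoints the exponent would not stay of order $1/\dt$ uniformly near the boundary.
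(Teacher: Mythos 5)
Your proof is correct and takes essentially the same route as the paper's: condition on $\F_{\eta(t)}$ to identify $\cxi_t$ as a Gaussian (frozen-coefficient Ornstein--Uhlenbeck) variable, bound its conditional variance by $\sigma_0^2\,\dt$, and use Gaussian tail estimates together with the boundary degeneracy forced by $\chi(x)=O(x(1-x))$ to keep the exponent of order $1/\dt$ uniformly in $z_0$, treating the zero-variance case $z_0\in\{0,1\}$ separately. The only difference is cosmetic: you keep the lower bounds $m\ge z_0e^{-(a+b)s}$ and $1-m\ge(1-z_0)e^{-(a+b)s}$ and cancel them against $\sigma_0\le A\,z_0(1-z_0)$, whereas the paper bounds the conditional mean below by $z_0\wedge\rho_x/(\rho_x+\zeta_x)$ and splits into the two corresponding cases (one using boundedness of $\sigma_x$, the other using that $z^2/\chi(z)^2$ is bounded below); both yield the same uniform estimate.
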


\begin{rem} It is not difficult to see that
\begin{align*}
\E\left[\left( \cxi_{t} -  \widehat{x}^{(i)}_{t}\right)^2\right]  &=  \E\left[\left( \cxi_{t}-  \widehat{x}^{(i)}_{t}\right)^2\ind{\{\cxi_t \notin[0,1]\}}\right]    \\
  &     \leq \sqrt{\sup_{j=1,\ldots,N}\E\left[ 2(\check{x}^j_{t})^2+1\right]\P\left(\cxi_t \notin[0,1]\right)}  \leq K \exp\left(-\frac{C}{2\dt}\right). 
\end{align*}
Notice that the RHS above tends to zero faster than any power of $\dt$ when $\dt\to0$.
\end{rem}

\begin{proof}[Proof of Lemma \ref{prop:probability-of-projection}] 
We first notice that conditional to $\F_{\eta(t)}$, $\cxi$ corresponds to an Ornstein-Uhlenbeck process, therefore its law is Gaussian with known conditional mean and conditional variance given by 
\begin{align*}
  \E_{ \eta(t)}\left[  \cxi_t \right]&=      \widehat{x}^{(i)}_{\eta(t)}\exp\left(-\left(\rho_x+\zeta_x\right)(\widehat{V}^{(i)}_{\eta(t)})(t-\eta(t))\right)\\  
  & \quad+ \frac{\rho_x(\widehat{V}^{(i)}_{\eta(t)})}{\left(\rho_x+\zeta_x\right)(\widehat{V}^{(i)}_{\eta(t)})}\left(1-\exp\left(-\left(\rho_x+\zeta_x\right)(\widehat{V}^{(i)}_{\eta(t)})(t-\eta(t))\right)\right),\\
 \Var_{\eta(t)}\left[  \cxi_t \right] & =   \frac{\sigma^2_x(\widehat{V}^{(i)}_{\eta(t)},  \widehat{x}_{\eta(t)}^{(i)})}{2\left(\rho_x+\zeta_x\right)(\widehat{V}^{(i)}_{\eta(t)})}\left(1-\exp\left(-2\left(\rho_x+\zeta_x\right)(\widehat{V}^{(i)}_{\eta(t)})(t-\eta(t))\right)\right).
\end{align*}
Observe that the conditional variance is strictly positive if $t>\eta(t)$,  $\widehat{x}^{(i)}_{\eta(t)}\neq0$ and $\widehat{x}^{(i)}_{\eta(t)}\neq1$. Since for $\widehat{x}^{(i)}_{\eta(t)}=0$ or $\widehat{x}^{(i)}_{\eta(t)}=1$ the diffusions coefficient vanish, and in that case the solution to the  ODE for $\cxi$ remains in $[0,1]$ almost surely, we can restrict ourselves to the case $\widehat{x}^{(i)}_{\eta(t)}\in (0,1)$.

Using the inequality for Gaussian concentration, conditional to $\F_{\eta(t)}$,  we have
\begin{align*}
 \P_{\eta(t)}\left(\cxi_t \leq 0\right) &=     \P_{\eta(t)}\left(\frac{\cxi_t-\E_{ \eta(t)}\left[  \cxi_t \right]}{\sqrt{\Var_{\eta(t)}\left[  \cxi_t \right]}} \leq \frac{-\E_{ \eta(t)}\left[  \cxi_t \right]}{\sqrt{\Var_{\eta(t)}\left[  \cxi_t \right]}}\right)   \leq \frac{1}{2}\exp\left(- \frac{\E_{ \eta(t)}\left[  \cxi_t \right]^2}{\Var_{\eta(t)}\left[  \cxi_t \right]}\right).
\end{align*}
Since for $t$ small enough 
$$1-\exp\left(-2\left(\rho_x+\zeta_x\right)(\widehat{V}^{(i)}_{\eta(t)})(t-\eta(t))\right) \leq 2\left(\rho_x+\zeta_x\right)(\widehat{V}^{(i)}_{\eta(t)})(t-\eta(t)), $$
and $t-\eta(t)\leq \dt$, we can bound the conditional variance, and then it follows
\begin{align*}
 \P_{\eta(t)}\left(\cxi_t \leq 0\right) & \leq \frac{1}{2}\exp\left(- \frac{\E_{ \eta(t)}\left[  \cxi_t \right]^2}{\sigma^2_x(\widehat{V}^{(i)}_{\eta(t)},  \widehat{x}_{\eta(t)}^{(i)})\dt}\right).
\end{align*}
On the other hand, $\E_{ \eta(t)}\left[  \cxi_t \right]$ is a weighted mean between to quantities in $[0,1]$, therefore
$$  \E_{ \eta(t)}\left[  \cxi_t \right]\geq      \widehat{x}^{(i)}_{\eta(t)} \wedge \frac{\rho_x(\widehat{V}^{(i)}_{\eta(t)})}{\left(\rho_x+\zeta_x\right)(\widehat{V}^{(i)}_{\eta(t)})},$$
hence
\begin{equation}\label{eq:bound1-projection}
\begin{aligned}
 \P_{\eta(t)}\left(\cxi_t \leq 0\right) & \leq \frac{1}{2}\exp\left( \frac{-\rho_x^2(\widehat{V}^{(i)}_{\eta(t)})}{\left(\rho_x+\zeta_x\right)^2(\widehat{V}^{(i)}_{\eta(t)})\sigma^2_x(\widehat{V}^{(i)}_{\eta(t)},  \widehat{x}_{\eta(t)}^{(i)})\dt}\right)\ind{\left\{ \widehat{x}_{\eta(t)}^{(i)} \geq   \frac{\rho_x(\widehat{V}^{(i)}_{\eta(t)})}{\left(\rho_x+\zeta_x\right)(\widehat{V}^{(i)}_{\eta(t)})} \right\}}\\
 &\quad +\frac{1}{2}\exp\left(- \frac{\left(\widehat{x}_{\eta(t)}^{(i)}\right)^2}{\sigma^2_x(\widehat{V}^{(i)}_{\eta(t)},  \widehat{x}_{\eta(t)}^{(i)})\dt}\right)\ind{\left\{ \widehat{x}_{\eta(t)}^{(i)} \leq   \frac{\rho_x(\widehat{V}^{(i)}_{\eta(t)})}{\left(\rho_x+\zeta_x\right)(\widehat{V}^{(i)}_{\eta(t)})} \right\}}.
\end{aligned}
\end{equation}

To bound the first exponential in the right-hand side of the last inequality,  we notice that since the process $\widehat{V}^{(i)}$ is uniformly bounded and $\sigma$ is bounded, we can easily exhibit a constant $C_1>0$ independent of $i$ such that
$$\frac{\rho_x^2(\widehat{V}^{(i)}_{\eta(t)})}{\left(\rho_x+\zeta_x\right)^2(\widehat{V}^{(i)}_{\eta(t)})\sigma^2_x(\widehat{V}^{(i)}_{\eta(t)},  \widehat{x}_{\eta(t)}^{(i)})}\geq C_1.$$
For the second term in the right-hand side of \eqref{eq:bound1-projection}, since $x^2/\chi(x)^2$ is bounded from below in $(0,1)$, there exists $C_2>0$, such that
$$\frac{\left(\widehat{x}_{\eta(t)}^{(i)}\right)^2}{\sigma^2_x(\widehat{V}^{(i)}_{\eta(t)},  \widehat{x}_{\eta(t)}^{(i)})} \geq C_2,$$
from which we conclude
$$ \P\left(\cxi_t \leq 0\right)  \leq \exp\left(-\frac{C_1\wedge C_2}{\dt}\right).$$
An analogous computation shows 
$$ \P\left(\cxi_t \geq 1\right)  \leq \exp\left(-\frac{C_1\wedge C_2}{\dt}\right).$$
\end{proof}

The last preliminary step in the proof of Proposition \ref{theo:ConvNS} is the following 
\begin{lem}\label{lem:conv-num-step1} Under hypotheses of Proposition \ref{theo:ConvNS}, consider
$$u(t):= \E\left[\left(V_t^{(i)}- \widehat{V}_t^{(i)}\right)^2\right] +\sum_{x=m,n,h,y}\E\left[ |x_t^{(i)}-\cxi_t|^2\right].$$
Then there exists a constant $C$ depending on the parameters of the system, but independent of $\dt$, such that
\begin{equation}\label{eq:bound-error-t-from-tk}
u(t) \leq  \Bigg( u(\eta(t))  +  C\dt^2\Bigg)e^{C\dt}.
\end{equation}
\end{lem}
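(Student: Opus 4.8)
The plan is to establish, on the single elementary interval $[\eta(t),t]$ (on which $\widehat V^{(i)}$ solves a linear ODE and each $\cxi$ is the Ornstein--Uhlenbeck process started from $\widehat x^{(i)}_{\eta(t)}\in[0,1]$ with coefficients frozen at time $\eta(t)$), a one-step inequality of the form $u(t)\le u(\eta(t))+C\int_{\eta(t)}^t u(s)\,ds+C\dt^2$, and then to conclude by Gronwall's lemma since $t-\eta(t)\le\dt$. Two ingredients would be used throughout. First, all voltages $V^{(i)}_s,\widehat V^{(i)}_s$ are bounded by $R:=\tfrac{5\Rmax}{2\gL}$ for $s\ge\eta(t)\ge t_0$ (Proposition \ref{prop:boundedness-of-V^i} and item iii) of Remark \ref{rem:after-prop-boundedness-of-V^i}), so that $F$, $\rho_x$, $\zeta_x$ and $\sigma_x$ may be treated as Lipschitz on the relevant compact set $[-R,R]\times[0,1]$ (item i) of the Remark following Lemma \ref{lem:well-posedHH}). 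Second, the local error estimates of Lemma \ref{prop:local-errors}: $\E[(\widehat V^{(i)}_t-\widehat V^{(i)}_{\eta(t)})^2]\le C\dt^2$ and $\E[(\cxi_t-\widehat x^{(i)}_{\eta(t)})^2]\le C\dt$.

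First I would apply It\^o's formula to $(V^{(i)}_t-\widehat V^{(i)}_t)^2$ and to each $(x^{(i)}_t-\cxi_t)^2$ and take expectations; the voltage equation carries no martingale part and the channel martingale parts vanish in the mean, so
$$ u(t)=u(\eta(t))+\int_{\eta(t)}^t \E\big[\,\text{drift cross terms}\,\big]\,ds+\sum_{x}\int_{\eta(t)}^t \E\big[\big(\sigma_x(V^{(i)}_s,x^{(i)}_s)-\sigma_x(\widehat V^{(i)}_{\eta(t)},\widehat x^{(i)}_{\eta(t)})\big)^2\big]\,ds. $$
For the voltage drift I would invoke \eqref{eq:boundForTheIncrementOfTheDriftF} applied to $(V^{(i)}_s,m^{(i)}_s,n^{(i)}_s,h^{(i)}_s)$ and $(\widehat V^{(i)}_s,\widehat m^{(i)}_{\eta(t)},\widehat n^{(i)}_{\eta(t)},\widehat h^{(i)}_{\eta(t)})$, which produces the dissipative term $-\gL(V^{(i)}_s-\widehat V^{(i)}_s)^2$ together with cross terms in the channel differences $n^{(i)}_s-\widehat n^{(i)}_{\eta(t)}$, etc. Each such difference is split as $(n^{(i)}_s-\cxi_s)+(\cxi_s-\widehat n^{(i)}_{\eta(t)})$, the first piece feeding $u(s)$ and the second controlled by Lemma \ref{prop:local-errors}; Young's inequality then bounds all the cross terms, the $(V^{(i)}_s-\widehat V^{(i)}_s)^2$ contributions being harmless since the leak term only helps. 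The mean-field interaction difference, after multiplication by $V^{(i)}_s-\widehat V^{(i)}_s$, yields $-\JE(V^{(i)}_s-\widehat V^{(i)}_s)^2$ plus cross terms $\tfrac{\JE}{N}\sum_j(V^{(j)}_s-\widehat V^{(j)}_{\eta(t)})(V^{(i)}_s-\widehat V^{(i)}_s)$ and analogous chemical terms involving $\widehat y^{(j)}_{\eta(t)}$; here I would use that, since the initial data are i.i.d.\ and both \eqref{eq:HH-model} and the scheme are permutation-symmetric, the law of $(X^{(j)},\widehat X^{(j)})$ is independent of $j$, so that $\E[(V^{(j)}_s-\widehat V^{(j)}_s)^2]=\E[(V^{(i)}_s-\widehat V^{(i)}_s)^2]$ and, by Cauchy--Schwarz, these cross terms close in terms of $u(s)$ plus local increments.

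For the channel part, the drift difference is handled with the Lipschitz bounds for $\rho_x,\zeta_x$ on $[-R,R]$ together with the same splitting into $u$-terms and local increments. The diffusion contribution is the one deserving care, and I expect it to be the main obstacle. Both argument pairs $(V^{(i)}_s,x^{(i)}_s)$ and $(\widehat V^{(i)}_{\eta(t)},\widehat x^{(i)}_{\eta(t)})$ lie in $\R\times[0,1]$ --- the former because $x^{(i)}_s\in[0,1]$ a.s.\ by Lemma \ref{lem:well-posedHH}, the latter because $\widehat x^{(i)}_{\eta(t)}$ is a \emph{projected} value --- so the local Lipschitz property of the square-root-type $\sigma_x$ applies and gives $\E[(\sigma_x(\cdot)-\sigma_x(\cdot))^2]\le C\big(\E[(V^{(i)}_s-\widehat V^{(i)}_{\eta(t)})^2]+\E[(x^{(i)}_s-\widehat x^{(i)}_{\eta(t)})^2]\big)$. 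Splitting again through $\widehat V^{(i)}_s$ and $\cxi_s$ and using Lemma \ref{prop:local-errors}, this integrand is at most $C\,u(s)+C\dt$, the delicate point being that the frozen diffusion coefficient yields only an $O(\dt)$ local error (through $\cxi_s-\widehat x^{(i)}_{\eta(t)}$) rather than $O(\dt^2)$; one must therefore check that it enters solely through the integral over $[\eta(t),t]$, whose length is $\le\dt$, so that its total contribution is genuinely $\le C\int_{\eta(t)}^t u(s)\,ds+C\dt^2$.

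Collecting all of the above, every local increment is integrated over $[\eta(t),t]$ and hence contributes at most $C\dt^2$, while the remaining terms are bounded by $C\int_{\eta(t)}^t u(s)\,ds$, giving $u(t)\le u(\eta(t))+C\int_{\eta(t)}^t u(s)\,ds+C\dt^2$. Gronwall's lemma then yields $u(t)\le\big(u(\eta(t))+C\dt^2\big)e^{C(t-\eta(t))}\le\big(u(\eta(t))+C\dt^2\big)e^{C\dt}$, which is \eqref{eq:bound-error-t-from-tk}.
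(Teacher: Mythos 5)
Your proposal is correct and follows essentially the same route as the paper's proof: It\^o's formula applied to both error quantities, Lipschitz estimates for the coefficients on the compact set where all processes live, the local-error bounds of Lemma \ref{prop:local-errors} to handle the frozen coefficients (with the $O(\dt)$ diffusion error absorbed by the length-$\dt$ integration interval, exactly the delicate point you flag), exchangeability to close the mean-field cross terms, and a one-step Gronwall argument. One cosmetic slip: the uniform voltage bound to invoke is $\tfrac{4\Rmax}{\gL}+2\Vmax_0$, valid for \emph{all} $t\geq 0$ (Proposition \ref{prop:boundedness-of-V^i} and Remark \ref{rem:after-prop-boundedness-of-V^i} iii)); there is no time $t_0$ in this setting, since the scheme's error must be controlled from time $0$, though this does not affect the argument.
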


\begin{proof}
Thanks to the boundedness of the processes, drift and diffusion coefficients, that is $b_x$ and $\sigma_x$, behave like Lipschitz functions, just as in the proof of Lemma \ref{lem:well-posedHH}. Then, thanks to It\^o formula and pivoting with in drift and diffusion with the point $( \widehat{V}^{(i)}_{s},\cxi_s)$
\begin{align*}
\E&\left[(x^{(i)}_t- \cxi_t)^2 \right] \\
 & \leq  \E\left[  ( x^{(i)}_{\eta(t)}- \widehat{x}^{(i)}_{\eta(t)})^2\right]  +2\int_{{\eta(t)}}^{t}\E\left[(x^{(i)}_s- \cxi_s)\left(b_x(V^{(i)}_s,x^{(i)}_s)-b_x( \widehat{V}^{(i)}_{s},\cxi_s)\right)\right] ds\\
&\quad +2\int_{{\eta(t)}}^{t}\E\left[(x^{(i)}_s- \cxi_s)\left(b_x( \widehat{V}^{(i)}_{s},\cxi_s) -b_x( \widehat{V}^{(i)}_{\eta(t)},\cxi_s)\right)\right] ds\\
&\quad+ \int_{\eta(t)}^{t}{2\E\left[\left(\sigma_x(V_s^{(i)},x_s^{(i)})- \sigma_x( \widehat{V}_{s}^{(i)}, \cxi_s)\right)^2\right]  +2\E\left[\left(\sigma_x( \widehat{V}_{s}^{(i)},\cxi_s) - \sigma_x( \widehat{V}_{\eta(t)}^{(i)}, \widehat{x}_{\eta(t)}^{(i)})\right)^2\right]ds}.
\end{align*}
from where the Lipchitz property of the coefficients, Lemma \ref{prop:local-errors} to bound the terms involving the local error and some classical arguments lead to
\begin{align*}
\E\left[(x^{(i)}_t- \cxi_t)^2 \right]  & \leq  \E\left[  ( x^{(i)}_{\eta(t)}-\cxi_{\eta(t)})^2\right]  +C\int_{{\eta(t)}}^{t} \E\left[  ( x^{(i)}_{s}-\cxi_{s})^2\right]  + \E\left[  ( V^{(i)}_{s}- \widehat{V}^{(i)}_{s})^2\right] ds + C\dt^2.
\end{align*}

On the other hand, for the voltage error we obtain first  the a.s. bound
\begin{align*}
 \left(V_t^{(i)}- \widehat{V}_t^{(i)}\right)^2
 &\leq \left( V^{(i)}_{\eta(t)}- \widehat{V}^{(i)}_{\eta(t)}\right)^2  + C\int_{{\eta(t)}}^{t} |V_s^{(i)}- \widehat{V}_s^{(i)}|^2 +  \sum_{x=m,n,h} |x_s^{(i)}-\widehat{x}_{\eta(t)}^{(i)}|^2 ds \\
& \quad   + C\int_{\eta(t)}^t |V_s^{(i)}- \widehat{V}_s^{(i)}|^2 + \frac{1}{N}\sum_{j=1}^{N} |V_s^{(j)} - \widehat{V}_{\eta(t)}^{(j)}|^2  ds,\\
 &\quad + C \int_{\eta(t)}^t |V_s^{(i)}-\hV_s^{(i)}|^2+ (V_s^{(i)}+\Vrev)^2\frac{1}{N}\sum_{j=1}^{N}{|y^{(j)}_s  -  \widehat{y}^{(j)}_{\eta(t)}|^2 } ds. 
\end{align*}
Thanks to the exchangeability of the particles, it follows that
$$\E\left[\frac{1}{N}\sum_{j=1}^{N}{|y^{(j)}_s  -  \widehat{y}^{(j)}_{\eta(t)}|^2 }\right] = \E\left[\left(y^{(i)}_s  -  \widehat{y}^{(i)}_{\eta(t)}\right)^2\right],\;\;\; \E\left[\frac{1}{N}\sum_{j=1}^{N}{|V_s^{(j)} - \widehat{V}_{\eta(t)}^{(j)}|^2  }\right] = \E\left[\left(V^{(i)}_s  -  \widehat{V}^{(i)}_{\eta(t)}\right)^2\right],$$
and then, since the processes are uniformly bounded, we get that
\begin{align*}
 \E&\left[\left(V_t^{(i)}- \widehat{V}_t^{(i)}\right)^2\right]\\
 &\leq  \E\left[\left( V^{(i)}_{\eta(t)}- \widehat{V}^{(i)}_{\eta(t)}\right)^2\right]  + C\int_{{\eta(t)}}^{t}\E\left[\left(V_s^{(i)}- \widehat{V}_s^{(i)}\right)^2\right] ds+  \\
& \quad   + C\int_{\eta(t)}^t   \E\left[\left(V^{(i)}_s  -  \widehat{V}^{(i)}_{\eta(t)}\right)^2\right] +\sum_{x=m,n,h,y}\E\left[ |x_s^{(i)}-\widehat{x}_{\eta(t)}^{(i)}|^2\right] ds\\
 &\leq  \E\left[\left( V^{(i)}_{\eta(t)}- \widehat{V}^{(i)}_{\eta(t)}\right)^2\right]  + C\int_{{\eta(t)}}^{t}\E\left[ |V_s^{(i)}- \widehat{V}_s^{(i)}|^2\right] ds+  \\
& \quad   + C\int_{\eta(t)}^t   \E\left[\left(V^{(i)}_s  -  \widehat{V}^{(i)}_{s}\right)^2\right] + C\dt^2 +\sum_{x=m,n,h,y}\E\left[ |x_s^{(i)}-\cxi_{s}|^2\right] + C\dt ds\\
 &\leq  \E\left[\left( V^{(i)}_{\eta(t)}- \widehat{V}^{(i)}_{\eta(t)}\right)^2\right] + C\int_{{\eta(t)}}^{t}\E\left[ \left(V_s^{(i)}- \widehat{V}_s^{(i)}\right)^2\right]+\sum_{x=m,n,h,y}\E\left[ |x_s^{(i)}-\cxi_{s}|^2\right] ds+ C\dt^2.
\end{align*}
We can summarize the previous computations as
\begin{align*}
 u(t) &\leq u(\eta(t)) + C\int_{{\eta(t)}}^{t}u(s)ds+ C\dt^2,   
\end{align*}
from where we conclude thanks to Gronwall's inequality.
\end{proof}
\begin{proof}[Proof of Proposition \ref{theo:ConvNS} ] 

From the previous Lemma, denoting $u_k = u(t_k)$ we obtain the following recurrence relationship:
$$u_{k+1} \leq A u_k + B,\quad u_0=0,\quad A= e^{C\dt} ,\;\; B= C\dt^2e^{C\dt}.$$
Iterating this inequality, it is easy to conclude that
$$u_{k+1} \leq B\frac{A^{k+1}-1}{A-  1}\leq  C\dt^2\frac{\left(e^{C\dt}\right)^{k+1}}{e^{C\dt}-  1}=C\dt^2\frac{e^{Ct_{k+1}}}{e^{C\dt}-  1}.$$
But when $\dt\to0$, we have $e^{C\dt}-  1 \sim C\dt$, and therefore $u_{k+1} \leq  C\dt.$
Inserting this in \eqref{eq:bound-error-t-from-tk}, we conclude

\begin{align*}
 \E&\left[\left(V_t^{(i)}- \widehat{V}_t^{(i)}\right)^2\right] +\sum_{x=m,n,h,y}\E\left[ |x_t^{(i)}-\hx_t^{(i)}|^2\right]\\
  &\leq  \E\left[\left(V_t^{(i)}- \widehat{V}_t^{(i)}\right)^2\right] +\sum_{x}\E\left[ |x_t^{(i)}-\cxi_t|^2\right]    +  \E\left[ |\cxi_t-\hx_t^{(i)}|^2\right]    \leq C\dt + \sum_{x}\P\left(\cxi_t \notin[0,1]\right), 
\end{align*}
from where the statement follows,  applying Lemma \ref{prop:probability-of-projection}.

\end{proof}

}

\bibliographystyle{plain}
\bibliography{biblio}

\newpage

\end{document}